\numberwithin{equation}{section}
\newtheorem{theorem}[equation]{Theorem}
\newtheorem{proposition}[equation]{Proposition}
\newtheorem{lemma}[equation]{Lemma}
\newtheorem{corollary}[equation]{Corollary}
\theoremstyle{definition}
\newtheorem{remark}[equation]{Remark}
\newtheorem{question}[equation]{Question}
\newcommand{\bA}{\mathbf{A}}
\newcommand{\cA}{\mathcal{A}}
\newcommand{\cB}{\mathcal{B}}
\newcommand{\bC}{\mathbf{C}}
\newcommand{\cC}{\mathcal{C}}
\newcommand{\cD}{\mathcal{D}}
\newcommand{\rD}{\mathrm{D}}
\newcommand{\cE}{\mathcal{E}}
\newcommand{\sF}{\mathscr{F}}
\newcommand{\sG}{\mathscr{G}}
\newcommand{\rH}{\mathrm{H}}
\newcommand{\bI}{\mathbf{I}}
\newcommand{\cK}{\mathcal{K}}
\newcommand{\rK}{\mathrm{K}}
\newcommand{\rL}{\mathrm{L}}
\newcommand{\bN}{\mathbf{N}}
\newcommand{\bP}{\mathbf{P}}
\newcommand{\cP}{\mathcal{P}}
\newcommand{\PP}{\mathbb{P}}
\newcommand{\bQ}{\mathbf{Q}}
\newcommand{\cQ}{\mathcal{Q}}
\newcommand{\fR}{\mathfrak{R}}
\newcommand{\rR}{\mathrm{R}}
\newcommand{\fS}{\mathfrak{S}}
\newcommand{\bT}{\mathbf{T}}
\newcommand{\VV}{\mathbb{V}}
\newcommand{\bZ}{\mathbf{Z}}
\newcommand{\fa}{\mathfrak{a}}
\newcommand{\fb}{\mathfrak{b}}
\newcommand{\fh}{\mathfrak{h}}
\newcommand{\fm}{\mathfrak{m}}
\newcommand{\fp}{\mathfrak{p}}
\newcommand{\fq}{\mathfrak{q}}
\let\ol\overline
\DeclareMathOperator{\im}{im} 
\DeclareMathOperator{\End}{End}
\DeclareMathOperator{\Sym}{Sym}
\DeclareMathOperator{\Tor}{Tor}
\DeclareMathOperator{\Mod}{Mod}
\DeclareMathOperator{\Ind}{Ind}
\DeclareMathOperator{\Hom}{Hom}
\DeclareMathOperator{\Ext}{Ext}
\DeclareMathOperator{\Rep}{Rep}
\DeclareMathOperator{\Irr}{Irr}
\DeclareMathOperator{\soc}{soc}
\DeclareMathOperator{\avg}{avg}
\DeclareMathOperator{\rad}{rad}
\DeclareMathOperator{\uHom}{\underline{Hom}}
\DeclareMathOperator{\chr}{char}
\renewcommand{\phi}{\varphi}
\newcommand{\id}{\mathrm{id}}
\newcommand{\op}{\mathrm{op}}
\renewcommand{\Vec}{\mathrm{Vec}}
\newcommand{\tors}{\mathrm{tors}}
\newcommand{\gen}{\mathrm{gen}}
\newcommand{\FI}{\mathbf{FI}}
\newcommand{\WI}{\mathbf{WI}}
\newcommand{\arxiv}[1]{\href{http://arxiv.org/abs/#1}{{\tiny\tt arXiv:#1}}}
\newcommand{\DOI}[1]{\href{http://doi.org/#1}{\color{purple}{\tiny\tt DOI:#1}}}
\newcommand{\defn}[1]{\emph{#1}}
\title[Symmetric modules over the infinite polynomial ring I]{Symmetric modules over the infinite \\ polynomial ring I: nilpotent quotients}
\author{Rohit Nagpal}
\email{\href{mailto:rohitna@gmail.com}{rohitna@gmail.com}}
\author{Andrew Snowden}
\address{Department of Mathematics, University of Michigan, Ann Arbor, MI}
\email{\href{mailto:asnowden@umich.edu}{asnowden@umich.edu}}
\urladdr{\url{http://www-personal.umich.edu/~asnowden/}}
\thanks{AS was supported by NSF grant DMS-2301871.}
\author{Teresa Yu}
\address{Department of Mathematics, University of Michigan, Ann Arbor, MI}
\email{\href{mailto:twyu@umich.edu}{twyu@umich.edu}}
\urladdr{\url{https://sites.google.com/view/teresayu}}
\thanks{TY was supported by NSF DGE-2241144 and DMS-1840234.}
\date{August 6, 2025}
\begin{document}

\begin{abstract}
Cohen proved that the infinite variable polynomial ring $R=k[x_1,x_2,\ldots]$ is noetherian with respect to the action of the infinite symmetric group $\fS$. The first two authors began a program to understand the $\fS$-equivariant algebra of $R$ in detail. In previous work, they classified the $\fS$-prime ideals of $R$. An important example of an $\fS$-prime is the ideal $\fh_s$ generated by $(s+1)$st powers of the variables. In this paper, we study the category of $R/\fh_s$-modules. We obtain a number of results, and mention just three here: (a) we determine the Grothendieck group of the category; (b) we show that the Krull--Gabriel dimension is $s$; and (c) we obtain generators for the derived category. This paper will play a key role in subsequent work where we study general modules.
\end{abstract}

\maketitle
\tableofcontents

\section{Introduction}

Let $R=k[x_1, x_2, \ldots]$ be the infinite variable polynomial ring over the field $k$, equipped with its natural action of the infinite symmetric group $\fS$. Cohen \cite{Cohen} proved that $R$ is \emph{$\fS$-noetherian}, that is, the ascending chain condition holds for $\fS$-stable ideals. There has been much interest in Cohen's theorem in recent years; see, for instance, \cite{AschenbrennerHillar, CDDEF, DraismaEggermont, DEKL, DraismaKuttler, GunturkunNagel, HillarSullivant, KLS, LNNR, LNNR2, MarajNagel, NagelRomer, NagelRomer2}. An exposition of some of the main ideas in this area is provided in \cite{DraismaNotes}. Cohen's theorem suggests that there should be a good theory of $\fS$-equivariant algebra over $R$. In two previous papers \cite{svar, sideals}, the first two authors studied the $\fS$-ideals of $R$ in great detail. In this paper, and subsequent ones, we study $\fS$-equivariant modules over $R$, and determine basic structural features of this category. We note that while this paper continues the program initiated in \cite{svar, sideals}, it can be read independently of those papers.

\subsection{The general strategy}

Before stating our results, we outline our general strategy for studying equivariant $R$-modules.

An $\fS$-ideal $\fp$ of $R$ is called \defn{$\fS$-prime} if $\fa \fb \subset \fp$ implies $\fa \subset \fp$ or $\fb \subset \fp$, for $\fS$-ideals $\fa$ and $\fb$ of $R$. Clearly, an $\fS$-stable prime ideal of $R$ is an $\fS$-prime. However, there are $\fS$-primes that are not prime. The simplest example is the ideal $\fh_s$ generated by $x_i^{s+1}$ for all $i \ge 1$: this is $\fS$-prime for all $s \ge 0$, but only prime for $s=0$. This example is particularly surprising since $\fh_s$ is not even radical for $s>0$; it is therefore difficult to give a geometric interpretation to this ideal.

The main result of \cite{sideals} is a complete classification of the $\fS$-primes of $R$. We will not need the statement of the theorem in this paper, but we wish to highlight an important feature of the classification. Roughly speaking, a general $\fS$-prime is a kind of combination of ideals like $\fh_s$ and a radical $\fS$-prime. Since radical $\fS$-primes can be understood well geometrically, understanding the $\fh_s$ ideals is particularly important.

By a \emph{module} over the ring $R$ (and related rings), we will always mean one that is $\fS$-equivariant, and where the action of $\fS$ is smooth (see \S \ref{s:sinf}). The general goal of this paper (and the ensuing ones) is to understand the category of $R$-modules. If $\fp$ is an $\fS$-prime then one can take the category $\Mod_{R/\fp}$ of $R/\fp$-modules and form the Serre quotient by the subcategory of torison modules; this yields a category $\Mod_{R/\fp}^{\gen}$ that we call the \defn{generic category}. The basic idea of our approach is that if one understands the generic categories of all $\fS$-primes well enough, one should be able to assemble this into an understanding of $\Mod_R$ itself. In this paper, we focus on the generic categories of the $\fh_s$ ideals. In subsequent papers, we show how one can use this to understand general $\fS$-primes, and then show that this indeed allows us to understand all $R$-modules.

\subsection{The generic category}

As outlined above, the main goal of this paper is to understand the generic category of $R/\fh_s$-modules. To simplify exposition, we assume here that our field has characteristic~0, though in the body of the paper we do not require this.

There are two particularly important families of modules we must introduce. First, for $n \ge 0$, we let $\cP_{s,n}$ be the free $R/\fh_s$-module with basis $e_{i_1, \ldots, i_n}$ where $i_1, \ldots, i_n$ are distinct indices $\ge 1$. The finite symmetric group $\fS_n$ acts on $\cP_{s,n}$ by permuting the indices, and since we are in characteristic~0, we can decompose under this action: for a partition $\lambda$ of $n$, we let $\cP_{s,\lambda}$ be the $S^{\lambda}$-multiplicity space of $\cP_{n,s}$, where $S^{\lambda}$ is the Specht module. Next, we define $\cQ_{s,n}$ to be the quotient of $\cP_{s,n}$ where we kill the elements $x_i e_{i_1, \ldots, i_n}$, whenever $i$ is one of the indices $i_1, \ldots, i_n$. Again, we have an action of $\fS_n$, and we let $\cQ_{s,\lambda}$ be the $S^{\lambda}$-multiplicity space. We put a bar on any of these modules (e.g., $\ol{\cP}_{s,\lambda}$) to denote the object of the generic category they define.

The following three theorems encapsulate many of the main results of this paper. The first theorem is certainly the most fundamental:

\begin{theorem}
The category $\Mod_{R/\fh_s}^{\gen}$ is locally of finite length, i.e., any finitely generated object has finite length.
\end{theorem}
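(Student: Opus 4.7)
The plan is to reduce the statement to showing that the generic images $\ol{\cP}_{s,\lambda}$ of the free-type modules have finite length, and then to exhibit such a finite length by a natural filtration whose graded pieces become simple or vanish in the generic category.

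First I would show that every finitely generated $R/\fh_s$-module is a quotient of a finite direct sum of the modules $\cP_{s,n}$. Indeed, a finite generating set is supported on only finitely many variables, and each generator lifts from the free generator of a suitable $\cP_{s,n}$ via its universal property. In characteristic~$0$, each $\cP_{s,n}$ decomposes as a finite direct sum of the $\cP_{s,\lambda}$ over $\lambda \vdash n$, extracted as $\fS_n$-isotypic components. Since the Serre localization $\Mod_{R/\fh_s} \to \Mod_{R/\fh_s}^{\gen}$ is exact, it suffices to prove that $\ol{\cP}_{s,\lambda}$ has finite length in the generic category for every partition $\lambda$.

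To bound the length of $\ol{\cP}_{s,\lambda}$, I would filter $\cP_{s,\lambda}$ by ``new-variable degree.'' For a monomial $x^a e_{i_1,\ldots,i_n}$, let the degree count the number of indices $j$ with $a_j > 0$ lying outside $\{i_1,\ldots,i_n\}$. Taking $F_k$ to be spanned by monomials of degree at most $k$ yields a finite filtration, because the exponent of each $x_j$ in $R/\fh_s$ is bounded by $s$ and so the degree of any monomial is bounded in terms of $s$, $n$, and the number of variables touched. The associated graded pieces should be expressible in terms of the modules $\cQ_{s,\mu}$: the ``new index'' part can be absorbed into an enlarged basis vector, on which the variables appearing in the index set now act by zero (which is exactly the defining relation of $\cQ_{s,\mu}$), while pieces where a new multiplier index accidentally coincides with an existing basis index contribute torsion and therefore die in the generic category.

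The principal obstacle is then to show that each $\ol{\cQ}_{s,\mu}$ is simple in $\Mod_{R/\fh_s}^{\gen}$. Concretely, given a nonzero $\fS$-equivariant submodule $N \subset \cQ_{s,\mu}$ whose image in the generic category is nonzero, one must prove that $\cQ_{s,\mu}/N$ is torsion. Starting from a nonzero element of $N$ with some leading basis vector $e_{i_1,\ldots,i_n}$ on which the Young symmetrizer of type $\mu$ acts nontrivially, the strategy is to use the action of $\fS$ to enlarge the support of the index set onto fresh disjoint indices and to exploit the relations $x_{i_j} e_{i_1,\ldots,i_n} = 0$ in $\cQ_{s,\mu}$ to produce enough elements of $N$ to generate $\cQ_{s,\mu}$ modulo torsion. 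The delicate point is that the combinatorial interplay between $\mu$, the $(s+1)$st power relations, and the index-coincidence relations must not hide proper nontrivial subobjects; carrying this out uniformly in $\lambda$ and $s$ is the main technical work of the theorem.
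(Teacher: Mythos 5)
Your proposal correctly reduces to showing that $\ol{\cP}_{s,n}$ has finite length, and correctly identifies that one should compare $\cP$-modules to $\cQ$-modules via a filtration. However, there are two genuine gaps.

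First, the filtration you propose is not finite. You filter $\cP_{s,\lambda}$ by the number of ``new'' variables (indices outside the basis set $\{i_1,\dots,i_n\}$ touched by the coefficient monomial), claiming this degree is bounded in terms of $s$, $n$, and the number of variables touched. But the number of variables touched is unbounded: for example $x_{n+1}^s x_{n+2}^s \cdots x_{n+k}^s\, e_{1,\dots,n}$ has $k$ new variables for arbitrarily large $k$, so $F_k \subsetneq F_{k+1}$ forever. The filtration actually used in the paper (Proposition~\ref{prop:PQfilt}) filters only by the exponents of the \emph{internal} variables $x_{i_1},\dots,x_{i_n}$ (the $\fm$-adic filtration on $S = k[x_1,\dots,x_n]/(x_i^{s+1})$), and is finite of length $(s+1)^n$; the graded pieces are copies of $\cQ_{s,n}$ by exactness of the $\sG$-construction.

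Second, and more seriously, the claim that $\ol{\cQ}_{s,\mu}$ is simple in $\Mod_{R/\fh_s}^{\gen}$ is false. Under the equivalence $\Psi_s$ established in Theorem~\ref{thm:Psi}, $\ol{\cQ}_{s,n}$ corresponds to $\VV_n$ in $\Rep(\fS)$, and $\ol{\cQ}_{s,\lambda}$ to $\PP(S^{\lambda})$; these are finite-length but typically not simple. For instance, already $\ol{\cQ}_{s,1} \cong \Psi_s(\VV)$ has length~2 (the trivial and standard constituents of the permutation module). The correct statement, proved later as Theorem~\ref{thm:simples}, is that $\soc(\ol{\cQ}_s(V))$ is simple for $V$ irreducible — not $\ol{\cQ}_s(V)$ itself. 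An attempted direct combinatorial proof of simplicity would therefore have to fail, because the generic $\cQ$-modules genuinely have proper nonzero subobjects. The paper sidesteps this by importing finiteness of length for $\VV_n$ from the $\FI$-module side (via the functor $\Psi_s$, whose fully-faithfulness requires the Ext-vanishing theorems of \S\ref{s:ext}), rather than proving simplicity of the $\ol{\cQ}_{s,\mu}$'s.
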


The next theorem explains the roles of the special objects we have defined.

\begin{theorem}
We have the following:
\begin{enumerate}
\item The simple objects of $\Mod_{R/\fh_s}^{\gen}$ are parametrized by partitions, via $\lambda \mapsto \soc(\ol{\cP}_{s,\lambda})$.
\item The $\ol{\cP}_{s,\lambda}$'s are exactly the indecomposable injective objects of $\Mod_{R/\fh_s}^{\gen}$.
\item The modules $\ol{\cQ}_{s,\lambda}$ generate $\rD^b_{\rm fg}(\Mod_{R/\fh_s}^{\gen})$ as a triangulated category.
\end{enumerate}
\end{theorem}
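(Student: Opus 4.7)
The core is a representability computation: $\cP_{s,n}$ is cyclic over $R/\fh_s$ generated by $e_{1,\ldots,n}$, whose $\fS$-stabilizer is $\fS_n \times \fS_{\{n+1,n+2,\ldots\}}$, so $\cP_{s,\lambda}$ corepresents the functor $M \mapsto \Hom_{\fS_n}(S^\lambda, M^{\fS_{\{n+1,n+2,\ldots\}}})$ on $\Mod_{R/\fh_s}$. Consequently the $\cP_{s,\lambda}$ are projective generators of $\cA := \Mod_{R/\fh_s}$ and the $\ol\cP_{s,\lambda}$ are projective generators of $\ol\cA := \Mod_{R/\fh_s}^{\gen}$. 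Combined with the local finite length of $\ol\cA$ from the previous theorem, this will drive the whole argument. I would carry out the steps in the order (b), (a), (c).

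\textbf{Part (b).} The core assertion is that each $\ol\cP_{s,\lambda}$ is also \emph{injective} in $\ol\cA$, so that the projective generators coincide with the indecomposable injectives. I would split this into two steps: first show $\cP_{s,\lambda}$ is torsion-free in $\cA$ (a direct check from the basis), and then show the key vanishing $\Ext^1_\cA(T, \cP_{s,\lambda}) = 0$ for all torsion $T \in \cA$. Combined with projectivity of $\cP_{s,\lambda}$, these yield $\Ext^1_{\ol\cA}(-, \ol\cP_{s,\lambda}) = 0$ via the standard long exact sequence relating $\Ext$ in $\cA$ to $\Ext$ in its Serre quotient. Once $\ol\cA$ is known to have the $\ol\cP_{s,\lambda}$ as a supply of indecomposable injectives in a locally finite length Grothendieck category, the standard bijection between simples and indecomposable injectives via injective envelopes completes (b).

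\textbf{Part (a).} Granted (b), each $\ol\cP_{s,\lambda}$ is the injective hull of its socle, and the socle is simple iff $\End_{\ol\cA}(\ol\cP_{s,\lambda})$ is local. By representability, this endomorphism ring is the $S^\lambda$-isotypic component of $(\cP_{s,\lambda})^{\fS_{\{n+1,\ldots\}}}$ modulo torsion, which I would compute explicitly and show to be a local ring, typically a field. Surjectivity of $\lambda \mapsto \soc(\ol\cP_{s,\lambda})$ onto the set of simples holds because the $\ol\cP_{s,\lambda}$ form an exhaustive family of indecomposable injectives, so every simple embeds in some $\ol\cP_{s,\lambda}$; injectivity holds because $\lambda$ is recoverable from the simple via the minimal $n$ at which it is nonzero together with the Specht type of its leading piece.

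\textbf{Part (c).} By dévissage (using local finite length) it suffices to place each simple in the thick triangulated subcategory $\cT$ generated by $\{\ol\cQ_{s,\mu}\}$, and by (a) and (b) it then suffices to place each $\ol\cP_{s,\lambda}$ in $\cT$. Using the defining short exact sequence $0 \to K_{s,n} \to \cP_{s,n} \to \cQ_{s,n} \to 0$, where $K_{s,n}$ is generated by $x_i e_{i_1,\ldots,i_n}$ with $i \in \{i_1,\ldots,i_n\}$, I would identify $K_{s,n}$ with a filtered object whose subquotients are simpler $\cP$-type modules (fewer basis indices or lower exponent on one variable), and then induct on the pair $(s, n)$.

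\textbf{Main obstacle.} The hardest step is the injectivity in (b): showing $\Ext^1_\cA(T, \cP_{s,\lambda}) = 0$ for all torsion $T$ needs genuine structural control over torsion $R/\fh_s$-modules, not a purely formal argument, and will likely require input from earlier sections of the paper. A secondary sticking point is identifying the correct filtration of $K_{s,n}$ in (c); if it does not yield a clean short exact sequence involving $\cP$-type pieces, a Koszul-style resolution reflecting the $n$ independent divisibility conditions, or a spectral sequence, will be required.
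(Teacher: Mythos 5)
Your central premise---that the $\cP_{s,\lambda}$ are \emph{projective} because $\cP_{s,\lambda}$ corepresents $M \mapsto \Hom_{\fS_n}(S^\lambda, M^{\fS(n)})$---is false. Corepresenting a functor gives projectivity only if that functor is exact, and $(-)^{\fS(n)}$ is merely left exact. Already for $s=0$: the category $\Mod_{R/\fh_0}$ is $\Rep(\fS)$, and $\cP_{0,0}=k$ is not projective there, since the surjection $\VV_1 \to k$ dies under $(-)^\fS$ ($\VV_1^{\fS}=0$ while $k^\fS=k$). In characteristic~$0$ the $\VV_n$ (hence the $\cP_{s,n}$) are \emph{injective}, not projective, and the category has no nonzero finite-length projectives. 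Everything downstream of this claim needs to be reworked.

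This breaks part (b) specifically at the last step. Even granting the vanishing $\Ext^i_{\cA}(T,\cP_{s,\lambda})=0$ for torsion $T$ (which the paper does prove, Theorem~\ref{thm:vanish2}), what this yields is that $\cP_{s,\lambda}$ is derived saturated, hence $\Ext^i_{\ol\cA}(T(M),\ol\cP_{s,\lambda}) \cong \Ext^i_{\cA}(M,\cP_{s,\lambda})$. To finish you would still need $\Ext^1_{\cA}(M,\cP_{s,\lambda})=0$ for \emph{all} $M$, i.e.\ injectivity of $\cP_{s,\lambda}$ in $\cA$---precisely the hard point. The paper gets around this entirely differently: it builds a fully faithful exact functor $\Psi_s\colon\Rep(\fS)\to\Mod^{\gen}_{R/\fh_s}$ with a right adjoint $\Psi_s^*$, proves $\rR\Psi_s^*(\ol\cP_{s,n})=\VV_n$, and transfers the known injectivity of $\VV_n$ in $\Rep(\fS)$ across the derived adjunction. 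That bridge to $\Rep(\fS)$ is the real content, and it is absent from your plan.

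In part (c), the reduction ``it then suffices to place each $\ol\cP_{s,\lambda}$ in $\cT$'' does not follow from (a) and (b): the socle of an object in a thick subcategory need not lie in that subcategory, and indeed Remark~\ref{rmk:P-dont-gen} in the paper shows the $\ol\cP_{s,\lambda}$'s do \emph{not} generate $\rD^b_{\rm fg}(\Mod^{\gen}_{R/\fh_s})$ when $s\ge 1$, so no purely formal implication in that direction can exist. (The $\ol\cP$'s \emph{are} in $\cT$, via Proposition~\ref{prop:PQfilt}, but this gives no traction.) The correct devissage runs through the $\ol\cQ$'s: via the equivalence $\Psi_s\colon\Rep(\fS)\xrightarrow{\sim}\Mod^{\gen,\cQ}_{R/\fh_s}$ and finite semi-induced resolutions in $\Rep(\fS)$ (Proposition~\ref{prop:sym-semi-ind-res}), every finite-length object of $\Mod^{\gen,\cQ}_{R/\fh_s}$ has a finite resolution by $\cQ$-filtered objects, and every finite-length object of $\Mod^{\gen}_{R/\fh_s}$ is filtered by such.
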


We refer to \S \ref{ss:dergen} for the notation and terminology used in (c). We note that, for $s \ge 1$, there are objects of $\Mod_{R/\fh_s}^{\gen}$ with infinite injective dimension, and so the $\ol{\cP}_{s,\lambda}$'s cannot generate $\rD^b_{\rm fg}(\Mod_{R/\fh_s}^{\gen})$; see Remark~\ref{rmk:P-dont-gen}. Also, while analogs of (a) and (c) remain vaild in positive characteristic, (b) is specific to characteristic~0.

We next turn to the Grothendieck group. Let $K$ be the Grothendieck group\footnote{Whenever we speak of Grothendieck groups, we will always mean the Grothendieck group of the category of finitely generated objects.} of the $\Mod_{R/\fh_s}^{\gen}$, and let $\Lambda$ be the Grothendieck group of $\Rep(\fS)$, which is isomorphic to the ring of symmetric functions. Since $\Mod_{R/\fh_s}^{\gen}$ is a module category for $\Rep(\fS)$, it follows that $K$ is naturally a $\Lambda$-module.

\begin{theorem}
We have the following:
\begin{enumerate}
\item The classes $[\ol{\cQ}_{s,\lambda}]$ form a $\bZ$-basis of $K$.
\item The classes $[\ol{\cP}_{s,\lambda}]$ form a $\bQ$-basis of $\bQ \otimes K$.
\item As a $(\bQ \otimes \Lambda)$-module, $\bQ \otimes K$ is free of rank one with basis $[\ol{\cP}_{s,0}]$.
\end{enumerate}
\end{theorem}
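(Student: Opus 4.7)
By the previous theorems, $\Mod_{R/\fh_s}^{\gen}$ is locally of finite length with simple objects $L_\lambda := \soc(\ol{\cP}_{s,\lambda})$ indexed by partitions; hence $K$ is free abelian on $\{[L_\lambda]\}_\lambda$. Each $\ol{\cP}_{s,\lambda}$ and $\ol{\cQ}_{s,\lambda}$ is finitely generated (as an $S^\lambda$-multiplicity space it is a direct summand of the finitely generated $\ol{\cP}_{s,n}$ or $\ol{\cQ}_{s,n}$), hence of finite length, so its class lies in $K$. My strategy is to establish (a) and (b) by triangularity of the transition matrices into the simple basis, and then to deduce (c) formally from (b) via the module-category structure.

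\textbf{Parts (a) and (b).} Spanning in (a) is immediate from Theorem~2(c): since $\{\ol{\cQ}_{s,\lambda}\}$ triangulatedly generates $\rD^b_{\rm fg}(\Mod_{R/\fh_s}^{\gen})$, the classes $\{[\ol{\cQ}_{s,\lambda}]\}$ generate $K_0(\rD^b_{\rm fg}) = K$ as an abelian group. For $\bZ$-independence, I would produce a partial order $\preceq$ on partitions (extracted from a composition-factor or filtration analysis of $\ol{\cQ}_{s,\lambda}$) such that
\[
[\ol{\cQ}_{s,\lambda}] = [L_\lambda] + \sum_{\mu \succ \lambda} c_{\lambda\mu}[L_\mu];
\]
this unit upper-triangular expansion is automatically $\bZ$-invertible. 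For (b), expand $[\ol{\cP}_{s,\lambda}] = \sum_\mu [\ol{\cP}_{s,\lambda}:L_\mu][L_\mu]$, noting that the $L_\lambda$-multiplicity is $1$ since $L_\lambda = \soc(\ol{\cP}_{s,\lambda})$, and establish an analogous triangularity (possibly in the reverse direction) yielding a matrix with $1$'s on the diagonal. Over $\bQ$ this matrix is automatically invertible; over $\bZ$ it may fail because of nonunit off-diagonal entries, which is precisely why only a $\bQ$-basis is claimed here.

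\textbf{Part (c) and main obstacle.} The $\Rep(\fS)$-action on $\Mod_{R/\fh_s}$ is $(V,M) \mapsto V \otimes_k M$ with diagonal $\fS$-action, so the assignment $\Phi \colon \alpha \mapsto \alpha \cdot [\ol{\cP}_{s,0}]$ is $(\bQ \otimes \Lambda)$-linear. Writing $\cP_{s,n} \cong R/\fh_s \otimes_k T_n$ for $T_n$ the permutation $\fS$-representation on ordered $n$-tuples of distinct positive integers, and decomposing $T_n = \bigoplus_{\mu \vdash n} S^\mu \otimes_k T_\mu$ under the position-permuting $\fS_n$-action (which commutes with the $\fS$-action), one obtains $\cP_{s,\lambda} \cong R/\fh_s \otimes_k T_\lambda$, whence $\Phi([T_\lambda]) = [\ol{\cP}_{s,\lambda}]$. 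Since the classes $\{[T_\lambda]\}$ form a $\bZ$-basis of $\Lambda \cong K_0(\Rep(\fS))$ (the standard Schur-function identification for smooth $\fS_\infty$-representations), $\Phi$ sends a $\bQ$-basis of $\bQ \otimes \Lambda$ to the $\bQ$-basis $\{[\ol{\cP}_{s,\lambda}]\}$ from (b), and is therefore an isomorphism. The main obstacle is the two triangularity claims in (a) and (b), which require careful composition-factor analysis of the standard modules $\ol{\cQ}_{s,\lambda}$ and $\ol{\cP}_{s,\lambda}$ together with the identification of the right partial order on partitions; once these are in hand, part (c) is essentially formal.
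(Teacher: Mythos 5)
Your part (c) argument is essentially the one the paper uses, and your overall outline for (a)---spanning from derived generation plus a unit upper-triangular expansion of $[\ol{\cQ}_{s,\lambda}]$ in the simple basis, ordered by $|\lambda|$---is also the right shape, though you defer the actual triangularity claim to a ``composition-factor or filtration analysis'' that you don't carry out. The paper obtains that triangularity not by a direct analysis but by reducing to $\FI$-modules: the equivalence $\Psi_s \colon \Rep(\fS) \to \Mod_{R/\fh_s}^{\gen,\cQ}$ (Theorem~\ref{thm:Psi}), together with the fact that every simple of $\Mod_{R/\fh_s}^{\gen}$ lives in $\Mod_{R/\fh_s}^{\gen,\cQ}$, identifies $K$ with $\bigoplus_n \rK(\Rep(\fS_n))$ via $[V] \mapsto [\ol{\cQ}_s(V)]$, and then the triangularity is inherited from the known $\FI$-module result (Proposition~\ref{prop:gen-FI-K}, citing the periodicity theorem). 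That reduction is the key idea you are missing; trying to redo the filtration analysis from scratch is much harder.

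The more serious problem is in part (b). Your claim that $[\ol{\cP}_{s,\lambda}:L_\lambda]=1$ ``since $L_\lambda = \soc(\ol{\cP}_{s,\lambda})$'' is false: a simple socle does not bound the multiplicity of that simple as a composition factor. Already for $\lambda=(1)$ one has, by Proposition~\ref{prop:PQfilt}, $[\ol{\cP}_{s,(1)}]=(s+1)[\ol{\cQ}_{s,(1)}]$, so $[\ol{\cP}_{s,(1)}:L_{(1)}]=s+1$. Moreover the change-of-basis matrix from the $\{[\ol{\cP}_{s,\lambda}]\}$ to the $\{[L_\mu]\}$ (or to the $\{[\ol{\cQ}_{s,\mu}]\}$) is \emph{not} unit triangular within a fixed degree $n=|\lambda|$; the degree-$n$ block is precisely the matrix of multiplication by $[S_n]$ on $\rK(\Rep(\fS_n))$, where $S_n = k[x_1,\ldots,x_n]/(x_i^{s+1})$, and this is in general a dense non-triangular matrix (the paper even flags the absence of a triangular $\cP$--$\cQ$ relation in its discussion of why no highest-weight structure exists). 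The invertibility of this block over $\bQ$ is a genuine input: the paper proves it (Lemma~\ref{lem:Groth-P-2}) by observing that $S_n$ is a permutation representation fixing $1$, so its (Brauer) character is nowhere vanishing, hence $[S_n]$ is a unit in $\bQ \otimes \rK(\Rep(\fS_n))$. Combined with the identity $[\ol{\cP}_s(V)] = [\ol{\cQ}_s(S_n \otimes V)]$ (Lemma~\ref{lem:Groth-P-1}), this gives (b) as a degree-by-degree change of basis by an invertible (but non-triangular) matrix. Without this character argument, your approach to (b) does not close.

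Once (b) is in hand, your part (c) is correct and matches the paper: the $\Lambda$-linearity of $[V] \mapsto [V]\cdot[\ol{\cP}_{s,0}]$ sends the basis $\{[\PP(V)]\}$ of $\bQ\otimes\Lambda$ to the basis $\{[\ol{\cP}_s(V)]\}$ of $\bQ\otimes K$.
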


We note that the $\cP$'s do not span $K$ integrally, and $K$ is not free of rank one integrally. We do not have a good description of the $\Lambda$-module structure on $K$ at the integral level.

We also give a quite explicit description of the category $\Mod_{R/\fh_s}^{\gen}$ in \S \ref{ss:char0}, though this is specific to characteristic~0.

\subsection{General modules}

While we do not study general $R$-modules in this paper, we do prove some results about (non-generic) $R/\fh_s$-modules. We mention a few.

\begin{theorem}
The Krull--Gabriel dimension of the category $\Mod_{R/\fh_s}$ is $s$.
\end{theorem}

\begin{theorem}
The derived category $\rD^b_{\rm fg}(\Mod_{R/\fh_s})$ is generated by the objects $\cQ_{r,\lambda}$ for $0 \le r \le s$ and $\lambda$ a partition.
\end{theorem}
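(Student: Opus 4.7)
The plan is to argue by induction on $s$, using the localization sequence attached to the Serre subcategory $\Mod_{R/\fh_s}^{\tors}$. For the base case $s = 0$, we have $R/\fh_0 = k$, and $\Mod_{R/\fh_0}$ identifies with the category of smooth representations of $\fS$; the objects $\cQ_{0,\lambda}$ are the standard building blocks (Specht-like induced representations) and are well known to generate $\rD^b_{\rm fg}$ in any characteristic.

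For the inductive step, fix $s \ge 1$ and let $\cT \subseteq \rD^b_{\rm fg}(\Mod_{R/\fh_s})$ denote the triangulated subcategory generated by the $\cQ_{r,\lambda}$ for $0 \le r \le s$ and all partitions $\lambda$. I would verify two conditions: (i) the image of $\cT$ under the localization $\Mod_{R/\fh_s} \to \Mod_{R/\fh_s}^{\gen}$ equals all of $\rD^b_{\rm fg}(\Mod_{R/\fh_s}^{\gen})$, and (ii) $\cT$ contains every object of $\rD^b_{\rm fg}(\Mod_{R/\fh_s})$ with torsion cohomology. Condition (i) is immediate from the earlier theorem on the generic category, since $\cQ_{s,\lambda}$ maps to $\ol{\cQ}_{s,\lambda}$ and these generate the generic derived category. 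Combining (i) and (ii) via a standard localization argument then yields $\cT = \rD^b_{\rm fg}(\Mod_{R/\fh_s})$: for any $X$, lift a presentation of its image in the generic category by the $\ol{\cQ}_{s,\lambda}$'s to obtain $Y \in \cT$ with a morphism $Y \to X$ whose cone has torsion cohomology, then apply (ii) to the cone.

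The crucial claim needed for condition (ii) is that every finitely generated torsion $R/\fh_s$-module admits a finite filtration whose successive quotients are modules over $R/\fh_r$ for some $r < s$. Granting this, the derived category of finitely generated torsion modules is generated by the essential images of $\rD^b_{\rm fg}(\Mod_{R/\fh_r})$ for $r < s$, each of which by the inductive hypothesis is generated by $\cQ_{r',\lambda}$ with $r' \le r \le s-1$, hence lies in $\cT$.

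The hard part will be establishing this filtration claim. It amounts to an ideal-theoretic description of the torsion subcategory: one must show that any torsion element is annihilated by an $\fS$-ideal containing some $\fh_r$ with $r < s$, after which the $\fS$-noetherian property of $R$ yields a finite filtration of any finitely generated torsion module. The required description of the torsion subcategory should follow from the classification of $\fS$-ideals above $\fh_s$ established in \cite{sideals}, together with a careful analysis of which $\fS$-primes can contribute to the support of a torsion module.
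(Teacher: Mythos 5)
Your overall strategy is essentially the one the paper follows: filter $\Mod_{R/\fh_s}$ along the chain of torsion subcategories attached to $\fh_0 \subset \fh_1 \subset \cdots \subset \fh_s$, using the generic-category result at each level. The paper packages this as a semi-orthogonal decomposition $\rD^b_{\rm fg}(\Mod_{R/\fh_s}) = \langle \cD_0, \ldots, \cD_s \rangle$ (Section~\ref{ss:semi-orth}), with $\cD_r \simeq \rD^b_{\rm fg}(\Mod_{R/\fh_r}^{\gen})$, whereas you set it up as an induction on $s$; these are the same decomposition done iteratively versus recursively.

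There are two places where your plan underestimates what is needed. First, the step ``lift a presentation of its image in the generic category \ldots to obtain $Y \in \cT$ with a morphism $Y \to X$ whose cone has torsion cohomology'' cannot be done by a naive lifting in a Verdier quotient. The correct mechanism is the localization triangle $\rR\Gamma(X) \to X \to \rR\Sigma(X) \to$, applying the derived section functor $\rR S$ to the statement that $T(X) \in \langle \ol{\cQ}_{s,\lambda}\rangle$ and using that the $\cQ_{s,\lambda}$ are \emph{derived saturated}, so $\rR S(\ol{\cQ}_{s,\lambda}) = \cQ_{s,\lambda}$. The derived saturation is Theorem~\ref{thm:vanish2} (via Proposition~\ref{prop:dersat}), and the existence and good behavior of $\rR S$ and the local cohomology triangle require the injectivity condition (Inj), which the paper establishes through noetherianity of the Rees algebra $\fR_{\fh_s}(R)$ and the equivariant Artin--Rees lemma (Corollary~\ref{cor:Inj}). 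This is the genuinely technical part and your proposal does not flag it. Second, what you identify as ``the hard part'' --- filtering finitely generated torsion modules by modules killed by $\fh_r$, $r < s$ --- is comparatively routine: Proposition~\ref{prop:hs-tors-ann} shows any finitely generated torsion $R/\fh_s$-module is killed by a power of $\fh_{s-1}$ (a consequence of the elementary classification of $\fS$-primes above $\fh_s$ in Proposition~\ref{prop:h-prime}), after which the $\fh_{s-1}$-adic filtration has the desired graded pieces. So the outline is right, but the weight of difficulty is misplaced, and the lifting step needs the full saturation/local-cohomology machinery to be made precise.
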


\begin{theorem}
The Grothendieck group $\bQ \otimes \rK(\Mod_{R/\fh_s})$ is a free $(\bQ \otimes \Lambda)$-module of rank $s+1$ with basis $[R/\fh_r]$ for $0 \le r \le s$.
\end{theorem}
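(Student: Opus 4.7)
The plan is to induct on $s$. The base case $s=0$ is immediate: $R/\fh_0 = k$, so $\Mod_{R/\fh_0}$ is the category of smooth $\fS$-representations, and its Grothendieck group is $\Lambda$; thus $\bQ \otimes \rK$ is $\bQ \otimes \Lambda$-free of rank one on $[k] = [R/\fh_0]$. For the inductive step, I would consider the torsion subcategory $\cT \subset \Mod_{R/\fh_s}$ (whose Serre quotient is $\Mod_{R/\fh_s}^{\gen}$) and work with the Quillen localization sequence
\[
\bQ \otimes \rK(\cT) \xrightarrow{\iota} \bQ \otimes \rK(\Mod_{R/\fh_s}) \xrightarrow{\pi} \bQ \otimes \rK(\Mod_{R/\fh_s}^{\gen}) \to 0.
\]

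By $\fS$-noetherianity, any finitely generated torsion module is annihilated by a power of $\fh_{s-1}$, and so admits a finite filtration with $R/\fh_{s-1}$-module subquotients; Quillen's d\'evissage theorem therefore gives $\rK(\cT) \cong \rK(\Mod_{R/\fh_{s-1}})$. The inductive hypothesis identifies the left-hand term with a $\bQ \otimes \Lambda$-free module of rank $s$ on $\{[R/\fh_r]\}_{r<s}$, while the earlier computation of the generic Grothendieck group identifies the right-hand term as $\bQ \otimes \Lambda$-free of rank one on $[\ol{R/\fh_s}] = [\ol{\cP}_{s,0}]$. A $\bQ \otimes \Lambda$-linear section $\sigma: [\ol{R/\fh_s}] \mapsto [R/\fh_s]$ of $\pi$ is well-defined since the source of $\sigma$ is free of rank one. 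This splits the sequence to give
\[
\bQ \otimes \rK(\Mod_{R/\fh_s}) = \im(\iota) \oplus \bQ \otimes \Lambda \cdot [R/\fh_s],
\]
so $\{[R/\fh_r]\}_{0 \le r \le s}$ spans. (Spanning also follows directly from the generator theorem for the derived category: each $\cQ_{r,\lambda}$ lies in $\Mod_{R/\fh_r}$, hence its class is a $\bQ \otimes \Lambda$-combination of $\{[R/\fh_t]\}_{t \le r}$ by induction.)

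For linear independence, suppose $\sum_{r=0}^s f_r [R/\fh_r] = 0$. Applying $\pi$ forces $f_s = 0$; the residual relation $\sum_{r<s} f_r [R/\fh_r] = 0$ must then descend to a relation in $\bQ \otimes \rK(\Mod_{R/\fh_{s-1}})$, whence the induction concludes $f_r = 0$. This step amounts to showing $\iota$ is injective, and is the principal technical hurdle: from the extended Quillen localization sequence, the obstruction lives in the image of $\bQ \otimes \rK_1(\Mod_{R/\fh_s}^{\gen})$ in $\bQ \otimes \rK_0(\cT)$, and the obvious categorical candidate for a retraction (the $\fh_{s-1}$-torsion functor $M \mapsto M[\fh_{s-1}]$) is only left exact. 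My plan to bypass this is to construct, for each $0 \le r \le s$, a $\bQ \otimes \Lambda$-linear ``generic character'' $\chi_r: \bQ \otimes \rK(\Mod_{R/\fh_s}) \to \bQ \otimes \Lambda$ such that the matrix $(\chi_r([R/\fh_{r'}]))_{r,r'}$ is upper triangular with units on the diagonal. The top character $\chi_s$ is just $\pi$; the lower $\chi_r$ are obtained by iterating the localization/d\'evissage procedure along the Krull--Gabriel filtration of length $s+1$ (supplied by the Krull--Gabriel dimension theorem), using at each layer the rank-one description of $\bQ \otimes \rK(\Mod_{R/\fh_r}^{\gen})$ to lift $[\ol{R/\fh_r}]$ to $[R/\fh_r]$ and produce the character.
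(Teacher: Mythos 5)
Your overall strategy—filter $\Mod_{R/\fh_s}$ by the subcategories of modules locally annihilated by a power of $\fh_r$, apply localization at each stage, use d\'evissage to identify the torsion layer with $\rK(\Mod_{R/\fh_{s-1}})$, and induct—is exactly the skeleton of the paper's proof. You correctly locate the single non-trivial point: the map $\iota \colon \rK(\cT) \to \rK(\Mod_{R/\fh_s})$ must be injective, and the naive torsion functor $M \mapsto M[\fh_{s-1}]$ cannot serve as a retraction because it is only left exact. But your proposed workaround via ``generic characters'' does not actually bypass this hurdle; when you try to write down $\chi_r$ for $r < s$, you need a functor $\Mod_{R/\fh_s} \to \cC_{\le r}$ that descends to a map on Grothendieck groups, and you are back to the retraction problem.

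The ingredient you are missing is the finiteness theorem for derived local cohomology (Theorem~\ref{thm:loccoh} in the paper): for $M \in \rD^b_{\rm fg}(\Mod_{R/\fh_s})$, both $\rR\Gamma_{\le r}(M)$ and $\rR\Sigma_{>r}(M)$ again lie in $\rD^b_{\rm fg}$. Given this, the \emph{derived} torsion functor $\rR\Gamma_{\le r}$---not the left-exact $\Gamma_{\le r}$ you rejected---induces a well-defined $\Lambda$-linear map $\rK(\Mod_{R/\fh_s}) \to \rK(\cC_{\le r})$ which retracts the inclusion, and injectivity of $\iota$ follows. This theorem is genuinely substantive: its proof relies on Proposition~\ref{prop:vanish3} (the $\Ext$-vanishing of torsion modules against $\cQ_s$-filtered modules over the intermediate rings $R/\fa$), which in turn uses Proposition~\ref{prop:sym-semi-ind-res} to build finite semi-induced resolutions in the generic layer. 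Your appeal to ``the extended Quillen localization sequence'' correctly names the obstruction ($\rK_1$ of the quotient mapping to $\rK_0$ of the subcategory) but gives no mechanism for killing it; the paper never invokes higher $K$-theory, instead constructing the explicit derived retraction. To complete your proof along these lines, you would need to state and prove (or cite) the $\rD^b_{\rm fg}$-preservation of $\rR\Gamma$, at which point your ``generic characters'' become the components of the resulting direct-sum decomposition $\bQ \otimes K \cong \bigoplus_{r=0}^s \bQ \otimes \rK(\Mod_{R/\fh_r}^{\gen})$, and the upper-triangularity you want is automatic.
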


An important technical point in the above theorems is that the section functor $S \colon \Mod_{R/\fh_s}^{\gen} \to \Mod_{R/\fh_s}$ has nice finiteness properties; see \S \ref{ss:semi-orth}.

\subsection{The key idea}

Let $\FI$ be the category of finite sets and injections. An \defn{$\FI$-module} is a functor from the category $\FI$ to the category of vector spaces. These objects were introduced by Church, Ellenberg, and Farb \cite{fimodule}, and studied in depth (in characteristic~0) in \cite{SSglI}. The most important idea in this paper is to relate $R/\fh_s$-modules to $\FI$-modules. Since $\FI$-modules are well-understood, this yields important information about $R/\fh_s$-modules.

Here is the basic idea. Suppose that $M$ is an $\FI$-module. Consider the vector space
\begin{displaymath}
N = \bigoplus_S M(S),
\end{displaymath}
where the sum is taken over all finite subsets $S$ of $\{1,2,\ldots\}$. The space $N$ clearly has an action of $\fS$. Moreover, we can give $N$ the structure of an $R/\fh_1$-module, in the following manner. Let $m \in M(S)$ be given. If $i \not\in S$ then we define $x_i m$ to be $f_*(m)$, where $f \colon S \to S \cup \{i\}$ is the inclusion and $f_* \colon M(S) \to M(S \cup \{i\})$ is the given map. If $i \in S$ then we define $x_i m=0$. It is not difficult to verify that $N$ is indeed an $R/\fh_1$-module.

Using an extension of this idea, we define a functor
\begin{displaymath}
\Phi_s \colon \Mod_{\FI} \to \Mod_{R/\fh_s}
\end{displaymath}
for any $s \ge 1$; see \S \ref{s:Phi}. This functor takes the basic projective $\FI$-modules to the $\cQ_s$-modules, and the basic torsion $\FI$-modules to the $\cQ_{s-1}$-modules. The functor $\Phi_s$ induces a functor on generic categories
\begin{displaymath}
\Psi_s \colon \Rep(\fS) \to \Mod^{\gen}_{R/\fh_s},
\end{displaymath}
where here we have identified $\Mod_{\FI}^{\gen}$ with $\Rep(\fS)$. This functor is studied in depth in \S \ref{s:Psi}, and is the primary tool used in our analysis of $\Mod_{R/\fh_s}^{\gen}$.

\subsection{Further comments}

We make a few additional comments on this work.
\begin{enumerate}
\item There have been several other generic categories that have been studied in the setting of equivariant commuative algebra; see, e.g., \cite{Ganapathy, SSglI, SSglII, symc1sp, sym2noeth, periplectic, FInmod}. This paper is perhaps the first case where detailed structural information about a generic category has been obtained without leveraging geometry.
\item In classical commutative algebra, higher Tor groups are torsion. This has remained true in equivariant commutative algebra, in the cases investigated so far (that we are aware of). However, we will see (\S \ref{ss:gentor}) that it is not true for $R/\fh_s$. Perhaps this is related to lack of geometry.
\item One might hope that $\Mod_{R/\fh_s}^{\gen}$ is a (semi-infinite) highest weight category, at least in characteristic~0, with the $\ol{\cQ}_{s,\lambda}$ being the co-standard modules. While it is true that there is an upper triangular change of basis between the $\ol{\cQ}_{s,\lambda}$'s and the simple modules, there is not an upper triangular relation between the injective modules $\ol{\cP}_{s,\lambda}$ and the $\cQ_{s,\lambda}$'s. For example, $\ol{\cP}_{s,(1)}$ has a filtration of length $s+1$ where each graded piece is $\ol{\cQ}_{s,(1)}$ (Proposition~\ref{prop:PQfilt}). Thus we do not have a highest weight structure.
\end{enumerate}

\subsection{Notation}

We list the most important notation:
\begin{description}[align=right,labelwidth=2cm,leftmargin=!]
\item[ $k$ ] the coefficient field
\item[ $S^{\lambda}$ ] the Specht module for $\fS_n$, where $\lambda$ is a partition of $n$
\item[ $\bP_n$ ] the $n$th principal projective $\FI$-module (\S \ref{ss:FI-proj})
\item[ $\fS$ ] the infinite symmetric group (\S \ref{ss:smooth-rep})
\item[ $\VV_n$ ] the representation of $\fS$ with basis $e_{i_1,\ldots,i_n}$, with $i_1, \ldots, i_n$ distinct (\S \ref{ss:Vrep})
\item[ $R$ ] the polynomial ring $k[x_i]_{i \ge 1}$ (\S \ref{ss:poly-ring})
\item[ $\fh_s$ ] the ideal $(x_i^{s+1})_{i \ge 1}$ of $R$
\item[ $\cP_{s,n}$ ] the module $R/\fh_s \otimes \VV_n$ (\S \ref{ss:Pmod})
\item[ $\cQ_{s,n}$ ] a quotient of the module $\cP_{s,n}$ (\S \ref{ss:Qmod})
\end{description}

\section{\texorpdfstring{$\FI$}{FI}-modules}\label{s:fimod}

In this section, we review the theory of $\FI$-modules. These modules will be of use to us in two ways. First, there is a close relationship between $\FI$-modules and the representation theory of $\fS$, and we will deduce some results about $\fS$ from known results about $\FI$-modules. And second, in \S \ref{s:Phi}, we construct a functor from the category of $\FI$-modules to the module category for $R/\fh_s$. This functor will play a key role in our analysis of $R/\fh_s$-modules. We note that an expository treatment of $\FI$-modules, in characteristic~0, can be found in the notes \cite{msri}.

\subsection{Basic definitions}

Let $\FI$ be the category of finite sets and injective morphisms. An \defn{$\FI$-module} is a functor from $\FI$ to the category of $k$-vector spaces. A \defn{map} of $\FI$-modules is a natural transformation of functors. We let $\Mod_{\FI}$ denote the category of $\FI$-modules. It is a $k$-linear Grothendieck abelian category.

Suppose $M$ is an $\FI$-module. We let $M_n$ denote the value of $M$ on the finite set $[n]$, which is a representation of the symmetric group $\fS_n$. The standard injection $[n] \to [n+1]$ induces a linear map $M_n \to M_{n+1}$, which is $\fS_n$-equivariant with respect to the standard inclusion $\fS_n \subset \fS_{n+1}$. This data completely describes $M$; that is, giving an $\FI$-module is equivalent to giving a sequence $(M_n)_{n \ge 0}$, where $M_n$ is a representation of $\fS_n$, equipped with transition maps $M_n \to M_{n+1}$ satisfying certain axioms; see \cite[Exercise~2.6]{msri}.

\subsection{Noetherianity}

Given an $\FI$-module $M$ and a collection $S$ of elements in various $M_n$'s, one can consider the $\FI$-submodule of $M$ \defn{generated} by $S$. We say that $M$ is \defn{finitely generated} if it is generated by a finite set of elements. One of the most important results about $\FI$-modules is the noetherianity property:

\begin{proposition}
An $\FI$-submodule of a finitely generated $\FI$-module is finitely generated.
\end{proposition}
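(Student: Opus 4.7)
The plan is to mimic a Gröbner-basis proof of the Hilbert basis theorem, replacing the well-order on monomials in a polynomial ring by a Higman-type well-partial-order on injections between finite sets.

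\textbf{Step 1: Reduction to principal projectives.} Since any finitely generated $\FI$-module $M$ is a quotient of a finite direct sum $P=\bP_{n_1}\oplus\cdots\oplus\bP_{n_r}$, and submodules of $M$ lift to submodules of $P$ containing the kernel, it suffices to prove that every submodule of $P$ is finitely generated. Using the short exact sequence $0\to\bP_{n_1}\to P\to P/\bP_{n_1}\to 0$ and a standard diagram chase (a submodule $N\subset P$ fits in $0\to N\cap\bP_{n_1}\to N\to\mathrm{image}(N)\to 0$), an induction on $r$ reduces the problem to showing that each principal projective $\bP_n$ is Noetherian in its own right.

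\textbf{Step 2: A well-partial-order on monomials.} By Yoneda, $\bP_n(S)$ has a $k$-basis given by $\Hom_\FI([n],S)$. Fixing $S=[N]$ for each $N$, the basis elements of $\bigoplus_N\bP_n([N])$ are indexed by the set $\cI_n=\bigsqcup_N\Hom_\FI([n],[N])$. Declare $\sigma\colon[n]\to[N]$ to \emph{divide} $\tau\colon[n]\to[N']$ if there is an order-preserving injection $\iota\colon[N]\hookrightarrow[N']$ with $\iota\circ\sigma=\tau$; this captures precisely when $\tau$ lies in the $\FI$-orbit of $\sigma$ inside $\bP_n$. The combinatorial heart of the argument is the assertion that this divisibility relation is a well-partial-order on $\cI_n$: any infinite sequence in $\cI_n$ contains an infinite chain. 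This is a Higman-type result for sequences of distinct positive integers and is the main obstacle of the proof.

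\textbf{Step 3: Leading terms.} Equip $\cI_n$ with a compatible total well-order (first by $N$, then lexicographically on the image), and for a nonzero element $f\in\bigoplus_N\bP_n([N])$ let $\mathrm{in}(f)\in\cI_n$ be the maximum basis element appearing in $f$. For any $\FI$-submodule $M\subset\bP_n$, let $L(M)\subset\cI_n$ be the set of leading terms of nonzero elements of $M$. Because $\FI$ acts by sending a basis element $\sigma$ to $\iota\circ\sigma$ for various order-preserving $\iota$, the set $L(M)$ is upward-closed under the divisibility relation of Step~2.

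\textbf{Step 4: Finite generation.} By the well-partial-order property, every upward-closed subset of $\cI_n$ has finitely many minimal elements. Pick lifts $f_1,\ldots,f_m\in M$ of the minimal elements of $L(M)$. Given any $g\in M$, its leading term is divisible by some $\mathrm{in}(f_i)$, and one subtracts an appropriate $\FI$-translate of $f_i$ from $g$ to reduce $\mathrm{in}(g)$; by the well-order on $\cI_n$ this process terminates, showing that $f_1,\ldots,f_m$ generate $M$. Combining Steps 1--4 yields the proposition.

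The substantive difficulty lies entirely in Step~2. Once the Higman-type wpo on $\cI_n$ is established, Steps~3 and~4 are the standard Gröbner-termination arguments, and Step~1 is a formal reduction.
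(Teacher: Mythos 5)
The paper does not actually prove this proposition: it simply cites \cite{deltamod, fimodule} for characteristic~$0$ and \cite{fimodule2} for the general case, so any proof you give is necessarily ``different'' from the paper. That said, your argument is correct and is essentially the Gr\"obner-basis approach of \cite{fimodule2} (and of Sam--Snowden's later general theory of Gr\"obner categories). Two small points. First, in Step~2 the parenthetical claim that divisibility ``captures precisely when $\tau$ lies in the $\FI$-orbit of $\sigma$'' is false: the $\FI$-orbit is generated by arbitrary injections, whereas your divisibility only uses order-preserving ones (e.g.\ for $n=2$, the transposition $[2]\to[2]$ is in the $\FI$-orbit of $\id_{[2]}$ but is not divisible by it). This does not affect the argument, since Steps~3--4 only use that order-preserving injections act on $M$ and preserve the total order; you should simply drop that parenthetical. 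Second, the well-partial-order you need is actually weaker than a Higman-type result: recording each $\sigma\colon[n]\to[N]$ by its pattern $\pi\in\fS_n$ (the permutation sorting its values) together with the $n+1$ gap sizes gives a bijection $\cI_n\cong\fS_n\times\bN^{n+1}$, under which your divisibility becomes ``same pattern and coordinatewise $\le$ on gaps''; this is a wpo by Dickson's lemma, since $\fS_n$ is finite. You should also clarify that ``lexicographically on the image'' means on the ordered tuple $(\sigma(1),\ldots,\sigma(n))$, not the underlying set, so that the order is genuinely total. Your approach is more elementary and more general than the characteristic-$0$ arguments of \cite{deltamod, fimodule}, which exploit semisimplicity of $k[\fS_n]$; it matches the route the paper cites for arbitrary coefficients.
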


This theorem was proved in \cite{deltamod, fimodule} in characteristic~0, and \cite{fimodule2} in general.

\subsection{Torsion modules}

Let $M$ be an $\FI$-module and let $x \in M_n$. We say that $x$ is \defn{torsion} if there is some injection $i \colon [n] \to [m]$ such that $i_*(x)=0$, where $i_*$ denotes the value of $M$ on $i$. The set of all torsion elements of $M$ forms an $\FI$-submodule of $M$ denoted $M_{\rm tors}$. We say that $M$ itself is \defn{torsion} if $M=M_{\rm tors}$. A finitely generated torsion module is supported in finitely many degrees and has finite length.

For a representation $V$ of $\fS_n$, we let $\bT(V)$ denote the $\FI$-module that is given by $V$ in degree $n$, and zero in all other degrees. We also let $\bT_n=\bT(k[\fS_n])$. The simple $\FI$-modules are exactly the $\bT(V)$ where $V$ is an irreducible $\fS_n$-representation. In particular, in characteristic~0, the simple $\FI$-modules are $\bT(S^{\lambda})$, where $S^{\lambda}$ denotes the Specht module attached to the partition $\lambda$.

\subsection{Projective modules} \label{ss:FI-proj}

Let $\bP_n$ be the $\FI$-module represented by the finite set $[n]$; that is, for a finite set $S$, we have
\begin{displaymath}
\bP_n(S) = k[\Hom_{\FI}(S, [n])].
\end{displaymath}
By Yoneda's lemma, for an arbitrary $\FI$-module $M$ we have
\begin{displaymath}
\Hom_{\FI}(\bP_n, M) = M_n.
\end{displaymath}
This shows that the functor $\Hom_{\FI}(\bP_n, -)$ is exact, and so $\bP_n$ is projective. We call $\bP_n$ the $n$th \defn{principal projective}.

The mapping property for $\bP_n$ shows that $\End_{\FI}(\bP_n)=k[\fS_n]$. In particular, in characteristic~0, we have an $\fS_n$-equivariant decomposition
\begin{displaymath}
\bP_n=\bigoplus_{\lambda \vdash n} S^{\lambda} \otimes \bP_{\lambda},
\end{displaymath}
where $\bP_{\lambda}$ is defined as the $S^{\lambda}$-multiplicity space in $\bP_n$; it is not difficult to see that the $\bP_{\lambda}$ are exactly the indecomposable projective $\FI$-modules.

One easily sees that an $\FI$-module is finitely generated if and only if it is a quotient of a finite direct sum of principal projective modules. In particular, if $M$ is a finitely generated $\FI$-module then there is a resolution $P_{\bullet} \to M$ where each $P_i$ is a finite sum of principal projectives. (This statement uses the noetherian theorem.)

\subsection{Induced modules}

The functor
\begin{displaymath}
\Mod_{\FI} \to \Rep(\fS_n), \qquad M \mapsto M_n
\end{displaymath}
has a left adjoint that we denote by $\bP(-)$. Thus if $V$ is a representation of $\fS_n$ then we have an associated $\FI$-module $\bP(V)$, characterized by the mapping property
\begin{displaymath}
\Hom_{\FI}(\bP(V), M) = \Hom_{\fS_n}(V, M_n).
\end{displaymath}
We have a natural identification
\begin{displaymath}
\bP(V) = (V \otimes \bP_n)_{\fS_n},
\end{displaymath}
where $(-)_{\fS_n}$ denotes $\fS_n$ co-invariants. We can also use invariants instead of co-invariants:

\begin{proposition} \label{prop:ind-inv}
We have a natural identification $\bP(V) = (V \otimes \bP_n)^{\fS_n}$.
\end{proposition}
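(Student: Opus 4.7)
The plan is to construct an isomorphism between the coinvariants $(V \otimes \bP_n)_{\fS_n}$, already identified with $\bP(V)$, and the invariants $(V \otimes \bP_n)^{\fS_n}$, using the \emph{norm map} $N \colon M_{\fS_n} \to M^{\fS_n}$ defined by $[m] \mapsto \sum_{\sigma \in \fS_n} \sigma m$. The norm is natural in its $k[\fS_n]$-module argument $M$, and it is an isomorphism whenever $M$ is a free $k[\fS_n]$-module. So my strategy is to show that $(V \otimes \bP_n)(S)$ is $k[\fS_n]$-free for every finite set $S$, so that the levelwise norm assembles into an $\FI$-module isomorphism.

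First, I would analyze the $\fS_n$-action on $\bP_n(S) = k[\Hom_{\FI}([n], S)]$. Under the Yoneda identification $\End_{\FI}(\bP_n) = k[\fS_n]$, this action is induced by the tautological $\fS_n$-action on the source $[n]$, and since $\fS_n$ acts freely on $[n]$, it acts freely on $\Hom_{\FI}([n], S)$. The orbits are indexed by the image $\im(f) \subset S$, i.e., by the $n$-subsets of $S$. Choosing a bijection $f_T \colon [n] \to T$ for each such $T$ identifies its orbit with $\fS_n$ itself, giving
\begin{equation*}
\bP_n(S) \cong \bigoplus_{T \in \binom{S}{n}} k[\fS_n]
\end{equation*}
as a $k[\fS_n]$-module. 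In particular, $\bP_n(S)$, and hence $V \otimes \bP_n(S)$ with its diagonal $\fS_n$-action, is a free $k[\fS_n]$-module.

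Next I would carry out the explicit computation of the norm map on $V \otimes k[\fS_n]$ with diagonal action. One checks directly that the coinvariants have basis $\{[v \otimes 1]\}_{v \in B}$ for any basis $B$ of $V$, that the invariants have basis $\{\sum_{\sigma} (\sigma v) \otimes \sigma\}_{v \in B}$, and that the norm map sends the first basis bijectively onto the second; in particular $N$ is an isomorphism. Combining this with the decomposition of the previous step shows that the norm is an isomorphism on each level $(V \otimes \bP_n)(S)$. By naturality of the norm in the underlying $k[\fS_n]$-module, these levelwise isomorphisms are compatible with the $\FI$-transition maps, and so assemble into an $\FI$-module isomorphism $(V \otimes \bP_n)_{\fS_n} \cong (V \otimes \bP_n)^{\fS_n}$.

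The only delicate point is bookkeeping with left versus right conventions in the Yoneda identification $\End_{\FI}(\bP_n) = k[\fS_n]$; once these are fixed, everything else is routine and no serious obstacle arises.
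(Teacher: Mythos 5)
Your proof is correct and takes essentially the same approach as the paper: the paper also uses the averaging/norm map $W_{\fS_n} \to W^{\fS_n}$, observes it is an isomorphism for free $k[\fS_n]$-modules, and applies this levelwise to $V \otimes \bP_n$ using freeness of $\bP_n([m])$. You simply spell out in more detail why $\bP_n(S)$ is $k[\fS_n]$-free (via the orbit decomposition of $\Hom_{\FI}([n],S)$) and verify the norm explicitly on $V \otimes k[\fS_n]$, steps the paper takes as standard.
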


\begin{proof}
For any $k[\fS_n]$-module $W$, we have a natural map
\begin{displaymath}
\avg \colon W_{\fS_n} \to W^{\fS_n}, \qquad \avg(x) = \sum_{g \in \fS_n} gx.
\end{displaymath}
This map is an isomorphism if $W$ is free, i.e., a sum of regular representations. If $M$ is an $\FI$-module with an action of $\fS_n$, then averaging defines a map of $\FI$-modules
\begin{displaymath}
\avg \colon M_{\fS_n} \to M^{\fS_n}.
\end{displaymath}
If $M_m$ is a free $k[\fS_n]$-module for all $m$ then this map is an isomorphism. We apply this with $M=V \otimes \bP_n$. Since $\bP_n([m])$ is a free $k[\fS_n]$-module for all $m$, we see that $V \otimes \bP_n([m])$ is as well, and so the result follows.
\end{proof}

We call $\FI$-modules of the form $\bP(V)$ \defn{induced}. Note that $\bP_n=\bP(k[\fS_n])$ and (in characteristic~0) $\bP_{\lambda}=\bP(S^{\lambda})$. From this, one sees that projective $\FI$-modules are (sums of) induced modules. Conversely, in characteristic~0, a representation $V$ of $\fS_n$ decomposes into Specht modules, and so the induced module $\bP(V)$ decomposes into a direct sum of $\bP_{\lambda}$'s; hence induced modules are projective. In positive characteristic, there are induced modules that are not projective.

\subsection{Semi-induced modules}

We say that an $\FI$-module $M$ is \defn{semi-induced} if there is a finite length filtration $0=F_0 \subset \cdots \subset F_n=M$ such that the graded pieces $F_i/F_{i-1}$ are induced $\FI$-modules. In characteristic~0, a semi-induced module is projective. In positive characteristic, there exist semi-induced modules that are not induced. Semi-induced modules admit the following elegant characterization:

\begin{proposition} \label{prop:FI-dersat}
A finitely generated $\FI$-module $M$ is semi-induced if and only if for every torsion $\FI$-module $T$ we have $\Ext^i_{\FI}(T, M)=0$ for all $i \ge 0$.
\end{proposition}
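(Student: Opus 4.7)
The plan is to establish each direction of the equivalence by reducing to induced modules via the long exact sequence of $\Ext$. The condition that $\Ext^i_{\FI}(T,M)=0$ for all torsion $T$ and $i \ge 0$ is equivalent to the vanishing of the derived torsion functor $R\Gamma(M)$, so the statement asserts that semi-induced is the same as derived torsion-free.

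For the forward direction, I would use induction on the length of an induced filtration $0=F_0 \subset F_1 \subset \cdots \subset F_r = M$. The long exact sequences of $\Ext^\bullet(T,-)$ associated to $0 \to F_{j-1} \to F_j \to F_j/F_{j-1} \to 0$ reduce the vanishing to the case $M = \bP(V)$ for $V$ some $\fS_n$-representation. Since every finitely generated torsion $\FI$-module has a finite composition series with simple torsion quotients $\bT(W)$, and since general torsion modules are filtered colimits of finitely generated torsion submodules (so that $\Ext$ against them reduces to the finitely generated case), I would further reduce to showing $\Ext^i_{\FI}(\bT(W), \bP(V)) = 0$ for all $i \ge 0$, all $m$, and all $\fS_m$-representations $W$. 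To compute these Ext groups, I would build an explicit projective resolution of $\bT(W)$ by principal projectives using a Koszul-type construction (this is the standard fact that simple torsion modules have finite projective dimension over $\FI$), apply $\Hom_{\FI}(-,\bP(V))$, and verify acyclicity by hand; the invariant description $\bP(V) = (V \otimes \bP_n)^{\fS_n}$ from Proposition~\ref{prop:ind-inv} is useful here for rewriting the Hom complex in terms of $\fS$-equivariant combinatorics.

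For the reverse direction, I would argue by induction on the generation degree $d$ of $M$. The vanishing of $\Hom_{\FI}(T,M)$ for all torsion $T$ gives immediately that $M$ is torsion-free, which handles the base case. For the inductive step, let $V$ be a suitable $\fS_d$-representation isolating the top generators of $M$ in degree $d$; I would construct an injection $\bP(V) \hookrightarrow M$ and verify that the cokernel $M' = M/\bP(V)$ has strictly smaller generation degree. Using the already-established forward direction applied to $\bP(V)$, the long exact sequence of $\Ext^\bullet(T,-)$ transfers the vanishing from $M$ to $M'$. By induction $M'$ is semi-induced, and pulling back an induced filtration of $M'$ to $M$ then yields one whose bottom piece is $\bP(V)$.

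The main obstacle is the construction of the induced submodule $\bP(V) \hookrightarrow M$ in the inductive step. In characteristic zero, semisimplicity of $\fS_d$ lets one split $M_d$ off a complement to the image of $M$ from lower degrees and take $V$ to be the top quotient; the resulting map $\bP(V) \to M$ is then injective because $M$ is torsion-free and the top generators have no relations forced from lower degrees. In positive characteristic this splitting fails, and the existence of the correct $V$ together with an injective lift must be extracted from the higher Ext vanishing hypothesis — this is the genuinely hard input of the proposition and is where the full strength of the condition $\Ext^i_{\FI}(T,M)=0$ for $i \ge 1$ (not just $i=0$) is used.
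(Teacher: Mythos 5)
The paper itself does not prove this proposition: it simply cites Djament's Theorem~A.9 (and notes that the characteristic~0 case is in \cite{SSglI}). So you are trying to supply a proof that the authors chose to outsource, and while your outline correctly identifies the reformulation ``derived saturated $=$ semi-induced,'' there are two substantive problems.

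In the forward direction, your reduction to $\Ext^i_{\FI}(\bT(W),\bP(V))=0$ is correct, but the parenthetical justification is false: simple torsion $\FI$-modules do \emph{not} have finite projective dimension over a field. The minimal projective resolution of $\bT_0$ over $k$ is the infinite complex $\cdots \to \bP(\mathrm{sgn}_2)\to\bP_1\to\bP_0\to\bT_0\to 0$, and indeed $\Ext^n_{\FI}(\bT_0,\bT(\mathrm{sgn}_n))\ne 0$ for every $n$. The vanishing $\Ext^i_{\FI}(\bT(W),\bP(V))=0$ is still a true (and nontrivial) fact, but establishing it by ``resolving $\bT(W)$ finitely and checking by hand'' is not an option; one needs a genuine argument, e.g.\ via the shift functor and the explicit description of $\Sigma\bP_n$, or via the local cohomology/FI-homology comparison.

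In the reverse direction you concede at the end that the construction of an injection $\bP(V)\hookrightarrow M$ ``must be extracted from the higher Ext vanishing hypothesis --- this is the genuinely hard input,'' but you never extract it. That is exactly the content of the proposition, so what you have written is a reduction to the theorem rather than a proof of it. Moreover, even in characteristic~0 the injectivity of the classifying map $\bP(V)\to M$ (for $V$ a complement to the image of lower degrees in $M_d$) is not ``because $M$ is torsion-free'' --- a torsion-free $M$ can certainly have relations in degree $>d$ involving degree-$d$ generators, and ruling those out already uses the higher $\Ext$-vanishing in an essential way. The standard route (which is what Djament and, in characteristic~0, \cite{SSglI} do) is to combine the shift theorem (Proposition~\ref{prop:shift}), the exact sequence $0\to M\to\Sigma M\to\Delta M\to 0$ for torsion-free $M$, and an inductive bootstrap that lowers the number of shifts needed, rather than peeling off a single $\bP(V)$ in one step. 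As written, your proposal has a genuine gap precisely where the hypothesis $\Ext^i_{\FI}(T,M)=0$ for $i\ge 1$ has to do its work.
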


\begin{proof}
See \cite[Theorem~A.9]{Djament}. The result also follows from \cite{SSglI} in characteristic~0.
\end{proof}

\subsection{The shift theorem}

Let $M$ be an $\FI$-module. We define the \defn{shift} of $M$ to be the $\FI$-module $\Sigma M$ defined by $(\Sigma M)(S)=M(S \amalg \{\ast\})$, or, equivalently, $(\Sigma M)_n=M_{n+1}$, equipped with the natural transition maps. The shift operation defines an exact endofunctor of $\Mod_{\FI}$. We let $\Sigma^n$ denote its $n$-fold iterate. We have the following important result about this operation, due to the first author \cite[Theorem~A]{Nagpal1}.

\begin{proposition} \label{prop:shift}
Let $M$ be a finitely generated $\FI$-module. Then there exists $n \ge 0$ such that $\Sigma^n(M)$ is semi-induced.
\end{proposition}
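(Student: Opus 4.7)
My plan is to apply Proposition~\ref{prop:FI-dersat}: the goal becomes to produce an integer $n$ such that $\Ext^i_{\FI}(T, \Sigma^n M) = 0$ for every finitely generated torsion module $T$ and every $i \ge 0$. Since every finitely generated torsion $T$ admits a finite composition series whose simple factors have the form $\bT(V)$ for some irreducible $\fS_m$-representation $V$, and Ext-vanishing is closed under extensions, I will repackage the goal as the vanishing of the right derived functors of the torsion-submodule functor $\Gamma(N) := N_{\mathrm{tors}}$: it suffices to arrange $R^i\Gamma(\Sigma^n M) = 0$ for all $i \ge 0$ and all $n$ sufficiently large.

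The first substantive step is to establish compatibility of $\Sigma$ with $R\Gamma$. The shift is already known to be exact. Sorting injections $S \amalg \{\ast\} \hookrightarrow [n]$ according to the image of $\ast$ gives a natural isomorphism $\Sigma \bP_n \cong \bP_{n-1}^{\oplus n}$, so $\Sigma$ preserves projectivity. It also visibly sends torsion to torsion, strictly shrinking support. These two facts together force natural isomorphisms $R^i\Gamma(\Sigma N) \cong \Sigma R^i\Gamma(N)$ for every $N$ and $i$.

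Granting this compatibility, the whole theorem reduces to a finiteness claim: for every finitely generated $\FI$-module $M$, each $R^i\Gamma(M)$ is finitely generated, and $R^i\Gamma(M) = 0$ for $i$ sufficiently large. A finitely generated torsion $\FI$-module is supported in bounded degrees, so the finitely many nonzero $R^i\Gamma(M)$ are supported in a common range, and choosing $n$ larger than this range kills them all simultaneously, finishing the proof.

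The main obstacle is this finiteness claim. Noetherianity alone does not suffice, since a finitely generated $\FI$-module need not have a finite projective resolution, and so $R^i\Gamma(M) = 0$ for $i \gg 0$ is not formal. The route I would follow, essentially the original argument in \cite{Nagpal1}, introduces two refined invariants of a finitely generated $\FI$-module---a stable degree and a local cohomology degree---and shows that passing from $M$ to the kernel of a surjection from a finite sum of principal projectives strictly decreases one invariant while the other remains controlled via noetherianity. A well-founded induction on the resulting pair yields simultaneously the finite generation of every $R^i\Gamma(M)$, a uniform bound on its top nonzero degree, and its vanishing in cohomological degree beyond an $M$-dependent bound.
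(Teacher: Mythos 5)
The paper does not prove Proposition~\ref{prop:shift}; it is quoted as \cite[Theorem~A]{Nagpal1} (with a remark that the characteristic-$0$ case also follows from \cite{SSglI}). So there is no internal proof to compare against, and the relevant question is whether your sketch is self-contained and sound.

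Your reduction via Proposition~\ref{prop:FI-dersat} to showing $\rR^i\Gamma(\Sigma^n M)=0$ for all $i$ and large $n$ is reasonable, but there are several problems. First, a small computational slip: sorting injections $[n]\hookrightarrow S\amalg\{\ast\}$ by whether $\ast$ is in the image gives $\Sigma\bP_n\cong\bP_n\oplus\bP_{n-1}^{\oplus n}$, not $\bP_{n-1}^{\oplus n}$ (this does not affect the conclusion that $\Sigma$ preserves projectivity). Second, and more seriously, the inference ``$\Sigma$ preserves projectives and sends torsion to torsion, hence $\rR^i\Gamma(\Sigma N)\cong\Sigma\,\rR^i\Gamma(N)$'' is a non sequitur: $\Gamma$ is left exact and $\rR^i\Gamma$ is computed with \emph{injective} resolutions, so preservation of projectives is not the relevant property. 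The compatibility you want is in fact true, but the correct justification is that $\Sigma$, being restriction along $S\mapsto S\amalg\{\ast\}$, has an exact left adjoint $L$ with $(LM)(T)=\bigoplus_{t\in T}M(T\setminus\{t\})$ (so $L\bP_n=\bP_{n+1}$); hence $\Sigma$ preserves injectives, and since it is exact and commutes with $\Gamma$ one gets $\rR^i\Gamma\circ\Sigma\cong\Sigma\circ\rR^i\Gamma$. Third, the reduction to ``each $\rR^i\Gamma(M)$ is finitely generated and all but finitely many vanish'' is where the real content sits, and you should be wary of circularity: within this paper that finiteness statement (Proposition~\ref{prop:FI-sec}, phrased for $\rR^iST$) is \emph{derived} from Corollary~\ref{cor:semi-ind-res}, which is a corollary of the very proposition you are proving. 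Your plan defers this point to ``essentially the original argument in \cite{Nagpal1}'', which is legitimate, but it means your proposal does not actually contain a proof --- it repackages the statement and then cites the same source the paper does. To make this rigorous you would need to carry out the well-founded induction on a stable-degree/local-cohomology-degree pair that you gesture at, and to do so in a way that does not presuppose the shift theorem.
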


If $M$ is any $\FI$-module then there exists a natural map $M \to \Sigma^n(M)$ whose kernel is torsion. Assuming $M$ is finitely generated, the theorem thus shows that there is a map from $M$ to a semi-induced module with torsion kernel. By iterating the above corollary, and using the fact that the cokernel of $M \to \Sigma^n(M)$ is smaller than $M$ (when measured by generation degree), we obtain another important corollary. This was first proved in general in \cite[Theorem~A]{Nagpal1}, but in characteristic~0 follows from the results of \cite{SSglI}.

\begin{corollary} \label{cor:semi-ind-res}
Let $M$ be a finitely generated $\FI$-module. Then there exists a finite length complex
\begin{displaymath}
0 \to M \to I^0 \to \cdots \to I^n \to 0
\end{displaymath}
where each $I^i$ is finitely generated and semi-induced, and all cohomology groups are torsion.
\end{corollary}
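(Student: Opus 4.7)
The plan is to induct on the generation degree $d$ of $M$, with trivial base case $M = 0$. For the inductive step, apply Proposition~\ref{prop:shift} to obtain $n \ge 0$ such that $I^0 := \Sigma^n M$ is semi-induced, and consider the iterated structure map $\iota \colon M \to I^0$. If $n = 0$ then $M$ is already semi-induced and the complex $0 \to M \to M \to 0$ suffices, so assume $n \ge 1$. By the very definition of torsion, every element of $\ker(\iota)$ is torsion, so $\ker(\iota)$ is a torsion $\FI$-module. Let $C := \mathrm{coker}(\iota)$, which is finitely generated since $I^0$ is.

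The key technical step---and the main obstacle---is showing that $C$ has generation degree at most $d-1$. Writing $\Delta N := \mathrm{coker}(N \to \Sigma N)$, the computation $\Sigma \bP_m = \bP_m \oplus m \cdot \bP_{m-1}$ (with structure map the inclusion into the first summand) gives $\Delta \bP_m = m \cdot \bP_{m-1}$; taking a surjection from a sum of principal projectives onto $N$ then shows that $\Delta N$ is generated in strictly smaller degrees than $N$. Combined with the easy observation that shift is exact and does not increase generation degree, the right exact sequence
\[ \mathrm{coker}(M \to \Sigma^{n-1} M) \to \mathrm{coker}(M \to \Sigma^n M) \to \Sigma^{n-1}(\Delta M) \to 0 \]
(obtained by applying $\Sigma^{n-1}$ to $M \to \Sigma M \to \Delta M \to 0$ and comparing to the composition $M \to \Sigma^{n-1} M \to \Sigma^n M$) lets one induct on $n$ to conclude that $\mathrm{coker}(M \to \Sigma^n M)$ has generation degree $\le d - 1$.

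Now invoke the inductive hypothesis on $C$ to obtain a complex $0 \to C \to J^0 \to \cdots \to J^m \to 0$ of finitely generated semi-induced modules with torsion cohomology. Splicing with the four-term exact sequence $0 \to \ker(\iota) \to M \to I^0 \to C \to 0$ via the composition $I^0 \twoheadrightarrow C \to J^0$ produces
\[ 0 \to M \to I^0 \to J^0 \to \cdots \to J^m \to 0. \]
A routine diagram chase identifies the cohomology at $M$ with $\ker(\iota)$, the cohomology at $I^0$ with $\ker(C \to J^0)$, and the cohomology at each $J^j$ with that of the original complex at the corresponding spot; each is torsion by construction. Once the generation-degree estimate is in hand, this splicing and cohomology check is essentially formal bookkeeping.
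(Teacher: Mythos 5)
Your proof is correct and follows exactly the route the paper sketches: iterate the shift theorem, using the fact (which the paper states without proof) that $\mathrm{coker}(M \to \Sigma^n M)$ has strictly smaller generation degree than $M$. You supply the missing computation of that degree drop via the decomposition $\Sigma \bP_m \cong \bP_m \oplus m\bP_{m-1}$ and right-exactness of $\Delta$, and the splicing and cohomology bookkeeping at the end is fine.
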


\subsection{The generic category} \label{ss:FI-gen}

Let $\Mod_{\FI}^{\tors}$ be the full subcategory of $\Mod_{\FI}$ spanned by torsion modules. This is a Serre subcategory. We define the \defn{generic category} of $\FI$-modules, denoted $\Mod_{\FI}^{\gen}$, to be the Serre quotient category $\Mod_{\FI}/\Mod_{\FI}^{\tors}$. We let
\begin{displaymath}
T \colon \Mod_{\FI} \to \Mod_{\FI}^{\gen}
\end{displaymath}
be the quotient functor. The category $\Mod_{\FI}^{\gen}$ is a Grothendieck abelian category, and the functor $T$ is exact and commutes with colimits. These claims follow from general properties of Serre quotients; see \cite[\S III]{Gabriel}. The most fundamental result about the generic category is the following:

\begin{proposition}
The category $\Mod_{\FI}^{\gen}$ is locally of finite length: if $M$ is a finitely generated $\FI$-module then $T(M)$ has finite length.
\end{proposition}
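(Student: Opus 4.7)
The strategy is to combine Corollary~\ref{cor:semi-ind-res} with the structure of induced modules to reduce the statement to the finite length of $T(\bP_n)$ for each principal projective. Given a finitely generated $M$, Corollary~\ref{cor:semi-ind-res} supplies an exact complex $0 \to M \to I^0 \to \cdots \to I^m \to 0$ whose terms are finitely generated semi-induced and whose cohomology groups are torsion. Since $T$ is exact and kills torsion modules, applying $T$ yields an exact complex in $\Mod_{\FI}^{\gen}$; in particular $T(M)$ embeds as a subobject of $T(I^0)$. As finite length passes to subobjects in a Grothendieck abelian category, the problem reduces to showing $T(I)$ has finite length for every finitely generated semi-induced $I$.

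By definition, $I$ admits a finite filtration whose subquotients are induced modules $\bP(V)$, and finite generation of $I$ forces each $V$ to be a finite-dimensional representation of some $\fS_n$. Exactness of $T$ transports this to a finite filtration on $T(I)$, further reducing to finite length of $T(\bP(V))$. Since any finite-dimensional $\fS_n$-representation $V$ is a quotient of some free module $k[\fS_n]^{\oplus d}$, and $\bP(-)$ is right exact (being a left adjoint), we obtain a surjection $\bP_n^{\oplus d} \twoheadrightarrow \bP(V)$, and by exactness of $T$ the problem reduces to finite length of $T(\bP_n)$ for each $n \geq 0$.

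The main obstacle is this last step, which I would prove by induction on $n$. The base $n = 0$ is immediate since $\bP_0$ is the constant functor $k$, hence already simple in $\Mod_{\FI}$. For the inductive step, a direct computation gives $\Sigma \bP_n \cong \bP_n \oplus \bP_{n-1}^{\oplus n}$: an injection $[n] \hookrightarrow S \sqcup \{\ast\}$ either avoids $\ast$ (contributing $\bP_n$) or sends a distinguished element of $[n]$ to $\ast$ (contributing $n$ copies of $\bP_{n-1}$). By the inductive hypothesis, $T(\bP_{n-1})$ has finite length, so the cokernel of the natural inclusion $\bP_n \hookrightarrow \Sigma \bP_n$ becomes a finite-length object after passing to $\Mod_{\FI}^{\gen}$. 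Combining this structural information with noetherianity of $\bP_n$ (which gives noetherianity of $T(\bP_n)$), the Ext-vanishing characterization of semi-induced modules (Proposition~\ref{prop:FI-dersat}), and the shift theorem (Proposition~\ref{prop:shift}) to iterate the construction, one controls both ascending and descending chains of saturated subobjects of $\bP_n$. The most delicate part is the descending chain condition, which is where the interplay between shifts, saturation, and semi-induction requires the most care.
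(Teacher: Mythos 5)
Your reduction is clean and correct down to the point of needing to show $T(\bP_n)$ has finite length: the passage through Corollary~\ref{cor:semi-ind-res}, then to semi-induced modules via filtration, then to $\bP(V)$ via a free presentation of $V$, is exactly the right sequence of moves, and the identity $\Sigma\bP_n \cong \bP_n \oplus \bP_{n-1}^{\oplus n}$ is correct. For context, the paper itself gives no argument here --- it cites \cite[Corollary 2.2.6]{SSglI} for characteristic~0 and \cite[Theorem 2]{Djament} in general --- so any honest proof would be new content relative to the paper.

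However, there is a genuine gap at the step you yourself flag: the descending chain condition for subobjects of $T(\bP_n)$ is not actually proved, and the ingredients you list do not obviously combine to give it. Noetherianity of $\bP_n$ gives ACC, but ACC plus DCC is exactly finite length, so DCC is the whole content. The fact that $T(\bP_n)$ sits inside $\ol{\Sigma}(T(\bP_n)) = T(\bP_n) \oplus T(\bP_{n-1})^{\oplus n}$ with finite-length cokernel (by the inductive hypothesis) is consistent with $T(\bP_n)$ having infinite length: given any subobject $N \subset T(\bP_n)$, applying $\ol{\Sigma}$ produces $N \subset \ol{\Sigma}(N)$ with $\ol{\Sigma}(N)/N$ of finite length, but this says nothing about how long a strictly descending chain $N_0 \supsetneq N_1 \supsetneq \cdots$ can be, since the construction applies uniformly to every $N_i$ and produces no decreasing integer invariant. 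The Ext-vanishing criterion for semi-induced modules (Proposition~\ref{prop:FI-dersat}) and the shift theorem (Proposition~\ref{prop:shift}) are likewise not the right tools: $\bP_n$ is already semi-induced, so shifting it produces nothing new, and Ext-vanishing against torsion modules controls maps into $\bP_n$, not chains of subobjects of $T(\bP_n)$. What is actually needed is a quantitative invariant that strictly decreases along proper saturated inclusions $S(N_{i+1}) \subsetneq S(N_i) \subset \bP_n$ --- for instance the eventual polynomial degree and leading coefficient of $m \mapsto \dim_k M_m$, whose existence and additivity is itself a nontrivial theorem (this is essentially the route in Djament). Without such an invariant, your inductive step does not close, and the phrase "one controls both ascending and descending chains" is where the missing argument is hiding.
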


\begin{proof}
This was first prove in characteristic~0 in \cite[Corollary~2.2.6]{SSglI}. The general case follows from \cite[Theorem~2]{Djament}.
\end{proof}

For an abelian category $\cA$, let $\Irr(\cA)$ denote the set of isomorphism classes of simple objects in $\cA$. We have the following description of this for generic $\FI$-modules. We write $\ol{\bP}(V)$ for the image of $\bP(V)$ in the generic category.

\begin{proposition} \label{prop:FI-gen-simple}
If $V$ is a simple $k[\fS_n]$-module then $\soc(\ol{\bP}(V))$ is a simple object of $\Mod_{\FI}^{\gen}$. Moreover, this construction defines a bijection
\begin{displaymath}
\coprod_{n \ge 0} \Irr(\Rep(\fS_n)) \to \Irr(\Mod_{\FI}^{\gen}).
\end{displaymath}
In particular, if $\chr(k)=0$, then the simple generic $\FI$-modules are given by $\soc(\ol{\bP}_{\lambda})$ where $\lambda$ is a partition.
\end{proposition}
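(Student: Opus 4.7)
The plan is to combine three tools from the preceding material: the local finiteness of $\Mod_{\FI}^{\gen}$, the adjunction $\Hom_{\FI}(\bP(V), -) = \Hom_{\fS_n}(V, (-)_n)$ characterizing induced modules, and the semi-induced resolution of Corollary~\ref{cor:semi-ind-res}. The central computation is that Hom groups in the generic category between induced modules agree with ordinary $\FI$-Hom groups:
\begin{displaymath}
\Hom_{\Mod_{\FI}^{\gen}}(\ol{\bP}(V), \ol{\bP}(W)) = \Hom_{\FI}(\bP(V), \bP(W)) = \Hom_{\fS_m}(V, \bP(W)_m)
\end{displaymath}
for $V \in \Rep(\fS_m)$ and $W \in \Rep(\fS_n)$. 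This follows from Proposition~\ref{prop:FI-dersat}: the vanishing $\Ext^i_{\FI}(N, \bP(W)) = 0$ for torsion $N$ places $\bP(W)$ in the essential image of the right adjoint to the quotient functor, so $T$ is fully faithful on Hom into induced targets.

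For the simplicity of $\soc(\ol{\bP}(V))$ when $V$ is simple, specializing the above to $V = W$ and applying Schur's lemma gives that $\End_{\Mod_{\FI}^{\gen}}(\ol{\bP}(V)) = \End_{\fS_n}(V)$ is a division ring. Hence $\ol{\bP}(V)$ is indecomposable, and by local finiteness an indecomposable finite length object has a simple socle.

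For the classification, surjectivity comes from Corollary~\ref{cor:semi-ind-res}: any simple $X \in \Mod_{\FI}^{\gen}$ lifts to a finitely generated $M \in \Mod_{\FI}$ admitting a map to a semi-induced module $I^0$ with torsion kernel, whence $X \hookrightarrow T(I^0)$. Refining the semi-induced filtration using composition series of its induced graded pieces $\bP(W_i)$ — valid given the exactness of $\bP$, deduced from Proposition~\ref{prop:ind-inv} together with the freeness of $\bP_n([m])$ as a $k[\fS_n]$-module — we obtain a filtration of $T(I^0)$ whose graded pieces are $\ol{\bP}(L)$ with $L$ simple, one of which must contain $X$; hence $X = \soc(\ol{\bP}(L))$. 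For injectivity, an isomorphism $\ol{\bP}(V) \cong \ol{\bP}(V')$ in $\Mod_{\FI}^{\gen}$ lifts via the Hom identification to an $\FI$-morphism $\bP(V) \to \bP(V')$ with torsion kernel and cokernel; since $\bP(V)$ is torsion-free this map is injective, and comparing the minimal degrees of support forces $V, V'$ to live on the same $\fS_n$, whence restricting the map to that degree yields an isomorphism $V \cong V'$.

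The main obstacle is the Hom identification at the start. Once that is in hand, the rest of the argument is essentially formal: Schur plus finite length gives simplicity of socles, and the semi-induced resolution produces enough induced modules to capture every simple. The identification itself rests on the specifically $\FI$-theoretic vanishing of Proposition~\ref{prop:FI-dersat}.
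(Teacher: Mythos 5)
The paper's own ``proof'' is a bare citation (to \cite[Props.~2.2.7, 2.2.8]{SSglI} in characteristic~$0$ and \cite[\S 1.2]{Nagpal2} in general), so your proposal is necessarily an independent argument and must stand on its own. There is, however, a real gap at the central step.

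You assert ``by local finiteness an indecomposable finite length object has a simple socle.'' This is false in a general locally finite abelian category. For instance, in the category of representations of the quiver $\bullet \leftarrow \bullet \rightarrow \bullet$, the indecomposable representation $k \leftarrow k \rightarrow k$ (with identity maps) has endomorphism ring $k$ but socle $S_1 \oplus S_3$. The correct statement is that in a locally finite Grothendieck category an indecomposable \emph{injective} object has a simple socle (it is the injective hull of that socle). In the present setting, the injectivity of $\ol{\bP}_\lambda$ is available only in characteristic~$0$ (Remark~\ref{rmk:FI-gen-inj}); in positive characteristic $\ol{\bP}(V)$ need not be injective, and the indecomposability of $\ol{\bP}(V)$ you obtain from the $\End$-computation is not enough. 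So your deduction of simplicity of $\soc(\ol{\bP}(V))$ does not go through, and with it the ``surjectivity'' step (which crucially invokes this simplicity to conclude $X = \soc(\ol{\bP}(L))$) also collapses.

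A second, related gap appears in the injectivity argument. Injectivity of the classifying map means: if $\soc(\ol{\bP}(V)) \cong \soc(\ol{\bP}(V'))$ then $V \cong V'$. You instead start from the stronger hypothesis $\ol{\bP}(V) \cong \ol{\bP}(V')$ and never bridge the gap. Passing from an isomorphism of socles to an isomorphism of the ambient objects is again a property of indecomposable injectives (an indecomposable injective is determined by its socle), so this step is also only available in characteristic~$0$ via Remark~\ref{rmk:FI-gen-inj}.

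To summarize: the $\Hom$-identification $\Hom_{\Mod_{\FI}^{\gen}}(\ol{\bP}(V), \ol{\bP}(W)) = \Hom_{\FI}(\bP(V), \bP(W))$ and its consequence $\End(\ol{\bP}(V)) \cong \End_{\fS_n}(V)$ are correct and well justified via Proposition~\ref{prop:FI-dersat} and saturation. The filtration refinement and degree-comparison ideas are sound. But the argument as written proves the statement only in characteristic~$0$, and only after replacing the false ``indecomposable finite length $\Rightarrow$ simple socle'' with the correct invocation of injectivity of the $\ol{\bP}_\lambda$. In positive characteristic a genuinely different argument is required, which is precisely what \cite[\S 1.2]{Nagpal2} supplies; your proposal does not substitute for it.
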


\begin{proof}
This was proved in characteristic~0 in \cite[Props~2.2.7 and~2.2.8]{SSglI}. In positive characteristic, it is a result of the first author \cite[\S 1.2]{Nagpal2}.
\end{proof}

For a locally noetherian abelian category $\cA$, let $\rK(\cA)$ be the Grothendieck group of the category of noetherian objects. Of course, $\rK(\Mod_{\FI}^{\gen})$ has a $\bZ$-basis consisting of the classes of simple objects. We now give a different basis, that is sometimes more useful.

\begin{proposition} \label{prop:gen-FI-K}
The map
\begin{displaymath}
\phi \colon \bigoplus_{n \ge 0} \rK(\Rep(\fS_n)) \to \rK(\Mod_{\FI}^{\gen}), \qquad [V] \mapsto [\ol{\bP}(V)]
\end{displaymath}
is an isomorphism. In particular, if $\chr(k)=0$, then the $[\ol{\bP}_{\lambda}]$ form a $\bZ$-basis of $\rK(\Mod_{\FI}^{\gen})$.
\end{proposition}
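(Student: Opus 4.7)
My plan is to establish $\phi$'s well-definedness, surjectivity, and injectivity in turn.

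\emph{Well-definedness.} The functor $\bP(-)\colon \Rep(\fS_n) \to \Mod_{\FI}$ is exact: by Proposition~\ref{prop:ind-inv}, $\bP(V) = (V\otimes \bP_n)_{\fS_n}$, and $\bP_n(S) = k[\Hom_{\FI}([n],S)]$ is a free $k[\fS_n]$-module (or zero), so $V \mapsto (V \otimes \bP_n(S))_{\fS_n}$ is exact in $V$. Composing with the exact quotient $T$, the assignment $[V] \mapsto [\ol{\bP}(V)]$ descends to a well-defined map of Grothendieck groups.

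\emph{Surjectivity.} Any class in $\rK(\Mod_{\FI}^{\gen})$ equals $[T(M)]$ for some finitely generated $\FI$-module $M$. By Corollary~\ref{cor:semi-ind-res}, $M$ fits into a finite complex
\[
0 \to M \to I^0 \to \cdots \to I^r \to 0
\]
with each $I^i$ semi-induced and all cohomology torsion. Applying $T$ (which kills torsion) gives an exact complex, so $[T(M)] = \sum_i (-1)^i [T(I^i)]$. Each $I^i$ has a finite filtration with induced graded pieces $\bP(V)$, hence $[T(I^i)]$ (and therefore $[T(M)]$) is a $\bZ$-linear combination of classes $[\ol{\bP}(V)]$, placing it in the image of $\phi$.

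\emph{Injectivity.} By Proposition~\ref{prop:FI-gen-simple}, the classes $[\soc(\ol{\bP}(W))]$ form a $\bZ$-basis of $\rK(\Mod_{\FI}^{\gen})$, indexed by $W \in \coprod_m \Irr(\Rep(\fS_m))$, where I will say such a $W$ has \emph{degree} $m$. Define a filtration $G_n \subseteq \rK(\Mod_{\FI}^{\gen})$ as the $\bZ$-span of basis elements of degree at most $n$, and the analogous filtration $G_n' = \bigoplus_{m \le n} \rK(\Rep(\fS_m))$ on the source. The crucial claim is the unitriangularity
\[
[\ol{\bP}(V)] \equiv [\soc(\ol{\bP}(V))] \pmod{G_{n-1}} \qquad \text{for each } V \in \Irr(\Rep(\fS_n)),
\]
which implies that $\phi$ respects these filtrations and induces the identity of $\rK(\Rep(\fS_n))$ on each graded piece. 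An induction on $n$ then yields that $\phi$ is an isomorphism.

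The unitriangularity statement is the heart of the argument and I expect it to be the hardest step. I would attack it using the section functor $S$ and the evaluation functors $D_m := (-)_m \circ S$. Since $\bP(V)$ is semi-induced, Proposition~\ref{prop:FI-dersat} gives $S(\ol{\bP}(V)) = \bP(V)$, so $D_m(\ol{\bP}(V)) = \bP(V)_m$ vanishes for $m<n$ and equals $V$ for $m=n$. Analyzing these dimensions along a composition series of $\ol{\bP}(V)$, together with the observation that $D_m(\soc(\ol{\bP}(W)))=0$ whenever $m$ is less than the degree of $W$ (since $S(\soc(\ol{\bP}(W))) \hookrightarrow S(\ol{\bP}(W)) = \bP(W)$ by left-exactness of $S$), should constrain the composition factors of $\ol{\bP}(V)$ and force the degree-$n$ layer to consist of precisely one copy of $\soc(\ol{\bP}(V))$.
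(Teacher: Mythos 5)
Your well-definedness and surjectivity steps match the paper's proof exactly (the paper omits well-definedness, but your check is correct and the surjectivity argument via Corollary~\ref{cor:semi-ind-res} is precisely what the paper uses). Your injectivity strategy is also the right one: establish a unitriangular change of basis between the $[\ol{\bP}(V)]$'s and the simples, graded by degree. So the overall architecture is sound and essentially the same as the paper's.

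The difference is the direction of the triangularity, and this is where a genuine gap appears. The paper cites the refined form of Corollary~\ref{cor:semi-ind-res} in \cite[Theorem~2.7]{periodicity}, which says directly that $[\soc(\ol{\bP}(V))] = [\ol{\bP}(V)] + \sum [\ol{\bP}(W_j)]$ with each $W_j$ in degree $< n$. You instead want the inverse relation $[\ol{\bP}(V)] = [\soc(\ol{\bP}(V))] + (\text{lower-degree simples})$, which amounts to understanding the composition series of $\ol{\bP}(V)$, and you propose to get it from the functors $D_m = (-)_m \circ S$. This plan does not go through as written, for two reasons. First, $S$ is only left exact, so along a composition series $0 = M_0 \subset \cdots \subset M_k = \ol{\bP}(V)$ you only obtain inclusions $D_m(M_{i-1}) \subseteq D_m(M_i)$ with $D_m(M_i)/D_m(M_{i-1}) \hookrightarrow D_m(L_i)$; this gives $\dim D_m(\ol{\bP}(V)) \le \sum_i \dim D_m(L_i)$, an inequality in the wrong direction for ruling out composition factors of degree $>n$ or extra factors of degree $n$. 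Second, the inequality is vacuous unless you also know that $D_{\deg L}(L) \neq 0$ for every simple $L$ — i.e., that the saturation $S(L)$ is nonzero in its minimal degree — and you do not establish this; your observation only covers the easy vanishing $D_m(L) = 0$ for $m < \deg L$. So the "should constrain\ldots and force\ldots" step is where the argument breaks down. To complete your route you would need either an Euler-characteristic version using $\rR S$ (which requires control of the higher $\rR^i S(L)$), or the structural input the paper gets from \cite[Theorem~2.7]{periodicity}; as it stands, the key lemma is not proved.
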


\begin{proof}
Corollary~\ref{cor:semi-ind-res} shows that $\phi$ is surjective. The more precise version of the corollary given in \cite[Theorem~2.7]{periodicity} shows that if $V$ is a simple representation of $\fS_n$ then $[\soc(\ol{\bP}(V))]$ is equal to $[\ol{\bP}(V)]$ plus terms of the form $[\ol{\bP}(W)]$, where $W$ is a representation of $\fS_m$ with $m<n$. Thus there is an upper-triangular change of basis between the classes of simples and the classes of induced modules, which completes the proof.
\end{proof}

\begin{remark} \label{rmk:FI-gen-inj}
In characteristic~0, the $T(\bP_{\lambda})$ are exactly the indecomposable injective objects of $\Mod_{\FI}^{\gen}$ \cite[Proposition~2.2.10]{SSglI}.
\end{remark}

\subsection{Saturation} \label{ss:fi-sat}

Let
\begin{displaymath}
S \colon \Mod_{\FI}^{\gen} \to \Mod_{\FI}
\end{displaymath}
be the right adjoint of $T$; this is called the \defn{section functor}. Since $S$ is a right adjoint to an exact functor, it follows that $S$ is left exact and preserves injective objects. If $N$ is an object of $\Mod_{\FI}^{\gen}$ then the natural map $N \to T(S(N))$ is an isomorphism. See \cite[\S III.2]{Gabriel} for general background on the section functor.

Let $M$ be an $\FI$-module. We call $S(T(M))$ the \defn{saturation} of $M$, and we say that $M$ is \defn{saturated} if the natural map $M \to S(T(M))$ is an isomorphism. An $\FI$-module is saturated if and only if $\Ext^i_{\FI}(T, M)=0$ for $i=0,1$ and all torsion modules $T$.

We say that $M$ is \defn{derived saturated} if the natural map $M \to \rR S(T(M))$ is an isomorphism; this simply means that $M$ is saturated and $\rR^i S(T(M))=0$ for all $i>0$. An $\FI$-module is derived saturated if and only if $\Ext^i_{\FI}(T, M)=0$ for all $i$ and all torsion modules $T$. Proposition~\ref{prop:FI-dersat} can thus be reformulated as follows: a finitely generated $\FI$-module is derived saturated if and only if it is semi-induced.

The following proposition gives some finiteness properties of derived saturation.

\begin{proposition} \label{prop:FI-sec}
Let $M$ be a finitely generated $\FI$-module, and let
\begin{displaymath}
0 \to M \to I^0 \to \cdots \to I^n \to 0
\end{displaymath}
be a complex with torsion cohomology where each $I^i$ is finitely generated and semi-induced (see Corollary~\ref{cor:semi-ind-res}).
\begin{enumerate}
\item $\rR ST(M)$ is computed by the complex $I^{\bullet}$.
\item $ST(M)=\ker(I^0 \to I^1)$ is finitely generated.
\item $\rR^i ST(M)$ has finite length for $i>0$.
\end{enumerate}
\end{proposition}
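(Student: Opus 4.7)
The plan is to combine Proposition~\ref{prop:FI-dersat}---which identifies the finitely generated derived saturated $\FI$-modules with the semi-induced ones---with the standard principle that right derived functors may be computed on acyclic resolutions. Each $I^j$ is semi-induced, hence derived saturated, so $T(I^j)$ is $S$-acyclic with $S T(I^j) = I^j$.

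For part (a), I would apply $T$ to the given complex. Since $T$ is exact and kills torsion modules, the hypothesis that the cohomologies of the complex are torsion yields an exact sequence
\[ 0 \to T(M) \to T(I^0) \to T(I^1) \to \cdots \to T(I^n) \to 0 \]
in $\Mod_{\FI}^{\gen}$, which is therefore an $S$-acyclic resolution of $T(M)$ of finite length. Since $\Mod_{\FI}^{\gen}$ is a Grothendieck abelian category (hence has enough injectives) and $S$ is left exact as a right adjoint, $\rR S$ is well-defined and can be computed on any $S$-acyclic resolution. Applying $S$ here returns the complex $I^\bullet$, so $\rR S T(M)$ is represented by $I^\bullet$.

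Parts (b) and (c) then reduce to bookkeeping. For (b), $S T(M) = H^0(I^\bullet) = \ker(I^0 \to I^1)$ is a submodule of the finitely generated module $I^0$, hence finitely generated by $\FI$-noetherianity. For (c), $\rR^i S T(M) = H^i(I^\bullet)$ is a finitely generated subquotient of $I^i$ for $i \ge 1$, and coincides with the cohomology of the original complex in that degree, which is torsion by hypothesis; a finitely generated torsion $\FI$-module has finite length, as recalled earlier in this section.

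The only conceptually nontrivial step is the justification that $S$-acyclic resolutions suffice to compute $\rR S$. This is a standard piece of homological algebra once one observes that $\Mod_{\FI}^{\gen}$ has enough injectives and $S$ preserves injectives (being a right adjoint to the exact functor $T$). The essential input is Proposition~\ref{prop:FI-dersat}, which translates the condition ``semi-induced'' into the homological condition ``$S$-acyclic''; with this translation in hand, no further difficulty arises.
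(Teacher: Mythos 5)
Your proof is correct and follows the same route as the paper: apply $T$ to get an $S$-acyclic resolution of $T(M)$ (using that semi-induced $=$ derived saturated, via Proposition~\ref{prop:FI-dersat}), then apply $S$ to recover $I^\bullet$ as a representative of $\rR ST(M)$. The paper states (b) and (c) ``follow easily''; your explicit reasoning---(b) via $\FI$-noetherianity, (c) by matching $\rH^i(I^\bullet)$ for $i\ge 1$ with the torsion cohomology of the original complex and invoking that finitely generated torsion modules have finite length---is the intended argument.
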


\begin{proof}
Since $T$ is exact and kills torsion modules, $T(M) \to T(I^{\bullet})$ is a resolution. Since semi-induced modules are derived saturated, $T(I^i)$ is acyclic for $S$. Thus this resolution can be used to compute $\rR ST(M)$. Since $S(T(I^i))=I^i$, claim (a) follows. The remaining claims follow easily.
\end{proof}

If $N$ is a finite length object of $\Mod_{\FI}^{\gen}$ then $N=T(M)$ for some finitely generated $\FI$-module $M$. Thus the above proposition gives finiteness results for $\rR^i S(N)$. We record one final result here.

\begin{proposition} \label{prop:FI-Ext-Pn}
Let $N$ be a finite length object of $\Mod_{\FI}^{\gen}$. The following are equivalent:
\begin{enumerate}
\item $S(N)$ is a semi-induced $\FI$-module.
\item $\Ext^i(\ol{\bP}_n, N)=0$ for all $n \ge 0$ and $i>0$.
\end{enumerate}
\end{proposition}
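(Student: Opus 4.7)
The plan is to reduce the equivalence to the vanishing of higher derived sections of $N$, via the identification
\[ \Ext^i_{\Mod_{\FI}^{\gen}}(\ol{\bP}_n, N) \cong \bigl(\rR^i S(N)\bigr)_n \]
for all $n \ge 0$ and $i \ge 0$. Once this is in hand, Proposition~\ref{prop:FI-dersat}, in its derived-saturation reformulation from \S\ref{ss:fi-sat}, will identify condition (a) with the vanishing of $\rR^i S(N)$ for $i > 0$, which in turn matches condition (b).

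To establish the identity, I would pick an injective resolution $N \to J^{\bullet}$ in $\Mod_{\FI}^{\gen}$. Since $S$ is right adjoint to the exact functor $T$, it preserves injectives, so $S(J^{\bullet})$ is a complex of injective $\FI$-modules whose cohomology computes $\rR S(N)$. Applying the $(T,S)$ adjunction termwise, together with the Yoneda identity $\Hom_{\FI}(\bP_n, -) = (-)_n$, gives
\[ \Hom_{\Mod_{\FI}^{\gen}}(\ol{\bP}_n, J^i) = \Hom_{\FI}(\bP_n, S(J^i)) = S(J^i)_n. \]
Because evaluation at $[n]$ is exact on $\FI$-modules, the cohomology of this complex is on the one hand $\Ext^i_{\Mod_{\FI}^{\gen}}(\ol{\bP}_n, N)$ (by definition of Ext in the generic category, as $J^{\bullet}$ is an injective resolution of $N$) and on the other hand $(\rR^i S(N))_n$, yielding the claimed identification.

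To conclude, I would observe that (b) is equivalent to $(\rR^i S(N))_n = 0$ for all $n \ge 0$ and $i > 0$, hence to $\rR^i S(N) = 0$ for all $i > 0$. Since $N$ has finite length, we may write $N = T(M)$ for some finitely generated $M$, and Proposition~\ref{prop:FI-sec}(b) gives that $S(N) = ST(M)$ is finitely generated. Combined with the identity $TS = \id$ on $\Mod_{\FI}^{\gen}$ (so that $S(N)$ is automatically saturated), the vanishing $\rR^i S(N) = 0$ for $i > 0$ is precisely the condition that $S(N)$ is derived saturated, which by Proposition~\ref{prop:FI-dersat} is equivalent to $S(N)$ being semi-induced, i.e., condition (a). The only nontrivial ingredient is the key identity in the second paragraph; this is a standard formal consequence of the Serre-quotient adjunction combined with the preservation of injectives by $S$, so I do not anticipate a real obstacle beyond verifying that the textbook formalism applies in this setting.
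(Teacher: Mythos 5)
Your argument is correct and follows essentially the same route as the paper's: both establish the key identity $(\rR^i S N)_n \cong \Ext^i_{\Mod_{\FI}^{\gen}}(\ol{\bP}_n, N)$ via the $(T,S)$-adjunction and Yoneda (you spell out the injective-resolution computation that the paper packages as the ``derived adjunction'' of \S\ref{ss:deradj}), then translate condition (b) into $\rR^i S(N) = 0$ for $i > 0$, i.e.\ derived saturation of $S(N)$, and invoke Proposition~\ref{prop:FI-dersat}. If anything your write-up is slightly more careful than the paper's, since you explicitly note that Proposition~\ref{prop:FI-sec}(b) supplies the finite generation of $S(N)$ needed to apply Proposition~\ref{prop:FI-dersat}.
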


\begin{proof}
By derived adjunction (see \S \ref{ss:deradj}), we have a natural isomorphism
\begin{displaymath}
\rR \Hom(M, \rR S(N)) = \rR \Hom(T(M), N)
\end{displaymath}
where $M$ is an $\FI$-module and $N$ is an object of the generic category. Take $M=\bP_n$. The higher $\rR \Hom$'s on the left vanish since $M$ is projective. We thus find
\begin{displaymath}
(\rR^i S N)_n = \Ext^i(\ol{\bP}_n, N).
\end{displaymath}
Note that $N=TS(N)$. Thus the above shows that $\rR^i ST(SN)=0$ for all $i>0$, meaning $SN$ is derived saturated, if and only if (b) holds. The result thus follows from Proposition~\ref{prop:FI-dersat}.
\end{proof}

\subsection{Derived adjunction} \label{ss:deradj}

We will use a dervied version of adjunction several times, so we state it in the abstract here. Let $\cA$ and $\cB$ be abelian categories with enough injectives, let $F \colon \cA \to \cB$ be an exact functor, and let $G \colon \cB \to \cA$ be the right adjoint to $F$. Since $F$ is exact, $G$ carries injectives to injectives. Let $M \in \rD^+(\cA)$ and $N \in \rD^+(\cB)$. Then we have a natural isomorphism
\begin{displaymath}
\rR \Hom_{\cA}(M, \rR G(N)) = \rR \Hom_{\cB}(F(M), N),
\end{displaymath}
as one sees by computing with an injective resolution of $N$.

\section{The infinite symmetric group} \label{s:sinf}

In this section, we recall basic results about the representation theory of the infinite symmetric group $\fS$. A detailed treatment of this representation theory, in characteristic~0, is given in \cite{infrank}. We also review some concepts from $\fS$-equivariant commutative algebra.

\subsection{Smooth representations} \label{ss:smooth-rep}

Let $\fS=\bigcup_{n \ge 1} \fS_n$ denote the infinite symmetric group, acting on the set $[\infty]=\{1,2,\ldots\}$. For $n \ge 1$, we let $\fS(n)$ be the subgroup of $\fS$ consisting of elements that fix each of $1, \ldots, n$. We say that a representation of $\fS$ on a $k$-vector space $V$ is \defn{smooth} if for every $x \in V$ there is some $n$ such that $x$ is fixed by $\fS(n)$. We let $\Rep(\fS)$ be the category of smooth representations; throughout this paper, all representations of $\fS$ are smooth. One easily sees that the class of smooth representations is closed under subquotients, direct limits, and tensor products. We will see that it also has a generator (Remark~\ref{rmk:generator}), and so $\Rep(\fS)$ is a Grothendieck abelian category.

\begin{remark}
Since $\Rep(\fS)$ is a Grothendieck abelian category, it is complete. However, a limit of smooth representations, taken in the category of $k[\fS]$-modules, is typically not smooth. Limits in $\Rep(\fS)$ are computed by first taking the limit in the category of $k[\fS]$-modules, and then taking the maximal smooth subrepresentation. The latter operation is not exact, so one must be careful when working with limits. For example, products in $\Rep(\fS)$ are not exact, that is, Grothendieck's (AB4*) axiom does not hold. One can see this using reasoning similar to \cite[Proposition~3.1]{increp}.
\end{remark}

\subsection{The basic representations} \label{ss:Vrep}

Let $\VV=\bigoplus_{n \ge 1} ke_n$ be the usual permutation representation of $\fS$. This is smooth: indeed, a typical vector has the form $\sum_{i=1}^n a_i e_i$ for some $n$, and is fixed by $\fS(n)$. For $n \ge 0$, let $\VV_n$ be the subrepresentation of $\VV^{\otimes n}$ spanned by tensors
\begin{displaymath}
e_{i_1,\ldots,i_n} = e_{i_1} \otimes \cdots \otimes e_{i_n}
\end{displaymath}
with all indices distinct. Since $\VV^{\otimes n}$ is smooth and $\VV_n$ is a subrepresentation, it too is smooth. The $\VV_n$'s are the most important representations for us. They enjoy the following mapping property.

\begin{proposition} \label{prop:Vn}
Let $V$ be a smooth representation of $\fS$. We have a bijection
\begin{displaymath}
i \colon \Hom_{\fS}(\VV_n, V) \to V^{\fS(n)}, \qquad i(f) = f(e_{1,\ldots,n}).
\end{displaymath}
Moreover, $V$ is a quotient of a direct sum of $\VV_n$'s.
\end{proposition}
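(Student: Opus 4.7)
The plan is to identify $\VV_n$ with the induced/permutation representation $k[\fS/\fS(n)]$ in disguise, so that the first claim becomes Frobenius reciprocity. Concretely, I would first observe that $\fS$ acts transitively on the set of basis vectors $\{e_{i_1,\ldots,i_n}\}$ (one can send $e_{1,\ldots,n}$ to any $e_{i_1,\ldots,i_n}$ by choosing any $\sigma\in\fS$ with $\sigma(j)=i_j$), and that the stabilizer of $e_{1,\ldots,n}$ is exactly $\fS(n)$. Thus $\VV_n$ is the $\fS$-set $\fS/\fS(n)$ linearized.

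For the first statement, well-definedness of $i$ is immediate: any $\fS$-map $f\colon\VV_n\to V$ sends the $\fS(n)$-fixed vector $e_{1,\ldots,n}$ to an $\fS(n)$-fixed vector of $V$. Injectivity follows because $e_{1,\ldots,n}$ generates $\VV_n$ as an $\fS$-representation, by the transitivity observation. For surjectivity, given $v\in V^{\fS(n)}$, define $f\colon\VV_n\to V$ by $f(e_{i_1,\ldots,i_n}) = \sigma\cdot v$ for any $\sigma\in\fS$ with $\sigma(j)=i_j$; two such choices differ by right multiplication by an element of $\fS(n)$, which fixes $v$, so $f$ is well-defined. It is $\fS$-equivariant by construction, and $i(f)=v$.

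For the second statement, use smoothness. Each vector $x\in V$ lies in $V^{\fS(n)}$ for some $n=n(x)$. By the first part, there is a corresponding $\fS$-map $f_x\colon\VV_{n(x)}\to V$ with $e_{1,\ldots,n(x)}\mapsto x$. Assembling these into a single map
\[
\bigoplus_{x\in V} \VV_{n(x)} \;\longrightarrow\; V
\]
produces a surjection, since the image contains every $x\in V$.

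The only subtle step is checking well-definedness of $f$ in the surjectivity argument, and this is the conceptual heart of the proposition: it is exactly the reason one must pass to $\fS(n)$-invariants on the right-hand side. Everything else is routine, and no ingredients beyond smoothness and the explicit description of the $\fS$-orbit of $e_{1,\ldots,n}$ are needed.
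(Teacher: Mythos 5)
Your proof is correct and follows essentially the same route as the paper's: show $e_{1,\ldots,n}$ generates $\VV_n$ to get injectivity, construct $f$ by $e_{i_1,\ldots,i_n}\mapsto\sigma\cdot v$ and check well-definedness via right multiplication by $\fS(n)$ to get surjectivity, and then use smoothness to cover $V$ by images of the resulting maps. The preliminary observation that $\VV_n$ is the linearization of $\fS/\fS(n)$ is a nice conceptual framing, but the underlying computation is identical to the paper's.
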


\begin{proof}
Since $\VV_n$ is generated by $e_{1,\ldots,n}$, it follows that $i$ is injective. Now let $x \in V^{\fS(n)}$ be given. Define a linear map $f \colon \VV_n \to V$ by $f(e_{i_1,\ldots,i_n})=\sigma x$, where $\sigma \in \fS$ is any element satisfying $\sigma(1)=i_1, \ldots, \sigma(n)=i_n$. We note that $\sigma$ is well-defined up to right multiplication by $\fS(n)$, and so $f$ is well-defined since $x$ is $\fS(n)$-invariant. One easily sees that $f$ is a map of representations. Since $i(f)=x$, we see that $i$ is surjective. Finally, let $\{x_i\}_{i \in I}$ be a family of generators for $V$. We have just seen that each $x_i$ is in the image of a map $\VV_{n_i} \to V$ for some $n_i$. We thus have a surjection $\bigoplus_{i \in I} \VV_{n_i} \to V$.
\end{proof}

\begin{remark} \label{rmk:generator}
It follows that $\bigoplus_{n \ge 0} \VV_n$ is a generating object of $\Rep(\fS)$.
\end{remark}

\subsection{The connection to \texorpdfstring{$\FI$}{FI}-modules} \label{ss:sym-FI}

Let $M$ be an $\FI$-module. Define
\begin{displaymath}
\Theta(M) = \varinjlim_{n \to \infty} M_n
\end{displaymath}
where we are taking the direct limit with respect to the standard inclusions $[n] \to [n+1]$. One easily sees that this carries a representation of $\fS$. A simple computation shows that we have an isomorphism of $\fS$-representations
\begin{displaymath}
\Theta(\bP_n) = \VV_n.
\end{displaymath}
If $M$ is an arbitrary $\FI$-module then, writing $M$ as a quotient of a sum of principal projectives, we see that $\Theta(M)$ is a quotient of a sum of $\VV_n$'s, and thus a smooth representation. Hence $\Theta$ defines a functor
\begin{displaymath}
\Theta \colon \Mod_{\FI} \to \Rep(\fS).
\end{displaymath}
It is clear from the definition that $\Theta$ kills torsion modules. Since $\Theta$ is exact, it therefore induces a functor
\begin{displaymath}
\ol{\Theta} \colon \Mod_{\FI}^{\gen} \to \Rep(\fS).
\end{displaymath}
We require the following important result.

\begin{proposition} \label{prop:FI-gen-sym}
The functor $\ol{\Theta}$ is an equivalence.
\end{proposition}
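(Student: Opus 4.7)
The plan is to exhibit an explicit right adjoint $\Xi \colon \Rep(\fS) \to \Mod_{\FI}$ to $\Theta$, and then verify that the counit $\Theta\Xi \to \id$ is an isomorphism while the unit $\id \to \Xi\Theta$ becomes an isomorphism after applying $T$. This will show that $T \circ \Xi$ is a quasi-inverse to $\ol{\Theta}$. I define $\Xi(V)_n = V^{\fS(n)}$; the $\fS_n$-action on the invariants comes from the fact that $\fS_n$ normalizes $\fS(n)$ inside $\fS$. For an injection $f \colon [n] \to [m]$, I lift $f$ to an element $\tilde{f} \in \fS$ and set $f_*(v) = \tilde{f} \cdot v$. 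A direct check shows that $\tilde{f} v$ lies in $V^{\fS(m)}$, that the definition is independent of the chosen lift (because any two lifts differ by an element of $\fS(n)$, which fixes $v$), and that this makes $\Xi(V)$ into an $\FI$-module.

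The adjunction $\Theta \dashv \Xi$ then follows from Proposition~\ref{prop:Vn} combined with $\Theta(\bP_n) = \VV_n$: both $\Hom_{\FI}(\bP_n, \Xi(V))$ and $\Hom_{\fS}(\VV_n, V)$ equal $V^{\fS(n)}$ by Yoneda and Proposition~\ref{prop:Vn}, and the general case reduces to this by writing arbitrary $\FI$-modules as colimits of principal projectives. The counit is an isomorphism because $\Theta\Xi(V) = \varinjlim_n V^{\fS(n)} = V$, which is precisely the smoothness of $V$. For the unit $\eta_M \colon M \to \Xi\Theta(M)$, the kernel in degree $n$ consists of elements of $M_n$ killed by some transition map of $M$, i.e., the torsion submodule, so $\ker(\eta_M)$ is torsion. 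For the cokernel, any $v \in \Theta(M)^{\fS(n)}$ can be written as $[x]$ for some $x \in M_N$ with $N \ge n$, and the transition map $\Xi\Theta(M)_n \to \Xi\Theta(M)_N$ sends $v$ to its image in $V^{\fS(N)}$, which coincides with $\eta_M(x)$; thus $v$ dies in the cokernel upon passing to degree $N$. Hence $T(\eta_M)$ is an isomorphism, and combining with the counit isomorphism completes the proof.

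The main obstacle is purely bookkeeping: checking that the maps $f_*$ defining $\Xi(V)$ are well-defined and respect composition, and verifying naturality of the unit and counit with respect to morphisms in both categories. None of this is conceptually hard, but one must be careful that every lift $\tilde{f} \in \fS$ of an $\FI$-morphism interacts correctly with the $\fS$-action on $V$ and on $\Theta(M)$. Once this is in place the argument becomes completely formal: the cokernel-is-torsion calculation is a direct chase of representatives in the colimit $\varinjlim_n M_n$, and the counit isomorphism is nothing more than the smoothness hypothesis on $V$.
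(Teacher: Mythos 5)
Your proof is correct and is essentially the same as the paper's: you construct the identical right adjoint $V \mapsto (V^{\fS(n)})_n$, establish the adjunction via the $\VV_n$/$\bP_n$ mapping properties, and use smoothness for the counit. The only difference is that where the paper cites Gabriel [\S III, Prop.~5], you unpack that citation by directly checking the unit becomes an isomorphism after applying $T$ (torsion kernel and cokernel), which is a fine — if slightly more verbose — way to reach the same conclusion.
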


\begin{proof}
This result is well-known, but since we do not have a convenient reference we sketch a proof. Let $V$ be a representation of $\fS$. Then $V^{\fS(n)}$ is a representation of $\fS_n$, and there is a natural inclusion $V^{\fS(n)} \to V^{\fS(n+1)}$. One readily verifies that this defines the structure of an $\FI$-module, and so we have a functor
\begin{displaymath}
\Theta^* \colon \Rep(\fS) \to \Mod_{\FI}.
\end{displaymath}
If $M$ is an $\FI$-module then giving a map $M \to \Theta^*(V)$ of $\FI$-modules is easily seen to be equivalent to giving a map $\Theta(M) \to V$ of $\fS$-representations, and so $\Theta^*$ is right adjoint to $\Theta$. The smoothness condition shows that the natural map $V \to \Theta(\Theta^*(V))$ is an isomorphism. Finally, it follows from the definitions that $\Theta(M)=0$ if and only if $M$ is a torsion $\FI$-module. \cite[III Prop.~5]{Gabriel} thus shows that $\ol{\Theta}$ is an equivalence.
\end{proof}

\begin{corollary}
The representation $\VV_n$ has finite length, and the category $\Rep(\fS)$ is locally of finite length.
\end{corollary}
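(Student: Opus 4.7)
The plan is to deduce both statements directly from the equivalence $\ol{\Theta} \colon \Mod_{\FI}^{\gen} \to \Rep(\fS)$ of Proposition~\ref{prop:FI-gen-sym}, combined with the fact (established just before Proposition~\ref{prop:FI-gen-simple}) that $\Mod_{\FI}^{\gen}$ is locally of finite length. Since $\ol{\Theta}$ is an equivalence of abelian categories, it transports ``finite length'' and ``locally of finite length'' between the two sides; the only thing to verify is that what is finitely generated on one side corresponds to something finitely generated on the other.

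First I would handle $\VV_n$. Under $\Theta$ we have $\Theta(\bP_n) = \VV_n$, so $\ol{\Theta}(T(\bP_n)) = \VV_n$. Since $\bP_n$ is a finitely generated $\FI$-module, $T(\bP_n)$ has finite length in $\Mod_{\FI}^{\gen}$, and transporting along the equivalence gives that $\VV_n$ has finite length in $\Rep(\fS)$.

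For the second statement, suppose $V \in \Rep(\fS)$ is finitely generated. By Proposition~\ref{prop:Vn}, $V$ is a quotient of a finite direct sum $\bigoplus_{i=1}^r \VV_{n_i}$, which has finite length by the first part. Hence $V$ has finite length, proving that $\Rep(\fS)$ is locally of finite length.

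There is no real obstacle here; the content has been placed entirely in Proposition~\ref{prop:FI-gen-sym} and the local finiteness of $\Mod_{\FI}^{\gen}$. The only small point to keep straight is that ``finitely generated'' in $\Rep(\fS)$ (as used implicitly in the statement ``locally of finite length'') is the same notion employed in Proposition~\ref{prop:Vn}, so that the reduction to finite sums of $\VV_{n_i}$'s is legitimate.
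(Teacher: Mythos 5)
Your proof is correct and takes essentially the same approach as the paper: the paper's proof simply says the claim "follows from the corresponding properties of generic $\FI$-modules discussed in \S \ref{ss:FI-gen}," which is precisely the route you take (via $\ol{\Theta}$, $\Theta(\bP_n)=\VV_n$, and the local finiteness of $\Mod_{\FI}^{\gen}$), just spelled out in more detail.
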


\begin{proof}
This follows from the corresponding properties of generic $\FI$-modules discussed in \S \ref{ss:FI-gen}. See also \cite[(6.1.6)]{infrank} for a proof in characteristic~0.
\end{proof}

\subsection{Induced modules}

For a representation $V$ of $\fS_n$, we put
\begin{displaymath}
\PP(V) = \Ind_{\fS_n \times \fS(n)}^{\fS}(V \boxtimes k).
\end{displaymath}
Here induction is defined in the usual manner, and is easily seen to preserve smooth representations. In fact, we have
\begin{displaymath}
\PP(V) = (\VV_n \otimes V)_{\fS_n},
\end{displaymath}
which also shows that $\PP(V)$ is smooth; as with $\FI$-modules, we can use invariants here instead of co-invariants (see Proposition~\ref{prop:ind-inv}). We call representations of the form $\PP(V)$ \defn{induced}. Since $\Theta$ is exact and maps $\bP_n$ to $\VV_n$, we find
\begin{displaymath}
\Theta(\bP(V)) = \PP(V).
\end{displaymath}
We require the following result about tensor products of induced modules.

\begin{proposition} \label{prop:ten-ind}
Let $V$ be a $k[\fS_n]$-module and let $W$ be a $k[\fS_m]$-module. We have a decomposition of $\fS$-modules
\begin{displaymath}
\PP(V) \otimes \PP(W) \cong \bigoplus_{r=0}^{\min(n,m)} \PP(U_r), \qquad
U_r = \Ind_{\fS_r \times \fS_{n-r} \times \fS_{m-r}}^{\fS_{n+m-r}}(V \otimes W).
\end{displaymath}
Here $\fS_r \times \fS_{n-r} \times \fS_{m-r}$ is the usual Young subgroup of $\fS_{n+m-r}$, and $\fS_r \times \fS_{n-r}$ acts on $V$, and $\fS_r \times \fS_{m-r}$ acts on $W$.
\end{proposition}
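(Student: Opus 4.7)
The plan is to reduce the computation to decomposing $\VV_n \otimes \VV_m$ as an $\fS \times \fS_n \times \fS_m$-module. Since $\fS_n \times \fS_m$-coinvariants commute with tensor products over $k$,
\begin{displaymath}
\PP(V) \otimes \PP(W) = (\VV_n \otimes V)_{\fS_n} \otimes (\VV_m \otimes W)_{\fS_m} = (\VV_n \otimes \VV_m \otimes V \otimes W)_{\fS_n \times \fS_m},
\end{displaymath}
so everything reduces to understanding $\VV_n \otimes \VV_m$ with $V \otimes W$ carried along.

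First, I would decompose $\VV_n \otimes \VV_m = \bigoplus_{r=0}^{\min(n,m)} X_r$, where $X_r$ is the subspace spanned by basis tensors $e_{i_1,\ldots,i_n} \otimes e_{j_1,\ldots,j_m}$ whose index sets overlap in exactly $r$ elements. Each $X_r$ is stable under $\fS \times \fS_n \times \fS_m$, and a direct orbit computation shows the action is transitive on the basis. For the standard element $\xi_r = e_{1,\ldots,n} \otimes e_{1,\ldots,r,n+1,\ldots,n+m-r}$, working out the stabilizer gives
\begin{displaymath}
X_r \cong \Ind_{H_r}^{\fS \times \fS_n \times \fS_m}(\mathbf{1}),
\end{displaymath}
where $H_r \cong \fS_r \times \fS_{n-r} \times \fS_{m-r} \times \fS(n+m-r)$ sits inside $\fS \times \fS_n \times \fS_m$ diagonally: the $\fS_r$ factor acts simultaneously on $\{1,\ldots,r\}$ inside $\fS$, on the first $r$ positions of $[n]$ inside $\fS_n$, and on the first $r$ positions of $[m]$ inside $\fS_m$; $\fS_{n-r}$ embeds into $\fS$ and $\fS_n$; $\fS_{m-r}$ embeds into $\fS$ and $\fS_m$; and $\fS(n+m-r)$ embeds only into $\fS$.

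Next, by the projection formula,
\begin{displaymath}
X_r \otimes V \otimes W \cong \Ind_{H_r}^{\fS \times \fS_n \times \fS_m}\bigl((V \otimes W)|_{H_r}\bigr),
\end{displaymath}
where $V \otimes W$ restricts to $H_r$ through the quotient $H_r \to \fS_r \times \fS_{n-r} \times \fS_{m-r}$ (with $\fS(n+m-r)$ acting trivially). Taking $\fS_n \times \fS_m$-coinvariants has the effect of inducing along the injection $H_r \hookrightarrow \fS$, whose image is the subgroup $\fS_r \times \fS_{n-r} \times \fS_{m-r} \times \fS(n+m-r) \le \fS$. Inducing in stages through $\fS_{n+m-r} \times \fS(n+m-r)$ collapses the three finite factors into $U_r = \Ind_{\fS_r \times \fS_{n-r} \times \fS_{m-r}}^{\fS_{n+m-r}}(V \otimes W)$, and leaves $\fS(n+m-r)$ acting trivially, so the $r$-th summand is exactly $\PP(U_r)$.

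The main obstacle will be the bookkeeping of the three simultaneous group actions — in particular, correctly identifying the diagonal embeddings defining $H_r$ and then the right $H_r$-action on the coset space $(\fS \times \fS_n \times \fS_m)/(\fS_n \times \fS_m) \cong \fS$. Once this combinatorics is pinned down, the remaining passage to $\PP(U_r)$ is a routine application of Frobenius reciprocity and induction in stages.
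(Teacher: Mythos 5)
Your proof is correct and follows essentially the same strategy as the paper: decompose $\VV_n \otimes \VV_m$ according to the size $r$ of the overlap between the two index sets, identify the $r$-piece via an orbit--stabilizer argument, and pass to $\PP(U_r)$ by Frobenius reciprocity and induction in stages. The only organizational difference is that you treat the full $\fS \times \fS_n \times \fS_m$-action at once, realizing $X_r$ as $\Ind_{H_r}^{\fS \times \fS_n \times \fS_m}(\mathbf{1})$ and descending to $\fS$ by taking $\fS_n \times \fS_m$-coinvariants (which, since $H_r$ meets $\{1\}\times\fS_n\times\fS_m$ trivially, becomes $\Ind_{\bar H_r}^{\fS}$), whereas the paper first decomposes $\VV_n \otimes \VV_m$ as an $\fS_n \times \fS_m$-module into orbits of bipartite matchings, with each $r$-orbit contributing $\Ind_{\fS_r \times \fS_{n-r} \times \fS_{m-r}}^{\fS_n \times \fS_m}(\VV_{n+m-r})$, and then applies Frobenius reciprocity twice after tensoring with $V \otimes W$ and taking $\fS_n\times\fS_m$-invariants.
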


\begin{proof}
First consider the tensor product $\VV_n \otimes \VV_m$. This is the subspace of $\VV^{\otimes (n+m)}$ spanned by tensors
\begin{displaymath}
e_{i_1, \ldots, i_n, j_1, \ldots, j_m}
\end{displaymath}
where the $i$'s are distinct and the $j$'s are distinct. Now, the span of the vectors where the $i$'s and $j$'s are distinct in total is $\VV_{n+m}$. If $i_1=j_1$ and the other indices are distinct, we get a copy of $\VV_{n+m-1}$. Carrying on with this reasoning, we obtain a decomposition
\begin{displaymath}
\VV_n \otimes \VV_m \cong \bigoplus_{\Gamma} \VV_{n+m-\# E(\Gamma)}
\end{displaymath}
where the sum is over bipartite matchings $\Gamma$ on $A \amalg B$, and $E(\Gamma)$ is the edge set of $\Gamma$. Suppose $\Gamma$ has $r$ edges. Then the $\fS_n \times \fS_m$ orbit of $\Gamma$ contains the standard matching, whose edges are $(1,1), \ldots, (r,r)$. We thus see that, as a representation of $\fS_n \times \fS_m \times \fS$, we have
\begin{displaymath}
\VV_n \otimes \VV_m \cong \bigoplus_{r=0}^{\min(n,m)} \Ind_{\fS_r \times \fS_{n-r} \times \fS_{m-r}}^{\fS_n \times \fS_m}(\VV_{n+m-r}),
\end{displaymath}
where here $\fS_r$ is embedded diagonally in $\fS_n$ and $\fS_m$.

Now suppose that $V$ is a $k[\fS_n]$-module and $W$ is a $k[\fS_m]$-module. Tensor the above with $V \otimes W$ and take invariants under $\fS_n \times \fS_m$. By Frobenius reciprocity, we have
\begin{displaymath}
\PP(V) \otimes \PP(W) \cong \bigoplus_{r=0}^{\min(n,m)} (\VV_{n+m-r} \otimes V \otimes W)^{\fS_r \times \fS_{n-r} \times \fS_{m-r}}.
\end{displaymath}
Here $\fS_r \times \fS_{n-r} \times \fS_{m-r}$ is a Young subgroup of $\fS_{n+m-r}$, and its action on $\VV_{n+m-r}$ is through this inclusion. Another application of Frobenius reciprocity gives
\begin{displaymath}
\PP(V) \otimes \PP(W) \cong \bigoplus_{r=0}^{\min(n,m)} (\VV_{n+m-r} \otimes \Ind_{\fS_r \times \fS_{n-r} \times \fS_{m-r}}^{\fS_{n+m-r}}(V \otimes W))^{\fS_{n+m-r}},
\end{displaymath}
which gives the stated result.
\end{proof}

\subsection{Semi-induced modules}

As with $\FI$-modules, we say that a representation of $\fS$ is \defn{semi-induced} if it admits a finite length filtration where the graded pieces are induced. Since $\Theta$ is exact and maps induced $\FI$-modules to induced $\fS$-modules, it also maps semi-induced $\FI$-modules to semi-induced $\fS$-modules.

\begin{proposition} \label{prop:Ext-semi-induced-sinf}
Let $V$ be a finite length representation of $\fS$. Then $V$ is semi-induced if and only if
\begin{displaymath}
\Ext^i_{\fS}(\VV_n, V)=0
\end{displaymath}
holds for all $n \ge 0$ and $i>0$.
\end{proposition}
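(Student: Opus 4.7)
The plan is to translate the problem into the setting of $\FI$-modules using the equivalence $\ol\Theta \colon \Mod_{\FI}^{\gen} \to \Rep(\fS)$ of Proposition~\ref{prop:FI-gen-sym}, and then to apply Proposition~\ref{prop:FI-Ext-Pn}. Set $N = \ol\Theta^{-1}(V) \in \Mod_{\FI}^{\gen}$; since $\ol\Theta$ is an equivalence and $V$ has finite length, so does $N$. Since $\Theta(\bP_n) = \VV_n$, we have $\ol\Theta(\ol\bP_n) = \VV_n$ and hence a natural identification
\[
\Ext^i_{\fS}(\VV_n, V) = \Ext^i_{\Mod_{\FI}^{\gen}}(\ol\bP_n, N).
\]
Writing $\Theta = \ol\Theta \circ T$, the right adjoint $\Theta^*$ constructed in the proof of Proposition~\ref{prop:FI-gen-sym} is identified with $S \circ \ol\Theta^{-1}$, so that $\Theta^*(V) = S(N)$.

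With this identification in place, Proposition~\ref{prop:FI-Ext-Pn} says that the vanishing of $\Ext^i_{\fS}(\VV_n, V)$ for all $n \ge 0$ and $i > 0$ is equivalent to $\Theta^*(V)$ being a semi-induced $\FI$-module. It therefore suffices to prove that a finite length representation $V$ of $\fS$ is semi-induced if and only if $\Theta^*(V)$ is semi-induced as an $\FI$-module. For the easy direction, if $\Theta^*(V)$ is semi-induced, then $V \cong \Theta(\Theta^*(V))$ by the unit iso established in the proof of Proposition~\ref{prop:FI-gen-sym}, and $\Theta$ carries semi-induced $\FI$-modules to semi-induced $\fS$-representations, as noted just before the present proposition.

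The forward direction is the main content. I proceed by induction on the length $m$ of a semi-induced filtration $0 = V_0 \subset \cdots \subset V_m = V$ with induced quotients $V_i/V_{i-1} = \PP(W_i)$; note that finite length of $V$ forces each $W_i$ to be finite-dimensional, so each $\bP(W_i)$ is a finitely generated $\FI$-module. The key computation is that for any induced module $\PP(W)$ with $W$ finite-dimensional,
\[
\Theta^*(\PP(W)) = \bP(W), \qquad \rR^i \Theta^*(\PP(W)) = 0 \text{ for } i > 0,
\]
which in terms of $S$ and $T$ is precisely the statement that $\bP(W)$ is derived saturated; this holds by the reformulation of Proposition~\ref{prop:FI-dersat}, since $\bP(W)$ is semi-induced. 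Applying $\rR\Theta^*$ to the short exact sequence $0 \to V_{m-1} \to V_m \to \PP(W_m) \to 0$, the long exact sequence together with the inductive vanishing $\rR^i\Theta^*(V_{m-1}) = 0$ for $i > 0$ gives both $\rR^i\Theta^*(V_m) = 0$ for $i > 0$ and a short exact sequence
\[
0 \to \Theta^*(V_{m-1}) \to \Theta^*(V_m) \to \bP(W_m) \to 0;
\]
since $\Theta^*(V_{m-1})$ is semi-induced by induction and $\bP(W_m)$ is induced, $\Theta^*(V) = \Theta^*(V_m)$ is semi-induced as well. The main obstacle is precisely this forward implication, which requires the higher derived vanishing for $\Theta^*$ on induced pieces; once that is extracted from derived saturation of $\bP(W)$, the rest of the argument is a formal chase through the long exact sequence and the identifications coming from the equivalence $\ol\Theta$.
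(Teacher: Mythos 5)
Your proof is correct and follows the same route as the paper, which simply cites Proposition~\ref{prop:FI-Ext-Pn}; the one piece of genuine content you supply — that a finite length $\fS$-representation $V$ is semi-induced if and only if $\Theta^*(V)$ is a semi-induced $\FI$-module, proved by induction on the filtration length using the derived saturation of induced $\FI$-modules — is exactly the translation step the paper leaves implicit when passing through the equivalence $\ol\Theta$. Your argument, including the observation that finite length of $V$ forces the $W_i$ to be finite-dimensional, is sound.
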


\begin{proof}
This follows from Proposition~\ref{prop:FI-Ext-Pn}.
\end{proof}

\begin{proposition} \label{prop:sym-semi-ind-res}
Let $V$ be a finite length representation of $\fS$. Then there is a finite length resolution
\begin{displaymath}
0 \to V \to I^0 \to \cdots \to I^n \to 0
\end{displaymath}
where each $I^i$ is of finite length and semi-induced.
\end{proposition}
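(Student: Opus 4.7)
The plan is to transport the statement back to $\FI$-modules via the equivalence $\ol{\Theta} \colon \Mod_{\FI}^{\gen} \to \Rep(\fS)$ of Proposition~\ref{prop:FI-gen-sym}, apply Corollary~\ref{cor:semi-ind-res}, and then push the resulting complex back down using the exact functor $\Theta$.

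First, I would realize $V$ as $\Theta(M)$ for some finitely generated $\FI$-module $M$. Since $V$ has finite length in $\Rep(\fS)$ and $\ol{\Theta}$ is an equivalence, there is a finite length object $N$ in $\Mod_{\FI}^{\gen}$ with $\ol{\Theta}(N) \cong V$. Any finite length object in the generic category lifts to a finitely generated $\FI$-module: choose any $\FI$-module $M'$ with $T(M') \cong N$, and since $N$ is finitely generated in $\Mod_{\FI}^{\gen}$ and $T$ commutes with colimits, one can replace $M'$ by a finitely generated subobject whose image in the generic category is still $N$. With such an $M$ in hand, $\Theta(M) = \ol{\Theta}(T(M)) \cong V$.

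Next, I would apply Corollary~\ref{cor:semi-ind-res} to obtain a finite length complex
\begin{displaymath}
0 \to M \to I^0 \to \cdots \to I^n \to 0
\end{displaymath}
whose terms are finitely generated and semi-induced and whose cohomology groups are torsion. Applying the exact functor $\Theta$, and using that $\Theta$ kills torsion $\FI$-modules, yields an exact sequence
\begin{displaymath}
0 \to V \to \Theta(I^0) \to \cdots \to \Theta(I^n) \to 0.
\end{displaymath}
Each $\Theta(I^i)$ is semi-induced in $\Rep(\fS)$ because $\Theta$ is exact and carries induced $\FI$-modules to induced $\fS$-representations (by $\Theta(\bP(W)) = \PP(W)$, already recorded in the discussion preceding Proposition~\ref{prop:ten-ind}); a finite filtration by induced $\FI$-modules therefore maps to a finite filtration by induced $\fS$-representations. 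Finally, each $\Theta(I^i)$ has finite length: it equals $\ol{\Theta}(T(I^i))$, and $T(I^i)$ has finite length in $\Mod_{\FI}^{\gen}$ since $I^i$ is finitely generated.

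There is no real obstacle here; the only point requiring a moment's care is the lift of a finite length generic object to a finitely generated $\FI$-module, which follows from the description of the quotient functor $T$ as exact and colimit-preserving.
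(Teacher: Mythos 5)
Your proposal is correct and takes essentially the same approach as the paper, whose proof is the one-liner ``This follows from Corollary~\ref{cor:semi-ind-res} by applying $\Theta$''; you have simply filled in the routine details (lifting $V$ to a finitely generated $\FI$-module, exactness and torsion-killing of $\Theta$, preservation of semi-induced and of finite length), all of which are sound.
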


\begin{proof}
This follows from Corollary~\ref{cor:semi-ind-res} by applying $\Theta$. In characteristic~0, it also follows from the results of \cite{infrank}.
\end{proof}

\subsection{Commutative algebra} \label{ss:commalg}

We now briefly review some general concepts about commutative algebra in the category $\Rep(\fS)$. An \defn{$\fS$-algebra} is a commutative $k$-algebra $A$ equipped with an action of $\fS$ by algebra automorphisms, under which it forms a smoth representation. In other words, an $\fS$-algebra is a commutative algebra object in the category $\Rep(\fS)$.

Let $A$ be an $\fS$-algebra. We write $\vert A \vert$ for the $k$-algebra underlying $A$; we use this notation when we want to forget the $\fS$ action. An \defn{$A$-module} is a module object in $\Rep(\fS)$. Concretely, an $A$-module is an $\vert A \vert$-module $M$, equipped with a smooth action of $\fS$ that is compatible with the one on $A$, meaning $\sigma(am)=\sigma(a) \sigma(m)$ for $a \in A$, $m \in M$, and $\sigma \in \fS$. We let $\Mod_A$ be the category of $A$-modules. It is a Grothendieck abelian category.

An $A$-module $M$ is \defn{finitely generated} if $\vert M \vert$ is generated as an $\vert A \vert$-module by finitely many $\fS$-orbits. (Here we use $\vert M \vert$ in an analogous manner to $\vert A \vert$.) Equivalently, $M$ is finitely generated if and only if it is a quotient of an $A$-module of the form $A \otimes V$ with $V$ a finite length representation. We say that $A$ is \defn{noetherian} if any submodule of a finitely generated $A$-module is finitely generated.

An \defn{$\fS$-ideal} of $A$ is an ideal of the ring $\vert A \vert$ that is stable for the $\fS$ action. An $\fS$-ideal $\fp$ is \defn{$\fS$-prime} if it is not the unit ideal, and $\fa \fb \subset \fp$ implies $\fa \subset \fp$ or $\fb \subset \fp$ for all $\fS$-ideals $\fa$ and $\fb$. Equivalently, $\fp$ is $\fS$-prime if whenever $a$ and $b$ are elements of $A$ such that $a \cdot \sigma(b)$ belongs to $\fp$ for all $\sigma \in \fS$, either $a \in \fp$ or $b \in \fp$ \cite[Proposition~2.9]{sideals}. We say that $A$ is an \defn{$\fS$-domain} if the zero ideal is $\fS$-prime.

Suppose $A$ is an $\fS$-domain. We say that a module $M$ is \defn{torsion} if every finitely generated submodule has non-zero annihilator. Equivalently, this means that for every $x \in M$ there exists $a \in A$ non-zero such that $\sigma(a) \cdot x=0$ for all $\sigma \in \fS$. We let $\Mod_A^{\tors}$ be the full subcategory of $\Mod_A$ spanned by torsion modules. This is a localizing subcategory (see below). We define the \defn{generic category} $\Mod_A^{\gen}$ to be the Serre quotient $\Mod_A/\Mod_A^{\tors}$.

\begin{proposition}
Let $A$ be an $\fS$-domain. Then $\Mod_A^{\tors}$ is a localizing subcategory of $\Mod_A$.
\end{proposition}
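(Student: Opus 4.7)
The plan is to verify the four conditions that characterize a localizing subcategory in a Grothendieck abelian category: closure under subobjects, quotients, extensions, and arbitrary coproducts. The first two are immediate from the element-wise definition: if $x \in N \subset M$ and $M$ is torsion then $\Ann^{\fS}(x) \neq 0$ witnesses torsion of $N$, and similarly for quotients (the stable witness descends along the quotient map).

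For coproducts, I would first handle filtered colimits, where every element arises from a finite stage and inherits its stable annihilator. For finite direct sums $M_1 \oplus \cdots \oplus M_n$, given $x = (x_1, \ldots, x_n)$ and nonzero stable witnesses $c_i \in \Ann^{\fS}(x_i)$, the product of $\fS$-ideals $(c_1)_{\fS} \cdots (c_n)_{\fS}$ is nonzero by iterated application of the $\fS$-prime property of $(0)$, and any element of this product stably annihilates every $x_i$ hence $x$. Combining the two handles arbitrary coproducts, since an element of $\bigoplus_{i \in I} M_i$ has finite support.

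The main step is closure under extensions. Given $0 \to M' \to M \to M'' \to 0$ with $M'$ and $M''$ torsion, and $x \in M$, I need $\Ann^{\fS}(x) \neq 0$. The image $\bar x \in M''$ is stably-torsion, so there is a nonzero $a \in A$ with $(a)_{\fS} \bar x = 0$, giving $(a)_{\fS} x \subset M'$. In particular $ax \in M'$, and the cyclic submodule $L = A\fS \cdot (ax) \subset M'$ has nonzero $\fS$-ideal annihilator $J$ by torsion of $M'$. The strategy is to show that $J \cdot (a)_{\fS} \subset \Ann^{\fS}(x)$, which is nonzero by $\fS$-primality, giving the desired witness. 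Concretely, for $c = \beta a$ with $\beta \in J$ and any $\sigma \in \fS$, one writes $\sigma(c) x = \sigma(\beta)\sigma(a) x$ and observes that $\sigma(\beta) \in J$ by $\fS$-stability of $J$ and $\sigma(a) x \in M'$ by $\fS$-stability of the inclusion $(a)_{\fS} x \subset M'$; the goal is then to recognize $\sigma(a) x$ as lying in (an $\fS$-translate of) $L$, so that the $J$-action kills it.

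The hard part is exactly this last identification: a priori $\sigma(a) x$ need not lie in $L = A\fS(ax)$ because $\fS$-translation of $ax$ produces $\sigma(a)\sigma(x)$, not $\sigma(a) x$. I expect to overcome this either by a Zorn's-lemma argument on maximal torsion submodules containing $M'$ (reducing to the case where the cyclic quotient $N'/N_{\max}$ is annihilated by a single $(a)_{\fS}$ and arguing inductively), or by enlarging $L$ to the $A\fS$-submodule $(a)_{\fS} \cdot A\fS x \subset M'$ generated by all $\sigma(a) x$ and extracting a uniform annihilator using $\fS$-primality applied to the $\fS$-orbit structure. This extension step is the main technical obstacle; once it is dispatched, the four closure properties combine to give that $\Mod_A^{\tors}$ is localizing.
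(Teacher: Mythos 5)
Your proposal takes the same approach as the paper's, and your second proposed fix for the extension step is exactly what the paper does; the only shortfall is that you stop short of executing it, calling it an ``obstacle'' when in fact it goes through directly.

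To spell it out: given $x\in M$ with image $\bar x\in M''$, choose $a\neq 0$ with $(a)_{\fS}\bar x=0$, so that $(a)_{\fS}\cdot x\subset M'$ and hence $(a)_{\fS}\cdot A\fS x\subset M'$. This is a finitely generated submodule of $M'$ (a product of finitely generated $\fS$-stable submodules is finitely generated, since $\fS(n)\backslash\fS/\fS(m)$-double cosets are finite). Since $M'$ is torsion, the definition ``every finitely generated submodule has non-zero annihilator'' gives a non-zero $\fS$-ideal $J$ killing $(a)_{\fS}\cdot A\fS x$. Then $J\cdot(a)_{\fS}$ is non-zero because $A$ is an $\fS$-domain, it is $\fS$-stable, and it annihilates $x$. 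There is no residual difficulty with ``recognizing $\sigma(a)x$ as lying in a translate of $L$'' because you have simply replaced $L$ by the right submodule from the start; your worry that $\sigma(a)x\neq\sigma(ax)$ is real, but it is fully resolved by working with $(a)_{\fS}\cdot A\fS x$ rather than $A\fS\cdot(ax)$. The paper phrases the same argument at the level of finitely generated submodules rather than elements: with $N=A\fS x$ and a finitely generated non-zero $\fS$-ideal $\fa$ killing $\bar N\subset M''$, the module $\fa N\subset M'$ is finitely generated, hence killed by a non-zero $\fS$-ideal $\fb$, and $\fa\fb$ is non-zero and kills $N$. Your Zorn's-lemma alternative is unnecessary. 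The rest of your proposal (subobjects, quotients, filtered colimits, finite sums via products of stable annihilators) is correct, and in fact more explicit than the paper, which simply asserts closure under subquotients and direct sums.
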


\begin{proof}
It is clear that the class of torsion modules is closed under passing to subquotients and arbitrary direct sums. Now, suppose that
\begin{displaymath}
0 \to M_1 \to M_2 \to M_3 \to 0
\end{displaymath}
is a short exact sequence of $A$-modules, with $M_1$ and $M_3$ torsion. We must show that $M_2$ is torsion. Let $N$ be a finitely generated submodule of $M_2$, and let $\ol{N}$ be its image in $M_3$. Since $M_3$ is torsion and $\ol{N}$ is a finitely generated submodule, there is a non-zero $\fS$-ideal $\fa$ of $A$ such that $\fa \ol{N}=0$. Of course, we can assume $\fa$ is finitely generated. Since $\fa N$ is a finitely generated submodule of the torsion module $M_1$, there is a non-zero $\fS$-ideal $\fb$ such that $\fb (\fa N)=0$. Since $A$ is an $\fS$-domain, $\fa \fb$ is non-zero. We have thus shown that $M_2$ is torsion, as required.
\end{proof}

\section{The polynomial ring \texorpdfstring{$R$}{R}}

In this section, we introduce the polynomial $R$, the quotient rings $R/\fh_s$, and the basic modules over these rings.

\subsection{The polynomial ring} \label{ss:poly-ring}

Let $R$ be the polynomial ring $k[x_i]_{i \ge 1}$. We let $\fS$ act on $R$ by permuting variables. This action is smooth, as the monomial $x_1^{e_1} \cdots x_n^{e_n}$ is fixed by $\fS(n)$. Thus $R$ is an $\fS$-algebra, as defined in \S \ref{ss:commalg}. We also have an identification $R=\Sym(\VV)$, which gives a coordinate-free description of $R$ as an $\fS$-algebra. The most fundamental theorem about $R$ is that it is noetherian \cite{Cohen, Cohen2, NagelRomer2}.

\subsection{The \texorpdfstring{$\fh$}{h} ideals}

For $s \ge 0$, let $\fh_s$ denote the $\fS$-ideal of $R$ generated by $x_i^{s+1}$, for $i \ge 1$. The following proposition is contained among the results of \cite{sideals}, but we include a proof since it is central to the present paper.

\begin{proposition} \label{prop:h-prime}
The ideal $\fh_s$ is $\fS$-prime. Moreover, the only $\fS$-primes containing $\fh_s$ are $\fh_0, \ldots, \fh_s$.
\end{proposition}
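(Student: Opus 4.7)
My plan has two parts, matching the two claims of the proposition. For (i), that $\fh_s$ is $\fS$-prime, I will use the elementwise criterion from \S\ref{ss:commalg}: it suffices to show that if $a,b\in R$ have nonzero images in $R/\fh_s$, then some $\sigma\in\fS$ satisfies $a\sigma(b)\notin \fh_s$. The ring $R/\fh_s$ has a $k$-basis of monomials $x_{i_1}^{e_1}\cdots x_{i_n}^{e_n}$ with distinct indices and each $e_j\in \{1,\ldots,s\}$. I will take $\sigma$ that moves the (finite) variable support of $b$ off of that of $a$; then every term in the expansion of $\bar a\cdot \sigma(\bar b)$ is a product of two basis monomials with disjoint supports, hence itself a basis monomial, and since different pairs of terms yield different basis monomials, no cancellation is possible and $\bar a\,\sigma(\bar b)\ne 0$.

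For (ii), let $\fp$ be an $\fS$-prime containing $\fh_s$. I set $r:=\min\{t\ge 0 : x_1^{t+1}\in \fp\}$, which exists and lies in $\{0,1,\ldots,s\}$, and note that $\fS$-stability gives $\fh_r\subseteq \fp$. It remains to show $\fp\subseteq \fh_r$, which I will do by passing to $A:=R/\fh_r$: setting $\bar\fp:=\fp/\fh_r$, this is an $\fS$-prime of $A$ with $x_1^r\notin \bar\fp$, and the goal becomes $\bar\fp=0$. Assume for contradiction $\bar\fp\ne 0$ and pick $0\ne f\in\bar\fp$ minimizing the number of monomials in its basis expansion.

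The first step is a minimality argument forcing $f$ to be a monomial: if $f$ had at least two terms, I could pick a variable $x_j$ where two of them have differing $x_j$-exponents $e_1>e_2$, and consider $f\cdot x_j^{r+1-e_1}$. In $A$ this kills precisely the terms whose $x_j$-exponent is $\ge e_1$ (their new exponent exceeds $r$) while preserving the others as distinct nonzero monomials; the result is a strictly shorter nonzero element of $\bar\fp$, contradicting minimality. Hence $\bar\fp$ contains a monomial $m=x_{i_1}^{e_1}\cdots x_{i_n}^{e_n}$ with each $e_j\in\{1,\ldots,r\}$.

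I will then induct on $n$. The base $n=1$ is immediate: $m\cdot x_{i_1}^{r-e_1}=x_{i_1}^r\in\bar\fp$ contradicts $x_1^r\notin\bar\fp$. For $n\ge 2$, I first replace $m$ by $m':=m\cdot x_{i_1}^{r-e_1}=x_{i_1}^r x_{i_2}^{e_2}\cdots x_{i_n}^{e_n}\in\bar\fp$, and then apply $\fS$-primeness to $a:=x_{i_1}^r$ and $b:=x_{i_2}^{e_2}\cdots x_{i_n}^{e_n}$. For every $\sigma\in\fS$, either $\sigma$ sends none of $i_2,\ldots,i_n$ to $i_1$, in which case $a\sigma(b)$ is an $\fS$-translate of $m'$ and hence in $\bar\fp$, or a collision at $i_1$ produces an $x_{i_1}^{r+e_j}$ factor with $e_j\ge 1$, forcing $a\sigma(b)=0$ in $A$. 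Either way $a\sigma(b)\in\bar\fp$, so $\fS$-primeness gives $a\in\bar\fp$ (immediate contradiction) or $b\in\bar\fp$ (to which the inductive hypothesis applies). The main obstacle I foresee is exactly this last step: first boosting the $x_{i_1}$-exponent up to $r$ before invoking $\fS$-primeness is the key trick that makes every potentially troublesome collision land in $\fh_r$, which is what lets us verify the $\fS$-primeness hypothesis for \emph{every} $\sigma$ rather than just generic ones.
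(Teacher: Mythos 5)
Your proof is correct, and the overall strategy for both claims matches the paper's. For the $\fS$-primeness of $\fh_s$ you use the same key idea (choose $\sigma$ to disjointify variable supports, then observe that the resulting products of basis monomials can neither collapse into $\fh_s$ nor cancel), just phrased in the contrapositive. For the classification of $\fS$-primes over $\fh_s$, you pick the same $r$ and reduce to showing $\fp/\fh_r=0$, and your collision trick (boosting $x_{i_1}$'s exponent to $r$ so that any overlap under $\sigma$ pushes the product into $\fh_r$) is a minor variant of the paper's, which instead boosts \emph{all} exponents to $r$ and works with $(x_1\cdots x_n)^r$, taking $n$ minimal rather than inducting. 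The one place where you do genuinely more than the paper is your minimality argument (pick $0\ne f\in\bar\fp$ with fewest terms and multiply by a suitable $x_j^{r+1-e_1}$ to strictly shorten it) justifying that $\bar\fp\ne 0$ forces $\bar\fp$ to contain a monomial; the paper simply asserts ``Then $\fp$ contains some monomial $\ldots$'' without proof, presumably leaning on the fuller treatment in \cite{sideals}. Your version is thus more self-contained at that step, and both approaches are otherwise equivalent.
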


\begin{proof}
Suppose $f \cdot \sigma(g)$ belongs to $\fh_s$ for all $\sigma \in \fS$, where $f \in R \setminus \fh_s$ and $g \in R$. We must show $g \in \fh_s$. Choose $\sigma$ such that $f$ and $\sigma(g)$ contain no common variable. Write $f=\sum_{i=1}^n c_i m_i$, where the $c_i$'s are non-zero scalars and the $m_i$'s are distinct monomials. Since $\fh_s$ is a monomial ideal, we have $m_i \sigma(g) \in \fh_s$ for all $i$. Since $f \not\in \fh_s$, there is some $i$ such that $m_i \not\in \fh_s$. Since $m_i \sigma(g) \in \fh_s$ and $m_i$ and $\sigma(g)$ have no common variable, it follows that $\sigma(g) \in \fh_s$, and so $g \in \fh_s$, as required.

Now suppose $\fp$ is an $\fS$-prime containing $\fh_s$. Thus $x_1^{s+1} \in \fp$. Let $r \ge 0$ be minimal such that $x_1^{r+1} \in \fp$. We claim that $\fp=\fh_r$. Clearly, we have the containment $\fh_r \subset \fp$. We therefore have equality if $r=0$. Suppose then that $r \ge 1$ and the containment is strict. Then $\fp$ contains some monomial $x_1^{e_1} \cdots x_n^{e_n}$ where all exponents are $\le r$, and thus contains $(x_1 \cdots x_n)^r$. Let $n$ be minimal such that $(x_1 \cdots x_n)^r \in \fp$. We must have $n \ge 2$ by the minimality of $r$. We have $(x_1 \cdots x_{n-1})^r \cdot \sigma(x_n^r) \in \fp$ for all $\sigma \in \fS$. Indeed, if $\sigma(n) \not\in [n-1]$ then this product is simply in the $\fS$-orbit of $(x_1 \cdots x_n)^r$, while if $\sigma(n) \in [n-1]$ then some exponent is $>r$. Thus, since $\fp$ is $\fS$-prime, we have $(x_1 \cdots x_{n-1})^r \in \fp$, contradicting the minimality of $n$. This contradiction proves $\fp=\fh_r$, as required.
\end{proof}

The next proposition follows from general theory. Since it too is important to the present paper, we sketch the proof.

\begin{proposition} \label{prop:hs-tors-ann}
A finitely generated torsion $R/\fh_s$-module is killed by a power of $\fh_{s-1}$.
\end{proposition}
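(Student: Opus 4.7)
The plan is to work with the annihilator and use Proposition~\ref{prop:h-prime} to pin down the $\fS$-primes that lie above it. Let $M$ be a finitely generated torsion $R/\fh_s$-module and let $\fa \subset R$ be its annihilator, viewed as an $\fS$-ideal of $R$. Because $M$ is finitely generated, the torsion hypothesis gives $\fa \neq \fh_s$, and of course $\fa \supset \fh_s$, so $\fa \supsetneq \fh_s$. I want to conclude that $\fh_{s-1}^N \subset \fa$ for some $N$, which is equivalent to $\fh_{s-1}^N \cdot M = 0$.

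The key observation is that Proposition~\ref{prop:h-prime} lists \emph{all} $\fS$-primes of $R$ above $\fh_s$: they are $\fh_0,\ldots,\fh_s$. Any $\fS$-prime of $R$ containing $\fa$ a fortiori contains $\fh_s$, hence belongs to $\{\fh_0,\ldots,\fh_s\}$. Since $\fa \not\subset \fh_s$, the prime $\fh_s$ is excluded, so the $\fS$-primes above $\fa$ are contained in $\{\fh_0,\ldots,\fh_{s-1}\}$. All of these contain $\fh_{s-1}$ (as $\fh_{r}\supset \fh_{r+1}$), so $\fh_{s-1}$ lies in every $\fS$-prime of $R$ containing $\fa$. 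This is the geometric heart of the statement: modulo $\fh_s$, the ideal $\fa$ cuts out a ``subset'' of $\mathrm{Spec}^{\fS}$ contained in the closed subset $V(\fh_{s-1})$.

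To convert this into the nilpotent statement, apply the equivariant version of the Nullstellensatz established in \cite{sideals}: the $\fS$-radical of any $\fS$-ideal equals the intersection of the $\fS$-primes containing it. This gives $\fh_{s-1} \subset \sqrt[\fS]{\fa}$. Combined with the fact that $\fh_{s-1}$ is $\fS$-generated by a single element (namely $x_1^s$) and that $R$ is $\fS$-noetherian by Cohen's theorem, a standard equivariant power argument yields $\fh_{s-1}^N \subset \fa$ for some $N$, as desired. The main obstacle is purely a matter of bookkeeping: one needs to verify that the classical implication ``contained in every prime $\Rightarrow$ some power lies in the ideal'' really does go through for finitely generated $\fS$-ideals in the $\fS$-noetherian ring $R$, and this is precisely where the technical apparatus of \cite{sideals} is invoked. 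Once that machinery is in hand, the proof is essentially formal from Proposition~\ref{prop:h-prime}.
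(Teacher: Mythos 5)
Your argument is correct and follows essentially the same route as the paper's proof: pass to the annihilator $\fa \supsetneq \fh_s$, use Proposition~\ref{prop:h-prime} to see the $\fS$-primes over $\fa$ lie among $\fh_0,\ldots,\fh_{s-1}$, deduce $\fh_{s-1}\subset\rad_{\fS}(\fa)$ from \cite[Proposition~2.23]{sideals}, and conclude by noetherianity that a power of the $\fS$-radical lies in $\fa$. The only cosmetic difference is that the paper identifies $\rad_{\fS}(\fa)=\fh_r$ explicitly while you argue directly that $\fh_{s-1}$ lies in every relevant $\fS$-prime; your extra remark that $\fh_{s-1}$ is $\fS$-principal is harmless but not needed.
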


\begin{proof}
Let $M$ be a finitely generated torsion $R/\fh_s$-module. The annihilator $\fa$ of $M$ in $R$ is an $\fS$-ideal that properly contains $\fh_s$. The \defn{$\fS$-radical} of $\fa$, denoted $\rad_{\fS}(\fa)$, is the sum of all $\fS$-ideals $\fb$ such that $\fb^n \subset \fa$ for some $n$. One easily sees that it is an $\fS$-ideal. Furthermore, we have
\begin{displaymath}
\rad_{\fS}(\fa) = \bigcap_{\fa \subset \fq} \fq,
\end{displaymath}
where the intersection is over all $\fS$-primes $\fq$ containing $\fa$; see \cite[Proposition~2.23]{sideals}. Since $\fa$ properly contains $\fh_s$, it follows from Proposition~\ref{prop:h-prime} that $\rad_{\fS}(\fa)=\fh_r$ for some $0 \le r < s$. We thus have $\fh_{s-1} \subset \fa$. Since $R$ is noetherian, one easily sees that $\fa$ contains a power of its $\fS$-radical. Thus $\fh_{s-1}^n \subset \fa$ for some $n$, which completes the proof.
\end{proof}

The finite truncations of $R/\fh_s$ have some special ring-theoretic properties that will play an important role in our work.

\begin{proposition} \label{prop:S-prop}
Fix $n \ge 1$, and let $S=k[x_1, \ldots, x_n](x_i^{s+1})$. Then:
\begin{enumerate}
\item The $k$-linear dual $S^*$ of $S$ is a free $S$-module of rank~1.
\item $S$ is injective as an $S$-module.
\end{enumerate}
\end{proposition}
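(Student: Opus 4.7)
The plan is to prove (a) by writing down an explicit $S$-module isomorphism $\alpha \colon S \to S^*$, and then deduce (b) from the general fact that the $k$-linear dual of any finite-dimensional $k$-algebra is an injective module over that algebra. The key observation for (a) is that $S$ is a Frobenius algebra with one-dimensional socle spanned by the ``top'' monomial, so that multiplication followed by projection onto this socle gives a perfect pairing on $S$.

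Concretely, I would fix the monomial $k$-basis $x^{\ba} = x_1^{a_1} \cdots x_n^{a_n}$ with $0 \le a_i \le s$, so that $\dim_k S = (s+1)^n = \dim_k S^*$. Let $\omega = x_1^s \cdots x_n^s$ and let $\phi \in S^*$ be the functional extracting the coefficient of $\omega$. Define $\alpha(f)(g) = \phi(fg)$; this is manifestly $S$-linear. By the equality of dimensions it then suffices to prove $\alpha$ is injective. Given nonzero $f = \sum_{\ba} c_{\ba} x^{\ba}$, pick any $\ba$ with $c_{\ba} \neq 0$ and set $\bb = (s, \ldots, s) - \ba$, which again lies in the basis range. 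For any basis index $\ba'$, the product $x^{\ba'} x^{\bb}$ equals $\omega$ in $S$ if and only if $a'_i + b_i = s$ for all $i$, i.e., if and only if $\ba' = \ba$; hence $\alpha(f)(x^{\bb}) = c_{\ba} \neq 0$, giving injectivity.

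For (b), the general fact to invoke is: for any finite-dimensional $k$-algebra $A$, the dual $A^* = \Hom_k(A, k)$ is an injective $A$-module, because tensor-hom adjunction gives a natural isomorphism $\Hom_A(M, A^*) \cong \Hom_k(M, k)$ and $\Hom_k(-, k)$ is exact on $k$-vector spaces. Taking $A = S$ and combining with the isomorphism $S \cong S^*$ from (a) shows that $S$ is injective over itself. The only substantive step in the whole proof is the injectivity check in (a), which is just the short monomial computation above; conceptually this is the statement that $S$ is Frobenius (it is a tensor product of the Frobenius algebras $k[x]/(x^{s+1})$), and I anticipate no real obstacle.
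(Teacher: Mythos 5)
Your proof is correct and takes essentially the same approach as the paper: both identify $S^*$ as free of rank one on the functional dual to the top monomial $\omega = x_1^s\cdots x_n^s$ (your $\alpha(1)=\phi$ is exactly the paper's $y_{(s,\ldots,s)}$), with the paper checking surjectivity of the multiplication map via $x^{\alpha} y_{\beta}=y_{\beta-\alpha}$ and you checking injectivity plus a dimension count, and both deduce (b) from the standard fact that the $k$-linear dual of a (projective, here free) module over a finite-dimensional algebra is injective.
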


\begin{proof}
(a) For $\alpha \in \bN^n$, let $x^{\alpha}=x_1^{\alpha_1} \cdots x_n^{\alpha_n}$. Then the $x^{\alpha}$ with $0 \le \alpha_i \le s$ form a $k$-basis of $S$. Let $\{ y_{\alpha} \}$ be the dual basis of $S^*$. A simple computation shows that $x^{\alpha} y_{\beta}=y_{\beta-\alpha}$, which is taken to be~0 if any coordinate is negative. We thus see that $S^*$ is free with basis $y_{(s,\ldots,s)}$.

(b) Since $S$ is a finite dimension $k$-algebra, the $k$-linear dual of any finite dimensional projective $S$-module is an injective $S$-module. Thus $S \cong S^*$ is injective.
\end{proof}

\subsection{The \texorpdfstring{$\cP$}{P}-modules} \label{ss:Pmod}

Let $\cP_{s,n} = R/\fh_s \otimes \VV_n$. This is the free $R/\fh_s$-module with basis $e_{i_1,\ldots,i_n}$, where the $i$'s are distinct. Every $R/\fh_s$-module is a quotient of a direct sum of $\cP_{s,n}$'s. More generally, if $V$ is an $\fS_n$-module, we put
\begin{displaymath}
\cP_s(V) = R/\fh_s \otimes \PP(V) = (\cP_{s,n} \otimes V)_{\fS_n}
\end{displaymath}
In characteristic~0, we let $\cP_{s,\lambda}=\cP_s(S^{\lambda})$, which is the $S^{\lambda}$-multiplicity space of $\cP_{s,n}$ (since $S^{\lambda}$ is self-dual).

\subsection{The \texorpdfstring{$\cQ$}{Q}-modules} \label{ss:Qmod}

Define $\cQ_{s,n}$ to be the quotient of $\cP_{s,n}$ by the submodule generated by elements $x_i e_{i_1, \ldots, i_n}$, where $i$ is one of the indices $i_1, \ldots, i_n$. These modules, and some variants defined below, will play a crucial role in our analysis of $R/\fh_s$. We begin by recording the mapping property for the $\cQ$'s.

\begin{proposition} \label{prop:Qmapprop}
Let $M$ be a $R/\fh_s$-module, and let $M'$ be the set of elements $M$ that are fixed by $\fS(n)$ and annihilated by each $x_i$ for $1 \le i \le n$. Then we have a bijection
\begin{displaymath}
\Hom_{R/\fh_s}(\cQ_{s,n}, M) \to M', \qquad f \mapsto f(e_{1,\ldots,n}).
\end{displaymath}
\end{proposition}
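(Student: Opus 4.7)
The plan is to reduce the statement to the mapping property already known for $\cP_{s,n}$ (combining the description $\cP_{s,n}=R/\fh_s\otimes\VV_n$ with Proposition~\ref{prop:Vn}), then translate the extra relations defining $\cQ_{s,n}$ into conditions on the evaluation at $e_{1,\ldots,n}$.

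First I would establish the analogous mapping property for $\cP_{s,n}$. Since $\cP_{s,n}=R/\fh_s\otimes\VV_n$ is the free $R/\fh_s$-module (in the $\fS$-equivariant sense) induced from $\VV_n$, the usual tensor-Hom adjunction together with Proposition~\ref{prop:Vn} gives a natural bijection
\begin{displaymath}
\Hom_{R/\fh_s}(\cP_{s,n},M)\xrightarrow{\;\sim\;}\Hom_{\fS}(\VV_n,M)\xrightarrow{\;\sim\;}M^{\fS(n)},\qquad f\mapsto f(e_{1,\ldots,n}).
\end{displaymath}
Thus every $R/\fh_s$-linear $\fS$-equivariant map out of $\cP_{s,n}$ is uniquely determined by where it sends $e_{1,\ldots,n}$, subject only to the $\fS(n)$-invariance of the image.

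Next I would invoke the universal property of the quotient. By construction $\cQ_{s,n}=\cP_{s,n}/K$, where $K$ is the $R/\fh_s$-submodule generated by the elements $x_ie_{i_1,\ldots,i_n}$ with $i\in\{i_1,\ldots,i_n\}$. Maps $\cQ_{s,n}\to M$ correspond to those $f\colon\cP_{s,n}\to M$ with $f(K)=0$. Setting $m=f(e_{1,\ldots,n})$, it remains to show $f(K)=0$ is equivalent to $x_im=0$ for $1\le i\le n$.

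For this last step, the key observation is that every generator of $K$ lies in the $\fS$-orbit of one of the standard generators $x_je_{1,\ldots,n}$ with $j\in\{1,\ldots,n\}$: if $i=i_j$, pick any $\sigma\in\fS$ with $\sigma(\ell)=i_\ell$ for $\ell=1,\ldots,n$, and then $\sigma(x_je_{1,\ldots,n})=x_ie_{i_1,\ldots,i_n}$. Since $f$ is $\fS$-equivariant, vanishing on all of $K$ is equivalent to vanishing on the finite set $\{x_je_{1,\ldots,n}\}_{j=1}^n$, i.e.\ to $x_jm=0$ for $1\le j\le n$. Combined with the $\fS(n)$-invariance already guaranteed, this matches exactly the description of $M'$, completing the proof.

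There is no real obstacle here — the argument is a routine Yoneda/adjunction computation combined with an $\fS$-orbit bookkeeping step. The only point requiring mild care is verifying that the single orbit calculation above really exhausts all defining relations of $\cQ_{s,n}$, but this is immediate once one notes that $\fS$ acts transitively on ordered $n$-tuples of distinct positive integers.
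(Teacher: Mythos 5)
Your proof is correct and follows the same route the paper does: the paper's one-line proof ("follows from the mapping property for $\VV_n$") is exactly the reduction you spell out, namely the free-module adjunction for $\cP_{s,n}=R/\fh_s\otimes\VV_n$ combined with Proposition~\ref{prop:Vn}, followed by the observation that the defining relations of $\cQ_{s,n}$ form a single $\fS$-orbit for each $j\in\{1,\ldots,n\}$. You have simply filled in the details the authors left implicit.
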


\begin{proof}
This follows from the mapping property for $\VV_n$ (Proposition~\ref{prop:Vn}).
\end{proof}

We note some corollaries.

\begin{corollary} \label{cor:QQmaps}
The space $\Hom_{R/\fh_s}(\cQ_{s,n}, \cQ_{s,m})$ has a $k$-basis indexed by injections $[n] \to [m]$.
\end{corollary}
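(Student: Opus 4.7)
The plan is to use the mapping property of Proposition~\ref{prop:Qmapprop}, which identifies
\[
\Hom_{R/\fh_s}(\cQ_{s,n},\cQ_{s,m}) \;\cong\; M' := \bigl\{v\in \cQ_{s,m} : \sigma v=v~\forall\sigma\in\fS(n),\ x_1 v=\cdots=x_n v=0\bigr\}.
\]
The proof then reduces to producing an explicit $k$-basis of this subspace and checking its cardinality matches the number of injective maps described in the statement.

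First, I would pick the natural monomial $k$-basis of $\cQ_{s,m}$ consisting of elements $x^\alpha e_{i_1,\ldots,i_m}$, where $(i_1,\ldots,i_m)$ is an ordered tuple of distinct positive integers and $\alpha$ is a finitely supported function $\{1,2,\ldots\}\setminus\{i_1,\ldots,i_m\} \to \{0,1,\ldots,s\}$ (the support constraint encodes the defining $\cQ$-relations, and the exponent bound encodes the ideal $\fh_s$). The first key step is an orbit argument pinning down the $\fS(n)$-invariants: since $\fS(n)$ acts by permuting the integers $>n$, any basis monomial involving such an index lies in an infinite $\fS(n)$-orbit of distinct monomials, so it cannot appear with nonzero coefficient in a finitely supported invariant vector. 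This forces $\{i_1,\ldots,i_m\}\subseteq [n]$ and $\mathrm{supp}(\alpha)\subseteq [n]\setminus\{i_1,\ldots,i_m\}$. This is the only place smoothness is used essentially, and it is the source of the dimension restriction relating $n$ and $m$.

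Second, I would impose the annihilation condition on the surviving monomials. If $j\in\{i_1,\ldots,i_m\}$, then $x_j e_{i_1,\ldots,i_m}=0$ in $\cQ_{s,m}$, so no constraint arises. If $j\in[n]\setminus\{i_1,\ldots,i_m\}$, then $x_j\cdot x^\alpha e_{i_1,\ldots,i_m}=x^{\alpha+\epsilon_j}e_{i_1,\ldots,i_m}$, which vanishes in $\cQ_{s,m}$ precisely when $\alpha_j=s$. Because basis monomials with different $e$-tuples remain linearly independent after this multiplication, the annihilation condition forces $\alpha_j=s$ for every $j\in[n]\setminus\{i_1,\ldots,i_m\}$, and this completely determines $\alpha$ from the tuple $(i_1,\ldots,i_m)$.

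Putting the two steps together produces a canonical $k$-basis of $M'$ consisting of the vectors
\[
\Bigl(\prod_{j\in[n]\setminus\{i_1,\ldots,i_m\}} x_j^s\Bigr)\,e_{i_1,\ldots,i_m},
\]
one for each ordered tuple of distinct elements parametrizing an injective map between $[n]$ and $[m]$, matching the indexing set in the corollary. The principal obstacle is the orbit-based $\fS(n)$-invariance reduction; once that is in place, the remaining work is a mechanical check with the defining relations of $\cQ_{s,m}$ and $\fh_s$.
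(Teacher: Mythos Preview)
Your argument is correct and follows the same route as the paper: invoke the mapping property of Proposition~\ref{prop:Qmapprop}, cut down to $\fS(n)$-invariants in $\cQ_{s,m}$ supported on indices in $[n]$, and then impose the annihilation by $x_1,\dots,x_n$ to force the remaining exponents to equal $s$. One bookkeeping caution: the basis you produce is naturally indexed by ordered $m$-tuples of distinct elements of $[n]$, i.e., by injections $[m]\to[n]$, which is the opposite direction from the corollary as stated (the paper's own proof has the same $n\leftrightarrow m$ transposition in its indexing).
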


\begin{proof}
An $\fS(n)$-invariant element of $\cQ_{s,m}$ has the form
\begin{displaymath}
\sum_{i \colon [n] \to [m]} a(i) e_{i(1), \ldots, i(n)}
\end{displaymath}
where $a(i)$ is a polynomial in $x_1, \ldots, x_n$. For $i$ as above, let $b(i)$ be the product of the $x_j^s$ over $j \not\in \im(i)$. Then the above element is annihilated by $x_1, \ldots, x_n$ if and only if $a(i)$ is divisible by $b(i)$ for each $i$. Of course, we can assume that the variables $x_{i(1)}, \ldots, x_{i(n)}$ do not occur in $a(i)$. If $a(i)$ is divisible by $b(i)$ and also satisfies this condition then it is simply a $k$-linear multiple of $b(i)$. We thus see that the elements $b(i) e_{i(1), \ldots, i(n)}$ form a $k$-basis of $\cQ_{s,m}'$, to use the notation of Proposition~\ref{prop:Qmapprop}. The result follows.
\end{proof}

\begin{corollary}
We have an injective map of $R/\fh_s$-modules
\begin{displaymath}
f \colon \cQ_{s,n} \to \cP_{s,n}, \qquad f(e_{1,\ldots,n}) = (x_1 \cdots x_n)^s e_{1,\ldots,n}.
\end{displaymath}
Thus $\cQ_{s,n}$ can be realized as a submodule of $\cP_{s,n}$.
\end{corollary}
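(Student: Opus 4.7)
The plan is to first invoke the mapping property of $\cQ_{s,n}$ (Proposition~\ref{prop:Qmapprop}) to construct $f$, and then to prove injectivity by an explicit computation with monomial bases of $\cQ_{s,n}$ and $\cP_{s,n}$.

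For the construction, set $u := (x_1 \cdots x_n)^s e_{1,\ldots,n} \in \cP_{s,n}$. I must verify that (i) $u$ is fixed by $\fS(n)$ and (ii) $x_i u = 0$ for each $1 \le i \le n$. Point (i) is immediate, since every element of $\fS(n)$ fixes the variables $x_1,\ldots,x_n$ and the vector $e_{1,\ldots,n}$. For point (ii), the calculation
\[
x_i \cdot (x_1 \cdots x_n)^s = x_1^s \cdots x_i^{s+1} \cdots x_n^s \in \fh_s
\]
shows that $x_i u = 0$ in $\cP_{s,n} = R/\fh_s \otimes \VV_n$. Proposition~\ref{prop:Qmapprop} then supplies the desired $R/\fh_s$-linear map $f$ sending $e_{1,\ldots,n}$ to $u$.

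For injectivity, I would work with the evident monomial bases. A $k$-basis of $\cP_{s,n}$ is given by the elements $x^\beta e_{\vec{i}}$, where $\vec{i} = (i_1,\ldots,i_n)$ ranges over ordered tuples of distinct positive integers and $\beta$ over finitely-supported exponent vectors satisfying $0 \le \beta_l \le s$. The kernel of $\cP_{s,n} \twoheadrightarrow \cQ_{s,n}$ is the $R$-submodule generated by the $x_{i_j} e_{\vec{i}}$ over all $\vec{i}$ and all $j$, and one checks that on monomials this kernel is exactly the span of those $x^\beta e_{\vec{i}}$ with $\beta_{i_j} > 0$ for some $j$. Hence $\cQ_{s,n}$ inherits a monomial $k$-basis consisting of the images of $x^\alpha e_{\vec{i}}$ with $\alpha_{i_j} = 0$ for every $j$. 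By $\fS$-equivariance and $R/\fh_s$-linearity, $f$ sends this basis element to $x^\alpha (x_{i_1} \cdots x_{i_n})^s e_{\vec{i}} = x^\beta e_{\vec{i}}$, where $\beta_{i_j} = s$ for each $j$ and $\beta_l = \alpha_l$ otherwise; the bounds on $\alpha$ ensure that $0 \le \beta_l \le s$ throughout, so $x^\beta e_{\vec{i}}$ is an honest basis element of $\cP_{s,n}$. Distinct $(\alpha, \vec{i})$ produce distinct images, since the tuple $\vec{i}$ is preserved and $\alpha$ can be read off of $\beta$ at coordinates outside of $\vec{i}$.

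I do not anticipate a real obstacle: the content is essentially bookkeeping, with the key observation being that multiplication by the ``local socle element'' $(x_{i_1} \cdots x_{i_n})^s$ identifies the monomial basis of $\cQ_{s,n}$ attached to $\vec{i}$ with the subset of the monomial basis of $\cP_{s,n}$ whose exponent equals $s$ on each index in $\vec{i}$. This may be viewed as a manifestation of the self-injectivity of the truncation $S = k[x_1,\ldots,x_n]/(x_i^{s+1})$ recorded in Proposition~\ref{prop:S-prop}.
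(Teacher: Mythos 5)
Your proposal is correct and takes the same approach as the paper: existence of $f$ from the mapping property of $\cQ_{s,n}$ (Proposition~\ref{prop:Qmapprop}), and injectivity by checking that $f$ carries the monomial $k$-basis of $\cQ_{s,n}$ to distinct elements of the monomial $k$-basis of $\cP_{s,n}$. The paper merely asserts the basis check as ``one easily sees''; you have filled in the bookkeeping, and your observation that the key point is self-injectivity of the finite truncation $S=k[x_1,\ldots,x_n]/(x_i^{s+1})$ is a nice conceptual remark consistent with Proposition~\ref{prop:S-prop}.
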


\begin{proof}
The mapping property for $\cQ_{s,n}$ ensures the existence of the map $f$. One easily sees that $f$ maps a $k$-basis of $\cQ_{s,n}$ to $k$-linearly independent elements, and so $f$ is injective.
\end{proof}

In fact, the above map is, in a sense, universal for maps from $Q$'s to $P$'s, as the following result shows.

\begin{proposition} \label{prop:QPmaps}
Let $n,m \in \bN$. Then any map $f \colon \cQ_{s,m} \to \cP_{s,n}$ has image contained in $\cQ_{s,n} \subset \cP_{s,n}$. In other words, the inclusion $\cQ_{s,n} \to \cP_{s,n}$ induces an isomorphism
\begin{displaymath}
\Hom_R(\cQ_{s,m}, \cQ_{s,n}) \to \Hom_R(\cQ_{s,m}, \cP_{s,n}).
\end{displaymath}
\end{proposition}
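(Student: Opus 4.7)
The plan is to use the mapping property of $\cQ_{s,m}$ from Proposition~\ref{prop:Qmapprop}: a map $f \colon \cQ_{s,m} \to \cP_{s,n}$ corresponds to the element $\alpha = f(e_{1,\ldots,m}) \in \cP_{s,n}$ that is $\fS(m)$-fixed and annihilated by $x_1,\ldots,x_m$. Under the embedding established just above, $\cQ_{s,n} \subset \cP_{s,n}$ consists of those elements whose expansion $\sum_J b_J e_J$ in the free $R/\fh_s$-basis (indexed by ordered tuples $J = (j_1,\ldots,j_n)$ of distinct positive integers) satisfies $b_J \in (x_{j_1} \cdots x_{j_n})^s \cdot (R/\fh_s)$ for every $J$. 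The goal thus becomes to check that any $\alpha$ with the two properties above has this form.

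Writing $\alpha = \sum_J a_J e_J$ as a finite sum in the free $R/\fh_s$-basis, the annihilation condition translates into $x_i a_J = 0$ in $R/\fh_s$ for all $J$ and all $i \in [m]$. A quick inspection of the monomial basis of $R/\fh_s$ shows that $\mathrm{Ann}(x_i) = x_i^s \cdot (R/\fh_s)$, so each $a_J$ is divisible by $(x_1 \cdots x_m)^s$.

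The main obstacle, and the place where the argument is not purely formal, is to show that $a_J = 0$ whenever some entry of $J$ exceeds $m$. The $\fS(m)$-invariance of $\alpha$ amounts to $a_{\sigma(J)} = \sigma(a_J)$ for all $\sigma \in \fS(m)$. If some $j_k > m$, then transposing $j_k$ with any sufficiently large index $l > m$ outside $\{j_1,\ldots,j_n\}$ defines an element of $\fS(m)$, and varying $l$ produces an infinite $\fS(m)$-orbit of $J$. Since $\fS(m)$ acts by ring automorphisms on $R/\fh_s$, a nonzero $a_J$ would force $a_{\sigma(J)} \neq 0$ for infinitely many distinct tuples $\sigma(J)$, contradicting that $\alpha$ is a finite $R/\fh_s$-linear combination of basis elements. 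Hence $a_J = 0$ whenever some $j_k > m$.

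For the surviving tuples with $\{j_1,\ldots,j_n\} \subseteq [m]$, divisibility of $a_J$ by $(x_1 \cdots x_m)^s$ automatically upgrades to divisibility by its divisor $(x_{j_1} \cdots x_{j_n})^s$. This places $\alpha$ inside $\cQ_{s,n}$ and completes the proof.
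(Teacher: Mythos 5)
Your proof is correct and follows essentially the same approach as the paper: both compute $f(e_{1,\ldots,m})$ in the free basis of $\cP_{s,n}$, use annihilation by $x_1,\ldots,x_m$ to get divisibility of coefficients by $(x_1\cdots x_m)^s$, and use $\fS(m)$-invariance together with finiteness of the sum to confine the basis indices to $[m]$. You are somewhat more explicit than the paper about the infinite-orbit argument forcing the indices into $[m]$, but the substance is identical.
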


\begin{proof}
Let $f$ be given, and put $z=f(e_{1,\ldots,m})$. Since $z$ is an $\fS(m)$-invariant element of $\cP_{s,n}$, we have an expression
\begin{displaymath}
z = \sum_{i \colon [n] \to [m]} a(i) e_{i(1),\ldots,i(n)},
\end{displaymath}
where each $a_{i_1,\ldots,i_n}$ is a polynomial in $x_1, \ldots, x_m$. Since $z$ is killed by $x_1, \ldots, x_m$, it follows that each coefficient $a(i)$ is divisible by $(x_1 \cdots x_m)^s$, and thus simply a scalar multiple of this monomial. Thus $z$ is contained in $\cQ_{s,n}$. Since $e_{1,\ldots,m}$ generates $\cQ_{s,m}$, it follows that the image of $f$ is contained in $\cQ_{s,n}$.
\end{proof}

For an $\fS_n$-module $V$, we put
\begin{displaymath}
\cQ_s(V) = (\cQ_{s,n} \otimes V)^{\fS_n}.
\end{displaymath}
As usual, in characteristic~0 we put $\cQ_{s,\lambda}=\cQ_s(S^{\lambda})$; in this case, we have a decomposition
\begin{displaymath}
\cQ_{s,n} = \bigoplus_{\lambda \vdash n} S^{\lambda} \otimes \cQ_{s,\lambda}.
\end{displaymath}
Collectively, we refer to the modules $\cQ_s(V)$ as \defn{$\cQ_s$-modules}. Note that if $\chr(k)=0$ then any $\cQ_s$-module is a direct sum of $\cQ_{s,\lambda}$'s. We say that an $R/\fh_s$-module $M$ is \defn{$\cQ_s$-filtered} if there is a finite length filtration $0=F_0 \subset \cdots \subset F_n=M$ where each $F_i/F_{i-1}$ is a $\cQ_s$-module.

\subsection{A more general construction} \label{ss:indmod}

Fix $n \ge 0$. Let $S=k[x_i]/(x_i^{s+1})$ where $1 \le i \le n$, and let $S'=k[x_i]/(x_i^{s+1})$ where $n<i$, so that $R/\fh_s=S \otimes S'$. Let $\cC$ be the category of $(\fS_n \times \fS(n))$-equivariant $S$-modules. Then we have a restriction functor
\begin{displaymath}
\sF \colon \Mod_{R/\fh_s} \to \cC.
\end{displaymath}
This functor has a left adjoint $\sG$, given by
\begin{displaymath}
\sG(M) = \Ind_{\fS_n \times \fS(n)}^{\fS}(M \otimes S').
\end{displaymath}
The $\sG$ construction interpolates between the $\cP$ and $\cQ$ construction. Indeed, if $V$ is a representation of $\fS_n$ then
\begin{displaymath}
\cP_s(V)=\sG(S \otimes V), \qquad
\cQ_s(V)=\sG(S/\fm \otimes V),
\end{displaymath}
where $\fm$ is the homogeneous maximal ideal of $S$. In the above equations, $\fS(n)$ acts trivially on the input to $\sG$. The $\sG$ construction allows us to easily prove the following useful results.

\begin{proposition} \label{prop:PQfilt2}
The module $\cP_s(V)$ is $\cQ_s$-filtered.
\end{proposition}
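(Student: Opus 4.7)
The plan is to exploit the functor $\sG$ from \S \ref{ss:indmod} and the identity $\cP_s(V) = \sG(S \otimes V)$. First I would verify that $\sG$ is exact: since it is the composition of the exact functor $M \mapsto M \otimes_k S'$ with induction from the subgroup $\fS_n \times \fS(n)$ to $\fS$, which is exact for smooth representations. Therefore, any filtration of $S \otimes V$ in the category $\cC$ of $(\fS_n \times \fS(n))$-equivariant $S$-modules produces, upon applying $\sG$, a filtration of $\cP_s(V)$ with the corresponding graded pieces.

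Next I would construct an $(\fS_n \times \fS(n))$-equivariant filtration of $S \otimes V$ whose subquotients are killed by $\fm$. The natural choice is the $\fm$-adic filtration $S \supset \fm \supset \fm^2 \supset \cdots \supset \fm^{N+1} = 0$, where $N = n s$. Each $\fm^i$ is $\fS_n$-stable since $\fS_n$ acts on $S$ by $k$-algebra automorphisms preserving $\fm$, and $\fS(n)$ acts trivially on $S$, so the whole filtration lives in $\cC$. Tensoring with $V$, we obtain an $(\fS_n \times \fS(n))$-equivariant filtration of $S \otimes V$ with graded pieces $(\fm^i/\fm^{i+1}) \otimes V$, each of which is annihilated by $\fm$.

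Since a finitely generated $S$-module killed by $\fm$ is an $S/\fm$-module with trivial $\fS(n)$ action, we can write $(\fm^i/\fm^{i+1}) \otimes V = (S/\fm) \otimes W_i$, where $W_i := (\fm^i/\fm^{i+1}) \otimes V$ is simply a representation of $\fS_n$. Applying $\sG$ and using the identity $\cQ_s(W) = \sG(S/\fm \otimes W)$ from \S \ref{ss:indmod}, we obtain a filtration of $\cP_s(V)$ whose graded pieces are exactly the $\cQ_s(W_i)$, proving that $\cP_s(V)$ is $\cQ_s$-filtered.

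There is no real obstacle here; the argument is essentially formal once one believes that $\sG$ is exact and that the $\fm$-adic filtration respects the $\fS_n$ action. The only mild care needed is to confirm $\fS(n)$-equivariance at each stage, but this is automatic since $\fS(n)$ acts trivially on $S$ (and hence on every subquotient of the $\fm$-adic filtration) throughout.
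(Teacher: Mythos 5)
Your proof takes exactly the same route as the paper: apply the exact functor $\sG$ to the $\fm$-adic filtration on $S \otimes V$, whose graded pieces $\fm^i/\fm^{i+1} \otimes V$ are killed by $\fm$ and hence yield $\cQ_s$-modules. You have simply spelled out the exactness of $\sG$ and the equivariance of the filtration in more detail than the paper does.
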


\begin{proof}
The $\fm$-adic filtration on $S$ gives a filtration on $\cP_s(V)$ where the graded pieces are $\cQ_s(\fm^i/\fm^{i+1} \otimes V)$. Here we have used that the functor $\sG$ is exact.
\end{proof}

In the case of $\cP_{s,n}$, we can be more precise:

\begin{proposition} \label{prop:PQfilt}
The module $\cP_{s,n}$ carries a filtration $0=F_0 \subset \cdots \subset F_{(s+1)^n}=\cP_{s,n}$ where $F_i/F_{i-1}$ is isomorphic to $\cQ_{s,n}$ for each $i$.
\end{proposition}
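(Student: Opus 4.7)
The plan is to exploit the exactness of the functor $\sG$ from \S\ref{ss:indmod}. Setting $S = k[x_1, \ldots, x_n]/(x_i^{s+1})$ and $V = k[\fS_n]$ for the regular representation, we have $\cP_{s,n} = \sG(S \otimes V)$ and $\cQ_{s,n} = \sG((S/\fm) \otimes V)$ in the category $\cC$ of \S\ref{ss:indmod}. It therefore suffices to construct a filtration in $\cC$
\[
0 = N_0 \subset N_1 \subset \cdots \subset N_N = S \otimes V, \qquad N = (s+1)^n,
\]
whose successive quotients $N_j/N_{j-1}$ are all isomorphic to $(S/\fm) \otimes V$; applying the exact functor $\sG$ will then yield the desired filtration of $\cP_{s,n}$.

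To build such a filtration, I would first order the monomial basis of $S$ as $m_1, m_2, \ldots, m_N$ by the integer $\sum_k (s - a_k)(s+1)^{k-1}$, where $m = x_1^{a_1} \cdots x_n^{a_n}$. Then $m_1 = x_1^s \cdots x_n^s$, $m_N = 1$, and multiplication by $x_k$ either sends $m_j$ to zero or strictly decreases its index. Set $L_j = \mathrm{span}_k\{m_1, \ldots, m_j\}$: this is an ideal of $S$ satisfying $\fm \cdot L_j \subset L_{j-1}$, and each quotient $L_j/L_{j-1}$ is isomorphic to $S/\fm$ as an $S$-module. The $L_j$ are not $\fS_n$-stable, but they can be promoted to an $\fS_n$-equivariant $S$-submodule of $S \otimes V$ by the twisted sum
\[
N_j = \bigoplus_{\sigma \in \fS_n} \sigma(L_j) \otimes k\sigma.
\]
Equivariance under $\fS_n$ follows from the identity $\tau(\sigma(L_j)) = (\tau\sigma)(L_j)$, and $S$-stability follows from each $\sigma(L_j)$ being an ideal of $S$.

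A direct computation then shows $N_j/N_{j-1} = \bigoplus_{\sigma \in \fS_n} k \cdot (\sigma(m_j) \otimes \sigma)$ in $\cC$: on each line the $S$-action factors through the augmentation $S \to S/\fm$ (by $\fm L_j \subset L_{j-1}$), while $\fS_n$ permutes the lines via the left regular action $\tau \cdot (\sigma(m_j) \otimes \sigma) = (\tau\sigma)(m_j) \otimes (\tau\sigma)$. Hence $N_j/N_{j-1} \cong (S/\fm) \otimes V$, and applying $\sG$ completes the proof. The key conceptual step, and the main obstacle to a naive approach, is the $\sigma$-twist in the definition of $N_j$: since the ideals $L_j$ are not $\fS_n$-invariant, one cannot simply tensor the chain $L_\bullet$ with $V$ to obtain an equivariant filtration, and the twist is precisely what restores $\fS_n$-equivariance while preserving the one-dimensionality of the graded pieces on each $\sigma$-component.
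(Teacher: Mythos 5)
Your proof is correct, and it differs from the paper's in how the filtration is produced. The paper invokes Proposition \ref{prop:PQfilt2}, obtaining the $\fm$-adic filtration with graded pieces $\cQ_s(\fm^i/\fm^{i+1} \otimes k[\fS_n])$, and then refines each layer using the fact that $W \otimes k[\fS_n]$ is a free $k[\fS_n]$-module of rank $\dim_k W$ for any $\fS_n$-representation $W$; the total length is then $\sum_i \dim_k(\fm^i/\fm^{i+1}) = \dim_k S = (s+1)^n$. You instead build the complete flag in one stroke: any total ordering of the monomial basis of $S$ under which multiplication by each $x_k$ strictly decreases the index (or kills the monomial) produces a chain of ideals $L_j$ with $\fm L_j \subset L_{j-1}$, and your twist $N_j = \bigoplus_{\sigma \in \fS_n} \sigma(L_j) \otimes k\sigma$ promotes this non-equivariant chain to an $\fS_n$-equivariant filtration of $S \otimes k[\fS_n]$ inside $\cC$. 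That twist is precisely the concrete counterpart of the regular-representation freeness the paper cites abstractly. (One small remark: your particular flag need not refine the $\fm$-adic one --- for $n = s = 2$ the degree sequence along your order is $4,3,2,3,2,1,2,1,0$ --- but this is immaterial; all that is used is the ideal property and $\fm L_j \subset L_{j-1}$.) Your verifications of $S$- and $\fS_n$-stability and of the identification $N_j/N_{j-1} \cong (S/\fm) \otimes k[\fS_n]$ are correct, and applying the exact functor $\sG$ finishes exactly as in the paper. The paper's route is shorter because it delegates the key point to a general fact; yours is more self-contained and makes the mechanism explicit.
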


\begin{proof}
We have just seen that $\cP_{s,n}$ carries a filtration with graded pieces $\cQ_s(\fm^i/\fm^{i+1} \otimes k[\fS_n])$. As a $k[\fS_n]$-module, $\fm^i/\fm^{i+1} \otimes[\fS_n]$ is isomorphic to a direct sum of $\dim_k(\fm^i/\fm^{i+1})$ copies of $k[\fS_n]$. Thus we can refine the filtration so that each graded piece is $\cQ_s(k[\fS_n])=\cQ_{s,n}$. The length of the refined filtration is $\dim_k(S)=(s+1)^n$.
\end{proof}

\begin{proposition} \label{prop:QP-res}
Let $V$ be a finite dimensional $k[\fS_n]$-module. We have a resolution
\begin{displaymath}
0 \to \cQ_s(V) \to \cP_{s,n}^{\oplus r_0} \to \cP_{s,n}^{\oplus r_1} \to \cdots
\end{displaymath}
for some $r_0, r_1, \ldots \in \bN$.
\end{proposition}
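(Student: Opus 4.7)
The plan is to reduce the statement to a resolution in the category $\cC$ introduced in \S \ref{ss:indmod}, and then apply the exact functor $\sG$. Recall that $S = k[x_1,\ldots,x_n]/(x_i^{s+1})$. From \S \ref{ss:indmod} we have $\cQ_s(V) = \sG(S/\fm \otimes V)$, while the identification $\cP_s(k[\fS_n]) = \cP_{s,n}$ (obtained by taking $\fS_n$-coinvariants of $\cP_{s,n} \otimes k[\fS_n]$) gives $\cP_{s,n} = \sG(S \otimes k[\fS_n])$. Since induction from the subgroup $\fS_n \times \fS(n)$ and tensoring over $k$ with $S'$ are both exact, $\sG$ is exact. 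It therefore suffices to construct a resolution of $S/\fm \otimes V$ in $\cC$ by modules of the form $(S \otimes k[\fS_n])^{\oplus r}$.

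Because $\fS(n)$ acts trivially on all the modules in question, working in $\cC$ is the same as working in the category of modules over the skew group algebra $S \rtimes \fS_n$, in which $S \otimes k[\fS_n]$ is the regular representation. The key technical step is to show that $S \rtimes \fS_n$ is a Frobenius algebra. By Proposition~\ref{prop:S-prop}(a), $S$ is Frobenius with trace form $\epsilon_S \colon S \to k$ picking out the coefficient of $(x_1 \cdots x_n)^s$; since this monomial is symmetric, $\epsilon_S$ is $\fS_n$-invariant. The formula $\epsilon(a \cdot \sigma) = \epsilon_S(a) \delta_{\sigma, e}$ then defines a $k$-linear form on $S \rtimes \fS_n$, and a direct calculation shows that the associated bilinear pairing $(x,y) \mapsto \epsilon(xy)$ is nondegenerate.

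Granted the Frobenius property, the regular representation $S \otimes k[\fS_n]$ is both projective and injective as an $S \rtimes \fS_n$-module, and is a cogenerator (it contains every simple in its socle). Hence any finite-dimensional $S \rtimes \fS_n$-module embeds into a finite direct sum of copies of $S \otimes k[\fS_n]$. Starting with $S/\fm \otimes V$ (finite-dimensional of dimension $\dim V$), we embed into $(S \otimes k[\fS_n])^{\oplus r_0}$; the cokernel is again finite-dimensional (bounded by $r_0 \cdot (s+1)^n \cdot n!$), so we iterate to produce the desired resolution in $\cC$. Applying $\sG$ yields the resolution
\[
0 \to \cQ_s(V) \to \cP_{s,n}^{\oplus r_0} \to \cP_{s,n}^{\oplus r_1} \to \cdots
\]
asserted in the proposition.

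The main obstacle is establishing the Frobenius structure on $S \rtimes \fS_n$; once this is in hand, everything else is formal. The crucial observation enabling the skew group algebra to inherit the Frobenius form from $S$ is the $\fS_n$-invariance of the top monomial $(x_1 \cdots x_n)^s$, which is automatic by symmetry.
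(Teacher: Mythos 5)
Your proof is correct, but it takes a genuinely different route to the key step. Both you and the paper reduce to the finite, local picture: construct a resolution of $S/\fm \otimes V$ by copies of $S \otimes k[\fS_n]$ in the category of $\fS_n$-equivariant $S$-modules, equip it with the trivial $\fS(n)$-action, and push it out through the exact functor $\sG$. The paper gets the local resolution by separately dualizing a free resolution of $S/\fm$ over $S$ (using Proposition~\ref{prop:S-prop}) and a free resolution of $V^*$ over $k[\fS_n]$, then taking the total complex of the tensor product over $k$, which is a resolution by the K\"unneth theorem over a field. You instead observe that the skew group algebra $S \rtimes \fS_n$ is Frobenius, so its regular module $S \otimes k[\fS_n]$ is an injective cogenerator, and you build the resolution by repeatedly embedding finite-dimensional modules into finite sums of copies of it. This is a clean packaging; the Frobenius property of $S \rtimes \fS_n$ bundles together the self-injectivity of $S$ and of $k[\fS_n]$ that the paper uses separately. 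One small point: the $\fS_n$-invariance of $\epsilon_S$ (equivalently, of the top monomial $(x_1\cdots x_n)^s$) is not actually needed for your nondegeneracy computation --- if $x=\sum_\sigma a_\sigma \sigma$ is in the left kernel of the pairing, then testing against $y = b\tau$ gives $\epsilon_S\bigl(a_{\tau^{-1}}\,\tau^{-1}(b)\bigr)=0$ for all $b\in S$, and since $\tau^{-1}(b)$ ranges over all of $S$, nondegeneracy of $\epsilon_S$ alone forces $a_{\tau^{-1}}=0$. The invariance would be relevant if you wanted to invoke a general theorem on Frobenius forms on skew group algebras, but your direct verification sidesteps it.
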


\begin{proof}
The $k$-linear dual of $S$ is isomorphic to $S$ as an $S$-module (Proposition~\ref{prop:S-prop}). Thus if we take the $k$-linear dual of the minimal free resolution of $S/\fm$, we obtain a resolution of $S$-modules
\begin{displaymath}
0 \to S/\fm \to S^{\oplus a_0} \to S^{\oplus a_1} \to \cdots.
\end{displaymath}
Similarly, dualizing a free resolution of $V^*$ as a $k[\fS_n]$-module, we obtain a resolution of $k[\fS_n]$-modules
\begin{displaymath}
0 \to V \to k[\fS_n]^{\oplus b_0} \to k[\fS_n]^{\oplus b_1} \to \cdots.
\end{displaymath}
Tensoring these resolutions together and taking the total complex, we obtain a resolution of $\fS_n$-equivariant $S$-modules
\begin{displaymath}
0 \to S/\fm \otimes V \to (S \otimes k[\fS_n])^{\oplus r_0} \to (S \otimes k[\fS_n])^{\oplus r_1} \to \cdots.
\end{displaymath}
We regard this as a sequence in $\cC$ by letting $\fS(n)$ act trivially. Applying $\sG$ yields the result.
\end{proof}

\subsection{A Tor calculation} \label{ss:gentor}

In classical commutative algebra, higher Tor groups are torsion modules. We now see that this is not the case in the present situation.

\begin{proposition}
For all $s \ge 1$ and $r \ge 1$ we have
\begin{displaymath}
\Tor^{R/\fh_s}_r(\cQ_{s,1}, \cQ_{s,1}) = \cQ_{s,1}.
\end{displaymath}
\end{proposition}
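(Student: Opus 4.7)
The plan is to compute $\Tor$ using an explicit flat periodic resolution of $\cQ_{s,1}$ obtained from the minimal free resolution of the residue field over $k[x]/(x^{s+1})$. Taking $n=1$ in \S\ref{ss:indmod}, so $S = k[x_1]/(x_1^{s+1})$, the exact functor $\sG$ satisfies $\sG(S) = \cP_{s,1}$ and $\sG(S/\fm) = \cQ_{s,1}$, so applying it to the standard periodic resolution
\begin{displaymath}
\cdots \xrightarrow{x_1^s} S \xrightarrow{x_1} S \xrightarrow{x_1^s} S \xrightarrow{x_1} S \to S/\fm \to 0
\end{displaymath}
yields a resolution of $\cQ_{s,1}$ by $\cP_{s,1}$'s whose differentials are the $\fS$-equivariant maps $e_i \mapsto x_i e_i$ and $e_i \mapsto x_i^s e_i$. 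Since $\cP_{s,1} = R/\fh_s \otimes \VV_1$ is free over $R/\fh_s$ with basis $\{e_i\}_{i \ge 1}$, this resolution is flat and may be used to compute Tor.

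I then tensor the resolution with $\cQ_{s,1}$. Under the natural identification $\cP_{s,1} \otimes_{R/\fh_s} N = \VV_1 \otimes_k N$, the differentials become $e_i \otimes v \mapsto e_i \otimes x_i v$ and $e_i \otimes v \mapsto e_i \otimes x_i^s v$ on the summand $ke_i \otimes N$. To compute the homology at $N = \cQ_{s,1}$, I use the $R/\fh_s$-module decomposition $\cQ_{s,1} = \bigoplus_j T_j$, where $T_j = R/(\fh_s + (x_j))$ is the cyclic summand generated by $f_j$; this splitting is not $\fS$-stable, but it does not need to be for the homology calculation. The problem reduces to computing, for each pair $(i,j)$, the homology of the periodic complex
\begin{displaymath}
\cdots \xrightarrow{x_i^s} T_j \xrightarrow{x_i} T_j \xrightarrow{x_i^s} T_j \xrightarrow{x_i} T_j.
\end{displaymath}

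For $j \ne i$, the action of $x_i$ makes $T_j$ into a free module over $k[x_i]/(x_i^{s+1})$, and the standard argument (kernels and images of $x_i$ and $x_i^s$ match up as in the resolution of $k$ over $k[x]/(x^{s+1})$) shows this complex is exact. For $j = i$, the element $x_i$ annihilates $T_i$, so both differentials vanish and each homology group equals $T_i$. Assembling, for $r \ge 1$ we obtain
\begin{displaymath}
\Tor^{R/\fh_s}_r(\cQ_{s,1}, \cQ_{s,1}) \;=\; \bigoplus_i e_i \otimes T_i,
\end{displaymath}
and the map $e_i \otimes v \mapsto v$ (for $v \in T_i$) gives an $R/\fh_s$-linear, $\fS$-equivariant isomorphism onto $\cQ_{s,1}$, since $\sigma \cdot (e_i \otimes v) = e_{\sigma(i)} \otimes \sigma(v)$ lands in $T_{\sigma(i)}$ and matches the $\fS$-action on $\cQ_{s,1}$. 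The main subtle point will be the $\fS$-bookkeeping: the summands $T_j$ are permuted by $\fS$ and the homology computation proceeds summand-by-summand, so one must verify that the pointwise answer assembles into the correct smooth $\fS$-representation, but this follows directly once the action is written out.
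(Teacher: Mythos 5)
Your proposal is correct and follows essentially the same strategy as the paper: apply $\sG$ with $n=1$ to the $2$-periodic resolution of $S/\fm$ over $S=k[x_1]/(x_1^{s+1})$, tensor with $\cQ_{s,1}$, and decompose the resulting complex along the free basis $\{e_i\}$ of $\cP_{s,1}$. Your further refinement of splitting $\cQ_{s,1}=\bigoplus_j T_j$ (where ``$f_j$'' should read $e_j$) and computing per summand is just a more explicit presentation of the paper's identities $\cQ_{s,1}[x_i]=x_i^s\cQ_{s,1}\oplus\cQ_{s,1}^{(i)}$ and $\cQ_{s,1}[x_i^s]=x_i\cQ_{s,1}\oplus\cQ_{s,1}^{(i)}$, so the two arguments are equivalent.
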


\begin{proof}
Use notation as in \S \ref{ss:indmod}, with $n=1$. Thus $S=k[x_1]/(x_1^{s+1})$. We have a 2-periodic free resolution
\begin{displaymath}
\xymatrix{
\cdots \ar[r] & S \ar[r]^-{x_1^s} & S \ar[r]^-{x_1} & S \ar[r]^-{x_1^s} & S \ar[r] & S/\fm \ar[r] & 0. }
\end{displaymath}
Applying the $\sG$ functor, we obtain a resolution
\begin{displaymath}
\xymatrix{
\cdots \ar[r] & \cP_{s,1} \ar[r] & \cP_{s,1} \ar[r] & \cP_{s,1} \ar[r] & \cP_{s,1} \ar[r] & \cQ_{s,1} \ar[r] & 0, }
\end{displaymath}
where the differentials map $e_i$ to $x_i^s e_i$ or $x_i e_i$. We can use this resolution to compute $\Tor^{R/\fh_s}$ since the module $\cP_{s,1}$ is flat over $R/\fh_s$. Applying $- \otimes_{R/\fh_s} \cQ_{s,1}$, we obtain the complex
\begin{displaymath}
\xymatrix{
\cdots \ar[r] & \VV \otimes \cQ_{s,1} \ar[r] & \VV \otimes \cQ_{s,1} \ar[r] & \VV \otimes \cQ_{s,1} \ar[r] & \VV \otimes \cQ_{s,1} }
\end{displaymath}
where each the differentials map $e_i \otimes m$ to $e_i \otimes x_i^s m$ or $e_i \otimes x_i m$. Ignoring the $\fS$ action, this complex decomposes as $\bigoplus_{i \ge 1} e_i \otimes C_i$, where $C_i$ is the complex
\begin{displaymath}
\xymatrix{
\cdots \ar[r] & \cQ_{s,1} \ar[r]^-{x_i^s} & \cQ_{s,1} \ar[r]^-{x_i} & \cQ_{s,1} \ar[r]^-{x_i^s} & \cQ_{s,1}. }
\end{displaymath}
Let $\cQ_{s,1}^{(i)}$ be the $|R|$-submodule of $\cQ_{s,1}$ generated by $e_i$. Then
\begin{displaymath}
\cQ_{s,1}[x_i^s] = x_i \cQ_{s,1} \oplus \cQ_{s,1}^{(i)}, \qquad
\cQ_{s,1}[x_i] = x_i^s \cQ_{s,1} \oplus \cQ_{s,1}^{(i)}.
\end{displaymath}
Here $(-)[a]$ denotes the set of elements killed by $a$. Note that this is indeed a direct sum. We thus see that $\rH_r(C_i)=\cQ_{1,1}^{(i)}$ for all $r \ge 1$. Hence
\begin{displaymath}
\Tor_r^{R/\fh_s}(\cQ_{s,1}, \cQ_{s,1}) = \bigoplus_{i \ge 1} e_i \otimes \rH_m(C_i) \cong \cQ_{s,1}
\end{displaymath}
as claimed.
\end{proof}

\begin{remark} \label{rmk:Q11-ext}
Using a similar resolution, and the fact that $\Ext^i_{R/\fh_s}(\cQ_{s,1}, \cP_{s,1})=0$ for $i\ge 1$ (Theorem~\ref{thm:vanish1}), one can show
\begin{displaymath}
\Ext^i_{R/\fh_s}(\cQ_{s,1,}, \cQ_{s,1}) = k
\end{displaymath}
for all $s \ge 1$ and $i \ge 0$.
\end{remark}

\section{From \texorpdfstring{$\FI$}{FI}-modules to \texorpdfstring{$R$}{R}-modules} \label{s:Phi}

\subsection{The main result}

Fix $s \ge 1$. In this section, we construct a functor
\begin{displaymath}
\Phi_s \colon \Mod_{\FI} \to \Mod_{R/\fh_s},
\end{displaymath}
and prove the following theorem about it.

\begin{theorem} \label{thm:Phi}
We have the following.
\begin{enumerate}
\item The functor $\Phi_s$ is exact, faithful, and commutes with arbitrary colimits.
\item If $M$ and $N$ are finitely generated $\FI$-modules, and $N$ is saturated, then the map
\begin{displaymath}
\Phi_s \colon \Hom_{\FI}(M,N) \to \Hom_R(\Phi_s(M), \Phi_s(N))
\end{displaymath}
is an isomorphism.
\item We have $\Phi_s(\bP_n) = \cQ_{s,n}$ and $\Phi_s(\bT_n) = \cQ_{s-1,n}$. More generally, for an $\fS_n$-representation $V$, we have $\Phi_s(\bP(V))=\cQ_s(V)$ and $\Phi_s(\bT(V))=\cQ_{s-1}(V)$.
\end{enumerate}
\end{theorem}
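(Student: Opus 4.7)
The proof rests on an explicit construction of $\Phi_s$, which extends the $s=1$ recipe from \S1.5. I would define
\[
\Phi_s(M) \;=\; \bigoplus_{S \subseteq \bN \text{ finite}} M(S) \otimes_k R_{S^c}/\fh_{s-1},
\]
with $R_{S^c} := k[x_i]_{i \in S^c}$; $\fS$ permutes summands in the evident way, and $x_i$ acts on the $S$-summand as $0$ when $i \in S$, and otherwise by multiplication on the $R_{S^c}/\fh_{s-1}$-factor, with the proviso that when such a multiplication would push the $x_i$-degree past $s-1$ the element is ``transferred'' to the $(S \cup \{i\})$-summand using the $\FI$-structure map $M(S) \to M(S \cup \{i\})$ together with dividing out $x_i^{s-1}$. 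For $s=1$ this recovers the construction from the introduction, since $R_{S^c}/\fh_0 \cong k$; a routine check confirms smoothness and functoriality.

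Parts (a) and (c) then follow directly. Exactness and preservation of arbitrary colimits hold because $\Phi_s$ is built from evaluation at a fixed set, tensor product with a fixed vector space, and direct sum, all of which have these properties. The inclusion $\iota_N \colon N_n \hookrightarrow \Phi_s(N)$ into the $S = [n]$ summand via $y \mapsto y \otimes 1$ is injective for any $N$, which immediately yields the faithfulness claim in (a): if $f \colon M \to M'$ is nonzero, pick $n$ with $f_n \neq 0$, and note that $\Phi_s(f)$ restricted to $\iota_M(M_n) = M_n \otimes 1$ equals $f_n \otimes \id$. For (c), a $k$-basis of $\Phi_s(\bP_n)$ consists of triples $(S, f, \beta)$ with $f \colon [n] \hookrightarrow S$ and $\beta \colon S^c \to \{0, \ldots, s-1\}$ finitely supported; sending $(S, f, \beta)$ to $x^\gamma e_{f(1), \ldots, f(n)}$ with $\gamma_i = s$ for $i \in S \setminus \im(f)$ and $\gamma_i = \beta_i$ for $i \in S^c$ gives an $R/\fh_s$-equivariant bijection with the standard basis of $\cQ_{s,n}$. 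The identification $\Phi_s(\bT_n) = \cQ_{s-1, n}$ drops out by noting that only summands with $|S| = n$ survive, forcing $f$ to be a bijection and confining the weights to $\{0, \ldots, s-1\}$; the $V$-coefficient versions follow by tensoring over $k[\fS_n]$ and using colimit-preservation.

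The substantive content is part (b). Using a free presentation $\bP^1 \to \bP^0 \to M \to 0$ combined with exactness of $\Phi_s$ and left-exactness of both $\Hom_{\FI}(-, N)$ and $\Hom_R(-, \Phi_s(N))$, one reduces the claim to $M = \bP_n$. Here Yoneda identifies the source with $N_n$, and the mapping property of $\cQ_{s,n}$ (Proposition~\ref{prop:Qmapprop}) identifies the target with the $\fS(n)$-fixed and $x_1, \ldots, x_n$-annihilated subspace of $\Phi_s(N)$. The natural inclusion $\iota_N \colon N_n \hookrightarrow \Phi_s(N)^{\fS(n), x\text{-kill}}$ is clear; the issue is surjectivity. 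An invariant element in this subspace must be supported on summands indexed by $S \supseteq [n]$, and the potential contributions from $S \supsetneq [n]$ reorganize (after passing to $\fS(n)$-invariants) into contributions controlled by torsion phenomena in $N$; the saturation hypothesis, equivalently the vanishing $\Ext^i_{\FI}(T, N) = 0$ for torsion $T$ and $i = 0, 1$, is precisely what is needed to force these extra contributions to vanish, leaving only the $S = [n]$ summand's contribution of $N_n$. This last verification is the main obstacle and will require a careful bookkeeping argument, patterned on the shift-theorem style analyses in \S\ref{s:fimod}.
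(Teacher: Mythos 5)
Your construction of $\Phi_s$ is the same as the paper's, just written in a different coordinate system: the paper works with the $\Delta(s)$-graded module $\bigoplus_{\alpha} M(\Pi_s([\alpha]))$ where $\Pi_s([\alpha]) = \{i : \alpha_i = s\}$, and your indexing by pairs $(S, \beta)$ with $S = \{i : \alpha_i = s\}$ and $\beta = \alpha|_{S^c}$ is an equivalent bookkeeping. Parts (a) and (c) are handled as in the paper, and your identifications of $\Phi_s(\bP_n)$ and $\Phi_s(\bT_n)$ are correct.

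The issue is part (b), where there is a genuine gap. You correctly reduce (via a free presentation of $M$) to the case $M = \bP_n$, and correctly identify the source with $N_n$ and the target with the $\fS(n)$-fixed, $(x_1,\ldots,x_n)$-annihilated subspace of $\Phi_s(N)$ via the mapping property of $\cQ_{s,n}$. But you do not prove surjectivity; you only assert that "this last verification ... will require a careful bookkeeping argument." Moreover, the one concrete claim you do make about that bookkeeping is backwards: you say the invariant elements are "supported on summands indexed by $S \supseteq [n]$," whereas $\fS(n)$-invariance (together with finiteness of the support of an element in the direct sum) forces the grading degree $\alpha$ to be supported on $[n]$, hence $S = \{i:\alpha_i=s\} \subseteq [n]$. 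When you actually run the analysis: annihilation by $x_i$ ($i \le n$) kills $z_\alpha$ outright when $\alpha_i < s-1$; when $\alpha_i = s-1$ it forces $z_\alpha$ to die under the $\FI$-transition map adding $\{i\}$, i.e.\ $z_\alpha$ is a torsion element of $N$; hence all contributions except $\alpha = (s,\ldots,s)$ vanish once $N$ has no torsion. So the argument does close, and cleanly, but you haven't supplied it, and the direction you point in would not immediately lead there. The paper takes a different route entirely: a five-step dévissage (both projective $\Rightarrow$ induced $\Rightarrow$ semi-induced $\Rightarrow$ saturated $\Rightarrow$ general), with the base case done by a dimension count (Corollary~\ref{cor:QQmaps}) and the saturated case reached via the semi-induced co-resolution from Proposition~\ref{prop:FI-sec}. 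Your direct approach, once completed, would be a legitimate alternative and arguably more self-contained (it does not invoke the shift theorem machinery); but as written the crucial surjectivity step is missing.
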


This functor $\Phi_s$ is quite useful, as it will allow us to transfer results about $\FI$-modules to $R/\fh_s$-modules.

\begin{remark} \label{rmk:Psi-not-full}
The functor $\Phi_s$ is not full. Indeed, if $n<m$ then $\Hom_{\FI}(\bT_m, \bT_n)=0$ but $\Hom_{R/\fh_s}(\cQ_{s-1,m}, \cQ_{s-1,n}) \ne 0$ (Corollary~\ref{cor:QQmaps}).
\end{remark}

\subsection{A preliminary construction}

A \defn{weighted set} is a finite set $S$ equipped with a function $\vert \cdot \vert \colon S \to [\infty]$. A \defn{morphism} of weighted sets $f \colon S \to T$ is an injective function such that $\vert x \vert \ge \vert f(x) \vert$, i.e., weights can not decrease. We let $\WI$ be the category of weighted sets. A \defn{$\WI$-module} is a functor from $\WI$ to the category of vector spaces. We let $\Mod_{\WI}$ be the cateory of $\WI$-modules.

Let $\Delta$ be the set of all functions $\alpha \colon [\infty] \to \bN$ with finite support. Let $V$ be a representation of $\fS$, and suppose that $V=\bigoplus_{\alpha \in \Delta} V_{\alpha}$ is a $\Delta$-grading of $V$. We say that the action and grading are \defn{compatible} if (a) $\sigma(V_{\alpha})=V_{\sigma \alpha}$ for $\sigma \in \fS$; and (b) if $\sigma$ acts by the identity on the support of $\alpha$ then $\sigma$ acts by the identity on $V_{\alpha}$. Note that (b) implies that the action of $\fS$ on $V$ is smooth.

For $\alpha \in \Delta$, let $x^{\alpha}$ denote the monomial $\prod_{i \ge 1} x^{\alpha_i}$. These monomials form a $k$-basis of $R$, and so endow $R$ with a $\Delta$-grading, which is easily seen to be compatible with the $\fS$-action. A \defn{$\Delta$-graded $R$-module} is an $R$-module $M$ equipped with a $\Delta$-grading that is compatible with the $\fS$-action and the $R$-grading, meaning that $x^{\alpha} M_{\beta} \subset M_{\alpha+\beta}$. We let $\Mod^{\Delta}_R$ denote the category of $\Delta$-graded $R$-modules.

For $\alpha \in \Delta$, we let $[\alpha]$ denote the support of $\alpha$, regarded as a weighted set with weight function $\alpha$. Let $M$ be a $\WI$-module. Define a vector space
\begin{displaymath}
N = \bigoplus_{\alpha \in \Delta} N_{\alpha}, \qquad N_{\alpha}=M([\alpha]).
\end{displaymath}
We now give $N$ the structure of a $\Delta$-graded $R$-module. The $\Delta$-grading is clear. Suppose that $\sigma \in \fS$ and $m \in N_{\alpha}$. Then $\sigma$ defines a morphism $i \colon [\alpha] \to [\sigma \alpha]$ in $\WI$, and we define $\sigma m \in N_{\sigma \alpha}$ to be $i_*(m)$. One easily sees that this defines a representation of $\fS$ on the space $N$ that is compatible with the grading. Next suppose $\alpha,\beta \in \Delta$ and $m \in N_{\beta}$. We have a natural morphism $j \colon [\alpha] \to [\alpha+\beta]$ in $\WI$, and we define $x^{\beta} m$ to be $j_*(m)$. One readily verifies that this defines an action of $R$ satisfying the necessary conditions.

The above construction defines a functor
\begin{displaymath}
\Phi' \colon \Mod_{\WI} \to \Mod^{\Delta}_R.
\end{displaymath}
In fact, one can show that this functor is an equivalence. We omit the proof, as we do not need it.

\subsection{A bounded variant}

An \defn{$s$-weighted set} is a weighted sets where the weights are at most $s$. Let $\WI(s)$ be the full subcategory of $\WI$ spanned by $s$-weighted sets. We let $\Mod_{\WI(s)}$ be the category of $\WI(s)$-modules. This category was introdued in \cite{FInmod}, with an eye towards the present application. Note that for $s=1$, the category $\WI(s)$ reverts to the familiar $\FI$. Also, if $M$ is a $\WI(s)$-module then we can extend $M$ by zero to a $\WI$-module, i.e., we define $M(S)=0$ if $S$ is a weighted set with some weight exceeding $s$.

Let $\Delta(s)$ be the subset of $\Delta$ consisting of functions valued in the set $\{0, \ldots, s\}$. A \defn{$\Delta(s)$-graded $R$-module} is a $\Delta$-graded $R$-module where the grading is supported on $\Delta(s)$. We let $\Mod^{\Delta(s)}_R$ denote the category of such modules. A basic example is $R/\fh_s$.

The functor $\Phi$ defined above induces a functor
\begin{displaymath}
\Phi'_s \colon \Mod_{\WI(s)} \to \Mod^{\Delta(s)}_R.
\end{displaymath}
Again, one can show that it is an equivalence.

\subsection{The main construction}

We have a functor
\begin{displaymath}
\Pi_s \colon \WI(s) \to \FI
\end{displaymath}
that takes an $s$-weighted set to its elements of weight $s$. This induces a pull-back functor on modules: that is, if $M$ is an $\FI$-module then $\Pi_s^*(M)$ is a $\WI(s)$-module. We define the functor $\Phi_s$ to be the composition
\begin{displaymath}
\xymatrix@C=3em{
\Mod_{\FI} \ar[r]^-{\Pi_s^*} \ar[r] &
\Mod_{\WI(s)} \ar[r]^-{\Phi'_s} &
\Mod^{\Delta(s)}_R \ar[r] & \Mod_R }
\end{displaymath}
where the final functor forgets the grading. It is clear that $\Phi_s$ is exact and commutes with colimits, as this is true for each individual functor above. We prove the remaining statements in Theorem~\ref{thm:Phi} in the subsequent lemmas.

\begin{lemma} \label{lem:Phi-1}
For an $\fS_n$-representation $V$, we have $\Phi_s(\bP(V))=\cQ_s(V)$.
\end{lemma}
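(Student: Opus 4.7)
The first step is to reduce to the principal case $V = k[\fS_n]$, i.e., to showing $\Phi_s(\bP_n) \cong \cQ_{s,n}$. Since $\Phi_s$ is exact and commutes with arbitrary colimits, and $\bP(V) = (V \otimes \bP_n)_{\fS_n}$, one has
\[
\Phi_s(\bP(V)) \;=\; (V \otimes \Phi_s(\bP_n))_{\fS_n}.
\]
On the target side, the canonical basis of $\cQ_{s,n}$ is permuted freely by $\fS_n$ (the indices in $e_{i_1,\ldots,i_n}$ being distinct), so $\cQ_{s,n}$ is a free $k[\fS_n]$-module; the averaging argument of Proposition~\ref{prop:ind-inv} then identifies $(V \otimes \cQ_{s,n})_{\fS_n} = (V \otimes \cQ_{s,n})^{\fS_n} = \cQ_s(V)$. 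It therefore suffices to treat the case $V = k[\fS_n]$.

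Next I would unwind $\Phi_s(\bP_n)$ and build a comparison map via Proposition~\ref{prop:Qmapprop}. Using that $\bP_n(T)$ has a basis indexed by injections $[n] \hookrightarrow T$, a basis of $\Phi_s(\bP_n)$ is indexed by tuples $(\alpha; i_1, \ldots, i_n)$ with $\alpha \in \Delta(s)$ and $(i_1, \ldots, i_n)$ an ordered $n$-tuple of distinct elements of $T_\alpha := \{i : \alpha(i) = s\}$. The element $w = (\alpha_0; 1, 2, \ldots, n)$, where $\alpha_0$ takes the value $s$ on $[n]$ and $0$ elsewhere, is $\fS(n)$-fixed and annihilated by each $x_j$ for $j \in [n]$: multiplying by $x_j$ would require the natural morphism $[\alpha_0] \to [\alpha_0 + \delta_j]$ to lie in $\WI(s)$, but the target set has weight $s+1$ at $j$, so this morphism falls outside $\WI(s)$ and $x_j$ acts as zero. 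Proposition~\ref{prop:Qmapprop} then produces a unique $R/\fh_s$-linear map $\varphi \colon \cQ_{s,n} \to \Phi_s(\bP_n)$ with $\varphi(e_{1,\ldots,n}) = w$.

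Finally, I would verify $\varphi$ is an isomorphism by tracking it on bases. A direct computation of the $R$-action on $\Phi_s(\bP_n)$ gives $\varphi(x^\gamma e_{i_1, \ldots, i_n}) = (\alpha; i_1, \ldots, i_n)$, where $\alpha$ equals $s$ on $\{i_1, \ldots, i_n\}$ and agrees with $\gamma$ off this set; the inverse sends $(\alpha; i_1, \ldots, i_n)$ to $x^\gamma e_{i_1, \ldots, i_n}$, where $\gamma$ is the restriction of $\alpha$ to the complement of $\{i_1, \ldots, i_n\}$ (extended by zero). This is manifestly a bijection of bases, so $\varphi$ is an isomorphism.

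The main obstacle is this bookkeeping: matching the two defining relations $x_{i_k} e_{i_1,\ldots,i_n} = 0$ and $x_j^{s+1} e_{i_1,\ldots,i_n} = 0$ of $\cQ_{s,n}$ against the single vanishing condition in $\Phi_s(\bP_n)$ (that a weight cannot exceed $s$). The intermediate case $\alpha(j) = s-1$ with $j \notin \{i_1, \ldots, i_n\}$ also requires attention: there $T_\alpha$ grows to $T_\alpha \cup \{j\}$, and one must check that the induced map on $\bP_n$ simply re-interprets the injection $[n] \hookrightarrow T_\alpha$ as one into the enlarged set.
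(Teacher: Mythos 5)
Your proof of the principal case $V = k[\fS_n]$ matches the paper's: identify the canonical element $w$ of degree $\alpha_0$, check that it is $\fS(n)$-fixed and annihilated by $x_1,\ldots,x_n$, apply the mapping property of $\cQ_{s,n}$ to get $\varphi$, and verify $\varphi$ is a bijection on the evident bases. Where you diverge is the reduction from general $V$ to this case. The paper picks a presentation $F_1 \to F_0 \to V \to 0$ by free $k[\fS_n]$-modules and compares the two right-exact rows obtained from $\Phi_s \circ \bP$ and from $\cQ_s$, which requires only the exactness of those functors. You instead write $\bP(V)=(V\otimes\bP_n)_{\fS_n}$, pull the coinvariants through the cocontinuous functor $\Phi_s$, and then pass from coinvariants to invariants by averaging; this needs the additional (correct) observation that $\cQ_{s,n}$ is a free $k[\fS_n]$-module, since $\fS_n$ freely permutes the basis vectors $x^{\gamma}e_{i_1,\ldots,i_n}$ (the tuple of distinct indices is moved, $\gamma$ is unchanged). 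Both routes are sound. The paper's is marginally more robust in that it does not rely on any freeness of the target module, while yours is a bit more economical and is essentially the same averaging comparison the paper already sets up in Proposition~\ref{prop:ind-inv}.
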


\begin{proof}
We first suppose $V$ is the regular representation. Let $M=\Phi_s(\bP(V)) = \Phi_s(\bP_n)$. By definition, we have
\begin{displaymath}
M = \bigoplus_{\alpha \in \Delta(s)} \bP_n(\Pi_s([\alpha])).
\end{displaymath}
Recall that $\bP_n(S)=k[\Hom_{\FI}([n], S)]$. Let $\alpha \in \Delta(s)$ be defined by $\alpha_i=s$ for $i \in [n]$ and $\alpha_i=0$ for $i \not\in [n]$. Then $\Pi_s([\alpha])$ is the set of bijections $[n] \to [n]$, and has a canonical element (the identity). Let $m \in M$ be this canonical element of degree $\alpha$. Since $\alpha$ is supported on $[n]$, it follows that $m$ is fixed by $\fS(n)$. Also, if $\beta$ is a weight with $\beta_i>0$ for some $i \in [n]$ then $\alpha+\beta \not\in \Delta(s)$, and so $x^{\beta} m=0$. In particular, $m$ is killed by $x_1, \ldots, x_n$. It follows that there is a map of $R$-modules $\cQ_{s,n} \to M$ given by $e_{1,\ldots,n} \mapsto m$ (Proposition~\ref{prop:Qmapprop}). One easily sees that this induces a bijection on the natural bases of the source and target, and is thus an isomorphism.

We now treat the case of a general representation $V$. Choose a presentation $F_1 \to F_0 \to V \to 0$ where $F_0$ and $F_1$ are free $k[\fS_n]$-modules. Consider the diagram
\begin{displaymath}
\xymatrix{
\Phi_s(\bP(F_1)) \ar[r] \ar[d] & \Phi_s(\bP(F_0)) \ar[r] \ar[d] & \Phi_s(\bP(V)) \ar[r] \ar@{..>}[d] & 0 \\
\cQ_s(F_1) \ar[r] & \cQ_s(F_0) \ar[r] & \cQ_s(V) \ar[r] & 0 }
\end{displaymath}
where the left two vertical maps are the isomorphisms from the previous paragraph. One easily sees that the left square commutes. The rows are exact, since $\bP$, $\Phi_s$, and $\cQ_s$ are exact functors. It follows that the right map exists and is an isomorphism.
\end{proof}

\begin{lemma} \label{lem:Phi-2}
For an $\fS_n$-representation $V$, we have $\Phi_s(\bT(V))=\cQ_{s-1}(V)$.
\end{lemma}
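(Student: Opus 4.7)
The plan is to follow the two-step structure of the proof of Lemma~\ref{lem:Phi-1}: first treat $V=k[\fS_n]$, so that $\bT(V)=\bT_n$, by explicit computation; then for arbitrary $V$ choose a free $k[\fS_n]$-presentation $F_1\to F_0\to V\to 0$ and invoke the right-exactness of both $\Phi_s\circ\bT$ (actually exact) and $\cQ_{s-1}(-)$ to propagate the identification through the same commutative diagram used at the end of Lemma~\ref{lem:Phi-1}.

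For the main case $V=k[\fS_n]$, unwinding the definitions of $\Pi_s^*$, $\Phi'_s$, and the extension-by-zero convention yields, for $M:=\Phi_s(\bT_n)$,
\[
M_\alpha \;=\; \bT_n\bigl(\Pi_s([\alpha])\bigr) \;=\; \begin{cases} k[\fS_n] & \text{if } \alpha\in\Delta(s) \text{ and } |\{i:\alpha_i=s\}|=n,\\ 0 & \text{otherwise.} \end{cases}
\]
Let $\alpha_0\in\Delta(s)$ be the weight with $(\alpha_0)_i=s$ for $i\in[n]$ and $(\alpha_0)_i=0$ otherwise, and let $m\in M_{\alpha_0}$ correspond to the identity of $k[\fS_n]$. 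I would verify three properties: (i) $m$ is fixed by $\fS(n)$, since $\fS(n)$ fixes $[\alpha_0]=[n]$ pointwise; (ii) $x_i m=0$ for $i\in[n]$, since $(\alpha_0+e_i)_i=s+1$ puts the target degree outside $\Delta(s)$; and (iii) $x_j^s m=0$ for every $j\ge 1$---for $j\in[n]$ this follows from (ii), while for $j\notin[n]$ the weight $\alpha_0+se_j$ has $n+1$ coordinates equal to $s$, so $\bT_n$ vanishes on $\Pi_s([\alpha_0+se_j])$.

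Combining (i) and (ii) with the mapping property (Proposition~\ref{prop:Qmapprop}) produces a map $\cQ_{s,n}\to M$ sending $e_{1,\ldots,n}\mapsto m$, and (iii) shows that it descends to $\cQ_{s-1,n}$, regarded as an $R/\fh_s$-module via the surjection $R/\fh_s\twoheadrightarrow R/\fh_{s-1}$ coming from the containment $\fh_s\subset\fh_{s-1}$. To conclude the case of $\bT_n$, I would compare $k$-bases: a basis of $\cQ_{s-1,n}$ consists of elements $m' e_{i_1,\ldots,i_n}$ where $(i_1,\ldots,i_n)$ is an ordered tuple of distinct positive integers and $m'$ is a monomial in the remaining variables with exponents in $\{0,\ldots,s-1\}$; a basis of $M$ is indexed by pairs $(\alpha,\tau)$ with $\alpha\in\Delta(s)$ having exactly $n$ weight-$s$ coordinates and $\tau$ a bijection $[n]\to\Pi_s([\alpha])$. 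Tracing the map through these descriptions (using the $R$- and $\fS$-action rules in $\Phi'_s$) exhibits the bijection.

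The only delicate step is property (iii): it is what forces the additional relations $x_j^s m=0$ for $j\notin[n]$ on top of the defining relations of $\cQ_{s,n}$, and it arises precisely because $\bT_n$ has been extended by zero to a $\WI$-module (as opposed to $\bP_n$, which is nonzero in all sufficiently large degrees). This is the mechanism by which $\cQ_s$ is replaced by $\cQ_{s-1}$ in the statement of the lemma; once it is observed, everything else is bookkeeping paralleling Lemma~\ref{lem:Phi-1}.
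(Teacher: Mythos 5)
Your proof is correct and follows essentially the same route as the paper's: identify the canonical element $m \in \Phi_s(\bT_n)$ of degree $\alpha_0$, verify it is $\fS(n)$-invariant and killed by $x_1,\ldots,x_n$ and by $x_j^s$ for $j>n$, invoke the mapping property to get a map from a $\cQ$-module, check it is a bijection on bases, and deduce the general case from the regular representation via a free presentation exactly as in Lemma~\ref{lem:Phi-1}. The only cosmetic difference is that you first build $\cQ_{s,n}\to M$ and then show it factors through the quotient $\cQ_{s,n}\twoheadrightarrow\cQ_{s-1,n}$, whereas the paper produces the map $\cQ_{s-1,n}\to M$ directly from the mapping property applied with $s-1$ in place of $s$ (legitimate since $m$ is killed by $\fh_{s-1}$); both are fine, and your closing remark correctly isolates the extension-by-zero as the reason $\cQ_s$ becomes $\cQ_{s-1}$.
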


\begin{proof}
First suppose $V$ is the regular representation. Let $M=\Phi_s(\bT(V))=\Phi_s(\bT_n)$. By definition, we have
\begin{displaymath}
M = \bigoplus_{\alpha \in \Delta(s)} \bT_n(\Pi_s([\alpha])).
\end{displaymath}
Recall that $\bT_n(S)=k[\operatorname{Bij}([n], S)]$. Let $\alpha \in \Delta(s)$ be as in the previous proof. Then once again, we have a canonical element $m \in M$ of degree $\alpha$, which is killed by $x_1, \ldots, x_n$. If $\beta \in \Delta$ has $\beta_i=s$ for some $i>n$ then $\Pi_s(\alpha+\beta)$ contains $[n] \cup \{i\}$, and thus is not in bijection with $[n]$; hence $M_{\alpha+\beta}=0$. It follows that $m$ is killed by $x_i^s$ for all $i>n$. Hence there is a map $\cQ_{s-1,n} \to M$ given by $e_{1,\ldots,n} \mapsto m$ (Proposition~\ref{prop:Qmapprop}). Again, one checks that it is a bijection on basis vectors. The case of general $V$ follows as in the proof of Lemma~\ref{lem:Phi-1}.
\end{proof}

\begin{lemma}
If $M$ and $N$ are finitely generated $\FI$-modules, and $N$ is saturated, then the map
\begin{displaymath}
\Phi_s \colon \Hom_{\FI}(M,N) \to \Hom_R(\Phi_s(M), \Phi_s(N))
\end{displaymath}
is an isomorphism.
\end{lemma}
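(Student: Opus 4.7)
The plan is to reduce to the case $M=\bP_n$ via a presentation, and then compute directly using the mapping property of $\cQ_{s,n}$ and the explicit form of $\Phi_s(N)$. Since $M$ is finitely generated and $\Mod_{\FI}$ is noetherian, I choose a presentation $P_1 \to P_0 \to M \to 0$ with $P_0, P_1$ finite direct sums of principal projectives. Applying $\Hom_{\FI}(-, N)$ (left exact) and $\Hom_R(\Phi_s(-), \Phi_s(N))$ (left exact, using exactness of $\Phi_s$), I obtain a commutative diagram with exact rows and columns given by the natural map $\Phi_s$. By a standard four-lemma argument, it suffices to show the map is an isomorphism when $M=\bP_n$.

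In this case, Yoneda gives $\Hom_{\FI}(\bP_n, N) = N_n = N([n])$, and Lemma~\ref{lem:Phi-1} gives $\Phi_s(\bP_n)=\cQ_{s,n}$. The mapping property of $\cQ_{s,n}$ (Proposition~\ref{prop:Qmapprop}) identifies
\[
\Hom_R(\cQ_{s,n},\Phi_s(N)) = \Psi(N):=\{z\in \Phi_s(N)^{\fS(n)} : x_i z=0 \text{ for } 1 \le i\le n\}.
\]
Tracing through the construction, the map $\Phi_s \colon N_n \to \Psi(N)$ sends $y$ to the element of $\Phi_s(N)=\bigoplus_{\alpha\in\Delta(s)} N(\Pi_s[\alpha])$ whose only nonzero component is $y$ in degree $\alpha_0:=s\chi_{[n]}$ (noting $\Pi_s[\alpha_0]=[n]$). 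This map is manifestly injective, and the real work is to prove surjectivity.

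For surjectivity, given $z=\sum_\alpha z_\alpha\in \Psi(N)$ (a finite sum), I argue $z_\alpha=0$ unless $\alpha=\alpha_0$, in two steps. \emph{First}, the support of $z$ is finite and $\fS(n)$-invariant; since $\fS(n)$ acts as the full symmetric group on $\{n+1,n+2,\ldots\}$, any finite $\fS(n)$-invariant subset of $\Delta(s)$ consists of $\alpha$ with $\alpha|_{>n}=0$. So $z_\alpha=0$ unless $\alpha$ is supported in $[n]$. \emph{Second}, for such $\alpha$ and $i\in[n]$, a direct unwinding of the $\WI(s)$-construction shows that $x_i\colon \Phi_s(N)_\alpha\to \Phi_s(N)_{\alpha+e_i}$ is: zero if $\alpha_i=s$ (the target is out of $\Delta(s)$); the transition map $N(\iota_i)$ for the inclusion $\iota_i\colon \Pi_s[\alpha]\hookrightarrow \Pi_s[\alpha]\cup\{i\}$ if $\alpha_i=s-1$ (so $i\notin\Pi_s[\alpha]$ becomes a weight-$s$ index); and the identity on $N(\Pi_s[\alpha])$ if $\alpha_i<s-1$ (no change in weight-$s$ support). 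The condition $x_iz_\alpha=0$ thus forces $z_\alpha=0$ whenever some $\alpha_i<s-1$, and forces $z_\alpha\in\ker N(\iota_i)$ whenever some $\alpha_i=s-1$. In the latter case, $z_\alpha$ is killed by a nontrivial $\FI$-injection and so is a torsion element of $N$; since $N$ is saturated, $\Hom_{\FI}(T,N)=0$ for every torsion $T$, so $N$ is torsion-free and $z_\alpha=0$. The only remaining possibility is $\alpha_i=s$ for all $i\in[n]$, which combined with $\alpha|_{>n}=0$ forces $\alpha=\alpha_0$.

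The main obstacle is this final paragraph: correctly computing the action of $x_i$ on a generic component $z_\alpha$ from the construction of $\Phi_s$ via $\Pi_s^{*}$, and recognizing the resulting kernel condition (in the $\alpha_i=s-1$ case) as precisely a torsion condition on $z_\alpha$ in $N$. It is worth noting that only the $\Hom$-vanishing portion of saturation is used; the $\Ext^1$-vanishing plays no role in the proof of this lemma.
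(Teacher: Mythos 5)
Your proof is correct, and it takes a genuinely different route from the paper's.

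The paper's proof proceeds in five steps, building up through a chain of increasingly general $N$: first both $M$ and $N$ principal projectives (computed directly via Corollary~\ref{cor:QQmaps}), then $N$ induced (via a co-presentation $0 \to N \to P_0 \to P_1$), then semi-induced (by filtration), then saturated (via the co-resolution $0\to N\to I^0\to I^1$ of Proposition~\ref{prop:FI-sec}), and finally general $M$ via a presentation. You keep only the final reduction step (your presentation argument is the paper's Step~5 verbatim), and replace the remaining structure-theoretic build-up with a direct unwinding of the definition of $\Phi_s$ when $M=\bP_n$. Your case analysis of the $x_i$-action on a degree-$\alpha$ component (identity when $\alpha_i<s-1$, transition map $N(\iota_i)$ when $\alpha_i=s-1$, zero when $\alpha_i=s$) is accurate, and the finiteness/$\fS(n)$-invariance argument correctly pins the support of $z$ to $[n]$. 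This approach has two things going for it that the paper's does not: it avoids invoking the semi-induced co-resolution machinery (the shift theorem and Proposition~\ref{prop:FI-sec}) entirely, and it reveals that only torsion-freeness of $N$ (i.e., $\Hom_{\FI}(T,N)=0$ for torsion $T$) is used, so the lemma in fact holds for all finitely generated torsion-free $N$ rather than only saturated ones. This is consistent with Remark~\ref{rmk:Psi-not-full}, whose counterexample involves torsion modules. The paper's proof, conversely, stays at the level of abstract exact-sequence manipulations and does not require tracing through the $\WI(s)$-module construction; the trade-off is that it needs the stronger saturation hypothesis to produce the co-resolution in Step~4.

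One small stylistic note: what you call a "four-lemma argument" in the reduction step is really just a direct diagram chase (or the five lemma with a trivial leftmost column), since the rows have only four terms; this is harmless, but worth phrasing precisely.
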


\begin{proof}
We prove the lemma in five steps. All modules below are finitely generated.

\textit{Step 1: $M$ and $N$ are projective.} It suffices to treat the case $M=\bP_m$ and $N=\bP_n$. We have
\begin{displaymath}
\Hom_{\FI}(\bP_m, \bP_n)=\bP_n([m]) = k[\Hom_{\FI}([n], [m])].
\end{displaymath}
Similarly,
\begin{displaymath}
\Hom_R(\Phi_s(\bP_m), \Phi_s(\bP_n)) = \Hom_R(\cQ_{s,m}, \cQ_{s,n})
\end{displaymath}
also has a $k$-basis indexed by injections $[n] \to [m]$ (Corollary~\ref{cor:QQmaps}). Since the map in question is injective and the source and target have the same dimension, it is an isomorphism.

\textit{Step 2: $M$ is projective and $N$ is induced.} Say $N=\bP(V)$, where $V$ is a representation of $\fS_n$. Choosing a co-presentation of $V$ by free $k[\fS_n]$-modules and applying $\bP$, we obtain an exact sequence $0 \to N \to P_0 \to P_1$ with $P_0$ and $P_1$ projective. Consider the diagram
\begin{displaymath}
\xymatrix{
0 \ar[r] & \Hom(M, N) \ar[r] \ar[d] & \Hom(M, P_0) \ar[r] \ar[d] & \Hom(M, P_1) \ar[d] \\
0 \ar[r] & \Hom(\Phi_s(M), \Phi_s(N)) \ar[r] & \Hom(\Phi_s(M), \Phi_s(P_0)) \ar[r] & \Hom(\Phi_s(M), \Phi_s(P_1))
}
\end{displaymath}
The right two vertical maps are isomorphisms by Step~1. It follows that the left vertical map is an isomorphism as well.

\textit{Step 3: $M$ is projective and $N$ is semi-induced.} Choose an exact sequence
\begin{displaymath}
0 \to N' \to N \to N'' \to 0
\end{displaymath}
with $N'$ induced and $N''$ semi-induced with a shorter filtration length than $N$. Consider the diagram analogous to the one in Step~2. The left vertical map is an isomorphism by Step~2, and the right vertical map is an isomorphism by induction. Since the top row is exact, as $M$ is projective, it follows that the middle vertical map is an isomorphism.

\textit{Step 4: $M$ is projective and $N$ is saturated.} Let $0 \to N \to I^0 \to I^1$ be an exact sequence with $I^0$ and $I^1$ semi-induced; this exists since $N$ is saturated (Proposition~\ref{prop:FI-sec}). Now proceed as in Step~2, making use of Step~3.

\textit{Step 5: the general case.} Let $P_1 \to P_0 \to M\to 0$ be a presentation of $M$, where $P_0$ and $P_1$ are projective $\FI$-modules. Consider the diagram
\begin{displaymath}
\xymatrix{
0 \ar[r] & \Hom(M, N) \ar[r] \ar[d] & \Hom(P_0, N) \ar[r] \ar[d] & \Hom(P_1, N) \ar[d] \\
0 \ar[r] & \Hom(\Phi_s(M), \Phi_s(N)) \ar[r] & \Hom(\Phi_s(P_0), \Phi_s(N)) \ar[r] & \Hom(\Phi_s(P_1), \Phi_s(N))
}
\end{displaymath}
The right two vertical maps are isomorphisms by Step~4. It follows that the left vertical map is an isomorphism as well.
\end{proof}

\subsection{The adjoint} \label{ss:Phi-adjoint}

Since $\Phi_s$ is a cocontinuous functor of Grothendieck abelian categories, it admits a right adjoint
\begin{displaymath}
\Phi_s^* \colon \Mod_{R/\fh_s} \to \Mod_{\FI}.
\end{displaymath}
We now compute this functor in two cases.

\begin{proposition}
If $M$ is a saturated $\FI$-module then the unit $M \to \Phi_s^*(\Phi_s(M))$ is an isomorphism.
\end{proposition}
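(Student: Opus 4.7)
The plan is to evaluate the unit $\eta_M \colon M \to \Phi_s^*(\Phi_s(M))$ at each level $n$ and reduce to the finitely generated case covered by Theorem~\ref{thm:Phi}(b). Using the representability of the level-$n$ functor by $\bP_n$, the adjunction $\Phi_s \dashv \Phi_s^*$, and the identification $\Phi_s(\bP_n) = \cQ_{s,n}$ from Theorem~\ref{thm:Phi}(c), the unit at level $n$ is exactly the comparison map
\[
M_n = \Hom_{\FI}(\bP_n, M) \xrightarrow{\Phi_s} \Hom_{R/\fh_s}(\cQ_{s,n}, \Phi_s(M)),
\]
so it suffices to prove this is bijective for every $n$.

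The key step is to exhibit $M$ as a filtered union of finitely generated saturated submodules. Given any finitely generated submodule $N \subset M$, the saturation $ST(N)$ is again finitely generated by Proposition~\ref{prop:FI-sec}(b); moreover, $N$ is torsion-free as a submodule of the saturated module $M$, so the unit $N \hookrightarrow ST(N)$ is injective, and naturality of $\eta$ together with $\eta_M$ being an isomorphism produces an inclusion $ST(N) \hookrightarrow M$ extending that of $N$. Hence the finitely generated saturated submodules of $M$ form a cofinal filtered subsystem of the finitely generated submodules, and $M = \varinjlim M_\alpha$ with each $M_\alpha$ finitely generated and saturated.

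To finish, I would pass to the filtered colimit on both sides. The left side yields $M_n = \varinjlim (M_\alpha)_n$ since level-$n$ evaluation commutes with colimits. For the right side, cocontinuity of $\Phi_s$ gives $\Phi_s(M) = \varinjlim \Phi_s(M_\alpha)$; since $R$ is $\fS$-noetherian by Cohen's theorem, so is $R/\fh_s$, whence the finitely generated module $\cQ_{s,n}$ is a compact (noetherian) object of $\Mod_{R/\fh_s}$, and $\Hom_{R/\fh_s}(\cQ_{s,n}, -)$ preserves the filtered colimit. Theorem~\ref{thm:Phi}(b) supplies an isomorphism at each $M_\alpha$, and the filtered colimit of isomorphisms is an isomorphism.

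The main obstacle I expect is the middle step---exhibiting $M$ as exhausted by its finitely generated saturated submodules---which rests crucially on Proposition~\ref{prop:FI-sec}(b) to ensure that saturation preserves finite generation. Everything else is a formal passage to the colimit once the compactness of $\cQ_{s,n}$ is in hand.
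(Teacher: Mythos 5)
Your proof is correct and rests on the same core strategy as the paper's: reduce to Theorem~\ref{thm:Phi}(b) via a Yoneda-style argument, using the adjunction to identify the unit (post-composed with the adjunction isomorphism) with the map $f\mapsto\Phi_s(f)$. The difference is one of care rather than route. The paper's proof is a one-liner that tests the unit against an arbitrary $\FI$-module $N$ and simply cites Theorem~\ref{thm:Phi}; but Theorem~\ref{thm:Phi}(b) as stated requires \emph{both} $\FI$-modules to be finitely generated, and neither the test module nor the $M$ of the proposition is assumed to be so. You fill exactly this gap: you test only against the representables $\bP_n$ (sufficient, since the level-$n$ evaluation functors $\Hom_{\FI}(\bP_n,-)$ jointly detect isomorphisms), you write $M$ as a filtered colimit of its finitely generated saturated submodules using Proposition~\ref{prop:FI-sec}(b) to know that $ST(N)$ stays finitely generated, and you pass the colimit through $\Hom_{R/\fh_s}(\cQ_{s,n},-)$ via the compactness of the noetherian object $\cQ_{s,n}$. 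One small point: you use $\eta$ for both the $(\Phi_s,\Phi_s^*)$-unit and the saturation unit; it is worth disambiguating, but the argument is unambiguous in context (the relevant square is the naturality of the saturation unit for $N\hookrightarrow M$, with $ST$ of an inclusion still injective because $T$ is exact and $S$ is left exact). It is also worth noting that the paper performs essentially your colimit reduction later, in the proof of Lemma~\ref{lem:genthm-1}, where a saturated $M$ is written as $\varinjlim M_i$ with $M_i$ finitely generated and saturated; so your argument is in harmony with the authors' intent and simply spells out what their proof leaves implicit.
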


\begin{proof}
Let $N$ be an $\FI$-module. Then the map
\begin{displaymath}
\Hom_{\FI}(N, M) \to \Hom_{\FI}(N, \Phi^*(\Phi_s(M))) = \Hom_{R/\fh_s}(\Phi_s(N), \Phi_s(M))
\end{displaymath}
is simply the map induced by $\Phi_s$, which is an isomorphism by Theorem~\ref{thm:Phi}. The result thus follows from Yoneda's lemma.
\end{proof}

\begin{proposition} \label{prop:Phi-star}
We have $\Phi_s^*(\cP_{s,n})=\bP_n$, and, more generally, $\Phi_s^*(\cP_s(V)) = \bP(V)$.
\end{proposition}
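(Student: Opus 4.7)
My plan is to first prove the special case $V=k[\fS_n]$, i.e.\ $\Phi_s^*(\cP_{s,n})\cong\bP_n$, and then bootstrap to general $V$ using that the right adjoint $\Phi_s^*$ preserves limits. For the special case, I evaluate the target $\FI$-module at each $[m]$ by the adjunction $\Phi_s\dashv\Phi_s^*$ together with Yoneda:
\[
(\Phi_s^*(\cP_{s,n}))_m \;=\; \Hom_{\FI}(\bP_m,\Phi_s^*(\cP_{s,n})) \;=\; \Hom_{R/\fh_s}(\Phi_s(\bP_m),\cP_{s,n}) \;=\; \Hom_{R/\fh_s}(\cQ_{s,m},\cP_{s,n}),
\]
where the last equality uses $\Phi_s(\bP_m)=\cQ_{s,m}$ from Theorem~\ref{thm:Phi}(c). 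By Proposition~\ref{prop:QPmaps}, this Hom agrees with $\Hom_{R/\fh_s}(\cQ_{s,m},\cQ_{s,n})$. Now $\bP_n$ is induced, hence semi-induced, hence derived saturated by Proposition~\ref{prop:FI-dersat}, hence in particular saturated, so Theorem~\ref{thm:Phi}(b) identifies the last Hom with $\Hom_{\FI}(\bP_m,\bP_n)=(\bP_n)_m$. Chaining these natural isomorphisms yields an isomorphism of $\FI$-modules $\Phi_s^*(\cP_{s,n})\cong\bP_n$. Concretely, this isomorphism is the adjunct of the canonical inclusion $\cQ_{s,n}\hookrightarrow\cP_{s,n}$ constructed just before Proposition~\ref{prop:QPmaps}.

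For general $V$, I realize both sides as $\fS_n$-invariants. By Proposition~\ref{prop:ind-inv}, $\bP(V)=(V\otimes\bP_n)^{\fS_n}$, and the same averaging argument (using that $\VV_n$ is a free $k[\fS_n]$-module, so $\cP_{s,n}=R/\fh_s\otimes\VV_n$ is $k[\fS_n]$-free with the position-permutation action, and $\fS_n$ acts trivially on the $R/\fh_s$-factor) gives $\cP_s(V)=(V\otimes\cP_{s,n})^{\fS_n}$. Since $\Phi_s^*$ is a right adjoint it preserves all limits, and in particular commutes with $\fS_n$-invariants and with tensoring by a finite-dimensional $V$. Therefore
\[
\Phi_s^*(\cP_s(V)) \;=\; \Phi_s^*\bigl((V\otimes\cP_{s,n})^{\fS_n}\bigr) \;=\; \bigl(V\otimes\Phi_s^*(\cP_{s,n})\bigr)^{\fS_n} \;=\; (V\otimes\bP_n)^{\fS_n} \;=\; \bP(V),
\]
using the special case in the middle step.

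The main obstacle is bookkeeping: one needs to verify that the degreewise isomorphism assembled in the special case arises from a single natural comparison map $\bP_n\to\Phi_s^*(\cP_{s,n})$ (namely, the adjunct of $\cQ_{s,n}\hookrightarrow\cP_{s,n}$), so that it genuinely defines an isomorphism of $\FI$-modules rather than merely a family of linear bijections. Since every link in the chain is natural in $\bP_m$, this is essentially automatic, but it deserves careful tracking to ensure the pieces glue into a single morphism.
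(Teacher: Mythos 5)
Your proof is correct and uses essentially the same ingredients as the paper's: Proposition~\ref{prop:QPmaps} (every map from a $\cQ$-module into $\cP_{s,n}$ factors through $\cQ_{s,n}$), the observation that $\bP_n$ is saturated, Theorem~\ref{thm:Phi}(b), and then left-exactness plus additivity of the right adjoint $\Phi_s^*$ for the general-$V$ step. The only difference is cosmetic: you evaluate at each principal projective $\bP_m$ and invoke Yoneda to assemble the degreewise identifications into an $\FI$-module isomorphism, whereas the paper verifies the universal property of $\Phi_s^*(\cP_{s,n})$ directly against an arbitrary $\FI$-module $M$ by passing to a projective presentation of $M$, which sidesteps the naturality bookkeeping you flag at the end.
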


\begin{proof}
Let $i \colon \cQ_{s,n} \to \cP_{s,n}$ be the usual inclusion. Let $M$ be an $\FI$-module, and suppose given a map $f \colon \Phi_s(M) \to \cP_{s,n}$. Choose a surjection $P \to M$ where $P$ is a sum of principal projective modules. We then have a surjection $\Phi_s(P) \to \Phi_s(M)$ where $\Phi_s(P)$ is a sum of $\cQ_{s,m}$'s. The restriction of $f$ to $\Phi_s(P_0)$ maps into $i(\cQ_{s,n})$ by Proposition~\ref{prop:QPmaps}, and so $f$ does as well, that is, $f$ factors uniquely as $i \circ g$ for some $g \colon \Phi_s(M) \to \cQ_{s,n}$. Now, $\cQ_{s,n}=\Phi_s(\bP_n)$. Since $\bP_n$ is saturated (Proposition~\ref{prop:FI-dersat}), it follows from Theorem~\ref{thm:Phi} that $g=\Phi_s(g')$ for a unique map $g' \colon M \to \bP_n$. We have thus shown that $f$ factors uniquely as $i \circ \Phi_s(g')$, and so $\Phi_s^*(P_n)=\bP_n$.

Now, let $V$ be a $k[\fS_n]$-module, so that $\cP_s(V)=(\cP_{s,n} \otimes V)^{\fS_n}$. We have
\begin{displaymath}
\Phi_s^*(\cP_s(V)) = (\Phi_s^*(\cP_{s,n}) \otimes V)^{\fS_n} = (\bP_n \otimes V)^{\fS_n} = \bP(V).
\end{displaymath}
In the first step, we used that $\Phi_s^*$ is left exact (as it is a right adjoint), and so commutes with $\fS_n$-invariants; we also used that $\Phi_s^*$ is additive, and so commutes with $-\otimes V$. In the second step, we used the previous paragraph. This completes the proof.
\end{proof}

\section{Some Ext vanishing} \label{s:ext}

The purpose of this section is to establish two important $\Ext$ vanishing theorems.

\subsection{The first theorem}

This gives $\Ext$ vanishing between certain $\cP$- and $\cQ$-modules:

\begin{theorem} \label{thm:vanish1}
We have $\Ext^i_{R/\fh_s}(\cQ_{s,n}, \cP_{s,d})=0$ for all $i>0$.
\end{theorem}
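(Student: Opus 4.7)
My plan is to use the adjunction $\sG\dashv\sF$ of \S\ref{ss:indmod} (at parameter $n$) to reduce the problem to a computation over the finite ring $S=k[x_1,\ldots,x_n]/(x_i^{s+1})$, where the self-injectivity of Proposition~\ref{prop:S-prop} gives the result.

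\textbf{Step 1 (apply derived adjunction).} Both $\sG$ and $\sF$ are exact: $\sF$ is pure restriction of scalars and of group action, and $\sG$ is induction along the subgroup inclusion $\fS_n\times\fS(n)\subset\fS$ (tensored with the flat $k$-algebra $S'$). Since $\cQ_{s,n}=\sG(S/\fm\otimes k[\fS_n])$, derived adjunction (\S\ref{ss:deradj}) yields
\begin{displaymath}
\Ext^i_{R/\fh_s}(\cQ_{s,n},\cP_{s,d})=\Ext^i_\cC\bigl(S/\fm\otimes k[\fS_n],\,\sF(\cP_{s,d})\bigr).
\end{displaymath}
Direct computation identifies $\sF(\cP_{s,d})=S\otimes S'\otimes\VV_d$, a \emph{free} $S$-module.

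\textbf{Step 2 (peel off $\fS(n)$).} Let $\cD\subset\cC$ denote the full subcategory of objects on which $\fS(n)$ acts trivially. The inclusion $\iota\colon\cD\hookrightarrow\cC$ is exact with right adjoint $(-)^{\fS(n)}$. Since $S/\fm\otimes k[\fS_n]$ lies in $\iota(\cD)$, I would show that $\sF(\cP_{s,d})$ is acyclic for $(-)^{\fS(n)}$, so that derived adjunction gives
\begin{displaymath}
\Ext^i_\cC\bigl(S/\fm\otimes k[\fS_n],\,\sF(\cP_{s,d})\bigr)=\Ext^i_\cD\bigl(S/\fm\otimes k[\fS_n],\,\sF(\cP_{s,d})^{\fS(n)}\bigr).
\end{displaymath}
An orbit analysis shows that $(S'\otimes\VV_d)^{\fS(n)}$ is the finite-dimensional subspace spanned by $1\otimes e_{i_1,\ldots,i_d}$ with $i_1,\ldots,i_d\in[n]$ distinct, so $\sF(\cP_{s,d})^{\fS(n)}$ is a finite free $S$-module.

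\textbf{Step 3 (reduce to $S$-Ext).} In $\cD$, a projective resolution of $S/\fm\otimes k[\fS_n]$ is obtained by tensoring an $\fS_n$-equivariant minimal free $S$-resolution $S\otimes W_\bullet\to S/\fm$ with the regular $\fS_n$-module $k[\fS_n]$. Each term $S\otimes W_i\otimes k[\fS_n]$ is projective in $\cD$: via the standard diagonal-untwisting identification $W_i\otimes k[\fS_n]\cong W_i^{\mathrm{triv}}\otimes k[\fS_n]$ of $\fS_n$-representations, one checks $\Hom_\cD(S\otimes W_i\otimes k[\fS_n],U)=\Hom_k(W_i,U)$. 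Evaluating on the resolution produces the complex $\Hom_k(W_\bullet,\sF(\cP_{s,d})^{\fS(n)})$, whose cohomology is $\Ext^i_S(S/\fm,\sF(\cP_{s,d})^{\fS(n)})$.

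\textbf{Step 4 (self-injectivity).} By Proposition~\ref{prop:S-prop}(b), the finite-dimensional algebra $S$ is self-injective. Since $S$ is noetherian, direct sums of copies of $S$ remain $S$-injective, and so $\sF(\cP_{s,d})^{\fS(n)}$ is $S$-injective. Therefore $\Ext^i_S(S/\fm,\sF(\cP_{s,d})^{\fS(n)})=0$ for $i>0$, completing the proof.

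\textbf{Main obstacle.} The main technical difficulty is Step~2: verifying the $(-)^{\fS(n)}$-acyclicity of $\sF(\cP_{s,d})$, equivalent to vanishing of higher smooth group cohomology $H^i(\fS(n),S'\otimes\VV_d)$ for $i>0$. In characteristic zero, a Shapiro-style argument using the $\fS(n)$-orbit decomposition of $S'\otimes\VV_d$ (each orbit being induced from a stabilizer of the form $\text{finite}\times\fS_\infty$) suffices, since $H^{>0}(\fS_\infty,k)=0$ rationally. In positive characteristic, a more delicate approach will be needed to handle possible contributions from $H^*(\fS_\infty,k)$, perhaps by constructing a more refined projective resolution directly in $\cC$ that uses compactly-induced smooth $\fS(n)$-modules in place of the trivial representation.
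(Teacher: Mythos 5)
Your plan is essentially the paper's: reduce via the adjunction $\sG\dashv\sF$ to an $\Ext$ computation in $\cC$, separate off the $\fS(n)$-action, and finish with the self-injectivity of $S$ from Proposition~\ref{prop:S-prop}. The main difference in packaging is that the paper does not pass to the fixed-point subcategory $\cD$; instead it decomposes $\sF(\cP_{s,d})$ as a direct sum of modules of the form $S\otimes W\otimes V$ (with $W$ a finite-dimensional $\fS_n$-representation and $V$ a finite-length induced $\fS(n)$-module) and then factors
\begin{displaymath}
\rR\Hom_{\cC}(S/\fm\otimes k[\fS_n],\,S\otimes W\otimes V)
\cong
\rR\Hom_{\cC'}(S/\fm\otimes k[\fS_n],\,S\otimes W)\otimes_k\rR\Hom_{\fS(n)}(k,V),
\end{displaymath}
where $\cC'$ is the category of $\fS_n$-equivariant $S$-modules. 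Both routes are fine; yours has the advantage of landing on a finitely generated $S$-module $\sF(\cP_{s,d})^{\fS(n)}$, so you can skip the paper's closing remark about $\Ext^i_{\cC}(X,-)$ commuting with infinite direct sums when $X$ has finite length.

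The genuine gap is exactly the one you flag in Step~2 — but your diagnosis of why it is hard is wrong, and the gap is in fact closed uniformly in all characteristics by results already in the paper. What you need is that $\rR^{>0}(-)^{\fS(n)}$ vanishes on $\sF(\cP_{s,d})$, i.e.\ that the higher smooth $\Ext$'s $\Ext^i_{\fS(n)}(k,\,S'\otimes\VV_d)$ vanish for $i>0$. The paper establishes this by proving (its Lemma~\ref{lem:vanish1-1}) that $R/\fh_s\cong S'$ decomposes $\fS(n)$-equivariantly as a direct sum of finite-length \emph{induced} modules — and hence so does $S'\otimes\VV_d$, using Proposition~\ref{prop:ten-ind} — and then invoking Proposition~\ref{prop:Ext-semi-induced-sinf}, which says that $\Ext^i_{\fS}(\VV_m,V)=0$ for $i>0$ whenever $V$ is semi-induced. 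Taking $m=0$ (so $\VV_0=k$) gives precisely the required acyclicity. Crucially, Proposition~\ref{prop:Ext-semi-induced-sinf} is a statement in the \emph{smooth} category $\Rep(\fS)$ and holds in every characteristic, because the trivial module $k=\VV_0=\PP(k)$ is itself an induced module; your concern about contributions from $H^{>0}(\fS_\infty,k)$ conflates ordinary group cohomology with smooth $\Ext$, which here vanishes. So your "more delicate approach" for positive characteristic is not needed: the decomposition into induced $\fS(n)$-pieces together with the smooth $\Ext$-vanishing does the job verbatim.
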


Before giving the proof, we note a few consequences.

\begin{corollary} \label{cor:vanish1}
We have $\rR^i \Phi_s^*(\cP_{s,n})=0$ for $i>0$.
\end{corollary}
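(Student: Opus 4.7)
The plan is to use the derived adjunction of \S \ref{ss:deradj} applied to the pair $(\Phi_s, \Phi_s^*)$. Since $\Phi_s$ is exact, its right adjoint $\Phi_s^*$ sends injectives to injectives, and for any $\FI$-module $M$ and any $R/\fh_s$-module $N$ we have a natural isomorphism
\begin{displaymath}
\rR\Hom_{\FI}(M, \rR\Phi_s^*(N)) \;=\; \rR\Hom_{R/\fh_s}(\Phi_s(M), N).
\end{displaymath}

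The key observation is that to show an $\FI$-module $X$ is zero, it suffices to show that $X([m]) = \Hom_{\FI}(\bP_m, X) = 0$ for every $m \ge 0$, by the Yoneda description of the principal projectives. So I would specialize the displayed isomorphism to $M = \bP_m$ and $N = \cP_{s,n}$. Since $\bP_m$ is projective, $\rR\Hom_{\FI}(\bP_m, -)$ reduces to evaluation at $[m]$ (no higher derived functors), hence taking $i$-th cohomology of both sides yields
\begin{displaymath}
\bigl(\rR^i\Phi_s^*(\cP_{s,n})\bigr)_m \;=\; \Ext^i_{R/\fh_s}\bigl(\Phi_s(\bP_m), \cP_{s,n}\bigr) \;=\; \Ext^i_{R/\fh_s}(\cQ_{s,m}, \cP_{s,n}),
\end{displaymath}
where the last equality uses part (c) of Theorem~\ref{thm:Phi}.

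By Theorem~\ref{thm:vanish1}, the right-hand side vanishes for every $i > 0$ and every $m$. Therefore $\rR^i\Phi_s^*(\cP_{s,n})$ is an $\FI$-module whose value at $[m]$ is $0$ for all $m$, which means it is the zero $\FI$-module. There is really no obstacle here — all the work is absorbed into Theorem~\ref{thm:vanish1} (the genuine vanishing statement) and into the existence of the derived adjunction; the corollary is just the translation of the former into the language of derived functors of $\Phi_s^*$ via the latter.
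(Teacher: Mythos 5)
Your proof is correct and is essentially the same as the paper's: both invoke the derived adjunction of \S\ref{ss:deradj} for $(\Phi_s, \Phi_s^*)$, evaluate at $M = \bP_m$ using projectivity to collapse the left side to degree $0$, identify $\Phi_s(\bP_m) = \cQ_{s,m}$ via Theorem~\ref{thm:Phi}, and then apply Theorem~\ref{thm:vanish1}. Your write-up is a bit more careful about distinguishing the two indices, but the argument is the same.
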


\begin{proof}
By derived adjunction (\S \ref{ss:deradj}), for an $\FI$-module $M$ and an $R/\fh_s$-module $N$, we have
\begin{displaymath}
\rR \Hom_{\FI}(M, \rR \Phi_s^*(N)) = \rR \Hom_{R/\fh_s}(\Phi_s(M), N).
\end{displaymath}
We apply this with $M=\bP_n$. Since $\bP_n$ is projective, the higher $\rR \Hom$'s on the left vanish. We thus find
\begin{displaymath}
(\rR^i \Phi_s^*(N))_n = \Ext^i_{R/\fh_s}(\cQ_{s,n}, N).
\end{displaymath}
Taking $N=\cP_{s,n}$ yields the stated result.
\end{proof}

\begin{corollary}
Suppose $\chr(k)=0$. Let $M$ be a $\cQ_s$-filtered $R/\fh_s$-module and let $d \in \bN$. Then $\Ext^i_{R/\fh_s}(M, \cP_{s,d})=0$ hold for all $i>0$.
\end{corollary}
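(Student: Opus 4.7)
The plan is to reduce the corollary to Theorem~\ref{thm:vanish1} in two stages: first extend the vanishing from $\cQ_{s,n}$ to an arbitrary $\cQ_s$-module $\cQ_s(V)$, then propagate it up a $\cQ_s$-filtration using the long exact sequence of $\Ext$.

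First, I would handle a single $\cQ_s$-module $\cQ_s(V)$ where $V$ is a finite-dimensional $k[\fS_n]$-module. Since $\chr(k)=0$, the functor $(-)^{\fS_n}$ is exact and splits off $\fS_n$-invariants as a direct summand, so $\cQ_s(V) = (\cQ_{s,n}\otimes V)^{\fS_n}$ is an $R/\fh_s$-module direct summand of $\cQ_{s,n}\otimes V$. Forgetting the $\fS_n$-action, $\cQ_{s,n}\otimes V$ is just a finite direct sum of copies of $\cQ_{s,n}$. Therefore
\begin{displaymath}
\Ext^i_{R/\fh_s}(\cQ_s(V),\cP_{s,d})
\end{displaymath}
is a direct summand of $\Ext^i_{R/\fh_s}(\cQ_{s,n},\cP_{s,d})^{\oplus \dim V}$, which vanishes for $i>0$ by Theorem~\ref{thm:vanish1}. (Equivalently, in characteristic~0 every $\cQ_s$-module is a direct sum of $\cQ_{s,\lambda}$'s, each of which is a summand of $\cQ_{s,n}$.)

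Second, I would induct on the length $n$ of a $\cQ_s$-filtration $0 = F_0 \subset F_1 \subset \cdots \subset F_n = M$. The case $n=0$ is trivial and $n=1$ is the previous paragraph. In general, the short exact sequence
\begin{displaymath}
0 \to F_{n-1} \to F_n \to F_n/F_{n-1} \to 0
\end{displaymath}
yields a long exact sequence
\begin{displaymath}
\cdots \to \Ext^i(F_n/F_{n-1},\cP_{s,d}) \to \Ext^i(F_n,\cP_{s,d}) \to \Ext^i(F_{n-1},\cP_{s,d}) \to \cdots
\end{displaymath}
For $i>0$ the outer terms vanish (the left by the base case, since $F_n/F_{n-1}$ is a $\cQ_s$-module, and the right by the inductive hypothesis), so the middle vanishes as well.

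There is no real obstacle here: the theorem is essentially a formal packaging of Theorem~\ref{thm:vanish1}. The only subtlety is the first step, which uses characteristic~0 in order to split $\cQ_s(V)$ off as a summand of something built from $\cQ_{s,n}$; this is consistent with the hypothesis $\chr(k)=0$ stated in the corollary.
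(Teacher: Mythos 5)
Your proof is correct and follows the same approach as the paper: reduce to a single $\cQ_s$-module by observing it is a direct summand of a finite sum of $\cQ_{s,n}$'s (using characteristic~0), then propagate up the filtration by d\'evissage. The only difference is that you spell out the details more explicitly than the paper does.
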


\begin{proof}
First note that the statement holds if $M=\cQ_{s,\lambda}$, as such a module is a summand of $\cQ_{s,n}$ with $n=\vert \lambda \vert$. The general case then follows from d\'evissage.
\end{proof}

\begin{remark}
If $M$ and $N$ are $\cQ_s$-filtered $R/\fh_s$-modules then $\Ext^i_{R/\fh_s}(M, N)$ can be non-zero. Indeed, not every $\cQ_s$-filtered module is a sum of $\cQ_s$-modules (e.g., $\cP_{s,d}$), and so there must be non-trivial $\Ext^1$ classes between $\cQ_s$-modules. Thus, in the above theorem, $\cP_{s,d}$ cannot be replaced with a general $\cQ_s$-filtered module.
\end{remark}

We now begin proving Theorem~\ref{thm:vanish1}. Fix $n \ge 0$ and let $S$, $S'$, $\cC$, $\sF$, and $\sG$ be as in \S \ref {ss:indmod}.

\begin{lemma} \label{lem:vanish1-1}
As an $\fS$-module, $R/\fh_s$ decomposes into a direct sum of finite length induced modules. (The sum is infinite if $s>0$.)
\end{lemma}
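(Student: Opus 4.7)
The plan is to decompose $R/\fh_s$ by the ``exponent type'' of monomials. A $k$-basis of $R/\fh_s$ consists of monomials $x_{i_1}^{c_1} \cdots x_{i_n}^{c_n}$ with $i_1 < \cdots < i_n$ distinct indices and each $c_j \in \{1,\ldots,s\}$. For each tuple $\ba = (a_1,\ldots,a_s) \in \bN^s$, let $W_{\ba} \subset R/\fh_s$ be the $k$-span of those monomials in which exactly $a_j$ of the variables appear with exponent $j$. Since $\fS$ permutes variables and so preserves the multiset of exponents of each monomial, the decomposition
\begin{displaymath}
R/\fh_s = \bigoplus_{\ba \in \bN^s} W_{\ba}
\end{displaymath}
is $\fS$-equivariant, and each $W_{\ba}$ is a smooth subrepresentation.

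Next I would identify each $W_{\ba}$ as an induced module. Set $n = a_1 + \cdots + a_s$ and let $H = \fS_{a_1} \times \cdots \times \fS_{a_s}$ be the corresponding Young subgroup of $\fS_n$. The basis of $W_{\ba}$ is naturally in bijection with $H$-orbits on the set of injections $[n] \hookrightarrow [\infty]$ (the first $a_1$ positions carrying exponent~$1$, the next $a_2$ positions carrying exponent~$2$, and so on), so there is a canonical $\fS$-equivariant isomorphism $W_{\ba} \cong (\VV_n)_H$ with the $H$-coinvariants of $\VV_n$. A standard manipulation with induction and coinvariants gives
\begin{displaymath}
(\VV_n)_H = \bigl(\VV_n \otimes \Ind_H^{\fS_n} k\bigr)_{\fS_n} = \PP\bigl(\Ind_H^{\fS_n} k\bigr),
\end{displaymath}
so $W_{\ba}$ is an induced module in the sense of the paper, namely $\PP$ of the Young permutation module $M^{\ba}$.

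To see that each $W_{\ba}$ has finite length, note that $\PP(V)$ for a finite-dimensional $\fS_n$-module $V$ is a quotient of $\VV_n \otimes V \cong \VV_n^{\oplus \dim V}$, which has finite length by the corollary following Proposition~\ref{prop:FI-gen-sym}. Finally, when $s = 0$ the only tuple is $\ba = ()$ and we recover $R/\fh_0 = k$; when $s \ge 1$ there are already infinitely many tuples of the form $(a_1,0,\ldots,0)$, so the sum is infinite. No step here is a serious obstacle: the only thing to watch is writing the coinvariants $(\VV_n)_H$ correctly as an induced module, which is a routine application of the definition $\PP(V) = (\VV_n \otimes V)_{\fS_n}$ together with Frobenius reciprocity.
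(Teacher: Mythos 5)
Your proof is correct and follows essentially the same approach as the paper: both decompose $R/\fh_s$ into $\fS$-orbits of monomials (which you parametrize by the exponent-type tuple $\ba$, whereas the paper picks a canonical representative $x_1^{e_1}\cdots x_n^{e_n}$ in each orbit) and then identify each orbit module as $\PP$ applied to a Young permutation module. The paper computes the orbit module directly from the stabilizer $G \times \fS(n)$, while you compute it as $(\VV_n)_H$; these are the same calculation.
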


\begin{proof}
Let $X$ be the set of monomials in $R/\fh_s$. Then $R/\fh_s$ is identified with the permutation module $k[X]$. Let $X=\coprod_{i\in I} X_i$ be the orbit decomposition of $X_i$, so that $R/\fh_s$ decomposes as $\bigoplus_{i \in I} k[X_i]$. Fix $i \in I$. Then $X_i$ contains a unique monomial $m$ of the form $x_1^{e_1} \cdots x_n^{e_n}$ where $e_1 \ge \cdots \ge e_n \ge 1$. The stabilizer in $\fS$ of $m$ is $G \times \fS(n)$, where $G$ is a Young subgroup of $\fS_n$. We thus have
\begin{displaymath}
k[X_i] \cong \Ind_{G \times \fS(n)}^{\fS}(k) \cong \PP(k[\fS_n/G]),
\end{displaymath}
and so $k[X_i]$ is a finite length induced module.
\end{proof}

Recall that $\cC$ is the category of $(\fS_n \times \fS(n))$-equivariant modules over $S$. Thus if $W$ and $V$ are representations of $\fS_n$ and $\fS(n)$ then $S \otimes W \otimes V$ is an object of $\cC$. Note that $\fS(n)$ is isomorphic to $\fS$, so we can apply our teminology and results about $\fS$ to it.

\begin{lemma}
$\sF(\cP_{s,d})$ is a direct sum of objects of the form $S \otimes W \otimes V$, where $W$ is a finite dimensional $\fS_n$-representation and $V$ is a finite length induced $\fS(n)$-representation.
\end{lemma}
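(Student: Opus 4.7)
The plan is to decompose the module $\cP_{s,d} = R/\fh_s \otimes \VV_d = S \otimes_k S' \otimes_k \VV_d$ directly. Applying $\sF$ forgets the $S'$-module structure and the $\fS$-action outside $\fS_n \times \fS(n)$, so $\sF(\cP_{s,d})$ is $S$-free with $(\fS_n \times \fS(n))$-equivariant complement $S' \otimes_k \VV_d$. My goal is to exhibit a decomposition of this complement as a direct sum of terms $W \otimes V$, where $W$ is a finite-dimensional $\fS_n$-representation on which $\fS(n)$ acts trivially, and $V$ is a finite-length induced $\fS(n)$-representation on which $\fS_n$ acts trivially; tensoring with $S$ then yields the desired decomposition in $\cC$.

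The first step is to decompose $\VV_d$ as an $\fS_n \times \fS(n)$-representation. For each subset $A \subseteq [d]$, let $\VV_d^{(A)}$ be the span of those basis vectors $e_{i_1, \ldots, i_d}$ with $i_j \in [n]$ iff $j \in A$; this yields $\VV_d = \bigoplus_{A \subseteq [d]} \VV_d^{(A)}$. Separating the indices in $[n]$ from those outside, one identifies $\VV_d^{(A)}$ with $W_A \otimes V_A$, where $W_A$ is the (finite-dimensional) span of ordered $|A|$-tuples of distinct elements of $[n]$ (a finite-dimensional $\fS_n$-representation, with trivial $\fS(n)$-action), and $V_A$ is the evident analog of $\VV_{d-|A|}$ for $\fS(n)$ acting on $\{n+1, n+2, \ldots\}$. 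The latter is a finite-length induced $\fS(n)$-representation with trivial $\fS_n$-action, since $\VV_m = \PP(k[\fS_m])$ in the language preceding Proposition~\ref{prop:ten-ind}.

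Next, I would handle $S'$. Since $S'=k[x_i]_{i>n}/(x_i^{s+1})$ is the analog of $R/\fh_s$ for $\fS(n)$ in place of $\fS$, Lemma~\ref{lem:vanish1-1} (applied to $\fS(n)$) gives a decomposition of $S'$ as a direct sum of finite-length induced $\fS(n)$-representations, each with trivial $\fS_n$-action. Applying Proposition~\ref{prop:ten-ind} (for $\fS(n)$) to each summand of $S' \otimes V_A$ then expresses $S' \otimes V_A$ as a direct sum of finite-length induced $\fS(n)$-representations, still with trivial $\fS_n$-action. Combining this with the decomposition of $\VV_d$ gives
\begin{displaymath}
S' \otimes \VV_d \;=\; \bigoplus_{A \subseteq [d]} W_A \otimes (S' \otimes V_A)
\end{displaymath}
in the required form, and tensoring with $S$ yields the claimed decomposition of $\sF(\cP_{s,d})$.

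There is no genuine obstacle here; the argument is essentially bookkeeping. The one point that requires care is verifying at each step that the $\fS_n$- and $\fS(n)$-actions remain cleanly separated: in particular, the decomposition of $S' \otimes V_A$ into finite-length induced $\fS(n)$-pieces must preserve the trivial $\fS_n$-action, which is automatic because both tensor factors carry trivial $\fS_n$-action.
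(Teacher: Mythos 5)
Your proposal is correct and is essentially the paper's argument: both write $R/\fh_s = S \otimes S'$, decompose $\VV_d$ as an $(\fS_n \times \fS(n))$-representation into pieces of the form ($\fS_n$-rep) $\otimes$ ($\VV_m$ for $\fS(n)$), invoke the lemma that $S'$ is a sum of finite-length induced $\fS(n)$-modules, and conclude via Proposition~\ref{prop:ten-ind}. The only difference is cosmetic: you index the decomposition of $\VV_d$ by the subset $A \subseteq [d]$ of positions with entries in $[n]$, whereas the paper only groups by its cardinality, writing $\VV_d = \bigoplus_{i=0}^d W_i \otimes \VV_i(n)$; your refinement is harmless and carries the same content.
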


\begin{proof}
Recall that $\sF(\cP_{s,d})$ is simply $\cP_{s,d}$ regarded as an $\fS_n \times \fS(n)$ equivariant module over $S$. By definition, $\cP_{s,d} = R/\fh_s \otimes \VV_d$. We have $R/\fh_s = S \otimes S'$. Let $\VV_i(n)$ denote the subspace of $\VV_i$ where all indices are $\ge n$; this is identified with $\VV_i$ when $\fS(n)$ is identified with $\fS$. As a representation of $\fS_n \times \fS(n)$, we have
\begin{displaymath}
\VV_d=\bigoplus_{i=0}^d W_i \otimes \VV_i(n)
\end{displaymath}
for certain $\fS_n$-modules $W_i$. We thus have
\begin{displaymath}
\cP_{s,d} \cong \bigoplus_{i=0}^d S \otimes W_i \otimes (S' \otimes \VV_i(n)).
\end{displaymath}
Since $(S', \fS(n))$ is isomorphic to $(R/\fh_s, \fS)$, Lemma~\ref{lem:vanish1-1} shows that $S'$ is a direct sum of induced $\fS(n)$-modules. Since a tensor product of induced modules is a sum of induced modules (Proposition~\ref{prop:ten-ind}), the result follows.
\end{proof}

\begin{lemma}
Let $W$ and $V$ be as in the previous lemma. Then
\begin{displaymath}
\Ext^i_{\cC}(S/\fm \otimes k[\fS_n], S \otimes W \otimes V)=0
\end{displaymath}
for all $i>0$.
\end{lemma}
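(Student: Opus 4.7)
The plan is to reduce the Ext computation in $\cC$ to the group cohomology $\rH^\bullet(\fS(n),V)$, which vanishes in positive degrees by Proposition~\ref{prop:Ext-semi-induced-sinf}. Let $\cC_0$ denote the category of $\fS(n)$-equivariant $S$-modules, obtained from $\cC$ by forgetting the $\fS_n$-action. Since $\fS_n$ is finite, the restriction functor $\cC \to \cC_0$ admits an exact two-sided adjoint $\Ind \colon M \mapsto M \otimes k[\fS_n]$ (with diagonal $\fS_n$-action), so both functors preserve injectives. As $S/\fm \otimes k[\fS_n] = \Ind(S/\fm)$, Frobenius reciprocity gives
$$\Ext^i_{\cC}(S/\fm \otimes k[\fS_n], S \otimes W \otimes V) \cong \Ext^i_{\cC_0}(S/\fm, S \otimes W \otimes V).$$

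Next I would use the self-duality $S^* \cong S$ from Proposition~\ref{prop:S-prop}(a) to identify $S \otimes U$ in $\cC_0$ with the coinduced module $\Hom_k(S, U)$ (with $\fS(n)$ acting only on $U$, since it acts trivially on $S$). This yields the natural adjunction
$$\Hom_{\cC_0}(M, S \otimes U) \cong \Hom_{\fS(n)}(M, U)$$
for $M \in \cC_0$ (regarded on the right merely as a smooth $\fS(n)$-representation) and $U \in \Rep(\fS(n))$. In particular, $S \otimes U$ is $\cC_0$-injective whenever $U$ is $\Rep(\fS(n))$-injective. Taking an injective resolution $V \to J^\bullet$ in $\Rep(\fS(n))$, each $W \otimes J^i$ is again injective (finite direct sum of copies of $J^i$), and exactness of tensoring with the flat vector space $S \otimes W$ shows that $S \otimes W \otimes J^\bullet$ is a $\cC_0$-injective resolution of $S \otimes W \otimes V$.

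Applying $\Hom_{\cC_0}(S/\fm, -)$ to this resolution and invoking the adjunction from the previous paragraph gives
$$\Hom_{\cC_0}(S/\fm, S \otimes W \otimes J^i) = \Hom_{\fS(n)}(k, W \otimes J^i) = W \otimes (J^i)^{\fS(n)},$$
since $S/\fm$ becomes the trivial $\fS(n)$-representation $k$. Taking cohomology yields
$$\Ext^i_{\cC_0}(S/\fm, S \otimes W \otimes V) = W \otimes \rH^i(\fS(n), V) = W \otimes \Ext^i_{\fS(n)}(\VV_0, V),$$
which vanishes for $i > 0$ by Proposition~\ref{prop:Ext-semi-induced-sinf} since $V$ is induced and $\VV_0 = k$.

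The main technical point is the coinduction adjunction $\Hom_{\cC_0}(M, S \otimes U) \cong \Hom_{\fS(n)}(M, U)$, which encodes the self-injectivity of $S$; once it is in place the rest of the argument is a routine combination of standard tensor-hom manipulations with the Ext-vanishing of Proposition~\ref{prop:Ext-semi-induced-sinf}.
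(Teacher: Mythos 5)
Your proof is correct, and it arrives at the same two essential ingredients as the paper (self-injectivity of $S$ via Proposition~\ref{prop:S-prop}, and vanishing of $\Ext^i_{\fS(n)}(\VV_m,V)$ for semi-induced $V$ via Proposition~\ref{prop:Ext-semi-induced-sinf}), but the reduction is organized differently. You first apply Frobenius reciprocity across the finite group $\fS_n$ to drop the $\fS_n$-equivariance, landing in the category $\cC_0$ of $\fS(n)$-equivariant $S$-modules, and then use the coinduction adjunction $\Hom_{\cC_0}(M,S\otimes U)\cong\Hom_{\fS(n)}(M,U)$ (valid because $S^*\cong S$) to reduce to $\fS(n)$-cohomology. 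The paper instead first performs a K\"unneth-type factorization
\begin{displaymath}
\rR\Hom_{\cC}(S/\fm\otimes k[\fS_n],\;S\otimes W\otimes V)\cong \rR\Hom_{\cC'}(S/\fm\otimes k[\fS_n],\;S\otimes W)\otimes_k \rR\Hom_{\fS(n)}(k,V),
\end{displaymath}
splitting the two group actions cleanly, and then handles the $\cC'$-factor (where $\cC'$ is $\fS_n$-equivariant $S$-modules) by exhibiting $S\otimes k[\fS_n]$ as a $\cC'$-injective object via $\Hom_{\cC'}(-,S\otimes k[\fS_n])=\Hom_S(-,S)$. Your sequential-adjunction route avoids the ``one easily sees'' K\"unneth identification, replacing it with two more elementary adjunction arguments; the paper's K\"unneth route is more symmetric and arguably more transparent once granted. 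One small stylistic imprecision in your write-up: ``$\Ind\colon M\mapsto M\otimes k[\fS_n]$ with diagonal $\fS_n$-action'' is not quite what induction does for $M\in\cC_0$ (there is no pre-existing $\fS_n$-action on $M$; the $S$-action on the induced module is twisted across the copies of $M$). In the case $M=S/\fm$ this twist is invisible, so your identification $\Ind(S/\fm)=S/\fm\otimes k[\fS_n]$ is still correct, but the general phrasing is slightly off.
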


\begin{proof}
Let $\cC'$ be the category of $\fS_n$-equivariant modules over $S$. One easily sees that we have an isomorphism
\begin{displaymath}
\rR \Hom_{\cC}(S/\fm \otimes k[\fS_n], S \otimes W \otimes V) \cong \rR \Hom_{\cC'}(S/\fm \otimes k[\fS_n], S \otimes W) \otimes_k \rR \Hom_{\fS(n)}(k, V).
\end{displaymath}
The second factor is concentrated in degree~0 by Proposition~\ref{prop:Ext-semi-induced-sinf}. We have
\begin{displaymath}
\rR \Hom_{\cC'}(S/\fm \otimes k[\fS_n], S \otimes W) \cong
\rR \Hom_{\cC'}(S/\fm, S \otimes k[\fS_n] \otimes W)
\end{displaymath}
since $k[\fS_n]$ is a self-dual representation. Furthermore, $k[\fS_n] \otimes W$ is isomorphic to a direct sum of $\dim(W)$ copies of $k[\fS_n]$. The result thus follows from the fact that $S \otimes k[\fS_n]$ is injective in $\cC'$. Indeed, if $M$ is an object of $\cC'$ then
\begin{displaymath}
\Hom_{\cC'}(M, S \otimes k[\fS_n]) = \Hom_S(M, S),
\end{displaymath}
and $S$ is a self-injective (Proposition~\ref{prop:S-prop}), so the above functor is exact in $M$.
\end{proof}

\begin{proof}[Proof of Theorem~\ref{thm:vanish1}]
By derived adjunction (\S \ref{ss:deradj}), for $M \in \cC$ and $N \in \Mod_{R/\fh_s}$-module we have a natural identification
\begin{displaymath}
\Ext^i_{R/\fh_s}(\sG(M), N) = \Ext^i_{\cC}(M, \sF(N)).
\end{displaymath}
Note that $\sF$ is exact, so we do not need to derive it. We apply this with $M=S/\fm \otimes k[\fS_n]$ and $N=\cP_{s,d}$. We find
\begin{displaymath}
\Ext^i_{R/\fh_s}(\cQ_{s,n}, N) = \Ext^i_{\cC}(S/\fm \otimes k[\fS_n], \sF(\cP_{s,d})).
\end{displaymath}
This vanishes by the previous two lemmas. Note that since $\cC$ is locally noetherian, an arbitrary direct sum of injectives is injective, and so if $X$ is a finite length object then $\Ext^i_{\cC}(X, -)$ commutes with arbitrary direct sums (compute with injective resolutions in the second argument).
\end{proof}

\subsection{The second theorem}

The second theorem gives $\Ext$ vanishing between $\cQ_s$-filtered modules and torsion modules.

\begin{theorem} \label{thm:vanish2}
Let $M$ be a finitely generated $\cQ_s$-filtered $R/\fh_s$-module, and let $T$ be a torsion $R/\fh_s$-module. Then
\begin{displaymath}
\Ext^i_{R/\fh_s}(T, M)=0
\end{displaymath}
hold for all $i \ge 0$.
\end{theorem}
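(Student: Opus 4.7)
The plan is to reduce the theorem via three successive devissages to a single $\Ext$-vanishing in $\FI$-modules. First, I apply long exact sequences of $\Ext$ to the $\cQ_s$-filtration of $M$ to reduce to the case $M = \cQ_s(V)$ for a finite-dimensional $k[\fS_n]$-module $V$. The QP-resolution of Proposition~\ref{prop:QP-res} then provides an exact complex $0 \to \cQ_s(V) \to \cP_{s,n}^{\oplus r_0} \to \cP_{s,n}^{\oplus r_1} \to \cdots$. Assuming $\Ext^i_{R/\fh_s}(T, \cP_{s,n}) = 0$ for all $i \ge 0$, this coresolution is $\Hom(T,-)$-acyclic, so $\rR \Hom(T, \cQ_s(V))$ is computed by $\Hom(T, \cP^\bullet) = 0$. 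The $i=0$ vanishing $\Hom(T, \cP_{s,n}) = 0$ follows because $\cP_{s,n}$ is torsion-free, via a disjoint-variables argument of the same flavor as the proof of Proposition~\ref{prop:h-prime}. The heart of the matter is then to establish $\Ext^i_{R/\fh_s}(T, \cP_{s,d}) = 0$ for all torsion $T$ and all $i \ge 0$.

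Second, I reduce $T$ to being annihilated by $\fh_{s-1}$. By Proposition~\ref{prop:hs-tors-ann}, any finitely generated torsion $T$ is killed by $\fh_{s-1}^k$ for some $k$, and long exact sequences applied to the $\fh_{s-1}^j$-adic filtration reduce to $k=1$. For arbitrary torsion $T$, writing $T = \bigcup_k T[\fh_{s-1}^k]$ as a sequential union of submodules and using the telescope exact sequence (together with $\Ext^i(\bigoplus T_n, -) = \prod_n \Ext^i(T_n, -)$) extends the vanishing from the $\fh_{s-1}^k$-annihilated case to all torsion modules.

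Third, I handle the case when $T$ is an $R/\fh_{s-1}$-module by resolving it in $\Mod_{R/\fh_{s-1}}$ by direct sums of the standard generators $\cP_{s-1,n}$; the resulting exact sequence is also exact in $\Mod_{R/\fh_s}$. By Proposition~\ref{prop:PQfilt}, each $\cP_{s-1,n}$ has a finite $\cQ_{s-1}$-filtration whose graded pieces are $\cQ_{s-1,m} = \Phi_s(\bT_m)$ by Theorem~\ref{thm:Phi}. Derived adjunction for $\Phi_s \dashv \Phi_s^*$ (\S\ref{ss:deradj}) combined with Corollary~\ref{cor:vanish1} then yields
\begin{displaymath}
\rR \Hom_{R/\fh_s}(\cQ_{s-1,m}, \cP_{s,d}) \cong \rR \Hom_{\FI}(\bT_m, \bP_d) = 0,
\end{displaymath}
the last equality because $\bP_d$ is induced, hence derived saturated by Proposition~\ref{prop:FI-dersat}. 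Devissage on the $\cQ_{s-1}$-filtration of $\cP_{s-1,n}$ then gives $\Ext^i_{R/\fh_s}(\cP_{s-1,n}, \cP_{s,d}) = 0$ for all $i \ge 0$; the resulting $\cP_{s-1,n}$-resolution of $T$ is therefore $\Hom(-, \cP_{s,d})$-acyclic, and the $i=0$ case forces the computing complex to vanish.

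The hard part will be the central identification $\rR \Hom_{R/\fh_s}(\cQ_{s-1,m}, \cP_{s,d}) \cong \rR \Hom_{\FI}(\bT_m, \bP_d)$, which is where Corollary~\ref{cor:vanish1} (the $\Phi_s^*$-acyclicity of $\cP_{s,d}$) meets the $\FI$-theoretic derived saturation of induced modules, and which packages the main content of the theorem; the telescope reduction for non-finitely-generated $T$ is routine but requires some care with the direct-sum/product interaction for $\Ext^i$.
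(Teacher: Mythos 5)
Your proposal is correct and takes essentially the same route as the paper: both reduce to $\cQ_s(V)$, truncate $T$ to the $\fh_{s-1}$-killed case, resolve $T$ by sums of $\cP_{s-1,m}$'s and $\cQ_s(V)$ by $\cP_{s,n}$'s via Proposition~\ref{prop:QP-res}, and deduce the key vanishing $\rR\Hom_{R/\fh_s}(\cQ_{s-1,m},\cP_{s,d})=0$ from the derived adjunction $\Phi_s\dashv\Phi_s^*$ together with $\rR\Phi_s^*(\cP_{s,d})=\bP_d$ (Proposition~\ref{prop:Phi-star}, Corollary~\ref{cor:vanish1}) and derived saturation of $\bP_d$ (Proposition~\ref{prop:FI-dersat}) — which is exactly Lemma~\ref{lem:vanish2-1}. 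The only cosmetic difference is the order of dévissages and your use of a countable telescope over $T=\bigcup_k T[\fh_{s-1}^k]$ in place of the paper's $\rR\varprojlim$ over finitely generated submodules.
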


The following lemma is the key result.

\begin{lemma} \label{lem:vanish2-1}
For any $n,m \in \bN$, we have
\begin{displaymath}
\Ext^i_{R/\fh_s}(\cQ_{s-1,m},\cP_{s,n})=0, \qquad
\Ext^i_{R/\fh_s}(\cP_{s-1,m},\cP_{s,n})=0,
\end{displaymath}
for all $i \ge 0$.
\end{lemma}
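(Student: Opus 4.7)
The plan is to transfer both $\Ext$ computations from $\Mod_{R/\fh_s}$ to $\Mod_{\FI}$ via the functor $\Phi_s$, where the required vanishing is essentially built into the theory of semi-induced modules. The first equation is the main point; the second will be a formal consequence via a filtration argument.

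First, I would combine Corollary~\ref{cor:vanish1} and Proposition~\ref{prop:Phi-star} to conclude that $\rR\Phi_s^*(\cP_{s,n}) = \bP_n$, concentrated in degree zero. Since $\Phi_s$ is exact (Theorem~\ref{thm:Phi}(a)), derived adjunction (\S\ref{ss:deradj}) then yields a natural isomorphism
\[
\Ext^i_{R/\fh_s}(\Phi_s(M),\cP_{s,n}) \;\cong\; \Ext^i_{\FI}(M,\bP_n)
\]
for every $\FI$-module $M$ and every $i \ge 0$. Taking $M=\bT_m$ and using $\Phi_s(\bT_m)=\cQ_{s-1,m}$ (Theorem~\ref{thm:Phi}(c)) converts the first claimed vanishing into $\Ext^i_{\FI}(\bT_m,\bP_n)=0$. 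But $\bP_n = \bP(k[\fS_n])$ is induced, hence semi-induced, hence derived saturated by Proposition~\ref{prop:FI-dersat}; by the characterization recalled in \S\ref{ss:fi-sat}, this forces $\Ext^i_{\FI}(T,\bP_n)=0$ for every torsion $\FI$-module $T$ and every $i \ge 0$. Since $\bT_m$ is torsion, the first equation follows.

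For the second equation, I would reduce to the first by filtering. Proposition~\ref{prop:PQfilt}, applied with $s$ replaced by $s-1$, provides a finite filtration of $\cP_{s-1,m}$ whose graded pieces are all isomorphic to $\cQ_{s-1,m}$; since $R/\fh_{s-1}$ is a quotient of $R/\fh_s$, this is simultaneously a filtration of $\cP_{s-1,m}$ as an $R/\fh_s$-module. Feeding the first equation into the long exact sequences of $\Ext^\bullet_{R/\fh_s}(-,\cP_{s,n})$ attached to this filtration, and inducting on the filtration length, yields the desired vanishing for $\cP_{s-1,m}$.

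I do not anticipate a serious obstacle: the whole argument is driven by derived adjunction and the derived saturation of $\bP_n$. The one point that needs to be noted carefully is that $\rR\Phi_s^*(\cP_{s,n})$ is concentrated in degree zero, which is exactly the content of Corollary~\ref{cor:vanish1}; without this, the derived-adjunction identification above would produce a spectral sequence rather than an isomorphism.
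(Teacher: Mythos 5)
Your proposal is correct and follows essentially the same route as the paper: derived adjunction for $(\Phi_s, \Phi_s^*)$, the identification $\rR\Phi_s^*(\cP_{s,n})=\bP_n$ from Corollary~\ref{cor:vanish1} and Proposition~\ref{prop:Phi-star}, the derived saturation of $\bP_n$ via Proposition~\ref{prop:FI-dersat}, and d\'evissage along the $\cQ_{s-1,m}$-filtration of $\cP_{s-1,m}$ supplied by Proposition~\ref{prop:PQfilt}. Your closing remark correctly identifies the one delicate point (concentration of $\rR\Phi_s^*(\cP_{s,n})$ in degree zero), which the paper also invokes.
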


\begin{proof}
By derived adjunction (\S \ref{ss:deradj}), we have
\begin{displaymath}
\rR \Hom_{\FI}(M, \rR \Phi_s^*(N)) = \rR \Hom_{R/\fh_s}(\Phi_s(M), N),
\end{displaymath}
for an $\FI$-module $M$ and an $R/\fh_s$-module $N$. We apply this with $N=\cP_{s,n}$. By Proposition~\ref{prop:Phi-star} and Corollary~\ref{cor:vanish1}, we have $\rR \Phi_s^*(\cP_{s,n}) = \bP_n$. We thus find
\begin{displaymath}
\Ext^i_{\FI}(M, \bP_n) = \Ext^i_{R/\fh_s}(\Phi_s(M), \cP_{s,n}).
\end{displaymath}
Now take $M=\bT_m$. The left side vanishes for all $i$ by Proposition~\ref{prop:FI-dersat}. Since $\Phi_s(\bT_m)=\cQ_{s-1,m}$, the first result follows. The second follows from the first, since $\cP_{s-1,m}$ has a finite length filtration with graded pieces $\cQ_{s-1,m}$ (Proposition~ \ref{prop:PQfilt}).
\end{proof}

\begin{proof}[Proof of Theorem~\ref{thm:vanish2}]
By d\'evissage, we reduce to the case $M=\cQ_s(V)$. If $T=\varinjlim T_i$ then
\begin{displaymath}
\rR \Hom_{R/\fh_s}(T, M) = \rR \varprojlim \rR \Hom_{R/\fh_s}(T_i, M),
\end{displaymath}
and so it suffices to treat the case where $T$ is finitely generated, which we assume. By another d\'evissage, we reduce to the case where $T$ is killed by $\fh_{s-1}$ (Proposition~\ref{prop:hs-tors-ann}). Since $T$ is an $R/\fh_{s-1}$-module, it admits a resolution $X_{\bullet} \to T$, where each $X_i$ is a sum of $\cP_{s-1,m}$'s. By Proposition~\ref{prop:QP-res}, we have a resolution $M \to Y_{\bullet}$, where each $Y_i$ is a finite sum of $\cP_{s,n}$'s. Thus the result follows from the previous lemma.
\end{proof}

\subsection{A variant}

We have just seen that $\Ext^i_{R/\fh_s}(T, M)$ vanishes in certain cases. We now observe that it is possible to prove the same result but working with $\Ext$ over other rings. Before proving this, we need a form of derived tensor-Hom adjunction.

\begin{proposition}
Let $A \to B$ be an $\fS$-algebra homomorphism, let $M$ be an $A$-module, and let $N$ be a $B$-module. Suppose that $A$ is noetherian and $M$ is finitely generated. Then we have a natural isomorphism
\begin{displaymath}
\rR \Hom_A(M, N) = \rR \Hom_B(B \otimes_A^{\rL} M, N).
\end{displaymath}
\end{proposition}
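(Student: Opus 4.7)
The plan is to reduce the derived identity to the underived tensor--Hom adjunction $\Hom_A(F, N) = \Hom_B(B \otimes_A F, N)$ (with $N$ viewed as an $A$-module via $A \to B$ on the left) by applying it termwise to a suitable resolution of $M$, and then invoking the general formalism of derived functors.

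First I would use the noetherian hypothesis on $A$ together with finite generation of $M$ to construct a resolution $F_\bullet \to M$ in $\Mod_A$ whose terms have the form $F_i = A \otimes V_i$, with each $V_i$ a finite length smooth $\fS$-representation. Such a resolution exists because any finitely generated $A$-module is a quotient of some $A \otimes V_0$ with $V_0$ finite length (see \S\ref{ss:commalg}), and the kernel of such a surjection is again finitely generated by noetherianity; one then iterates.

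Each term $F_i$ has underlying $\vert A \vert$-module $\vert A \vert \otimes_k V_i$, which is free, and tensor product in $\Mod_A$ agrees with tensor product over $\vert A \vert$ on underlying modules. Hence each $F_i$ is flat in $\Mod_A$, $F_\bullet$ is a K-flat resolution, and $B \otimes_A F_\bullet$ represents $B \otimes_A^{\rL} M$ in $\rD(\Mod_B)$. Now fix an injective resolution $N \to J^\bullet$ in $\Mod_B$. The total complex of $\Hom_B(B \otimes_A F_\bullet, J^\bullet)$ computes $\rR \Hom_B(B \otimes_A^{\rL} M, N)$, and the underived adjunction gives a natural termwise isomorphism
\begin{displaymath}
\Hom_B(B \otimes_A F_\bullet, J^\bullet) \cong \Hom_A(F_\bullet, J^\bullet)
\end{displaymath}
of bicomplexes.

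The principal technical obstacle is to identify the total complex on the right-hand side with $\rR \Hom_A(M, N)$. This is nontrivial because the restriction of $J^\bullet$ to $\Mod_A$ need not be an injective resolution of $N$ there, unless $B$ happens to be $A$-flat. I would resolve this by appealing to the general principle that the adjunction $(B \otimes_A -) \dashv (\text{restriction})$ descends to derived categories: since restriction is exact it coincides with its own right derived functor, while $B \otimes_A -$ admits a left derived functor via K-flat resolutions; by the general derived adjoint functor formalism (e.g., as in Keller or Spaltenstein), the pair $\rL(B \otimes_A -) \dashv (\text{restriction})$ is then adjoint on derived categories, which gives the required identification.
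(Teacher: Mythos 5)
Your proof is correct in outline, but it takes a genuinely different and more abstract route than the paper's. You correctly pinpoint the obstacle: $\Mod_A$ lacks projectives, the $F_i = A \otimes V_i$ are merely flat, and the restriction of an injective $\Mod_B$-resolution of $N$ need not be acyclic for $\Hom_A(M,-)$. But your way past it, citing the derived adjunction $\rL(B \otimes_A -) \dashv \mathrm{res}$ on unbounded derived categories, is essentially a black-box invocation of the general theorem that this proposition instantiates; once you grant that the adjunction holds at the level of $\rR\Hom$ complexes (and not merely of $\Hom$-groups in the derived category — a point worth flagging when citing the general theory), the explicit resolutions you set up do no further work. The paper deliberately avoids that machinery. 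It instead introduces the internal Hom $\uHom_A$ valued in $\Rep(\fS)$, and the key technical input is that $\uHom(\VV_n,-)$ is \emph{exact}, proved via the shift theorem (Proposition~\ref{prop:shift}) and Ext-vanishing against semi-induced representations (Proposition~\ref{prop:Ext-semi-induced-sinf}). This makes the modules $A \otimes \VV_n$ ``quasi-projective'' — acyclic for $\uHom_A$ in the first variable, though not projective — and a resolution of $M$ by such modules simultaneously computes $\rR\uHom_A(M,N)$, $B \otimes_A^{\rL} M$, and (since $B \otimes_A (A \otimes \VV_n) \cong B \otimes \VV_n$ is again quasi-projective) $\rR\uHom_B(B \otimes_A^{\rL} M, N)$; applying $\rR\Gamma$ then recovers $\rR\Hom$ from $\rR\uHom$. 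Your route buys brevity by outsourcing the crux to unbounded-derived-category generalities; the paper's is self-contained, uses only structural facts about $\Rep(\fS)$ already in hand, and keeps the $\fS$-enriched structure of the argument visible throughout.
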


\begin{proof}
Typically, this kind of result is proved using a projective resolution of $M$ as an $A$-module. Since our categories do not have enough projectives, we must modify this approach.

For an $\fS$-module $V$, let $\uHom(V, -)$ be the right adjoint of the endofunctor $- \otimes V$ of $\Rep(\fS)$. This adjoint exists since $\Rep(\fS)$ is a Grothendieck abelian category and $- \otimes V$ commutes with colimits. Thus, if $U$ and $W$ are two other $\fS$-modules, then we have a natural isomorphism
\begin{displaymath}
\Hom_{\fS}(U, \uHom(V, W)) = \Hom_{\fS}(U \otimes V, W).
\end{displaymath}
Taking $U=\VV_n=\Ind_{\fS(n)}^{\fS}(k)$, one sees that
\begin{displaymath}
\uHom(V, W)^{\fS(n)} = \Hom_{\fS(n)}(V, W).
\end{displaymath}
Since $\uHom(V, W)$ is a smooth representation, it is the union of its $\fS(n)$-invariants. Thus we see that $\uHom(V, W)$ consists of all linear maps $V \to W$ that commute with some $\fS(n)$. Taking $n=0$ in the above equation, we have
\begin{displaymath}
\Hom_{\fS}(V, W) = \Gamma(\uHom(V, W)),
\end{displaymath}
where $\Gamma$ denotes $\fS$-invariants.

A key point is that the functor $\uHom(\VV_n, -)$ is exact, or, more generally, $\uHom(V, -)$ is exact if $V$ is a finite direct sum of $\VV_n$'s. Indeed, suppose that
\begin{displaymath}
0 \to W_1 \to W_2 \to W_3 \to 0
\end{displaymath}
is an exact sequence $\fS$-modules. We must show that the sequence remains exact after applying $\uHom(V, -)$. Since $\uHom(V, -)$ is a right adjoint, it is left exact, so it suffices to verify we have a surjection on the right. The previous paragraph shows that $\uHom(V, -)$ commutes with direct limits, so we can assume that $W_i$'s are finite length. Suppose we have $f \in \uHom(V, W_3)$, i.e., a map $f \colon V \to W_3$ commuting with some $\fS(n)$. Replacing $\fS$ with $\fS(m)$ for $m$ sufficiently large, we can assume the $W_i$'s are semi-induced (Proposition~\ref{prop:shift}) and $f$ is $\fS$-equivariant. Now pull-back the above short exact sequence along $f$ to obtain an extension of $V$ by $W_1$. Since $\Ext^1_{\fS}(V, W_1)=0$ (Proposition~\ref{prop:Ext-semi-induced-sinf}), the sequence splits, which allows us to lift $f$ to $W_2$. We thus see that
\begin{displaymath}
\uHom(V, W_2) \to \uHom(V, W_3)
\end{displaymath}
is surjective, as required.

For $A$-modules $M$ and $M'$, let $\uHom_A(M, M')$ be the space of all $\vert A \vert$-linear maps $M \to M'$ that commute with some $\fS(n)$. We have
\begin{displaymath}
\uHom_A(A \otimes V, M) = \uHom(V, M).
\end{displaymath}
In particular, the functor $\uHom_A(A \otimes \VV_n, -)$ is exact. Call an $A$-module $P$ \defn{quasi-projective} if it is a finite direct sum of modules of the form $A \otimes \VV_n$ $\uHom_A(P, -)$. Since $M$ is finitely generated and $A$ is noetherian, we can resolve $M$ by quasi-projective modules. We can use this resolution to compute both $\rR \uHom_A(M, -)$ and $- \otimes_A^{\rL} M$. This yields a natural isomorphism
\begin{displaymath}
\rR \uHom_A(M, N) = \rR \uHom_B(B \otimes_A^{\rL} M, N),
\end{displaymath}
Applying $\rR \Gamma$ gives the stated result.
\end{proof}

\begin{proposition} \label{prop:vanish3}
Let $M$ be a finitely generated $\cQ_s$-filtered $R/\fh_s$-module, let $T$ be a torsion $R/\fh_s$-module, and let $\fa$ be an $\fS$-ideal of $R$ contained in $\fh_s$. Then $\Ext^i_{R/\fa}(T, M)=0$ for all $i \ge 0$.
\end{proposition}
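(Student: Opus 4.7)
The plan is to transport the Ext computation from $R/\fa$ to $R/\fh_s$, where Theorem~\ref{thm:vanish2} applies, using the derived tensor--Hom adjunction established just above.

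First I would reduce to the case where $T$ is finitely generated. Writing $T$ as the filtered colimit of its finitely generated torsion submodules $T_i$ and using
\begin{displaymath}
\rR\Hom_{R/\fa}(T, M) = \rR\varprojlim_i \rR\Hom_{R/\fa}(T_i, M)
\end{displaymath}
(exactly as in the proof of Theorem~\ref{thm:vanish2}), it suffices to handle finitely generated $T$. Assuming this, $R/\fa$ is noetherian (since $R$ is, by Cohen) and $T$ is finitely generated over $R/\fa$ via $R/\fa \twoheadrightarrow R/\fh_s$, so the derived tensor--Hom adjunction applied to $R/\fa \to R/\fh_s$ yields
\begin{displaymath}
\rR\Hom_{R/\fa}(T, M) \;\cong\; \rR\Hom_{R/\fh_s}\bigl((R/\fh_s) \otimes^{\rL}_{R/\fa} T,\, M\bigr).
\end{displaymath}

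The key observation is that each $\Tor^{R/\fa}_q(R/\fh_s, T)$ is a torsion $R/\fh_s$-module: by Proposition~\ref{prop:hs-tors-ann} some power $\fh_{s-1}^n$ annihilates $T$, and hence annihilates every $\Tor$ module built from $T$. I would then invoke the hyperext spectral sequence
\begin{displaymath}
E_2^{p,q} = \Ext^p_{R/\fh_s}\bigl(\Tor^{R/\fa}_q(R/\fh_s, T),\, M\bigr) \;\Longrightarrow\; \Ext^{p+q}_{R/\fa}(T, M).
\end{displaymath}
Every $E_2^{p,q}$ vanishes by Theorem~\ref{thm:vanish2}, applied with the $\cQ_s$-filtered module $M$ against the torsion module $\Tor^{R/\fa}_q(R/\fh_s, T)$, and a first-quadrant argument yields strong convergence to zero in each total degree.

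The only real obstacle is the convergence step: since $T$ may have infinite flat dimension over $R/\fa$, the complex $(R/\fh_s) \otimes^{\rL}_{R/\fa} T$ is unbounded. However, its cohomology is concentrated in degrees $\le 0$ and $M$ sits in cohomological degree $0$, so for each fixed $n$ only finitely many bidegrees $(p,q)$ with $p, q \ge 0$ and $p+q = n$ contribute, which is what makes the first-quadrant spectral sequence converge strongly to the abutment.
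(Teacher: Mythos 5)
Your proof is correct and follows essentially the same route as the paper: reduce to $T$ finitely generated, apply the derived tensor--Hom adjunction along $R/\fa\to R/\fh_s$, observe the Tor groups are torsion over $R/\fh_s$, and invoke Theorem~\ref{thm:vanish2}. The only differences are cosmetic: the paper d\'evisses further to $T$ killed by $\fh_{s-1}$ itself (rather than a power, as you do), and it leaves the final step implicit where you spell out the first-quadrant hyper-Ext spectral sequence and its convergence, which is a reasonable thing to make explicit.
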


\begin{proof}
As in the proof of Theorem~\ref{thm:vanish2}, we reduce to the case where $T$ is finitely generated, and then further to the case where it is killed by $\fh_{s-1}$. We now apply derived tensor-Hom adjunction to the ring map $R/\fa \to R/\fh_s$ to obtain
\begin{displaymath}
\rR \Hom_{R/\fa}(T, M) = \rR \Hom_{R/\fh_s}(R/\fh_s \otimes_{R/\fa}^{\rL} T, M).
\end{displaymath}
Since $T$ is killed by $\fh_{s-1}$, so are all the Tor groups in the derived tensor product, and so the result follows from Theorem~\ref{thm:vanish2}.
\end{proof}

\section{From \texorpdfstring{$\FI$}{FI}-modules to \texorpdfstring{$R$}{R}-modules generically} \label{s:Psi}

The functor $\Phi_s$ defined in \S \ref{s:Phi} induces a functor on generic categories, as we explain below. The purpose of this section is to study this induced functor. The $\Ext$ vanishing results from \S \ref{s:ext} play an important role in our analysis.

\subsection{The generic cateory}

Since $\fh_s$ is an $\fS$-prime of $R$ (Proposition~\ref{prop:h-prime}), the quotient $R/\fh_s$ is an $\fS$-domain, and so we can consider its generic category $\Mod_{R/\fh_s}^{\gen}$ as in \S \ref{ss:commalg}. We refer to \cite[\S III]{Gabriel} as a general reference for Serre quotient categories. The category $\Mod_{R/\fh_s}^{\gen}$ is a locally noetherian Grothendieck abelian category. We let
\begin{displaymath}
T \colon \Mod_{R/\fh_s} \to \Mod_{R/\fh_s}^{\gen}
\end{displaymath}
be the quotient functor, which is exact and commutes with colimits. We let $S$ be its right adjoint. This is the same notation that we use for $\FI$-modules, but it should not cause confusion. As with $\FI$-modules, we say that an $R/\fh_s$-module $M$ is \defn{saturated} (resp.\ \defn{derived saturated}) if the natural map $M \to ST(M)$ (resp.\ $\rR ST(M)$) is an isomorphism. A module is saturated if and only if $\Ext^i_{R/\fh_s}(T, M)=0$ for $i=0,1$ and $T$ torsion \cite[\S III.2, Corollaire]{Gabriel}; we note that \cite{Gabriel} uses the term ``$\bC$-ferm\'e'' instead of saturated, where $\bC$ is the torsion category. We have a similar criterion for derived saturation:

\begin{proposition} \label{prop:dersat}
An $R/\fh_s$-module $M$ is derived saturated if and only if $\Ext^i_{R/\fh_s}(T, M)=0$ for all $i \ge 0$ and torsion modules $T$.
\end{proposition}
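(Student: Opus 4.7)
My plan is to prove both implications by applying derived adjunction (\S\ref{ss:deradj}) to two different adjoint pairs.

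The forward direction is immediate from derived adjunction for the pair $(T,S)$: if $M\simeq \rR S T(M)$ and $T'$ is any torsion module, then
\[
\rR\Hom_{R/\fh_s}(T',M)\simeq \rR\Hom_{R/\fh_s}(T',\rR S T(M))\simeq \rR\Hom_{\Mod_{R/\fh_s}^{\gen}}(T(T'),T(M))=0,
\]
since $T$ annihilates torsion modules. Passing to cohomology yields $\Ext^i(T',M)=0$ for all $i\ge 0$.

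For the converse, I would invoke the standard Gabriel localization triangle in $\rD^+(\Mod_{R/\fh_s})$,
\[
\rR\Gamma(M)\to M\to \rR S T(M)\to \rR\Gamma(M)[1],
\]
where $\Gamma$ is the maximal torsion submodule functor, which is the right adjoint to the exact inclusion $\iota\colon \Mod_{R/\fh_s}^{\tors}\hookrightarrow \Mod_{R/\fh_s}$. From this triangle, $M$ is derived saturated if and only if $\rR\Gamma(M)=0$. Applying derived adjunction now to the pair $(\iota,\Gamma)$ gives
\[
\rR\Hom_{\Mod_{R/\fh_s}^{\tors}}(T',\rR\Gamma(M))\simeq \rR\Hom_{R/\fh_s}(T',M),
\]
which vanishes for every torsion $T'$ by hypothesis. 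If $\rR\Gamma(M)$ were nonzero, letting $n$ be minimal with $H^n(\rR\Gamma M)\ne 0$ and setting $T'=H^n(\rR\Gamma M)$, the canonical truncation map $T'[-n]\to \rR\Gamma(M)$ would produce a nonzero class in $\Ext^n_{\Mod^{\tors}}(T',\rR\Gamma M)$, contradicting the vanishing above. Hence $\rR\Gamma(M)=0$ and $M$ is derived saturated.

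The main obstacle is the clean invocation of the localization triangle, which relies on standard but nontrivial Gabriel theory (cf.\ \cite[III.2]{Gabriel}). A more hands-on alternative would be to bootstrap from the $i=0,1$ characterization of saturation recorded above: it gives $M=ST(M)$, and one can then compute $\rR^i ST(M)$ for $i\ge 1$ directly using an injective resolution $M\to I^\bullet$ in $\Mod_{R/\fh_s}$, after first checking that $T$ carries injectives to injectives in $\Mod^{\gen}_{R/\fh_s}$. This last preservation can be seen by decomposing each injective as $\Gamma(I)\oplus I_0$ with $I_0$ torsion-free injective, and observing that $\Hom_{\Mod^{\gen}_{R/\fh_s}}(-,T(I_0))\cong \Hom_{R/\fh_s}(S(-),I_0)$ is exact using injectivity of $I_0$. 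The resulting long exact sequence then identifies $\rR^i ST(M)$ with $\rR^{i+1}\Gamma(M)$ for $i\ge 1$, and the Ext-vanishing hypothesis forces these to vanish by the same truncation argument as above.
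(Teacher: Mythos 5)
Your forward direction is correct and in fact slightly cleaner than the paper's: you apply the derived adjunction of \S\ref{ss:deradj} directly to the pair $(T,S)$, whereas the paper takes an injective resolution $T(M)\to I^\bullet$ in the quotient and argues that $S(I^\bullet)$ is an injective resolution of $M$ consisting of saturated objects. Both are valid.

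The converse is where the two proofs genuinely diverge. You lean on the Gabriel localization triangle $\rR\Gamma(M)\to M\to \rR ST(M)\to$ and the characterization ``derived saturated $\iff$ $\rR\Gamma(M)=0$.'' The paper does \emph{not} assume this triangle at this point in the text: it is introduced only in \S\ref{ss:loccoh}, citing \cite[Prop.~4.6]{SSglII}, and there its use is predicated on property (Inj), which for the relevant torsion subcategory is established only in Corollary~\ref{cor:Inj}. So as written you have a forward dependency on material the paper develops later. The paper's route around this is more elementary: it takes an injective resolution $T(M)\to I^\bullet$ in $\Mod_{R/\fh_s}^{\gen}$, forms the triangle $M\to S(I^\bullet)\to X\to$, observes that $T(X)=0$ (so $X$ has torsion cohomology), shows $\rR\Hom(T',S(I^\bullet))=0$ for torsion $T'$ since each $S(I^i)$ is saturated, and then the hypothesis forces $\rR\Hom(T',X)=0$; the vanishing of $X$ then follows from exactly the truncation argument you use at the end, so the core vanishing step is the same. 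If you want to keep your formulation, you can close the gap without invoking (Inj): since each $S(I^i)$ is a saturated injective, $\Gamma(S(I^i))=0$ and the $S(I^i)$ are $\Gamma$-acyclic, so applying $\rR\Gamma$ to the paper's triangle identifies $\rR\Gamma(M)\cong X[-1]$ and yields the localization triangle directly. Finally, note that your ``more hands-on alternative'' has the same hidden dependency: the decomposition $I=\Gamma(I)\oplus I_0$ already requires $\Gamma(I)$ to be an injective summand, which is precisely property (Inj), so that route is not actually more elementary.
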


\begin{proof}
First suppose that $M$ is derived saturated. Let $T(M) \to I^{\bullet}$ be an injective resolution, so that the complex $S(I^{\bullet})$ computes $\rR ST(M)$. By assumption, $M \to S(I^{\bullet})$ is a quasi-isomorphism, and so $S(I^{\bullet})$ is an injective resolution of $M$ (note that $S$ takes injectives to injectives since it is right adjoint to an exact functor). We thus see that $\Ext^i_{R/\fh_s}(T, M)$ is computed by $\Hom_{R/\fh_s}(T, S(I^{\bullet}))$. Each $\Hom$ space here vanishes when $T$ is torsion, since $S(I^i)$ is saturated \cite[\S III.2 Lemme~2]{Gabriel}. Thus we obtain the stated $\Ext$ vanishing.

Next suppose that $\Ext^i_{R/\fh_s}(T, M)=0$ for all $i \ge 0$ and torsion modules $T$. Again, choose an injective resolution $T(M) \to I^{\bullet}$, so that $S(I^{\bullet})$ computes $\rR ST(M)$. Consider a distinguished triangle
\begin{displaymath}
M \to S(I^{\bullet}) \to X \to.
\end{displaymath}
Since the first map is a quasi-isomorphism after applying $T$, we see that $T(X)=0$ in the derived category; thus the cohomology groups of $X$ are torsion modules. Now, suppose $T$ is a torsion $R/\fh_s$-module. By assumption, we have $\rR \Hom_{R/\fh_s}(T, M)=0$. As in the previous paragraph, we have $\rR \Hom_{R/\fh_s}(T, S(I^{\bullet}))=0$. Thus $\rR \Hom_{R/\fh_s}(T, X)=0$ as well. Since $X$ is bounded below and has torsion cohomology, it follows that $X=0$ in the derived category, as required.
\end{proof}

We use an overline to indicate the images of our $\cP$ and $\cQ$ modules in the generic category, e.g., $\ol{\cP}_{s,n}=T(\cP_{s,n})$. We let $\Mod_{R/\fh_s}^{\gen,\cQ}$ be the full subcategory of the generic category generated by the $\cQ$-modules; that is, an object $M$ belongs to $\Mod_{R/\fh_s}^{\gen,\cQ}$ if $M$ is isomorphic to a subquotient of a module of the form $\bigoplus_{i \in I} \ol{\cQ}_{s,n_i}$.

\subsection{The main theorem} \label{ss:genthm}

In \S \ref{s:Phi}, we constructed a functor
\begin{displaymath}
\Phi_s \colon \Mod_{\FI} \to \Mod_{R/\fh_s}.
\end{displaymath}
Since $\Phi_s(\bT_n) = \cQ_{s-1,n}$, it follows that $\Phi_s$ takes torsion $\FI$-modules to torsion $R/\fh_s$-modules; indeed, a (finitely generated) torsion $\FI$-module is an iterated extension of subquotients $\bT_n$'s. We thus see that $\Phi_s$ induces a functor on generic categories. Identifying $\Mod_{\FI}^{\gen}$ with $\Rep(\fS)$ (\S \ref{ss:sym-FI}), we thus have a functor
\begin{displaymath}
\Psi_s \colon \Rep(\fS) \to \Mod_{R/\fh_s}^{\gen}, \qquad \VV_n \mapsto \ol{\cQ}_{s,n}.
\end{displaymath}
By construction, this functor is exact and commutes with arbitrary colimits. The following is the main result of \S \ref{s:Psi}.

\begin{theorem} \label{thm:Psi}
The functor $\Psi_s$ induces an equivalence of $\Rep(\fS)$ with $\Mod_{R/\fh_s}^{\gen,\cQ}$.
\end{theorem}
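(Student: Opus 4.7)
The plan is to prove the theorem in two stages: first that $\Psi_s$ is fully faithful, and then that its essential image is exactly $\Mod_{R/\fh_s}^{\gen,\cQ}$. At the outset I note that $\Psi_s$ is well-defined on the generic categories because $\Phi_s(\bT_n) = \cQ_{s-1,n}$ is annihilated by the nonzero $\fS$-ideal $\fh_{s-1}/\fh_s$ of $R/\fh_s$, so $\Phi_s$ carries torsion $\FI$-modules to torsion $R/\fh_s$-modules. Moreover, since $\Psi_s$ is exact and cocontinuous with $\Psi_s(\VV_n) = \ol{\cQ}_{s,n}$, its essential image is automatically contained in $\Mod_{R/\fh_s}^{\gen,\cQ}$.

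For full faithfulness, present an arbitrary $V \in \Rep(\fS)$ as the cokernel of a map between direct sums of $\VV_n$'s and use left-exactness of $\Hom$ to reduce to showing, for all $W \in \Rep(\fS)$ and $n \ge 0$, that
\[
\Psi_s \colon \Hom_{\fS}(\VV_n, W) \to \Hom^{\gen}_R(\ol{\cQ}_{s,n}, \Psi_s(W))
\]
is an isomorphism. First handle the case where $W = \ol{\Theta}(T_{\FI}(N))$ comes from a finitely generated $\FI$-module $N$. Choose a semi-induced resolution $0 \to N \to I^0 \to \cdots \to I^n \to 0$ with torsion cohomology (Corollary~\ref{cor:semi-ind-res}). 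Applying the exact functor $\Phi_s$ yields a resolution of $\Phi_s(N)$ with torsion cohomology whose terms $\Phi_s(I^i)$ are finitely generated and $\cQ_s$-filtered; by Theorem~\ref{thm:vanish2} together with Proposition~\ref{prop:dersat}, each $\Phi_s(I^i)$ is derived saturated. The $R/\fh_s$-analog of Proposition~\ref{prop:FI-sec}(b), whose proof transfers verbatim, then gives
\[
S_R T_R \Phi_s(N) \;=\; \ker(\Phi_s(I^0) \to \Phi_s(I^1)) \;=\; \Phi_s(\ker(I^0 \to I^1)) \;=\; \Phi_s(S_{\FI} T_{\FI}(N)),
\]
the middle equality coming from exactness of $\Phi_s$. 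Theorem~\ref{thm:Phi}(b), applied to the saturated finitely generated $\FI$-module $S_{\FI}T_{\FI}(N)$, now identifies $\Hom_R(\cQ_{s,n}, \Phi_s(S_{\FI}T_{\FI}(N)))$ with $\Hom_{\FI}(\bP_n, S_{\FI}T_{\FI}(N)) = \Hom^{\gen}_{\FI}(T_{\FI}\bP_n, T_{\FI}N) = \Hom_{\fS}(\VV_n, W)$, completing the finitely generated case. For general $W$, write $W$ as a filtered colimit of its finitely generated subrepresentations and observe that $\ol{\cQ}_{s,n}$ is noetherian, hence compact, in the locally noetherian category $\Mod_{R/\fh_s}^{\gen}$, so $\Hom^{\gen}_R(\ol{\cQ}_{s,n}, -)$ commutes with filtered colimits.

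For essential surjectivity, I observe that $\Mod_{R/\fh_s}^{\gen,\cQ}$ is itself a Grothendieck abelian subcategory of $\Mod_{R/\fh_s}^{\gen}$, with the $\ol{\cQ}_{s,n}$'s as a generating set. Generation follows from the fact that any nonzero $M \in \Mod_{R/\fh_s}^{\gen,\cQ}$ admits a nonzero map from some $\ol{\cP}_{s,m}$ (since the $\ol{\cP}_{s,m}$'s generate $\Mod_{R/\fh_s}^{\gen}$), and the filtration of $\ol{\cP}_{s,m}$ by $\ol{\cQ}_{s,m}$-pieces from Proposition~\ref{prop:PQfilt} can be used to extract a nonzero map $\ol{\cQ}_{s,m} \to M$. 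Consequently, any such $M$ can be presented as
\[
\bigoplus_l \ol{\cQ}_{s,p_l} \to \bigoplus_j \ol{\cQ}_{s,m_j} \to M \to 0,
\]
obtained by first covering $M$ with a direct sum of $\ol{\cQ}$'s and then covering the kernel (which again lies in $\Mod_{R/\fh_s}^{\gen,\cQ}$, as this subcategory is closed under subobjects) by a second direct sum. The arrow between the two direct sums, by fully faithfulness of $\Psi_s$ already established, equals $\Psi_s(f)$ for some morphism $f$ of $\fS$-representations; exactness of $\Psi_s$ then gives $M \cong \Psi_s(\operatorname{coker} f)$, exhibiting $M$ in the essential image.

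The main obstacle is the Hom identification $S_R T_R \Phi_s(N) = \Phi_s(S_{\FI}T_{\FI}(N))$ in the finitely generated case of the first stage: this step relies crucially on Theorem~\ref{thm:vanish2} via derived saturation. Once this identification is combined with Theorem~\ref{thm:Phi}(b), the remaining arguments reduce to formal manipulations with compact generators and Grothendieck-abelian presentations.
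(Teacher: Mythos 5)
Your full-faithfulness argument is a legitimate alternative to the paper's (which instead factors $\Psi_s$ as $T\circ\Phi_s\circ S$ and quotes Lemma~\ref{lem:genthm-1}, Theorem~\ref{thm:Phi}(b), and the general theory of Serre quotients), and the key identity $S_R T_R \Phi_s(N) = \Phi_s(S_{\FI}T_{\FI}(N))$ for finitely generated $N$ is correct and follows from Theorem~\ref{thm:vanish2} and Proposition~\ref{prop:dersat} as you say.

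The essential-surjectivity argument, however, has a genuine gap. You claim the $\ol{\cQ}_{s,n}$'s form a generating set for $\Mod_{R/\fh_s}^{\gen,\cQ}$ and justify this by showing that every nonzero $M$ admits \emph{a} nonzero map from some $\ol{\cQ}_{s,n}$ (this is Lemma~\ref{lem:genthm-3}). But ``every nonzero object receives a nonzero map from a $\ol{\cQ}_{s,n}$'' is strictly weaker than the generator property, which requires that every object be a \emph{quotient} of a coproduct of $\ol{\cQ}_{s,n}$'s, or equivalently that for every proper subobject $B\subsetneq A$ there be a map $\ol{\cQ}_{s,n}\to A$ not factoring through $B$. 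To pass from the weaker statement to the stronger one you would need to lift a nonzero map $\ol{\cQ}_{s,n}\to M/M'$ (where $M'$ is the sum of all images of maps from $\ol{\cQ}$'s) to a map $\ol{\cQ}_{s,n}\to M$, and there is no reason this lift exists: the $\ol{\cQ}_{s,n}$'s are not projective in $\Mod_{R/\fh_s}^{\gen,\cQ}$ (under the equivalence you are trying to prove, they correspond to $\VV_n$, which is injective but not projective in $\Rep(\fS)$). Thus the existence of the presentation $\bigoplus_l \ol{\cQ}_{s,p_l}\to\bigoplus_j\ol{\cQ}_{s,m_j}\to M\to 0$ is unjustified.

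In fact the statement that every object of $\Mod_{R/\fh_s}^{\gen,\cQ}$ is a quotient of a sum of $\ol{\cQ}_{s,n}$'s is true, but it is recorded in the paper as a \emph{remark following} Theorem~\ref{thm:Psi}, obtained as a consequence of the equivalence rather than an input to it; using it as you do makes the argument circular. The paper sidesteps this by a different route: it first shows $\Psi_s$ sends simples to simples (Lemma~\ref{lem:genthm-4}) and then uses the abstract lemma that a fully faithful exact functor inducing a bijection on simples induces a bijection on subobjects of finite-length objects (Lemma~\ref{lem:genthm-5}); essential surjectivity onto $\Mod_{R/\fh_s}^{\gen,\cQ}$ then follows by writing $M=Y/X$ with $X\subset Y\subset\bigoplus\ol{\cQ}_{s,n_i}=\Psi_s(\bigoplus\VV_{n_i})$ and pulling $X$ and $Y$ back through the subobject bijection. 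You should replace your generation claim with an argument along these lines.
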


The proof is contained in the following series of lemmas.

\begin{lemma} \label{lem:genthm-1}
If $M$ is a saturated $\FI$-module then $\Phi_s(M)$ is a saturated $R/\fh_s$-module.
\end{lemma}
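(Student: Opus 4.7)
The plan is to verify the $\Ext$ criterion for saturation: show $\Ext^i_{R/\fh_s}(T, \Phi_s(M)) = 0$ for $i = 0, 1$ and every torsion $R/\fh_s$-module $T$. Both sides of this criterion commute with filtered colimits in $M$ (via the standard fact that $\Ext^i(T', -)$ commutes with filtered colimits for $T'$ finitely generated in a locally noetherian Grothendieck category), so I would first reduce to the case where $M$ is finitely generated. Indeed, $M = ST(M)$ is exhausted by the saturations $M_\alpha := ST(\widetilde{M}_\alpha) \subset M$ of its finitely generated submodules $\widetilde{M}_\alpha$, and each $M_\alpha$ is itself finitely generated by Proposition~\ref{prop:FI-sec}(b) and saturated.

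For such $M$ I would apply Corollary~\ref{cor:semi-ind-res} to obtain a finite complex $0 \to M \to I^0 \to \cdots \to I^n \to 0$ with each $I^j$ finitely generated and semi-induced, and with torsion cohomology. Saturation of $M$ identifies $\ker(I^0 \to I^1)$ with $ST(M) = M$, while $\rR^i ST(M)$ for $i > 0$ is torsion since $T \circ \rR^i S = \rR^i(T \circ S) = 0$. Applying the exact functor $\Phi_s$, the complex $C^\bullet := \Phi_s(I^\bullet)$ has $\cQ_s$-filtered terms $C^j = \Phi_s(I^j)$, with cohomology $H^0(C^\bullet) = \Phi_s(M)$ and torsion $H^i(C^\bullet)$ for $i > 0$.

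Theorem~\ref{thm:vanish2} then gives $\Ext^i_{R/\fh_s}(T, C^j) = 0$ for all $i, j$, so $\rR\Hom(T, C^\bullet)$ is computed by the termwise-zero complex $\Hom(T, C^\bullet)$ and vanishes in the derived category. I would then run the hypercohomology spectral sequence
\[ E_2^{p,q} = \Ext^p(T, H^q(C^\bullet)) \Longrightarrow H^{p+q}\bigl(\rR\Hom(T, C^\bullet)\bigr) = 0. \]
On the bottom row, the positions $(0,0)$ and $(1,0)$ are isolated from all differentials $d_r$ with $r \ge 2$: outgoing differentials land in columns with $q < 0$ and incoming ones originate in columns with $p < 0$, both of which vanish. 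Therefore $E_\infty^{p,0} = E_2^{p,0}$ for $p = 0, 1$, and convergence to zero forces $\Hom(T, \Phi_s(M)) = 0$ and $\Ext^1(T, \Phi_s(M)) = 0$.

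The main delicacy I anticipate is precisely this bottom-row bookkeeping: for $p \ge 2$, $E_2^{p,0}$ can be the target of $d_2 \colon E_2^{p-2,1} \to E_2^{p,0}$ from the torsion-cohomology column, so the argument does not extend to higher $\Ext$—which is as it must be, since $\Phi_s(M)$ need not be derived saturated (cf.\ Remark~\ref{rmk:Q11-ext}).
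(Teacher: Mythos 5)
Your proof is correct, but it takes a noticeably heavier route than the paper at the key step. The overall skeleton is the same: reduce to finitely generated $M$ by a filtered-colimit argument, produce a finite complex $I^\bullet$ of semi-induced $\FI$-modules via Proposition~\ref{prop:FI-sec}/Corollary~\ref{cor:semi-ind-res}, apply the exact functor $\Phi_s$ to obtain $\cQ_s$-filtered terms, and invoke Theorem~\ref{thm:vanish2}. Where you differ is in the final extraction step. You use the entire resolution and a hypercohomology spectral sequence $E_2^{p,q}=\Ext^p(T,H^q(C^\bullet))$, then carry out the bottom-row bookkeeping to isolate $(0,0)$ and $(1,0)$. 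The paper instead only needs the first two terms: since $M$ is saturated, $M=ST(M)=\ker(I^0\to I^1)$, so applying $\Phi_s$ gives a left-exact sequence $0\to\Phi_s(M)\to\Phi_s(I^0)\to\Phi_s(I^1)$ whose outer terms are saturated (by Theorem~\ref{thm:vanish2}), and one simply invokes the elementary Serre-quotient fact that the kernel of a morphism between saturated objects is saturated. This avoids the spectral sequence entirely and makes the statement's depth more transparent: the only real input is $\Ext^{0,1}$-vanishing against the two $\cQ_s$-filtered modules $\Phi_s(I^0),\Phi_s(I^1)$. Your observation that the argument cannot be pushed to $\Ext^{\ge 2}$ is correct and is exactly what makes the kernel-of-saturated trick the natural shortcut: $\Phi_s(M)$ is typically only saturated, not derived saturated.

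One small imprecision worth flagging in the reduction step: you invoke commutation of $\Ext^i(T',-)$ with filtered colimits for $T'$ \emph{finitely generated}, but the saturation criterion as you quoted it ranges over all torsion $T$. This is harmless because saturation in a locally noetherian category can be tested on finitely generated torsion objects, but that reduction deserves a sentence; the paper sidesteps it by directly citing that a filtered colimit of saturated objects is saturated in a locally noetherian category \cite[\S III.4 Cor.~1]{Gabriel}.
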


\begin{proof}
First suppose that $M$ is finitely generated. Choose an exact sequence $0 \to M \to I_0 \to I_1$ with $I_0$ and $I_1$ semi-induced; this is possible because $M$ is saturated (Proposition~\ref{prop:FI-sec}). We then have an exact sequence
\begin{displaymath}
0 \to \Phi_s(M) \to \Phi_s(I_0) \to \Phi_s(I_1).
\end{displaymath}
Since $\Phi_s$ is exact and carries induced modules to $\cQ_s$-modules, it carries semi-induced modules to $\cQ_s$-filtered modules. Thus $\Phi_s(I_0)$ and $\Phi_s(I_1)$ are $\cQ_s$-filtered. By Theorem~\ref{thm:vanish2}, it follows that these modules are saturated. The kernel of a map of saturated modules is saturated, and so the result follows.

We now treat the general case. If $N$ is a finitely generated subobject of $M$ then $ST(N)$ is a finitely generated saturated subobject of $ST(M)=M$ (finite generation comes from Proposition~\ref{prop:FI-sec}). It follows that $M=\varinjlim M_i$, where the $M_i$ are finitely generated saturated $\FI$-modules. Since $\Psi_s$ commutes with colimits, we have $\Psi_s(M) = \varinjlim \Psi_s(M_i)$. Each $\Psi_s(M_i)$ is saturated by the previous paragraph. Since $\Mod_{R/\fh_s}$ is locally noetherian, a direct limit of saturated objects is saturated (see \cite[\S III.4 Cor.~1]{Gabriel}, and its proof). The result thus follows.
\end{proof}

\begin{lemma} \label{lem:Psi-2}
The functor $\Psi_s$ is fully faithful.
\end{lemma}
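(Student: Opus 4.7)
The plan is to work through the equivalence of Proposition~\ref{prop:FI-gen-sym} so that $\Psi_s$ is identified with the functor $\ol{\Phi}_s \colon \Mod_{\FI}^{\gen} \to \Mod_{R/\fh_s}^{\gen}$ induced by $\Phi_s$, and then to reduce the fully faithfulness of $\ol{\Phi}_s$ to the finite-generation case handled by Theorem~\ref{thm:Phi}(b).

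First I would reduce to showing that for finitely generated $X, Y \in \Mod_{\FI}^{\gen}$ the comparison map $\Hom(X, Y) \to \Hom(\ol{\Phi}_s X, \ol{\Phi}_s Y)$ is an isomorphism. This uses three inputs: both generic categories are locally noetherian (so finitely generated objects are compact); $\ol{\Phi}_s$ commutes with colimits (it is induced by $\Phi_s$, which does); and $\ol{\Phi}_s$ sends finitely generated objects to finitely generated objects, which follows from $\Phi_s(\bP_n) = \cQ_{s,n}$ (Theorem~\ref{thm:Phi}(c)). Writing $X$ and $Y$ as filtered colimits of finitely generated subobjects, I would commute the colimits past the $\Hom$ functors on both sides to pass to the finitely generated case.

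Next, assuming $X$ and $Y$ are finitely generated, I would set $N = S(Y)$, which is finitely generated by Proposition~\ref{prop:FI-sec}(b) and saturated by construction, and choose any finitely generated $\FI$-module $M$ with $T(M) = X$. The Serre quotient adjunction $T \dashv S$, together with the identity $ST(N) = N$, gives
\begin{displaymath}
\Hom_{\Mod_{\FI}^{\gen}}(X, Y) = \Hom_{\Mod_{\FI}^{\gen}}(T(M), T(N)) = \Hom_{\FI}(M, N).
\end{displaymath}
By Lemma~\ref{lem:genthm-1}, $\Phi_s(N)$ is a saturated $R/\fh_s$-module, so the same formal argument on the $R/\fh_s$ side yields
\begin{displaymath}
\Hom_{\Mod_{R/\fh_s}^{\gen}}(\ol{\Phi}_s X, \ol{\Phi}_s Y) = \Hom_{R/\fh_s}(\Phi_s M, \Phi_s N).
\end{displaymath}
Under these identifications, naturality of the adjunction isomorphisms shows that the map induced by $\Psi_s$ coincides with the map $\Hom_{\FI}(M, N) \to \Hom_{R/\fh_s}(\Phi_s M, \Phi_s N)$ given by $\Phi_s$, which is an isomorphism by Theorem~\ref{thm:Phi}(b).

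The main obstacle I anticipate is the colimit reduction, which requires the compactness/finite-generation bookkeeping outlined above; once that is in place, the result is essentially a formal consequence of the Serre quotient adjunction together with Theorem~\ref{thm:Phi}(b) and Lemma~\ref{lem:genthm-1}.
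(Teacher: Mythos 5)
Your proof is correct and takes essentially the same approach as the paper. The paper's proof is terser: it factorizes $\Psi_s = T \circ \Phi_s \circ S$ (which follows from $T_{\FI} S_{\FI} = \id$) and then states that $S$ is fully faithful with image the saturated objects, that $\Phi_s$ is fully faithful on saturated objects and preserves saturation (Lemma~\ref{lem:genthm-1}), and that $T$ is fully faithful on saturated objects. Your argument unpacks the same adjunction bookkeeping by hand, which is fine; in fact your explicit reduction to finitely generated objects via compactness is a useful addition, since Theorem~\ref{thm:Phi}(b) is only stated for finitely generated $\FI$-modules and the paper's one-line appeal to it as ``fully faithful on saturated objects'' leaves that colimit reduction implicit. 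One small remark: in your reduction you will also want to note that $\ol{\Phi}_s$ takes a noetherian object $T(M)$ to a noetherian object, which follows because $\Phi_s$ sends a finite sum of $\bP_n$'s (and hence a quotient thereof) to a finitely generated $R/\fh_s$-module; you implicitly gesture at this via $\Phi_s(\bP_n) = \cQ_{s,n}$, so the gap is only one of exposition.
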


\begin{proof}
The functor $\Psi_s$ is identified with the composition
\begin{displaymath}
\xymatrix{
\Mod_{\FI}^{\gen} \ar[r]^S &
\Mod_{\FI} \ar[r]^-{\Phi_s} &
\Mod_{R/\fh_s} \ar[r]^T &
\Mod_{R/\fh_s}^{\gen} }
\end{displaymath}
The functor $S$ is fully faithful, and its image consists of saturated objects \cite[\S III.2]{Gabriel}. The functor $\Phi_s$ is fully faithful on saturated objects (Theorem~\ref{thm:Phi}), and maps saturated objects to saturated objects (Lemma~\ref{lem:genthm-1}). Finally, $T$ is fully faithful on saturated objects, by general properties of Serre quotients \cite[\S III.2]{Gabriel}.
\end{proof}

\begin{lemma} \label{lem:genthm-3}
Let $M$ be a non-zero object of $\Mod_{R/\fh_s}^{\gen}$. Then there exists a non-zero map $\ol{\cQ}_{s,n} \to M$ for some $n$.
\end{lemma}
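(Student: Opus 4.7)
The plan is to translate the problem across the adjunction $T \dashv S$ and then produce the required element using the Artinian local structure of a truncated polynomial subring. By Proposition~\ref{prop:Qmapprop} combined with the $T \dashv S$ adjunction, a map $\ol{\cQ}_{s,n} \to M$ corresponds to a map $\cQ_{s,n} \to S(M)$, which in turn corresponds to an element of $S(M)$ that is $\fS(n)$-invariant and killed by $x_1, \dots, x_n$. Since $T$ is exact and $TS(M) \cong M$, we have $S(M) \ne 0$. Moreover, $S(M)$ is saturated and therefore torsion-free, so any non-zero map $\cQ_{s,n} \to S(M)$ has non-torsion image, and hence induces a non-zero map $\ol{\cQ}_{s,n} \to M$ after applying $T$.

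So the real task is: given a non-zero smooth $\fS$-equivariant $R/\fh_s$-module $N = S(M)$, find a non-zero element of $N$ that is $\fS(n)$-invariant and annihilated by $x_1, \dots, x_n$ for some $n$. First, pick any non-zero $y \in N$ and use smoothness to choose $n$ so that $y \in N^{\fS(n)}$. Next, consider the subring
\[
A = k[x_1, \dots, x_n]/(x_1^{s+1}, \dots, x_n^{s+1}) \subset R/\fh_s.
\]
Each $x_i$ with $i \le n$ is fixed by $\fS(n)$, so the action of $A$ on $N$ preserves $N^{\fS(n)}$. The cyclic submodule $A \cdot y \subset N^{\fS(n)}$ is a non-zero finite-dimensional module over the Artinian local ring $A$, whose maximal ideal $\fm = (x_1, \dots, x_n)$ is nilpotent, and therefore it has a non-zero socle $\{z \in A \cdot y : \fm z = 0\}$. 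Any non-zero $z$ in this socle lies in $N^{\fS(n)}$ and is killed by $x_1, \dots, x_n$, which gives the desired element.

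I do not expect any serious obstacle in this argument; the one point worth double-checking is that a non-zero map $\cQ_{s,n} \to S(M)$ really does yield a non-zero map $\ol{\cQ}_{s,n} \to M$, which rests on the general fact that saturated objects are torsion-free (as recorded in \cite[\S III.2]{Gabriel} and used implicitly throughout \S\ref{ss:fi-sat}).
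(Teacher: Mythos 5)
Your proof is correct, but it takes a genuinely different route from the paper's. The paper argues directly in the generic category: since $M=T(N)$ for some $R/\fh_s$-module $N$ which is a quotient of a direct sum of $\cP_{s,n}$'s, one gets a non-zero map $\ol{\cP}_{s,n}\to M$; then one uses the filtration of $\cP_{s,n}$ with graded pieces $\cQ_{s,n}$ (Proposition~\ref{prop:PQfilt}) and picks the first graded piece not contained in the kernel, yielding a non-zero map $\ol{\cQ}_{s,n}\to M$. Your argument instead passes through the section functor: you translate the problem to finding a non-zero, $\fS(n)$-invariant element of $S(M)$ killed by $x_1,\dots,x_n$, and you produce such an element by taking the socle of the cyclic module $A\cdot y$ over the Artinian local ring $A=k[x_1,\dots,x_n]/(x_i^{s+1})$. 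Both are valid; the paper's proof is shorter and leans on the already-established $\cQ_s$-filtration of $\cP_{s,n}$, while yours is more concrete (it exhibits the generator of the map explicitly) and replaces that structural input with the elementary Artinian-socle observation plus torsion-freeness of saturated objects. The one point you flagged for verification --- that a non-zero map $\cQ_{s,n}\to S(M)$ survives applying $T$ --- is indeed fine: $S(M)$ is saturated, hence has no non-zero torsion submodule, so the non-zero image of the map is non-torsion and $T$ does not kill it.
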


\begin{proof}
Every $R/\fh_s$-module is a quotient of a direct sum of $\cP_{s,n}$'s. We can thus find a non-zero map $f \colon \ol{\cP}_{s,n} \to M$. Now, we have a filtration $0=F_0 \subset \cdots \subset F_r=\cP_{s,n}$ where each $F_i/F_{i-1}$ is isomorphic to $\cQ_{s,n}$ (Proposition~\ref{prop:PQfilt}), and so we obtain a similar filtration $\ol{F}_{\bullet}$ on $\ol{\cP}_{s,n}$. Let $1 \le i \le r$ be maximal such that $\ol{F}_{i-1}$ is contained in the kernel of $f$. Then $f$ induces a non-zero map $\ol{F}_i/\ol{F}_{i-1} \to M$, as required.
\end{proof}

\begin{lemma} \label{lem:genthm-4}
The functor $\Psi_s$ maps simples to simples.
\end{lemma}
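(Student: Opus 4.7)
The plan is to combine the fully faithful exact functor property of $\Psi_s$ (Lemma~\ref{lem:Psi-2}) with the generation statement of Lemma~\ref{lem:genthm-3} in a standard ``simple-goes-to-simple'' argument. The key mechanism is that any non-zero subobject of $\Psi_s(L)$ receives a non-zero map from some $\ol{\cQ}_{s,n}=\Psi_s(\VV_n)$, which, after composing with the inclusion, must lift along $\Psi_s$ to a non-zero map $\VV_n\to L$ in $\Rep(\fS)$; simplicity of $L$ then forces surjectivity.

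First I would check that $\Psi_s(L)$ is non-zero: since $\Psi_s$ is faithful, $\id_L\neq 0$ maps to $\id_{\Psi_s(L)}\neq 0$, so $\Psi_s(L)\neq 0$. Next, let $N$ be a non-zero subobject of $\Psi_s(L)$ with inclusion $i\colon N\hookrightarrow\Psi_s(L)$. By Lemma~\ref{lem:genthm-3}, there is a non-zero morphism $f\colon \ol{\cQ}_{s,n}\to N$ for some $n\ge 0$. Since $i$ is a monomorphism, the composition $i\circ f\colon \Psi_s(\VV_n)\to \Psi_s(L)$ is also non-zero. Full faithfulness of $\Psi_s$ (Lemma~\ref{lem:Psi-2}) gives a unique map $g\colon\VV_n\to L$ in $\Rep(\fS)$ with $\Psi_s(g)=i\circ f$, and $g$ is non-zero. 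Because $L$ is simple, any non-zero morphism into it is surjective, so $g$ is surjective. Applying the exact functor $\Psi_s$, the composition $i\circ f=\Psi_s(g)$ is surjective, hence $i$ is surjective, so $N=\Psi_s(L)$. This shows $\Psi_s(L)$ has no non-zero proper subobjects.

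There is no real obstacle here: all the serious work has been done in setting up full faithfulness of $\Psi_s$ (Lemma~\ref{lem:Psi-2}) and the generation lemma (Lemma~\ref{lem:genthm-3}). The only subtlety to watch is that the map $\Psi_s(\VV_n)\to N$ produced by Lemma~\ref{lem:genthm-3} need not itself be in the image of $\Psi_s$ (since $N$ is a priori just an object of $\Mod_{R/\fh_s}^{\gen}$, not known to be of the form $\Psi_s(-)$); one recovers full faithfulness only after composing with the inclusion into $\Psi_s(L)$, and one must observe that this composition remains non-zero because $i$ is monic. Everything else is formal.
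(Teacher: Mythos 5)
Your argument is correct and is essentially the same as the paper's: both start from a non-zero subobject, invoke Lemma~\ref{lem:genthm-3} to produce a non-zero map from some $\ol{\cQ}_{s,n}=\Psi_s(\VV_n)$, lift it to $\VV_n \to L$ by fullness, use simplicity of $L$ to get surjectivity, and push the surjectivity back through the exact functor $\Psi_s$. The small point you flag about composing with the monomorphism into $\Psi_s(L)$ before applying full faithfulness is exactly what the paper does implicitly when it speaks of a map $\ol{\cQ}_{s,n}\to L$ ``with image contained in $M$.''
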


\begin{proof}
Let $L'$ be a simple object of $\Rep(\fS)$. We claim $L=\Psi_s(L')$ is simple. Note that $L$ is non-zero since $\Psi_s$ is faithful. Suppose $M$ is a non-zero subobject of $L$. There is then a non-zero map $f \colon \ol{\cQ}_{s,n} \to L$, for some $n$, with image contained in $M$ (Lemma~\ref{lem:genthm-3}). Since $\Psi_s$ is full and $\ol{\cQ}_{s,n}=\Psi_s(\VV_n)$, we have $f=\Psi_s(f')$ for some $f' \colon \VV_n \to L'$. Since $f$ is non-zero, so is $f'$; since $L'$ is simple it follows that $f'$ is surjective; and since $\Psi_s$ is exact, it follows that $f$ is surjective. Thus $M=L$, which shows that $L$ is simple.
\end{proof}

The lemma implies that $\Psi_s$ maps finite length objects to finite length objects. In particular, $\ol{\cQ}_{s,n}$ has finite length.

\begin{lemma} \label{lem:genthm-5}
Let $\cC$ and $\cD$ be finite lenth abelian categories and let $F \colon \cC \to \cD$ be a fully faithful exact functor that induces an isomorphism on (isomorphism classes of) simple objects. Let $M$ be a finite length object of $\cC$. Then the function
\begin{displaymath}
F \colon \{ \text{subobjects of $M$} \} \to \{ \text{subobjects of $F(M)$} \}
\end{displaymath}
is bijective.
\end{lemma}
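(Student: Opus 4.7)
The plan is to separately establish injectivity and surjectivity of $F$ on subobjects. Injectivity is essentially formal from fully faithfulness: if $N_1, N_2 \subseteq M$ satisfy $F(N_1) = F(N_2)$ as subobjects of $F(M)$, then the witnessing isomorphism $\alpha' \colon F(N_1) \to F(N_2)$ over $F(M)$ is $F(\alpha)$ for a unique $\alpha \colon N_1 \to N_2$, which is an isomorphism since $F$ is fully faithful; faithfulness then descends the compatibility with the inclusions into $F(M)$ to a compatibility in $\cC$, giving $N_1 = N_2$ as subobjects of $M$.

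For surjectivity I would induct on the length of $N \subseteq F(M)$, which is finite since $\cD$ is a finite-length category. The case $N = 0$ takes $N' = 0$. If $N$ is simple, the bijection on simples yields a simple $L_0 \in \cC$ with $F(L_0) \cong N$; the composite $F(L_0) \xrightarrow{\sim} N \hookrightarrow F(M)$ lifts by fullness to $F(\phi)$ for a necessarily monic $\phi \colon L_0 \to M$, and $N' := \phi(L_0) \subseteq M$ satisfies $F(N') = N$ as subobjects. For the inductive step with $N$ of length $\geq 2$, choose a simple subobject $N_0 \subseteq N$, apply the base case to obtain a simple $L' \subseteq M$ with $F(L') = N_0$, and use exactness of $F$ to identify $F(M)/N_0$ with $F(M/L')$. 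Then $N/N_0$ is a subobject of $F(M/L')$ of strictly smaller length, so by induction there exists $K \subseteq M/L'$ with $F(K) = N/N_0$; taking $N' \subseteq M$ to be the preimage of $K$, the subobjects $F(N')$ and $N$ of $F(M)$ both contain $F(L')$ and have the same image $F(K) = N/N_0$ in $F(M/L')$, hence agree.

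The main subtlety throughout is the distinction between ``isomorphism of objects in $\cD$'' and ``equality of subobjects of $F(M)$''. The base case, in particular, relies on upgrading the abstract isomorphism $F(L_0) \cong N$ supplied by the bijection on simples to an embedded simple $N' \subseteq M$ whose $F$-image realizes the given inclusion $N \hookrightarrow F(M)$ on the nose; this is exactly where fullness (beyond mere essential surjectivity on simples) is used. Once this bookkeeping is straight, the inductive step amounts to the standard correspondence between subobjects of $F(M)$ containing $N_0$ and subobjects of $F(M)/N_0$, and no further difficulty arises.
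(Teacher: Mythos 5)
Your proof is correct and takes essentially the same approach as the paper's: induct on length, pull a simple subobject of $N\subseteq F(M)$ back to a simple subobject of $M$, pass to the quotient, and use the preimage. The only substantive difference is that you spell out the injectivity argument and the base case (simple $N$), which the paper dispatches as ``clearly true''; your extra care there (in particular the observation that a faithful exact functor reflects monomorphisms, so $\phi$ is monic) is a worthwhile fill-in but not a change of method.
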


\begin{proof}
This is clearly true for simple subobjects of $F(M)$. Now suppose $N'$ is an arbitrary non-zero subobject of $F(M)$. Let $L' \subset N'$ be simple. Then $L'=F(L)$ for some simple $L \subset M$. We thus see that $N'/L'$ is a subobject of $F(M/L)$. By induction on length, there is a subobject $\ol{N}$ of $M/L$ such that $N'/L'=F(\ol{N})$. Letting $N$ be the inverse image of $\ol{N}$ in $M$, we have $N'=F(N)$, as required.
\end{proof}

\begin{lemma} \label{lem:genthm-6}
The essential image of $\Psi_s$ is $\Mod_{R/\fh_s}^{\gen,\cQ}$.
\end{lemma}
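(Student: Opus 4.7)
The plan is to prove the two inclusions. The containment $\Psi_s(\Rep(\fS)) \subseteq \Mod_{R/\fh_s}^{\gen,\cQ}$ is immediate: every $V \in \Rep(\fS)$ is a quotient of a direct sum of $\VV_n$'s by Proposition~\ref{prop:Vn}, and since $\Psi_s$ is exact and commutes with colimits, $\Psi_s(V)$ is then a quotient of a direct sum of $\ol{\cQ}_{s,n}$'s, hence lies in $\Mod_{R/\fh_s}^{\gen,\cQ}$. For the reverse containment, I will establish the stronger assertion that, for any $V \in \Rep(\fS)$, the functor $\Psi_s$ induces a bijection between the lattice of subobjects of $V$ and that of $\Psi_s(V)$. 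Granted this, a general $M \in \Mod_{R/\fh_s}^{\gen,\cQ}$, being a subquotient of $\bigoplus_i \ol{\cQ}_{s,n_i} = \Psi_s(\bigoplus_i \VV_{n_i})$, corresponds to subobjects $N_1 \subseteq N_2$ of $\bigoplus_i \VV_{n_i}$, and exactness of $\Psi_s$ identifies $M$ with $\Psi_s(N_2/N_1)$.

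The heart of the argument is the finite-length case of this subobject bijection, which I will prove by induction on the length of $M \subseteq \Psi_s(V)$; note that $\Psi_s(V)$ itself has finite length since $\Psi_s$ is exact and sends simples to simples (Lemma~\ref{lem:genthm-4}). The inductive step is where Lemmas~\ref{lem:Psi-2} and~\ref{lem:genthm-3} combine decisively. Given non-zero $M \subseteq \Psi_s(V)$, Lemma~\ref{lem:genthm-3} produces a non-zero map $\Psi_s(\VV_m) \to M \hookrightarrow \Psi_s(V)$, and by the full faithfulness of $\Psi_s$ (Lemma~\ref{lem:Psi-2}) this composite is $\Psi_s(f')$ for a unique $f' \colon \VV_m \to V$. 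Setting $L = \im f'$, exactness of $\Psi_s$ identifies the image of the composite inside $M$ with $\Psi_s(L)$, so $\Psi_s(L) \subseteq M$ and $\Psi_s(L)$ is non-zero. The quotient $M/\Psi_s(L)$ then embeds into $\Psi_s(V)/\Psi_s(L) = \Psi_s(V/L)$ and has strictly smaller length, so by induction it equals $\Psi_s(W)$ for some $W \subseteq V/L$. Letting $N \subseteq V$ be the preimage of $W$, exactness of $\Psi_s$ gives that $\Psi_s(N)$ is the preimage of $\Psi_s(W)$ in $\Psi_s(V)$; since $M$ also contains the kernel $\Psi_s(L)$ and has image $\Psi_s(W)$ in $\Psi_s(V/L)$, it equals that same preimage, so $M = \Psi_s(N)$.

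The passage from finite to arbitrary $V$ is routine: write $V = \bigcup_\alpha V_\alpha$ as the directed union of its finite-length subobjects (possible since $\Rep(\fS)$ is locally of finite length), so $\Psi_s(V) = \bigcup_\alpha \Psi_s(V_\alpha)$ because $\Psi_s$ commutes with colimits, and any $M \subseteq \Psi_s(V)$ decomposes as $\bigcup_\alpha (M \cap \Psi_s(V_\alpha))$. The finite-length case yields subobjects $N_\alpha \subseteq V_\alpha$ with $\Psi_s(N_\alpha) = M \cap \Psi_s(V_\alpha)$; these are compatible across $\alpha$ by the uniqueness built into Lemma~\ref{lem:Psi-2}, and they glue to the desired $N \subseteq V$. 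The main obstacle is the inductive step above: without a mechanism for manufacturing subobjects of $\Psi_s(V)$ of the form $\Psi_s(-)$, the induction has nothing to grip, and it is precisely the combination of Lemma~\ref{lem:genthm-3} (producing maps from $\ol{\cQ}_{s,m}$'s into arbitrary non-zero subobjects) with the full faithfulness of $\Psi_s$ (lifting these maps back to $\Rep(\fS)$) that supplies this mechanism.
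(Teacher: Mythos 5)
Your proof is correct and rests on the same underlying induction as the paper's, but it is packaged differently in a way worth noting. The paper factors out the subobject-lifting step as the abstract Lemma~\ref{lem:genthm-5} (for a fully faithful exact functor that induces a bijection on simple objects) and then applies it with $\cC$ and $\cD$ the finite-length subcategories; you instead inline the mechanism directly, peeling off $\Psi_s(L)$ with $L = \im f'$ at each stage by combining Lemma~\ref{lem:genthm-3} with full faithfulness. This inlining has a small but real advantage: Lemma~\ref{lem:genthm-5} as stated requires knowing in advance that $\Psi_s$ is essentially surjective on simples of $\Mod_{R/\fh_s}^{\gen,\cQ}$, which the paper does not explicitly verify before invoking the lemma (it only needs the claim for simple subobjects of $\Psi_s(Z')$, and that is exactly what your step produces), whereas your argument never needs any a priori statement about arbitrary simples of the target category. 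The other difference is cosmetic: you extend the subobject bijection to infinite-length $V$ and apply it once to $\bigoplus_i \VV_{n_i}$, while the paper keeps the bijection at finite length and handles general $M$ by writing it as a directed union $\varinjlim M_i$ and using full faithfulness to assemble the $V_i$'s into an inductive system; both passages are routine and equivalent.
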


\begin{proof}
If $V$ is an object of $\Rep(\fS)$ then $V$ is a quotient of a (possibly infinite) direct sum of $\VV_n$'s. Since $\Psi_s$ is exact and commutes with colimits, it follows that $\Psi_s(V)$ is a quotient of a direct sum of $\ol{\cQ}_{s,n}$'s. Thus the essential image of $\Psi_s$ is contained in $\Mod_{R/\fh_s}^{\gen,\cQ}$. We must now establish the reverse inclusion.

Let $M$ be a finitely length object of $\Mod_{R/\fh_s}^{\gen,\cQ}$. Write $M=Y/X$, where $X \subset Y \subset Z$, and $Z$ is a finite direct sum of $\ol{\cQ}_{s,n}$'s. We have $Z=\Psi_s(Z')$, where $Z'$ is a finite direct sum of $\VV_n$'s. We apply Lemma~\ref{lem:genthm-5} with $\cC$ the category of finite length objects in $\Rep(\fS)$ and $\cD$ the category of finite length objects in $\Mod_{R/\fh_s}^{\gen,\cQ}$ and $F=\Psi_s$. We thus have subrepresentations $X' \subset Y' \subset Z'$ such that $Y=\Psi_s(Y')$ and $X=\Psi_s(X')$. Therefore $M=\Psi_s(Y'/X')$, as required.

Now suppose $M$ is a general object. Write $M=\varinjlim M_i$ where each $M_i$ is finitely length. Then $M_i=\Psi_s(V_i)$ for some $V_i$ by the previous paragraph. Since $\Psi_s$ is fully faithful, the $V_i$'s form an inductive system in $\Rep(\fS)$; let $V$ be the direct limit. Since $\Psi_s$ commutes with colimits, we have $M=\Psi_s(V)$, as required.
\end{proof}

\begin{remark}
The proof shows that every object of $\Mod_{R/\fh_s}^{\gen,\cQ}$ is a quotient of a sum of $\ol{\cQ}_{s,n}$'s.
\end{remark}

\subsection{The adjoint}

Since $\Psi_s$ is a cocontinuous functor of Grothendieck abelian categories, it admits a right adjoint
\begin{displaymath}
\Psi_s^* \colon \Mod_{R/\fh_s}^{\gen} \to \Rep(\fS).
\end{displaymath}
Since $\Psi_s$ is fully faithful, the unit $V \to \Psi_s^*(\Psi_s(V))$ is an isomorphism for any $\fS$-module $V$.

\begin{proposition} \label{prop:Psi-counit}
Let $M$ be a finite length object of $\Mod_{R/\fh_s}^{\gen}$. Then the co-unit
\begin{displaymath}
\Psi_s(\Psi_s^*(M)) \to M
\end{displaymath}
is injective, and its image is the maximal subobject of $M$ that belongs to $\Mod_{R/\fh_s}^{\gen,\cQ}$.
\end{proposition}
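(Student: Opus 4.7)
The plan is to identify $\Psi_s^*(M)$ explicitly so that the counit becomes a canonical inclusion. Let $N \subseteq M$ denote the sum of all subobjects of $M$ that lie in $\Mod_{R/\fh_s}^{\gen,\cQ}$; since $M$ has finite length, only finitely many terms are needed. The subcategory $\Mod_{R/\fh_s}^{\gen,\cQ}$ is closed under quotients and under direct sums computed in $\Mod_{R/\fh_s}^{\gen}$, since the defining property of being a subquotient of a direct sum of $\ol{\cQ}_{s,n}$'s is stable under these operations. A sum of subobjects of $M$ lying in this subcategory is a quotient of their direct sum, hence again in the subcategory. So $N$ is the unique maximal subobject of $M$ belonging to $\Mod_{R/\fh_s}^{\gen,\cQ}$.

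By Theorem~\ref{thm:Psi}, $N = \Psi_s(V_0)$ for some $V_0 \in \Rep(\fS)$. I would next show $\Psi_s^*(M) \cong V_0$, with the counit $\epsilon_M$ corresponding to the inclusion $\iota\colon N \hookrightarrow M$. To verify the defining universal property of $\Psi_s^*(M)$ at $V_0$, fix $V \in \Rep(\fS)$ and consider the composite
\begin{displaymath}
\Hom_{\Rep(\fS)}(V, V_0) \xrightarrow{\Psi_s} \Hom_{\Mod_{R/\fh_s}^{\gen}}(\Psi_s(V), N) \xrightarrow{\iota_*} \Hom_{\Mod_{R/\fh_s}^{\gen}}(\Psi_s(V), M).
\end{displaymath}
The first arrow is a bijection by full faithfulness of $\Psi_s$ (Lemma~\ref{lem:Psi-2}). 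For the second, any morphism $f\colon \Psi_s(V) \to M$ has image a quotient of $\Psi_s(V) \in \Mod_{R/\fh_s}^{\gen,\cQ}$; by the closure properties above, this image lies in $\Mod_{R/\fh_s}^{\gen,\cQ}$, hence is contained in the maximal such subobject $N$. Thus $f$ factors uniquely through $\iota$, and the composite above is a bijection. This identifies $\Psi_s^*(M)$ with $V_0$, and under the identification the counit $\epsilon_M$ corresponds to $\iota$.

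With this identification in hand, the counit $\epsilon_M\colon \Psi_s(\Psi_s^*(M)) = \Psi_s(V_0) = N \hookrightarrow M$ is the inclusion, which is injective with image exactly the maximal $\cQ$-subobject $N$, as desired. I do not anticipate a serious obstacle; the only point requiring any care is the closure of $\Mod_{R/\fh_s}^{\gen,\cQ}$ under quotients and under sums of subobjects, both of which follow immediately from the definition of the subcategory.
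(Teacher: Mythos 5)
Your proposal is correct and is essentially the same argument as the paper's: both identify the maximal subobject $N$ (the paper calls it $M'$) belonging to $\Mod_{R/\fh_s}^{\gen,\cQ}$, write it as $\Psi_s(V)$ via Theorem~\ref{thm:Psi}, and verify the universal property of the right adjoint by observing that any map $\Psi_s(W)\to M$ has image in $\Mod_{R/\fh_s}^{\gen,\cQ}$ and hence factors uniquely through the inclusion. The only minor difference is that you spell out the closure of $\Mod_{R/\fh_s}^{\gen,\cQ}$ under direct sums and quotients a bit more explicitly than the paper does.
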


\begin{proof}
Let $M'$ be the sum of all subobjects of $M$ belonging to $\Mod_{R/\fh_s}^{\gen,\cQ}$. Since $\Mod_{R/\fh_s}^{\gen,\cQ}$ is closed under quotients, it follows that $M'$ belongs to $\Mod_{R/\fh_s}^{\gen,\cQ}$, and it is clearly the maximal subobject of $M$ in this category. Write $M'=\Psi_s(V)$ for some $V$ in $\Rep(\fS)$, and let $i \colon \Psi_s(V) \to M$ be the inclusion. Now, suppose $f \colon \Psi_s(W) \to M$ is an arbitrary map, with $W$ in $\Rep(\fS)$. Since the image of $f$ belongs to $\Mod_{R/\fh_s}^{\gen,\cQ}$, it follows that $f$ factors uniquely as $i \circ g$, where $g \colon \Psi_s(W) \to \Psi_s(V)$. Since $\Psi_s$ is fully faithful, $g=\Psi_s(g')$ for a unique $g'$. It thus follows that $\Psi_s^*(M)=V$, and the co-unit $\Psi_s(\Psi_s^*(M)) \to M$ is identified with $i$. This completes the proof.
\end{proof}

\begin{proposition} \label{prop:Psi-star}
We have $\Psi_s^*(\ol{\cP}_{s,n}) = \VV_n$, and, more generally, $\Psi_s^*(\ol{\cP}_s(V))=\PP(V)$.
\end{proposition}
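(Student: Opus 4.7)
The strategy mirrors the proof of Proposition~\ref{prop:Phi-star}. I will show that the natural inclusion $i\colon \ol{\cQ}_{s,n}\hookrightarrow \ol{\cP}_{s,n}$ (the image under $T$ of the inclusion $\cQ_{s,n}\hookrightarrow \cP_{s,n}$, which is injective because $T$ is exact) is universal for maps into $\ol{\cP}_{s,n}$ from objects in the essential image of $\Psi_s$. Concretely, for every $\fS$-module $V$ and every map $f\colon \Psi_s(V)\to \ol{\cP}_{s,n}$, there is a unique $g\colon \Psi_s(V)\to \ol{\cQ}_{s,n}=\Psi_s(\VV_n)$ with $f=i\circ g$. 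Combined with the fact that $\Psi_s$ is fully faithful (Lemma~\ref{lem:Psi-2}), this identifies the functor $V \mapsto \Hom_{\Mod^{\gen}_{R/\fh_s}}(\Psi_s(V),\ol{\cP}_{s,n})$ with $\Hom_{\fS}(V,\VV_n)$, whence $\Psi_s^*(\ol{\cP}_{s,n})=\VV_n$ by Yoneda.

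The key observation for the factoring property is that $\cP_{s,n}$ is derived saturated: it is $\cQ_s$-filtered by Proposition~\ref{prop:PQfilt}, so Theorem~\ref{thm:vanish2} gives $\Ext^i_{R/\fh_s}(T,\cP_{s,n})=0$ for all torsion $T$ and all $i\ge 0$; hence $\cP_{s,n}=ST(\ol{\cP}_{s,n})$ and
\begin{displaymath}
\Hom_{\Mod^{\gen}_{R/\fh_s}}(\ol{M},\ol{\cP}_{s,n})=\Hom_{R/\fh_s}(M,\cP_{s,n})
\end{displaymath}
for any $R/\fh_s$-module $M$ with $T(M)=\ol M$. Now write $\Psi_s(V)$ as a quotient $\pi\colon \bigoplus_j \ol{\cQ}_{s,m_j}\twoheadrightarrow \Psi_s(V)$ (Lemma~\ref{lem:genthm-6}). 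The composite $f\circ\pi$ restricted to each summand $\ol{\cQ}_{s,m_j}$ lifts, by the displayed identification, to a map $\cQ_{s,m_j}\to \cP_{s,n}$; Proposition~\ref{prop:QPmaps} says this factors through $\cQ_{s,n}\subset \cP_{s,n}$, so after applying $T$ we obtain $h\colon \bigoplus_j \ol{\cQ}_{s,m_j}\to \ol{\cQ}_{s,n}$ with $i\circ h=f\circ\pi$. Since $i$ is injective and $i\circ h$ kills $\ker(\pi)$, so does $h$; hence $h$ descends uniquely to the desired $g\colon \Psi_s(V)\to \ol{\cQ}_{s,n}$, and uniqueness of $g$ given $f=i\circ g$ follows again from the injectivity of $i$.

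For the general case, $\cP_s(V)=(\cP_{s,n}\otimes V)^{\fS_n}$ by definition, so in the generic category $\ol{\cP}_s(V)=(\ol{\cP}_{s,n}\otimes V)^{\fS_n}$. Because $\Psi_s^*$ is a right adjoint, it is left exact, hence commutes with the invariants functor $(-)^{\fS_n}$ (an equalizer). Being additive, it also commutes with the finite-dimensional tensor $-\otimes V$. Applying these successively gives
\begin{displaymath}
\Psi_s^*(\ol{\cP}_s(V))=(\Psi_s^*(\ol{\cP}_{s,n})\otimes V)^{\fS_n}=(\VV_n\otimes V)^{\fS_n}=\PP(V),
\end{displaymath}
where the last equality is the invariants description of induction recorded in \S\ref{ss:Vrep}. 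The main technical point is the saturation of $\cP_{s,n}$; everything else is a formal consequence of Proposition~\ref{prop:QPmaps} together with full faithfulness of $\Psi_s$ and exactness of $T$.
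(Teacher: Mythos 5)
Your proof is correct, but it takes a different route from the paper's for the main identity $\Psi_s^*(\ol{\cP}_{s,n})=\VV_n$. The paper chains together the adjunctions $(\Psi_s,\Psi_s^*)$, $(T,S)$, $(\Phi_s,\Phi_s^*)$, and $(T,S)$ for $\FI$ (using saturation of $\cP_{s,n}$ and $\bP_n$) to reduce directly to the already-established $\Phi_s^*(\cP_{s,n})=\bP_n$ of Proposition~\ref{prop:Phi-star}, ending with Yoneda. You instead re-run the argument of Proposition~\ref{prop:Phi-star} internally in the generic category: write $\Psi_s(V)$ as a quotient of a sum of $\ol{\cQ}_{s,m_j}$'s, lift each restriction to $\Hom_{R/\fh_s}(\cQ_{s,m_j},\cP_{s,n})$ using saturation of $\cP_{s,n}$, factor through $\cQ_{s,n}$ via Proposition~\ref{prop:QPmaps}, and descend along the surjection; then full faithfulness of $\Psi_s$ and Yoneda finish. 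Both proofs rely on exactly the same three ingredients (saturation of $\cP_{s,n}$ from Theorem~\ref{thm:vanish2}, Proposition~\ref{prop:QPmaps}, and full faithfulness of $\Psi_s$); the paper's version is shorter because it recycles Proposition~\ref{prop:Phi-star} wholesale rather than redoing the factoring argument, while yours is more self-contained and makes visible why $\ol{\cQ}_{s,n}\hookrightarrow\ol{\cP}_{s,n}$ is terminal among maps from the image of $\Psi_s$. For the general $\cP_s(V)$, your invariants-and-additivity argument is the same one the paper uses in Proposition~\ref{prop:Phi-star}; the paper here opts instead for the co-presentation variant of Lemma~\ref{lem:Phi-1}, but the two are interchangeable.
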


\begin{proof}
Let $V$ be an object of $\Rep(\fS)$, and write $V=\Theta(N)$ for an $\FI$-module $N$ (see \S \ref{ss:sym-FI}). Note that $\Psi_s(V)=T(\Phi_s(N))$, by definition. We have isomorphisms
\begin{align*}
\Hom_{\fS}(V, \Psi_s^*(\ol{\cP}_{s,n}))
&=\Hom(\Psi_s(V), \ol{\cP}_{s,n})
=\Hom_{R/\fh_s}(\Phi_s(N), \cP_{s,n}) \\
&=\Hom_{\FI}(N, \PP_n)
=\Hom_{\fS}(V, \VV_n)
\end{align*}
In the first step, we used the adjunction $(\Psi_s, \Psi_s^*)$; in the second step, we used the adjunction $(T, S)$ and the fact that $\cP_{s,n}$ is saturated (Theorem~\ref{thm:vanish2}); in the third step, we used the adjunction $(\Phi_s, \Phi_s^*)$ and the computation of $\Phi_s^*(\cP_{s,n})$ (Proposition~\ref{prop:Phi-star}); and in the fourth step, we used in the adjunction $(T, S)$ for $\FI$, the fact that $\PP_n$ is saturated (\S \ref{ss:fi-sat}), and the identification of $\Mod_{\FI}^{\gen}$ with $\Rep(\fS)$ (\S \ref{ss:sym-FI}). The first statement thus follows from Yoneda's lemma. The second statement follows from an argument as in the second paragraph of the proof of Lemma~\ref{lem:Phi-1}, though we must use a co-presentation of $V$ instead of a presentation since $\Psi_s^*$ is only left exact.
\end{proof}

\begin{proposition}  \label{prop:vanish4}
We have $\Ext^i(\ol{\cQ}_{s,m}, \ol{\cP}_{s,n})=0$ for all $n$ and $m$ and $i>0$.
\end{proposition}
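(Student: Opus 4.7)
The plan is to reduce the claim to the already-established vanishing of $\Ext$ in the ambient module category (Theorem~\ref{thm:vanish1}) by using derived adjunction for the pair $(T, S)$. The key intermediate step is showing that $\cP_{s,n}$ is derived saturated, so that the section functor $\rR S$ applied to $\ol{\cP}_{s,n}$ returns $\cP_{s,n}$ itself, with no higher cohomology corrections.

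First I would verify that $\cP_{s,n}$ is derived saturated. By Proposition~\ref{prop:dersat}, this amounts to showing that $\Ext^i_{R/\fh_s}(T, \cP_{s,n}) = 0$ for all torsion $T$ and all $i \ge 0$. But Proposition~\ref{prop:PQfilt} asserts that $\cP_{s,n}$ is $\cQ_s$-filtered, and Theorem~\ref{thm:vanish2} gives exactly this vanishing for any $\cQ_s$-filtered module against torsion modules. Hence $\cP_{s,n}$ is derived saturated, i.e., the natural map $\cP_{s,n} \to \rR S T(\cP_{s,n}) = \rR S(\ol{\cP}_{s,n})$ is an isomorphism in the derived category.

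Next I would invoke the derived adjunction of \S\ref{ss:deradj} applied to the exact functor $T \colon \Mod_{R/\fh_s} \to \Mod_{R/\fh_s}^{\gen}$ and its right adjoint $S$. This gives a natural isomorphism
\[
\rR\Hom_{R/\fh_s}\bigl(\cQ_{s,m},\, \rR S(\ol{\cP}_{s,n})\bigr) \;\cong\; \rR\Hom_{\mathrm{gen}}\bigl(T(\cQ_{s,m}),\, \ol{\cP}_{s,n}\bigr).
\]
By the previous paragraph, the left side equals $\rR\Hom_{R/\fh_s}(\cQ_{s,m}, \cP_{s,n})$, whose higher cohomology vanishes by Theorem~\ref{thm:vanish1}. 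Since $T(\cQ_{s,m}) = \ol{\cQ}_{s,m}$, taking $\rH^i$ on the right side for $i>0$ yields the claimed vanishing.

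There is no significant obstacle: the proof is a direct packaging of two prior results via derived adjunction. The only point requiring attention is ensuring that the derived adjunction is applied in the correct direction (with $T$ exact so no derivation is needed on the left variable, and $\rR S$ on the right variable) and that the derived saturation of $\cP_{s,n}$ is invoked to collapse $\rR S(\ol{\cP}_{s,n})$ to $\cP_{s,n}$ in degree zero.
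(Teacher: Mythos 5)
Your proof is correct and follows exactly the paper's argument: verify that $\cP_{s,n}$ is derived saturated (via Proposition~\ref{prop:PQfilt}, Theorem~\ref{thm:vanish2}, and Proposition~\ref{prop:dersat}) so that $\rR S(\ol{\cP}_{s,n})=\cP_{s,n}$, then apply derived adjunction for the pair $(T,S)$ and conclude by Theorem~\ref{thm:vanish1}. The only difference is that you spell out the derived-saturation step a bit more explicitly than the paper does.
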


\begin{proof}
By derived adjunction (\S \ref{ss:deradj}), we have
\begin{displaymath}
\rR \Hom_{R/\fh_s}(M, \rR S(N)) = \rR \Hom(T(M), N)
\end{displaymath}
for $M$ in $\Mod_{R/\fh_s}$ and $N$ in $\Mod_{R/\fh_s}^{\gen}$. We apply this with $M=\cQ_{s,m}$ and $N=\ol{\cP}_{s,n}$. We have $\rR S(\ol{\cP}_{s,n})=\cP_{s,n}$ by Theorem~\ref{thm:vanish2} and Proposition~\ref{prop:dersat}, and so the result follows from Theorem~\ref{thm:vanish1}.
\end{proof}

\begin{proposition}\label{prop:RPsi-star}
We have $\rR^i \Psi_s^*(\ol{\cP}_{s,n})=0$ for $i>0$.
\end{proposition}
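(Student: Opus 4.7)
The plan is to prove the stronger statement that $\rR \Psi_s^*(\ol{\cP}_{s,n})$ is quasi-isomorphic to $\VV_n$ concentrated in degree~$0$; combined with Proposition~\ref{prop:Psi-star} (which identifies $\rH^0$), this yields the vanishing of $\rR^i \Psi_s^*(\ol{\cP}_{s,n})$ for all $i>0$. By Yoneda in the derived category, it suffices to produce a natural isomorphism
\[
\rR \Hom_{\fS}(V, \rR \Psi_s^*(\ol{\cP}_{s,n})) \;\cong\; \rR \Hom_{\fS}(V, \VV_n)
\]
for every $V \in \Rep(\fS)$.

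To establish this, I would first use derived adjunction for $\Psi_s \dashv \Psi_s^*$ (see \S\ref{ss:deradj}) to rewrite the left side as $\rR \Hom_{\Mod_{R/\fh_s}^{\gen}}(\Psi_s(V), \ol{\cP}_{s,n})$. Then I would unwind $\Psi_s$ via the factorization $\Psi_s = T \circ \Phi_s \circ S$ appearing in the proof of Lemma~\ref{lem:Psi-2}. Writing $N = S(V)$ for the saturated $\FI$-module attached to $V$ (so that $\Psi_s(V) = T(\Phi_s(N))$ and $T(N)=V$ under the identification $\Mod_{\FI}^{\gen} = \Rep(\fS)$), a sequence of three further derived adjunctions transforms the complex step by step:
\begin{align*}
\rR \Hom_{\Mod_{R/\fh_s}^{\gen}}(T(\Phi_s(N)), \ol{\cP}_{s,n})
&\cong \rR \Hom_{R/\fh_s}(\Phi_s(N), \rR S(\ol{\cP}_{s,n})) \\
&\cong \rR \Hom_{R/\fh_s}(\Phi_s(N), \cP_{s,n}) \\
&\cong \rR \Hom_{\FI}(N, \rR \Phi_s^*(\cP_{s,n})) \\
&\cong \rR \Hom_{\FI}(N, \bP_n) \\
&\cong \rR \Hom_{\fS}(V, \VV_n).
\end{align*}
The first step is derived adjunction for $T \dashv \rR S$ on the $R/\fh_s$ side. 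The second step uses that $\cP_{s,n}$ is derived saturated, a consequence of Theorem~\ref{thm:vanish2} (since $\cP_{s,n}$ is $\cQ_s$-filtered by Proposition~\ref{prop:PQfilt}) together with Proposition~\ref{prop:dersat}. The third step is derived adjunction for $\Phi_s \dashv \rR \Phi_s^*$, and the fourth combines Proposition~\ref{prop:Phi-star} with Corollary~\ref{cor:vanish1}. The final step uses that $\bP_n$, being semi-induced, is derived saturated by Proposition~\ref{prop:FI-dersat}, so $\bP_n \cong \rR S(\VV_n)$ in $\rD^+(\Mod_{\FI})$; derived adjunction for $T \dashv \rR S$ on the $\FI$ side then gives the identification, using $T(N)=V$.

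The main obstacle is strictly bookkeeping: each step of the chain requires verifying that some intermediate object is derived saturated in the appropriate category, so that the derivation of the corresponding section functor disappears from the formula. All of these saturation statements are exactly the $\Ext$-vanishing results packaged in \S\ref{s:ext}, which were set up with this collapse in mind; no new conceptual input is required beyond the pieces already assembled.
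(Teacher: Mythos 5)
Your chain of derived adjunctions is correct, and each step is individually justified by the saturation results in the paper; it is a longer but valid route to the same key computation as the paper. The paper applies derived adjunction for $\Psi_s \dashv \Psi_s^*$ just once, reducing to $\rR \Hom(\ol{\cQ}_{s,m}, \ol{\cP}_{s,n})$, which is already known to be concentrated in degree $0$ by Proposition~\ref{prop:vanish4}; your steps essentially re-derive Proposition~\ref{prop:vanish4} and then continue unwinding all the way back to $\FI$-modules. Nothing is wrong with that, but it is considerably more work for the same payoff.

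The genuine gap is the final ``Yoneda in the derived category'' step. A natural isomorphism $\rR \Hom_{\fS}(V, X) \cong \rR \Hom_{\fS}(V, \VV_n)$ for $V$ ranging over the \emph{abelian} category $\Rep(\fS)$ does not by itself force $X \cong \VV_n$ in $\rD^+(\Rep(\fS))$. Yoneda applies when the representing objects and the test objects all live in the same category; here $X$ is a priori an unbounded-above complex, so you cannot simply invoke the embedding, and it is also not automatic that the isomorphism you build from the chain of adjunctions is the one induced by the truncation map $\VV_n \to \rR\Psi_s^*(\ol{\cP}_{s,n})$, which is what you would need to argue by taking a cone. The paper sidesteps this by proving an explicit lemma immediately after the proposition: for a complex $X$ in $\Rep(\fS)$ concentrated in non-negative degrees, if $\rR^i \Hom_{\fS}(\VV_m, X) = 0$ for all $i>0$ and all $m$, then $\rH^i(X)=0$ for $i>0$ (the proof lifts a class in $\rH^i(X)$ to an $\fS(m)$-invariant cocycle, producing a nonzero element of $\rR^i\Hom_{\fS}(\VV_m,X)$). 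Your argument can be closed the same way: specialize your natural isomorphism to $V=\VV_m$, note that $\Ext^i_{\fS}(\VV_m, \VV_n)=0$ for $i>0$ since $\VV_n$ is semi-induced (Proposition~\ref{prop:Ext-semi-induced-sinf}), and then invoke that lemma. This replaces the informal Yoneda appeal with the concrete detection argument the paper uses.
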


\begin{proof}
By derived adjunction (\S \ref{ss:deradj}), for an $\fS$-module $V$ and a generic $R/\fh_s$-module $M$, we have a natural isomorphism
\begin{displaymath}
\rR \Hom_{\fS}(V, \rR \Psi_s^*(M)) = \rR \Hom(\Psi_s(V), M).
\end{displaymath}
We apply this with $V=\VV_m$ and $M=\ol{\cP}_{s,n}$. The right side only has cohomology in degree~0 by the Proposition~\ref{prop:vanish4}. We thus have $\rR^i \Psi_s^*(\ol{\cP}_{s,n})=0$ for $i>0$ by the following lemma.
\end{proof}

\begin{lemma}
Let $X$ be a complex in $\Rep(\fS)$ supported in non-negative degrees. Suppose that $\rR^i \Hom_{\fS}(\VV_m, X)=0$ for all $i>0$ and all $m \ge 0$. Then $\rH^i(X)=0$ for all $i>0$.
\end{lemma}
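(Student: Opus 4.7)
The plan is to resolve $X$ by injectives and then use smoothness to recover the cohomology of $X$ as a filtered colimit of the hyperext groups we have assumed to vanish.

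First, since $\Rep(\fS)$ is a Grothendieck abelian category it has enough injectives, so I would choose a quasi-isomorphism $X \to I^{\bullet}$ with $I^{\bullet}$ a bounded-below complex of injective objects. By the definition of the right derived functor,
\[
\rR^i \Hom_{\fS}(\VV_m, X) = \rH^i\bigl(\Hom_{\fS}(\VV_m, I^{\bullet})\bigr),
\]
and Proposition~\ref{prop:Vn} identifies $\Hom_{\fS}(\VV_m, I^j)$ with the $\fS(m)$-invariants $(I^j)^{\fS(m)}$. Since the differentials of $I^{\bullet}$ are $\fS$-equivariant they restrict to these invariants, producing a subcomplex $(I^{\bullet})^{\fS(m)} \subset I^{\bullet}$ whose $i$th cohomology computes $\rR^i \Hom_{\fS}(\VV_m, X)$.

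Next, smoothness of each $I^j$ means that every vector is fixed by some $\fS(m)$, giving $I^j = \bigcup_m (I^j)^{\fS(m)}$. Assembling this degreewise yields an identification of complexes
\[
I^{\bullet} = \varinjlim_m (I^{\bullet})^{\fS(m)},
\]
where the colimit is filtered (along $\fS(0) \supset \fS(1) \supset \cdots$). Filtered colimits of vector spaces are exact and hence commute with cohomology, so
\[
\rH^i(X) = \rH^i(I^{\bullet}) = \varinjlim_m \rH^i\bigl((I^{\bullet})^{\fS(m)}\bigr) = \varinjlim_m \rR^i \Hom_{\fS}(\VV_m, X),
\]
which vanishes for every $i > 0$ by hypothesis.

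I do not anticipate any serious obstacle here; the only substantive point is the identification $I^{\bullet} = \varinjlim_m (I^{\bullet})^{\fS(m)}$, which is forced by smoothness applied degreewise together with the $\fS$-equivariance of the differentials. The argument sidesteps any spectral-sequence or truncation manipulation precisely because the smoothness axiom is designed so that the $\VV_m$-invariants detect everything in the colimit.
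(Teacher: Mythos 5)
Your argument is correct, and it takes a genuinely different (arguably cleaner) route than the paper's. The paper argues by contradiction: it replaces $X$ by an injective complex $I^\bullet$, assumes $\rH^i(X)\neq 0$ for some $i>0$, lifts a nonzero class to a cocycle $x\in\ker(d^i)$ fixed by some $\fS(m)$, uses Proposition~\ref{prop:Vn} to build a chain map $\VV_m[-i]\to X$, and then observes that this map is nonzero on $\rH^i$, hence gives a nonzero class in $\rR^i\Hom_\fS(\VV_m,X)$ (because, $X$ being injective, derived Homs are computed by homotopy classes of chain maps) --- a contradiction. Your proof instead runs the same ingredients forward: identify $\Hom_\fS(\VV_m,I^\bullet)\cong (I^\bullet)^{\fS(m)}$ via Proposition~\ref{prop:Vn}, note that by smoothness $I^\bullet=\varinjlim_m (I^\bullet)^{\fS(m)}$ degreewise, and then use exactness of filtered colimits to conclude $\rH^i(X)=\varinjlim_m \rR^i\Hom_\fS(\VV_m,X)=0$. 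Your version is more structural and exhibits the cohomology of $X$ explicitly as a colimit of the vanishing groups, whereas the paper's version isolates a single witness and is perhaps slightly more elementary in that it avoids any appeal to exactness of filtered colimits of complexes. Both are sound; no gaps.
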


\begin{proof}
Replace $X$ with a quasi-isomorphic complex of injectives. Suppose by way of contradiction that $\rH^i(X) \ne 0$ for some $i>0$. Choose a non-zero element of $\rH^i(X)$, and lift it to an element of $x$ of $\ker(d^i)$, where $d^i \colon X^i \to X^{i+1}$ is the differential. Let $m$ be such that $x$ is fixed by $\fS(m)$. We then have a map $f \colon \VV_m \to \ker(d^i)$ in $\Rep(\fS)$ that takes $e_{1,\ldots,m}$ to $x$ (Proposition~\ref{prop:Vn}). This defines a map of complexes $f \colon \VV_m[-i] \to X$, where $\VV_m[-i]$ is the complex that is $\VV_m$ in degree~$i$, and~0 elsewhere. Since $f$ induces a non-zero map on $\rH^i$, it defines a non-zero element of $\rR^i \Hom_{\fS}(\VV_m, X)$, a contradiction. The result thus follows.
\end{proof}

\section{The generic category}

In this section we prove our main results on the generic category of $R/\fh_s$-modules.

\subsection{Finite length}

The following is the most fundamental result about the generic category.

\begin{theorem} \label{thm:gen-fin-len}
The category $\Mod_{R/\fh_s}^{\gen}$ is locally of finite length: if $M$ is a finitely generated $R/\fh_s$-module then $T(M)$ has finite length.
\end{theorem}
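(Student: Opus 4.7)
The plan is to reduce to showing that $\ol{\cP}_{s,n}=T(\cP_{s,n})$ has finite length for each fixed $n$, and then to further reduce to showing that $\ol{\cQ}_{s,n}$ has finite length, which will follow directly from the machinery of $\Psi_s$ developed in Section~\ref{s:Psi}.

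First, since any finitely generated $R/\fh_s$-module $M$ is a quotient of a finite direct sum of $\cP_{s,n}$'s, and $T$ is exact and additive, it suffices to establish that $\ol{\cP}_{s,n}$ has finite length for every $n \ge 0$. Second, by Proposition~\ref{prop:PQfilt}, $\cP_{s,n}$ carries a filtration of length $(s+1)^n$ whose graded pieces are all isomorphic to $\cQ_{s,n}$; applying the exact functor $T$ yields a finite filtration of $\ol{\cP}_{s,n}$ whose graded pieces are $\ol{\cQ}_{s,n}$. Thus the problem reduces to showing that $\ol{\cQ}_{s,n}$ has finite length in $\Mod_{R/\fh_s}^{\gen}$.

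For this last step, recall from \S\ref{ss:genthm} that $\ol{\cQ}_{s,n} = \Psi_s(\VV_n)$, and by the corollary in \S\ref{ss:sym-FI}, $\VV_n$ is a finite length object of $\Rep(\fS)$. By Lemma~\ref{lem:genthm-4}, $\Psi_s$ sends simple objects to simple objects; combined with exactness (and the fact that $\Psi_s$ is faithful), an easy induction on length shows that $\Psi_s$ carries finite length objects to finite length objects. Therefore $\ol{\cQ}_{s,n}$ has finite length, and assembling the three reductions gives the theorem.

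There is no serious obstacle here: all the real work has been done upstream, in proving the $\cP \leftrightsquigarrow \cQ$ filtration result (Proposition~\ref{prop:PQfilt}) and in constructing the fully faithful functor $\Psi_s$ with its good behavior on simples. The only subtlety worth flagging is that one should verify the implicit claim that $\Psi_s$ preserves finite length; this is not a literal consequence of exactness alone, but follows immediately from exactness together with the fact that $\Psi_s$ is fully faithful and sends simples to simples (equivalently, from Lemma~\ref{lem:genthm-5} applied to the inclusion of finite length subcategories).
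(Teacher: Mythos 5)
Your proof is correct and follows the same route as the paper: reduce to $\ol{\cP}_{s,n}$, reduce to $\ol{\cQ}_{s,n}$ via Proposition~\ref{prop:PQfilt}, and then invoke the $\Psi_s$ machinery (the paper cites Theorem~\ref{thm:Psi}, but the remark immediately after Lemma~\ref{lem:genthm-4} makes precisely the ``$\Psi_s$ preserves finite length'' observation you spell out). The subtlety you flag is real and your resolution is the intended one.
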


\begin{proof}
Theorem~\ref{thm:Psi} implies that $\ol{\cQ}_{s,n}=\Psi_s(\VV_n)$ has finite length. Since $\cP_{s,n}$ has a finite length filtration with graded pieces $\cQ_{s,n}$ (Proposition~\ref{prop:PQfilt}), it follows that $\ol{\cP}_{s,n}$ has finite length. If $M$ is a finitely generated $R/\fh_s$-module then $M$ is a quotient of a finite sum of $\cP_{s,n}$'s, and so $T(M)$ has finite length.
\end{proof}

\subsection{Simple objects}

Recall that, for an abelian category $\cA$, let $\Irr(\cA)$ denotes the set of isomorphism classes of simple objects. We now aim to describe the simple objects in $\Mod_{R/\fh_s}^{\gen}$. We begin with an easy result.

\begin{theorem}
We have a bijection
\begin{displaymath}
\Irr(\Rep(\fS)) \to \Irr(\Mod_{R/\fh_s}^{\gen}), \qquad
V \mapsto \Psi_s(V).
\end{displaymath}
\end{theorem}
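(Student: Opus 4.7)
The plan is to assemble this theorem from three results already on the table: the equivalence $\Psi_s \colon \Rep(\fS) \xrightarrow{\sim} \Mod_{R/\fh_s}^{\gen,\cQ}$ (Theorem~\ref{thm:Psi}), the fact that $\Psi_s$ sends simples to simples (Lemma~\ref{lem:genthm-4}), and the fact that every non-zero generic module receives a non-zero map from some $\ol{\cQ}_{s,n}$ (Lemma~\ref{lem:genthm-3}). The map is well-defined by Lemma~\ref{lem:genthm-4}, so it remains to show injectivity and surjectivity.

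For injectivity, I would argue that if $V, V' \in \Rep(\fS)$ are simple and $\Psi_s(V) \cong \Psi_s(V')$ in $\Mod_{R/\fh_s}^{\gen}$, then because $\Psi_s$ is fully faithful (Lemma~\ref{lem:Psi-2}), the isomorphism is of the form $\Psi_s(\phi)$ for an isomorphism $\phi \colon V \to V'$ in $\Rep(\fS)$, so $V \cong V'$.

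For surjectivity, I would take a simple object $L$ of $\Mod_{R/\fh_s}^{\gen}$. By Lemma~\ref{lem:genthm-3} there is a non-zero map $f \colon \ol{\cQ}_{s,n} \to L$ for some $n$, and since $L$ is simple $f$ is surjective. Hence $L$ is a quotient of $\ol{\cQ}_{s,n}$, so $L$ belongs to $\Mod_{R/\fh_s}^{\gen,\cQ}$. By Theorem~\ref{thm:Psi}, there is some $V \in \Rep(\fS)$ with $L \cong \Psi_s(V)$. It remains to see $V$ is simple: since $\Psi_s$ is exact and fully faithful on $\Rep(\fS)$ with essential image $\Mod_{R/\fh_s}^{\gen,\cQ}$ (which is closed under subobjects of things in its image in the sense used in Lemma~\ref{lem:genthm-5}), every proper non-zero subobject of $V$ would give rise to a proper non-zero subobject of $\Psi_s(V) = L$, contradicting simplicity of $L$.

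There is no real obstacle here; the theorem is essentially a corollary of the main equivalence established in \S\ref{ss:genthm}. The only small point requiring care is to confirm that simplicity of $\Psi_s(V)$ forces simplicity of $V$, which one sees either directly from full faithfulness plus exactness of $\Psi_s$, or by invoking Lemma~\ref{lem:genthm-5} with $F = \Psi_s$ restricted to finite length objects (the hypotheses apply once one knows that $\Psi_s$ already induces a bijection on isomorphism classes of simples coming from $\Rep(\fS)$, via Lemma~\ref{lem:genthm-4} and injectivity above).
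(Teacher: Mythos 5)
Your proof is correct and essentially matches the paper's argument: both reduce to showing that every simple object of $\Mod_{R/\fh_s}^{\gen}$ already lies in $\Mod_{R/\fh_s}^{\gen,\cQ}$ and then invoke the equivalence of Theorem~\ref{thm:Psi}. The only cosmetic difference is that you reach this conclusion by citing Lemma~\ref{lem:genthm-3} directly (a simple $L$ receives a nonzero, hence surjective, map from some $\ol{\cQ}_{s,n}$), while the paper instead re-runs the underlying $\cQ$-filtration argument from Proposition~\ref{prop:PQfilt} on $\ol{\cP}_{s,n}$ to see that every simple is a constituent of some $\ol{\cQ}_{s,n}$ --- but Lemma~\ref{lem:genthm-3} is itself proved by that same filtration, so the content is identical. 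Your explicit unwinding of why $\Psi_s$ being fully faithful and exact implies $V$ must be simple is also fine, and is the part the paper leaves implicit in ``the result thus follows from Theorem~\ref{thm:Psi}.''
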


\begin{proof}
Since every object of $\Mod_{R/\fh_s}^{\gen}$ is a quotient of a sum of $\ol{\cP}_{s,n}$'s, it follows that any simple object is a constituent of some $\ol{\cP}_{s,n}$. Since $\cP_{s,n}$ has a filtration with graded pieces $\cQ_{s,n}$ (Proposition~\ref{prop:PQfilt}), it follows than any simple object is a constituent of some $\ol{\cQ}_{s,n}$, and therefore belongs to the category $\Mod_{R/\fh_s}^{\gen,\cQ}$. The result thus follows from Theorem~\ref{thm:Psi}.
\end{proof}

The above theorem gives a description of the simples of the generic category using the functor $\Psi_s$ and simples of $\Rep(\fS)$. We can derive from this a more intrinsic characterization of the simples.

\begin{theorem} \label{thm:simples}
If $V$ is an irreducible representation of $\fS_n$ then $\soc(\ol{\cQ}_s(V)) = \soc(\ol{\cP}_s(V))$ is simple. Moreover, we have a bijection
\begin{displaymath}
\coprod_{n \ge 0} \Irr(\Rep(\fS_n)) \to \Irr(\Mod_{R/\fh_s}^{\gen}), \qquad
V \mapsto \soc(\ol{\cQ}_s(V)).
\end{displaymath}
In particular, if $\chr(k)=0$ then the simples of $\Mod_{R/\fh_s}^{\gen}$ have the form $\soc(\ol{\cQ}_{s,\lambda})$.
\end{theorem}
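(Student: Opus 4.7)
The plan is to deduce the theorem from the preceding result (bijection $\Irr(\Rep(\fS)) \to \Irr(\Mod_{R/\fh_s}^{\gen})$ via $V \mapsto \Psi_s(V)$), together with Proposition~\ref{prop:FI-gen-simple} transported along the equivalence $\ol\Theta$, and Propositions~\ref{prop:Psi-star} and~\ref{prop:Psi-counit}. First I would record the basic identification
\[
\Psi_s(\PP(V)) = \ol{\cQ}_s(V),
\]
which is immediate from $\PP(V)=\Theta(\bP(V))$, Theorem~\ref{thm:Phi}(c), and the definition of $\Psi_s$.

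Next I would observe that $\Psi_s$ commutes with taking socle. Since $\Psi_s$ is exact and fully faithful (Lemma~\ref{lem:Psi-2}) and the preceding theorem gives a bijection on simples, any simple subobject $L \hookrightarrow \Psi_s(M)$ is uniquely of the form $\Psi_s(L_0)$ with $L_0 \hookrightarrow M$ simple (exactness and faithfulness lift the inclusion back to $M$). Hence $\soc(\Psi_s(M)) = \Psi_s(\soc(M))$, and in particular $\soc(\ol{\cQ}_s(V)) = \Psi_s(\soc(\PP(V)))$. Now Proposition~\ref{prop:FI-gen-simple}, transported via $\ol\Theta$ (which sends $\ol{\bP}(V) \mapsto \PP(V)$), says $\soc(\PP(V))$ is simple for irreducible $V$ and that $V \mapsto \soc(\PP(V))$ is a bijection $\coprod_n \Irr(\Rep(\fS_n)) \to \Irr(\Rep(\fS))$. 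Composing with $W \mapsto \Psi_s(W)$ yields the asserted bijection $V \mapsto \soc(\ol{\cQ}_s(V))$, with $\soc(\ol{\cQ}_s(V))$ simple.

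For the equality $\soc(\ol{\cP}_s(V)) = \soc(\ol{\cQ}_s(V))$, I would appeal to Proposition~\ref{prop:Psi-counit}: the co-unit $\Psi_s(\Psi_s^*(\ol{\cP}_s(V))) \to \ol{\cP}_s(V)$ is monic with image the maximal subobject of $\ol{\cP}_s(V)$ belonging to $\Mod_{R/\fh_s}^{\gen,\cQ}$. Combined with $\Psi_s^*(\ol{\cP}_s(V)) = \PP(V)$ from Proposition~\ref{prop:Psi-star} and the identification $\Psi_s(\PP(V))=\ol{\cQ}_s(V)$, this image is precisely $\ol{\cQ}_s(V)$. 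Since every simple of $\Mod_{R/\fh_s}^{\gen}$ lies in $\Mod_{R/\fh_s}^{\gen,\cQ}$ (again by the preceding theorem), the semisimple object $\soc(\ol{\cP}_s(V))$ is contained in $\ol{\cQ}_s(V)$ and therefore equals $\soc(\ol{\cQ}_s(V))$. The final ``in particular'' claim follows since in characteristic~$0$ the simples of $\fS_n$ are $\{S^\lambda\}_{\lambda \vdash n}$ and $\cQ_s(S^\lambda)=\cQ_{s,\lambda}$ by definition.

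The main conceptual point, and the step I expect to carry the real content, is recognizing that the inclusion $\ol{\cQ}_s(V) \hookrightarrow \ol{\cP}_s(V)$ is exactly the maximal $\Mod_{R/\fh_s}^{\gen,\cQ}$-subobject of $\ol{\cP}_s(V)$; Proposition~\ref{prop:Psi-counit} provides this through the adjunction $(\Psi_s,\Psi_s^*)$, and once it is in hand, the rest is a formal verification using the known $\FI$-module classification of simples and the equivalence $\ol\Theta$.
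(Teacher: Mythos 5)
Your proposal is correct and follows essentially the same route as the paper's proof: deduce the $\cQ$-statement from the preceding bijection $\Irr(\Rep(\fS)) \to \Irr(\Mod_{R/\fh_s}^{\gen})$ together with Proposition~\ref{prop:FI-gen-simple}, and then obtain $\soc(\ol{\cP}_s(V)) = \soc(\ol{\cQ}_s(V))$ by noting the socle lies in $\Mod_{R/\fh_s}^{\gen,\cQ}$ and invoking Propositions~\ref{prop:Psi-counit} and~\ref{prop:Psi-star}. You spell out a couple of intermediate steps (that $\Psi_s$ preserves socles, and the two inclusions giving the socle equality) that the paper leaves implicit, but the argument is the same.
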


\begin{proof}
Since the simples of $\Rep(\fS)$ are exactly of the form $\soc(\PP(V))$ with $V$ a simple $k[\fS_n]$-module (Proposition~\ref{prop:FI-gen-simple} and \S \ref{ss:sym-FI}), the statement with $\cQ_s$ follows immediately from the previous corollary. It remains to show that $\soc(\ol{\cP}_s(V)) = \soc(\ol{\cQ}_s(V))$ when $V$ is a simple $k[\fS_n]$-module. Since $\soc(\ol{\cP}_s(V))$ is a sum of simple objects of $\Mod_{R/\fh_s}^{\gen}$ and all simple objects belong to $\Mod_{R/\fh_s}^{\gen,\cQ}$, it follows that $\soc(\ol{\cP}_s(V))$ belongs to $\Mod_{R/\fh_s}^{\gen,\cQ}$. Therefore the inclusion $\soc(\ol{\cP}_s(V)) \to \ol{\cP}_s(V)$ factors through $\Psi_s(\Psi_s^*(\ol{\cP}_s(V)))$ (Proposition~\ref{prop:Psi-counit}), which is identified with $\ol{\cQ}_s(V)$ (Proposition~\ref{prop:Psi-star}), from which the result follows.
\end{proof}

\subsection{Derived generators} \label{ss:dergen}

Let $\cA$ be a locally noetherian Grothendieck abelian category. We let $\rD^b_{\rm fg}(\cA)$ be the full subcategory of the derived category $\rD(\cA)$ spanned by objects $X$ such that $\rH^i(X)$ is finitely generated (i.e., noetherian) for all $i$, and zero for all but finitely many $i$. Given a collection $S$ of objects of $\rD^b_{\rm fg}(\cA)$, the triangulated subcategory \defn{generated} by $S$ is the intersection of all replete full triangulated subcategories of $\rD^b_{\rm fg}(\cA)$ containing $S$. (Recall that a full subcategory is \defn{replete} if it contains all objects in the ambient category isomorphic to an object in the subcategory.) Having a set $S$ of generators for $\rD^b_{\rm fg}(\cA)$ can be very useful; for instance, $S$ will generate the Grothendieck group, and many kinds of homological questions about objects in $\cA$ can be reduced to $S$. We now determine generators for our generic category.

\begin{theorem} \label{thm:Q-gen}
The objects $\ol{\cQ}_s(V)$, with $V$ an irreducible $\fS_n$-representation (with $n$ variable), generate $\rD^b_{\rm fg}(\Mod_{R/\fh_s}^{\gen})$.
\end{theorem}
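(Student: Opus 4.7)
The plan is to combine the local finiteness of the generic category (Theorem~\ref{thm:gen-fin-len}) with the semi-induced resolutions in $\Rep(\fS)$ (Proposition~\ref{prop:sym-semi-ind-res}), transporting them via the functor $\Psi_s$. Write $\cT$ for the triangulated subcategory of $\rD^b_{\rm fg}(\Mod_{R/\fh_s}^{\gen})$ generated by the $\ol{\cQ}_s(V)$ with $V$ irreducible; the goal is $\cT = \rD^b_{\rm fg}(\Mod_{R/\fh_s}^{\gen})$. By Theorem~\ref{thm:gen-fin-len} every object of $\rD^b_{\rm fg}(\Mod_{R/\fh_s}^{\gen})$ has finite-length cohomology; using the canonical truncation triangles $\tau_{\le n-1}X \to \tau_{\le n}X \to \rH^n(X)[-n] \to$ together with a composition series of each $\rH^n(X)$, a standard d\'evissage reduces the claim to verifying that every simple object of $\Mod_{R/\fh_s}^{\gen}$ lies in $\cT$.

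By Theorem~\ref{thm:simples}, every simple object of $\Mod_{R/\fh_s}^{\gen}$ has the form $L = \Psi_s(L')$ for a simple $L' \in \Rep(\fS)$. Proposition~\ref{prop:sym-semi-ind-res} produces a finite length exact resolution $0 \to L' \to I^0 \to \cdots \to I^n \to 0$ in $\Rep(\fS)$ in which each $I^i$ is finite length and semi-induced, i.e., a finite iterated extension of modules of the form $\PP(W)$ with $W$ a finite-dimensional $\fS_m$-representation. Since induction from $\fS_m \times \fS(m)$ to $\fS$ is exact, the functor $\PP$ is exact, so a composition series of each $W$ refines the filtration further to one whose graded pieces are $\PP(V)$ with $V$ irreducible.

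Applying the exact functor $\Psi_s$ to this resolution yields a finite length exact complex in $\Mod_{R/\fh_s}^{\gen}$ resolving $L = \Psi_s(L')$, each of whose terms is an iterated extension of the modules $\Psi_s(\PP(V)) = \ol{\cQ}_s(V)$. The terms therefore lie in $\cT$, and since the resolution expresses $L$ as a successive cone of these terms, $L$ lies in $\cT$ as well. The only point requiring any care is the d\'evissage step, which is routine; the substantive input is simply the availability of the finite semi-induced resolution in $\Rep(\fS)$ and the compatibility of $\Psi_s$ with the $\PP$ and $\cQ_s$ constructions established in Section~\ref{s:Psi}.
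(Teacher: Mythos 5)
Your proof is correct and follows essentially the same route as the paper's: reduce via truncation triangles and composition series to simple objects, identify those with images $\Psi_s(L')$ of simples in $\Rep(\fS)$ via Theorem~\ref{thm:simples}, apply the finite semi-induced resolution of Proposition~\ref{prop:sym-semi-ind-res}, and push forward through the exact functor $\Psi_s$. The only cosmetic difference is that the paper's d\'evissage stops at arbitrary finite-length objects of $\Mod_{R/\fh_s}^{\gen,\cQ}$ while you go all the way down to simples; the content is identical.
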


\begin{proof}
Let $\cK$ be the subcategory generated by the $\ol{\cQ}_s(V)$. Clearly, $\cK$ contains any finite length $\ol{\cQ}_s$-filtered module. If $M$ is a finite length object of $\Mod^{\gen,\cQ}_{R/\fh_s}$ then, by Proposition~\ref{prop:sym-semi-ind-res} and Theorem~\ref{thm:Psi},  there is a finite resolution
\begin{displaymath}
0 \to M \to I^0 \to \cdots \to I^n \to 0
\end{displaymath}
where each $I^i$ is $\ol{\cQ}_s$-filtered; thus $M$ belongs to $\cK$. Since any finite length object $M$ of $\Mod_{R/\fh_s}^{\gen}$ has a finite length filtration where the graded pieces belong to $\Mod_{R/\fh_s}^{\gen,\cQ}$, it follows that $M$ belongs to $\cK$. Since the finite length modules in $\Mod_{R/\fh_s}^{\gen}$ clearly generate $\rD^b_{\rm fg}(\Mod_{R/\fh_s}^{\gen})$, the result thus follows.
\end{proof}

\begin{question}
In the above proof, we saw that any finite length object of $\Mod_{R/\fh_s}^{\gen,\cQ}$ admits a finite resolution by $\ol{\cQ}_s$-filtered objects. Does this hold more generally for any finite length object of $\Mod_{R/\fh_s}^{\gen}$?
\end{question}

\begin{remark} \label{rmk:P-dont-gen}
Suppose $s \ge 1$. We have
\begin{displaymath}
\Ext^i(\ol{\cQ}_{s,1}, \ol{\cQ}_{s,1}) = \Ext^i_{R/\fh_s}(\cQ_{s,1}, \cQ_{s,1}) = k
\end{displaymath}
for all $i \ge 0$, where the first step uses that $\cQ_{s,1}$ is derived saturated, and the second uses Remark~\ref{rmk:Q11-ext}. We thus see that $\ol{\cQ}_{s,1}$ does not have finite injective dimension in $\Mod_{R/\fh_s}^{\gen}$. We will see in Theorem~\ref{thm:P-inj} below that, in characteristic~0, the $\cP_{s,\lambda}$'s are injective in $\Mod_{R/\fh_s}^{\gen}$. It thus follows that the $\cP_s(V)$'s do not generated $\rD^b_{\rm fg}(\Mod_{R/\fh_s}^{\gen})$, at least in characteristic~0.
\end{remark}

\subsection{The Grothendieck group} \label{ss:gen-K}

For a locally noetherian abelian category $\cA$, let $\rK(\cA)$ denote the Grothendieck group of the category of noetherian objects in $\cA$. Put $K=\rK(\Mod_{R/\fh_s}^{\gen})$. We now investigate the structure of this group.

\begin{proposition} \label{prop:Groth-Q}
We have an isomorphism of abelian groups
\begin{displaymath}
\bigoplus_{n \ge 0} \rK(\Rep(\fS_n)) \to K, \qquad [V] \mapsto [\ol{\cQ}_s(V)].
\end{displaymath}
In particular, in if $\chr(k)=0$ then the classes $[\ol{\cQ}_{s,\lambda}]$ form a $\bZ$-basis of $K$
\end{proposition}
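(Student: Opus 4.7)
The strategy is to transport the analogous theorem for $\FI$-modules (Proposition~\ref{prop:gen-FI-K}) across the equivalence of Theorem~\ref{thm:Psi}, then compare the Grothendieck group of $\Mod_{R/\fh_s}^{\gen,\cQ}$ with that of $\Mod_{R/\fh_s}^{\gen}$.

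First I would verify that the inclusion $\Mod_{R/\fh_s}^{\gen,\cQ} \hookrightarrow \Mod_{R/\fh_s}^{\gen}$ induces an isomorphism on Grothendieck groups of noetherian objects. The subcategory is closed under subquotients by definition, so an object is noetherian in it if and only if it is noetherian in the ambient category. By Theorem~\ref{thm:gen-fin-len}, $\Mod_{R/\fh_s}^{\gen}$ is locally of finite length, and by Theorem~\ref{thm:simples} every simple has the form $\soc(\ol{\cQ}_s(V))$, which lies in $\Mod_{R/\fh_s}^{\gen,\cQ}$. Hence both categories are finite length with the same set of isomorphism classes of simples, so their Grothendieck groups are both free abelian on this common set, and the inclusion-induced map is an isomorphism.

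Next I would apply the equivalence $\Psi_s \colon \Rep(\fS) \xrightarrow{\sim} \Mod_{R/\fh_s}^{\gen,\cQ}$ of Theorem~\ref{thm:Psi}. It is exact and, combining its definition with Theorem~\ref{thm:Phi}(c) and the identification $\Theta(\bP(V)) = \PP(V)$ from \S\ref{ss:sym-FI}, it sends $\PP(V) \mapsto \ol{\cQ}_s(V)$. It therefore induces an isomorphism $\rK(\Rep(\fS)) \xrightarrow{\sim} \rK(\Mod_{R/\fh_s}^{\gen,\cQ})$ taking $[\PP(V)]$ to $[\ol{\cQ}_s(V)]$. Meanwhile, Proposition~\ref{prop:gen-FI-K} combined with the identification $\ol{\Theta} \colon \Mod_{\FI}^{\gen} \xrightarrow{\sim} \Rep(\fS)$ of Proposition~\ref{prop:FI-gen-sym} gives an isomorphism $\bigoplus_{n \ge 0} \rK(\Rep(\fS_n)) \xrightarrow{\sim} \rK(\Rep(\fS))$ sending $[V]$ to $[\PP(V)]$. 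Composing these three isomorphisms yields the desired map, and the characteristic-zero assertion follows from the fact that the Specht modules form a $\bZ$-basis of $\rK(\Rep(\fS_n))$.

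The only step that requires real care is the first: the identification $\rK(\Mod_{R/\fh_s}^{\gen,\cQ}) = K$. This is really a tautology once one has (i) local finite length of both categories and (ii) the fact that every simple of $\Mod_{R/\fh_s}^{\gen}$ already belongs to $\Mod_{R/\fh_s}^{\gen,\cQ}$. Both ingredients are in hand from earlier in the paper, so I anticipate no serious obstacle; the remainder of the proof is purely a composition of previously-established equivalences.
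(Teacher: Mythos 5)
Your proposal is correct and follows essentially the same route as the paper: reduce $\rK(\Mod_{R/\fh_s}^{\gen})$ to $\rK(\Mod_{R/\fh_s}^{\gen,\cQ})$ by noting all simples lie in the $\cQ$-subcategory, then transport Proposition~\ref{prop:gen-FI-K} across $\Psi_s$ and the identification of $\Mod_{\FI}^{\gen}$ with $\Rep(\fS)$. You merely spell out in more detail the steps that the paper compresses into two sentences.
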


\begin{proof}
Since the simple objects of $\Mod_{R/\fh_s}^{\gen}$ all live in the subcategory $\Mod_{R/\fh_s}^{\gen,\cQ}$, it follows that their Grothendieck groups agree. The result thus follows from Theorem~\ref{thm:Psi}, Proposition~\ref{prop:gen-FI-K}, and \S \ref{ss:sym-FI}.
\end{proof}

We now aim to establish a similar result, but with $P$'s instead of $Q$'s. We require the following two lemmas.

\begin{lemma} \label{lem:Groth-P-1}
Let $V$ be a $k[\fS_n]$-module and let $S=k[x_1, \ldots, x_n]/(x^{s+1})$. Then we have
\begin{displaymath}
[\ol{\cP}_s(V)] = [\ol{\cQ}_s(S \otimes V)]
\end{displaymath}
in $K$, where here we are simply treating $S$ as a $k[\fS_n]$-module.
\end{lemma}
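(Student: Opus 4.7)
The plan is to use the $\fm$-adic filtration on $S$ to express $\cP_s(V)$ as an iterated extension of modules of the form $\cQ_s(\fm^i/\fm^{i+1} \otimes V)$, and then use additivity of $\cQ_s$ to reassemble them.

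First, I would recall from the proof of Proposition~\ref{prop:PQfilt2} that the $\fm$-adic filtration $S \supset \fm \supset \fm^2 \supset \cdots$, which consists of $\fS_n$-stable submodules of $S$, gives (after tensoring with $V$ and applying the exact functor $\sG$) a finite filtration on $\cP_s(V) = \sG(S \otimes V)$ whose graded pieces are
\begin{displaymath}
\sG(\fm^i/\fm^{i+1} \otimes V) = \cQ_s(\fm^i/\fm^{i+1} \otimes V).
\end{displaymath}
Passing to the generic category (which preserves exact sequences) and taking classes in $K$, this yields
\begin{displaymath}
[\ol{\cP}_s(V)] = \sum_{i \ge 0} [\ol{\cQ}_s(\fm^i/\fm^{i+1} \otimes V)].
\end{displaymath}

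Next, I would observe that since $S = k[x_1,\ldots,x_n]/(x_i^{s+1})$ is a graded ring with homogeneous maximal ideal $\fm$, there is a $k[\fS_n]$-equivariant decomposition $S \cong \bigoplus_{i \ge 0} \fm^i/\fm^{i+1}$ of $S$ as a $k[\fS_n]$-module. Tensoring with $V$ gives $S \otimes V \cong \bigoplus_{i \ge 0} \fm^i/\fm^{i+1} \otimes V$ as $k[\fS_n]$-modules. Since the functor $\cQ_s$ is additive (being built from the left adjoint $\sG$ and the tensor product), applying $\cQ_s$ and passing to the generic category gives
\begin{displaymath}
[\ol{\cQ}_s(S \otimes V)] = \sum_{i \ge 0} [\ol{\cQ}_s(\fm^i/\fm^{i+1} \otimes V)],
\end{displaymath}
which coincides with the expression above for $[\ol{\cP}_s(V)]$.

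There is no real obstacle here; the only point requiring mild care is to confirm that the filtration on $\cP_s(V)$ produced in Proposition~\ref{prop:PQfilt2} really comes from the $\fm$-adic (hence grading) filtration on $S$, so that its associated graded matches the $k[\fS_n]$-module decomposition of $S$ used in the second step. Both descriptions depend only on the $\fS_n$-module structure of $S$ (with $\fS(n)$ acting trivially), so they agree.
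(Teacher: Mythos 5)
Your proposal is correct and takes essentially the same approach as the paper: use the $\fm$-adic filtration from Proposition~\ref{prop:PQfilt2} to write $[\ol{\cP}_s(V)]$ as a sum of $[\ol{\cQ}_s(\fm^i/\fm^{i+1}\otimes V)]$, then observe that $\cQ_s(S\otimes V)$ yields the same sum (the paper phrases this via a filtration on $\cQ_s(S\otimes V)$ with the same graded pieces, while you invoke the grading-induced splitting $S\cong\bigoplus_i\fm^i/\fm^{i+1}$ and additivity of $\cQ_s$, which is the same observation).
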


\begin{proof}
In Proposition~\ref{prop:PQfilt2}, we showed that $\cP_s(V)$ is a $\cQ_s$-filtered module. Examining the proof, we see that $\cP_s(V)$ has a filtration where the graded pieces are $\cQ_s(\fm^i/\fm^{i+1} \otimes V)$, where $\fm$ is the maximal ideal of $S$. Of course, $\cQ_s(S \otimes V)$ also has a filtration with the same graded pieces, and so the result follows.
\end{proof}

\begin{lemma} \label{lem:Groth-P-2}
Letting $S$ be as above, $[S]$ is a unit of the ring $\bQ \otimes \rK(\Rep(\fS_n))$.
\end{lemma}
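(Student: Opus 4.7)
My plan is to show that $[S]$ is a non-zero-divisor in $\bQ \otimes \rK(\Rep(\fS_n))$; this will imply $[S]$ is a unit because the ring in question is a finite-dimensional commutative $\bQ$-algebra, hence Artinian, and in an Artinian commutative ring being a unit coincides with being a non-zero-divisor. The finite-dimensionality is immediate: $\rK(\Rep(\fS_n))$ is a free $\bZ$-module of finite rank, with basis indexed by the (finitely many) isomorphism classes of simple $k[\fS_n]$-modules.

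To detect non-zero-divisors, I would use the Brauer character map
\[
\chi \colon \bQ \otimes \rK(\Rep(\fS_n)) \to \prod_{\sigma} \bQ,
\]
where $\sigma$ ranges over the conjugacy classes of $\fS_n$ of order prime to $\chr(k)$ (all conjugacy classes, if $\chr(k) = 0$). This is a ring homomorphism, and it is injective because the Brauer characters of the simple $k[\fS_n]$-modules are linearly independent over $\bQ$. Thus it is enough to prove $\chi_S(\sigma) \ne 0$ for every such $\sigma$.

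Finally, $S$ is a permutation $\fS_n$-module with basis the monomials $x^{\alpha}$ for $\alpha \in \{0, 1, \ldots, s\}^n$, on which $\sigma$ acts by permuting the coordinates of $\alpha$. Since the Brauer character of a permutation module at an element of order prime to $\chr(k)$ equals the number of fixed basis vectors, $\chi_S(\sigma)$ counts the exponent vectors $\alpha$ that are constant on each cycle of $\sigma$. Hence $\chi_S(\sigma) = (s+1)^{c(\sigma)}$, where $c(\sigma)$ denotes the number of cycles of $\sigma$; this is a positive integer, hence nonzero in $\bQ$, which completes the proof.

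The hard part will be essentially conceptual: one needs the injectivity and multiplicativity of the Brauer character map in positive characteristic, which is standard modular representation theory but worth flagging. In characteristic~$0$, the argument collapses to the observation that the ordinary character of the permutation module $S$ takes the positive value $(s+1)^{c(\sigma)}$ on every conjugacy class, so $[S]$ is a unit in $\bQ \otimes \rK(\Rep(\fS_n)) \cong \bQ^{\{\text{conjugacy classes}\}}$.
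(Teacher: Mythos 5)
Your proof is correct and follows essentially the same approach as the paper: reduce to showing the (Brauer) character of $S$ is nowhere vanishing, and use that $S$ is a permutation module. The only cosmetic differences are that you detour through the Artinian non-zero-divisor observation where the paper directly identifies $\bC \otimes \rK(\Rep(\fS_n))$ with class functions, and you give the exact value $\chi_S(\sigma)=(s+1)^{c(\sigma)}$ where the paper merely notes the basis vector $1 \in S$ is fixed so the count is positive.
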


\begin{proof}
First suppose $\chr(k)=0$; without loss of generality, we take $k=\bC$. If $V$ is a $k[\fS_n]$-module then $[V]$ is a unit of $\bQ \otimes \rK(\Rep(\fS_n))$ if and only if it is a unit of $\bC \otimes \rK(\Rep(\fS_n))$. The latter is identified, as ring, with the space of class functions on $\fS_n$, and so we see that $[V]$ is a unit if and only if the character $\chi_V$ is nowhere vanishing. Now, $S$ is a permutation representation of $\fS_n$, and so $\chi_S(g)$ is the number of fixed points of $g$ on the standard basis. This is always positive since $1 \in S$ is fixed. The proof in positive characteristic is similar, but using the Brauer character.
\end{proof}

We can now obtain a description of $K$ in terms of the $P$'s.

\begin{proposition} \label{prop:Groth-P}
We have an isomorphism of $\bQ$-vector spaces
\begin{displaymath}
\bigoplus_{n \ge 0} \bQ \otimes \rK(\Rep(\fS_n)) \to \bQ \otimes K, \qquad [V] \mapsto [\ol{\cP}_s(V)].
\end{displaymath}
In particular, if $\chr(k)=0$ then the classes $[\ol{\cP}_{s,\lambda}]$ form a $\bQ$-basis of $\bQ \otimes K$.
\end{proposition}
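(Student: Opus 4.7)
The plan is to deduce this from Proposition~\ref{prop:Groth-Q} by showing that the change of basis from the $\ol{\cP}$-classes to the $\ol{\cQ}$-classes is block diagonal (with respect to the decomposition by $n$) and is a unit in each block after tensoring with $\bQ$.

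More precisely, I would argue as follows. By Proposition~\ref{prop:Groth-Q}, tensoring with $\bQ$, the map
\begin{displaymath}
\beta \colon \bigoplus_{n \ge 0} \bQ \otimes \rK(\Rep(\fS_n)) \to \bQ \otimes K, \qquad [V] \mapsto [\ol{\cQ}_s(V)]
\end{displaymath}
is an isomorphism of $\bQ$-vector spaces. Let $\alpha$ denote the map in the proposition, defined by $[V] \mapsto [\ol{\cP}_s(V)]$. By Lemma~\ref{lem:Groth-P-1}, for $V$ a representation of $\fS_n$ and $S_n = k[x_1,\ldots,x_n]/(x_i^{s+1})$ viewed as a $k[\fS_n]$-module, we have $[\ol{\cP}_s(V)] = [\ol{\cQ}_s(S_n \otimes V)]$. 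Therefore $\alpha = \beta \circ \gamma$, where
\begin{displaymath}
\gamma \colon \bigoplus_{n \ge 0} \bQ \otimes \rK(\Rep(\fS_n)) \to \bigoplus_{n \ge 0} \bQ \otimes \rK(\Rep(\fS_n))
\end{displaymath}
is the degree-preserving endomorphism whose restriction to the $n$th summand is multiplication by $[S_n]$ in the ring $\bQ \otimes \rK(\Rep(\fS_n))$.

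By Lemma~\ref{lem:Groth-P-2}, $[S_n]$ is a unit in $\bQ \otimes \rK(\Rep(\fS_n))$ for every $n$, so $\gamma$ is an isomorphism. Since $\beta$ is also an isomorphism, the composition $\alpha = \beta \circ \gamma$ is an isomorphism as well, proving the first statement. The second statement then follows immediately: in characteristic~$0$, the classes $[S^{\lambda}]$ with $\lambda \vdash n$ form a $\bZ$-basis of $\rK(\Rep(\fS_n))$, so the classes $[\ol{\cP}_{s,\lambda}] = \alpha([S^{\lambda}])$ form a $\bQ$-basis of $\bQ \otimes K$.

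There is no real obstacle here beyond assembling the two lemmas correctly; the only subtle point is noticing that the change of basis preserves the grading by $n$ (so that the calculation reduces to a unit statement within each fixed $\rK(\Rep(\fS_n))$), which is why integrality must be sacrificed precisely to invert $[S_n]$.
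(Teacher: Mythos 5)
Your proof is correct and is essentially identical to the paper's: both factor the $\cP$-map through the $\cQ$-map via the block-diagonal multiplication-by-$[S_n]$ endomorphism, then invoke Lemma~\ref{lem:Groth-P-1} for the factorization, Lemma~\ref{lem:Groth-P-2} for invertibility of each block, and Proposition~\ref{prop:Groth-Q} for the $\cQ$-isomorphism.
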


\begin{proof}
Define
\begin{displaymath}
q_n \colon \bQ \otimes \rK(\Rep(\fS_n)) \to \bQ \otimes K, \qquad [V] \mapsto [\ol{\cQ}_s(V)],
\end{displaymath}
and define $p_n$ similarly but using $\cP_s(V)$. Let $S_n=k[x_1, \ldots, x_n]/(x_i^{s+1}))$, and let
\begin{displaymath}
\mu_n \colon \rK(\Rep(\fS_n)) \to \rK(\Rep(\fS_n))
\end{displaymath}
be the multiplication-by-$[S_n]$ map. Lemma~\ref{lem:Groth-P-1} shows that $p_n=q_n \circ \mu_n$, while Lemma~\ref{lem:Groth-P-2} shows that $\mu_n$ is an isomorphism. Since $\bigoplus_{n \ge 0} q_n$ is an isomorphism by Proposition~\ref{prop:Groth-Q}, the same holds with the $p$'s, which completes the proof.
\end{proof}

Let $\Lambda=\rK(\Rep(\fS))$. In characteristic~0, $\Lambda$ is identified with the ring of symmetric functions. Since $\Mod_{R/\fh_s}^{\gen}$ is a module category for $\Rep(\fS)$, it follows that $K$ is naturally a $\Lambda$-module; explicitly, scalar multiplication is defined by $[V] \cdot [M] = [V \otimes M]$. We now determine the structure of $K$ as a $\Lambda$-module, at least rationally.

\begin{proposition} \label{prop:K-gen}
The $(\bQ \otimes \Lambda)$-module $\bQ \otimes K$ is free of rank one with basis $[\ol{\cP}_{s,0}]$.
\end{proposition}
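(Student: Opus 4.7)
The plan is to define the $(\bQ \otimes \Lambda)$-linear map
$$\phi \colon \bQ \otimes \Lambda \to \bQ \otimes K, \qquad \alpha \mapsto \alpha \cdot [\ol{\cP}_{s,0}],$$
and show it is an isomorphism by matching $\bQ$-bases on both sides. Note that $\phi$ is automatically $(\bQ \otimes \Lambda)$-linear by associativity of the $\Lambda$-action on $K$, so it suffices to prove it is a $\bQ$-linear bijection; this will then exhibit $\bQ \otimes K$ as a free $(\bQ \otimes \Lambda)$-module of rank one with the required generator.

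To compute $\phi$ on a basis, I would use Proposition~\ref{prop:gen-FI-K} together with the identification of $\Mod_{\FI}^{\gen}$ with $\Rep(\fS)$ from \S \ref{ss:sym-FI} (under which $\ol{\bP}(V)$ corresponds to $\PP(V)$). This gives an isomorphism
$$\bigoplus_{n \ge 0} \rK(\Rep(\fS_n)) \xrightarrow{\;\sim\;} \Lambda, \qquad [V] \mapsto [\PP(V)],$$
so after inverting, the classes $[\PP(V)]$ with $V$ a simple $\fS_n$-module (and $n$ varying) form a $\bQ$-basis of $\bQ \otimes \Lambda$. Since $\cP_{s,0} = R/\fh_s$, the definition of $\cP_s(V)$ in \S \ref{ss:Pmod} immediately yields
$$\PP(V) \otimes \cP_{s,0} = \PP(V) \otimes R/\fh_s = \cP_s(V),$$
and therefore $\phi([\PP(V)]) = [\ol{\cP}_s(V)]$ for every $V$.

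Finally, Proposition~\ref{prop:Groth-P} says exactly that the classes $[\ol{\cP}_s(V)]$, with $V$ running over simple $\fS_n$-modules (and $n$ varying), form a $\bQ$-basis of $\bQ \otimes K$. Thus $\phi$ sends a distinguished $\bQ$-basis of its source bijectively onto a distinguished $\bQ$-basis of its target, so it is an isomorphism of $\bQ$-vector spaces, and hence of $(\bQ \otimes \Lambda)$-modules. Given the earlier Grothendieck-group computations, there is no real obstacle here; the main content of the argument is simply recognizing that the $\Lambda$-module structure on $K$ translates $[\PP(V)] \cdot [\ol{\cP}_{s,0}]$ into $[\ol{\cP}_s(V)]$, which reduces everything to the tautological identification $\PP(V) \otimes R/\fh_s = \cP_s(V)$.
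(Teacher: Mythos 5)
Your proof is correct and follows the same route as the paper's: define the map $\alpha \mapsto \alpha \cdot [\ol{\cP}_{s,0}]$, compute that $[\PP(V)] \mapsto [\ol{\cP}_s(V)]$ (which holds by definition since $\cP_s(V) = R/\fh_s \otimes \PP(V)$), and invoke Proposition~\ref{prop:gen-FI-K} and Proposition~\ref{prop:Groth-P} to identify matching $\bQ$-bases on the two sides. Your version is slightly more explicit than the paper's about why the map sends basis to basis, but the argument is the same.
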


\begin{proof}
Consider the map
\begin{displaymath}
i \colon \bQ \otimes \Lambda \to \bQ \otimes K, \qquad i([V]) = [V] \cdot [\ol{\cP}_{s,0}].
\end{displaymath}
We must show that $i$ is an isomorphism. We have $i([\PP(V)]) = [\ol{\cP}_s(V)]$. As $V$ varies over simple $k[\fS_n]$-modules, the classes $[\PP(V)]$ form a basis of $\bQ \otimes \Lambda$ (Proposition~\ref{prop:gen-FI-K} and \S \ref{ss:sym-FI}), and the classes $[\ol{\cP}_s(V)]$ form a basis of $\bQ \otimes K$ (Proposition~\ref{prop:Groth-P}). The result follows.
\end{proof}

\subsection{Injective objects}

We now determine the injective objects of the generic category, in characteristic~0.

\begin{theorem} \label{thm:P-inj}
If $\chr(k)=0$ then the $\ol{\cP}_{s,\lambda}$ are the indecomposable injectives of $\Mod_{R/\fh_s}^{\gen}$.
\end{theorem}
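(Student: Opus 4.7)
The plan is to first establish injectivity of each $\ol{\cP}_{s,\lambda}$ via the adjunction $(\Psi_s, \Psi_s^*)$, and then to deduce indecomposability and the exhaustion claim from standard injective-envelope theory for locally Noetherian Grothendieck categories.

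First I would observe that Proposition~\ref{prop:Psi-star} gives $\Psi_s^*(\ol{\cP}_{s,\lambda}) = \PP(S^\lambda)$, and since $\ol{\cP}_{s,\lambda}$ is a direct summand of $\ol{\cP}_{s,n}$ for $n = |\lambda|$ (via the $S^\lambda$-isotypic decomposition available in characteristic~$0$), Proposition~\ref{prop:RPsi-star} yields $\rR^i\Psi_s^*(\ol{\cP}_{s,\lambda})=0$ for $i>0$. Next, the equivalence $\ol{\Theta} \colon \Mod_{\FI}^{\gen} \to \Rep(\fS)$ of Proposition~\ref{prop:FI-gen-sym} sends $T(\bP_\lambda)$ to $\PP(S^\lambda)$, so Remark~\ref{rmk:FI-gen-inj} translates to the statement that in characteristic~$0$ the module $\PP(S^\lambda)$ is injective in $\Rep(\fS)$. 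Derived adjunction (\S\ref{ss:deradj}) applied to $(\Psi_s,\Psi_s^*)$ then produces, for any $V \in \Rep(\fS)$,
\begin{displaymath}
\rR\Hom(\Psi_s(V),\ol{\cP}_{s,\lambda}) \cong \rR\Hom_{\fS}(V,\PP(S^\lambda)),
\end{displaymath}
and the right-hand side is concentrated in degree~$0$ by injectivity of $\PP(S^\lambda)$. Hence $\Ext^i(\Psi_s(V),\ol{\cP}_{s,\lambda})=0$ for all $V$ and all $i>0$.

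By Theorem~\ref{thm:simples}, every simple object $L$ of $\Mod_{R/\fh_s}^{\gen}$ is of the form $\Psi_s(V)$ for some simple $V \in \Rep(\fS)$, so $\Ext^i(L,\ol{\cP}_{s,\lambda})=0$ for every simple $L$ and all $i>0$. An induction on length via the $\Ext$ long exact sequence extends this vanishing to every finite length object. Because $\Mod_{R/\fh_s}^{\gen}$ is locally of finite length (Theorem~\ref{thm:gen-fin-len}), Noetherian objects are of finite length, and the standard Baer-type criterion for locally Noetherian Grothendieck categories (an object $E$ is injective iff $\Ext^1(N,E)=0$ for every Noetherian $N$) then gives that $\ol{\cP}_{s,\lambda}$ is injective. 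For indecomposability, $\soc(\ol{\cP}_{s,\lambda})$ is simple by Theorem~\ref{thm:simples}, and any finite length object is an essential extension of its socle, so the injective $\ol{\cP}_{s,\lambda}$ is the injective envelope $E(\soc(\ol{\cP}_{s,\lambda}))$, hence indecomposable; Theorem~\ref{thm:simples} also guarantees non-isomorphic socles for distinct $\lambda$, so distinct $\ol{\cP}_{s,\lambda}$ are non-isomorphic. Finally, in any locally Noetherian Grothendieck category, indecomposable injectives are in bijection with iso-classes of simples via $L\mapsto E(L)$; since Theorem~\ref{thm:simples} realizes every simple as $\soc(\ol{\cP}_{s,\lambda})$, every indecomposable injective is isomorphic to some $\ol{\cP}_{s,\lambda}$.

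I do not anticipate a serious obstacle: the work lies almost entirely in assembling previously established ingredients (Theorem~\ref{thm:simples}, Propositions~\ref{prop:Psi-star}--\ref{prop:RPsi-star}, and the injectivity of $\PP(S^\lambda)$ from Remark~\ref{rmk:FI-gen-inj}) via $\Psi_s$ and derived adjunction. The only mildly delicate step is the reduction from $\Ext^1$-vanishing against Noetherian objects to actual injectivity, which is standard in the locally Noetherian setting and uses the locally finite length structure established in Theorem~\ref{thm:gen-fin-len}.
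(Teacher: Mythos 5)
Your proposal is correct and follows essentially the same route as the paper's proof: both use the derived adjunction $(\Psi_s,\Psi_s^*)$ together with Propositions~\ref{prop:Psi-star} and~\ref{prop:RPsi-star} to reduce $\Ext^i(\Psi_s(V),\ol{\cP}_{s,n})$ to $\Ext^i_{\fS}(V,\VV_n)$, then invoke injectivity in $\Rep(\fS) \simeq \Mod_{\FI}^{\gen}$ from Remark~\ref{rmk:FI-gen-inj} and the fact that all simples of the generic category lie in the image of $\Psi_s$; the paper works with $\ol{\cP}_{s,n}$ and then passes to summands, whereas you work with $\ol{\cP}_{s,\lambda}$ directly, and you spell out the finite-length Baer/d\'evissage step that the paper leaves implicit. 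One minor attribution slip: the fact that every simple is of the form $\Psi_s(V)$ with $V$ a simple $\fS$-representation comes from the unnumbered theorem preceding Theorem~\ref{thm:simples} rather than from Theorem~\ref{thm:simples} itself, which instead identifies simples as socles of $\ol{\cQ}_{s,\lambda}$ and $\ol{\cP}_{s,\lambda}$.
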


\begin{proof}
Let $V$ be an $\fS$-module and let $M$ be a generic $R/\fh_s$-module. By derived adjunction (\S \ref{ss:deradj}), we have a natural isomorphism
\begin{displaymath}
\rR \Hom_{\fS}(V, \rR \Psi_s^*(M)) = \rR \Hom(\Psi_s(V), M).
\end{displaymath}
We apply this with $M=\ol{\cP}_{s,n}$. Since $\rR \Psi_s^*(\ol{\cP}_{s,n})=\VV_n$ (Propositions~\ref{prop:Psi-star} and~\ref{prop:RPsi-star}), we find
\begin{displaymath}
\Ext^i_{\fS}(V, \VV_n) = \Ext^i(\Psi_s(V), \ol{\cP}_{s,n}).
\end{displaymath}
Since $\VV_n$ is injective in characteristic~0 (Remark~\ref{rmk:FI-gen-inj}), the higher $\Ext$ groups vanish here. Since every simple of $\Mod_{R/\fh_s}^{\gen}$ has the form $\Psi_s(V)$ for appropriate $V$, we see that $\ol{\cP}_{s,n}$ is injective. Since $\ol{\cP}_{s,\lambda}$ is a summand of $\ol{\cP}_{s,n}$, it too is injective. Since the socles of the $\ol{\cP}_{s,\lambda}$'s are the simple objects (Theorem~\ref{thm:simples}), it follows that they are the indecomposable injectives.
\end{proof}

\begin{remark}
Since $\cP_{s,\lambda}$ is saturated (Theorem~\ref{thm:vanish2}), we see that it is an injective module in $\Mod_{R/\fh_s}$.
\end{remark}

\subsection{Explicit description} \label{ss:char0}

Assume for the duration of \S \ref{ss:char0} that $\chr(k)=0$, and fix $s \ge 0$. We aim to give an explicit description of the generic category. An \defn{$\FI$-algebra} is a functor from $\FI$ to the cateory of commutative $k$-algebras. Suppose $A$ is an $\FI$-algebra. Then an \defn{$A$-module} is an $\FI$-module $M$ for which each $M(S)$ is equipped with the structure of an $A(S)$-module such that if $i \colon S \to T$ is an injection then $i_* \colon M(S) \to M(T)$ is a map of $A(S)$-modules, where $A(S)$ acts on $M(T)$ through the given map $i_* \colon A(S) \to A(T)$.

Let $\bA_s$ be the $\FI$-algebra define by
\begin{displaymath}
\bA_s(S) = k[x_i]_{i \in S}/(x_i^{s+1}),
\end{displaymath}
with obvious transition maps. We say that an $\bA_s$-module $M$ is \defn{locally finite} if it is the union of its finite length submodules; one easily sees that this is equivalent to $M$ being torsion as an $\FI$-module. We write $\Mod_{\bA_s}^{\rm lf}$ for the category of locally finite $\bA_s$-modules. The following is our main result:

\begin{theorem} \label{thm:char0}
The categories $\Mod_{R/\fh_s}^{\gen}$ and $\Mod_{\bA_s}^{\rm lf}$ are equivalent.
\end{theorem}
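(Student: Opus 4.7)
The plan is to extend the functor $\Phi_s$ of \S\ref{s:Phi} from $\FI$-modules to $\bA_s$-modules and show that, after passing to quotient categories, this gives the desired equivalence. Since an $\bA_s$-module is an $\FI$-module equipped with a compatible polynomial action, it carries exactly the additional data needed to capture the full generic category $\Mod_{R/\fh_s}^{\gen}$---rather than only the $\cQ$-generated subcategory reached by $\Psi_s$ in Theorem~\ref{thm:Psi}.

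The first step is to define a functor $\widehat{\Phi}_s \colon \Mod_{\bA_s} \to \Mod_{R/\fh_s}$ following the template in \S\ref{s:Phi}. For an $\bA_s$-module $\sM$, set
\begin{displaymath}
\widehat{\Phi}_s(\sM) = \bigoplus_{\alpha \in \Delta(s)} \sM(\mathrm{supp}(\alpha))_{\alpha},
\end{displaymath}
where $\sM(S)_{\alpha}$ denotes the weight-$\alpha$ component of $\sM(S)$ under the natural grading induced from $\bA_s(S) = k[x_i]_{i\in S}/(x_i^{s+1})$. The action of $x_i$ on the $\alpha$-component is given by the $\FI$-transition map $\sM(S) \to \sM(S \cup \{i\})$ when $i \notin S = \mathrm{supp}(\alpha)$, and by the polynomial action of $x_i$ on $\sM(S)$ when $i \in S$ and $\alpha_i < s$ (taken to be zero when $\alpha_i = s$). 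One verifies that this yields a well-defined $R/\fh_s$-module, that $\widehat{\Phi}_s$ is exact and cocontinuous, and that it restricts to $\Phi_s$ when $\sM$ comes from an $\FI$-module via the augmentation $\bA_s \to k$. Composing with $T$ and restricting to locally finite modules gives the candidate functor $F \colon \Mod_{\bA_s}^{\rm lf} \to \Mod_{R/\fh_s}^{\gen}$.

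To show $F$ is an equivalence, I would check that $F$ is exact, fully faithful, and essentially surjective. Exactness and cocontinuity are built in. For the simples: in characteristic~$0$, the simple locally finite $\bA_s$-modules are $\bT(S^\lambda)$ (concentrated in degree $|\lambda|$ with trivial polynomial action), and a direct computation with $\widehat{\Phi}_s$ shows $F(\bT(S^\lambda)) = \soc(\ol{\cQ}_{s,\lambda})$, matching Theorem~\ref{thm:simples}. Fully faithfulness and essential surjectivity on finite length objects would then follow by d\'evissage, using the $\Ext$-vanishing results of \S\ref{s:ext} together with the fact that the injective objects $\ol{\cP}_{s,\lambda}$ of $\Mod_{R/\fh_s}^{\gen}$ (identified in Theorem~\ref{thm:P-inj}) should correspond under $F$ to the injective envelopes of $\bT(S^\lambda)$ in $\Mod_{\bA_s}^{\rm lf}$. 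The general case extends by cocontinuity and the locally finite length property (Theorem~\ref{thm:gen-fin-len}).

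The main obstacle is identifying $F$ precisely on the injective envelopes and verifying essential surjectivity. In principle, a right adjoint $G$ of $F$ should exist with $G(M)(S)$ a suitable finite-dimensional piece of $(SM)^{\fS(S^c)}$ (which is naturally an $\bA_s(S)$-module since $x_i$ for $i \in S$ commutes with $\fS(S^c)$, and carries $\FI$-transition maps constructed as in the proof of Proposition~\ref{prop:FI-gen-sym}), but showing $G$ lands in locally finite $\bA_s$-modules is nontrivial: for instance, $(SM)^{\fS(S^c)}$ for $SM = R/\fh_s$ equals all of $\bA_s(S)$, which is not locally finite as an $\bA_s$-module. Restricting to the correct finite-dimensional isotypic piece in each degree requires the characteristic~$0$ hypothesis, which permits the Specht decomposition and the identification of injective envelopes via Theorem~\ref{thm:P-inj}.
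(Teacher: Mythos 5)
Your central construction is not well-defined. The formula
\[
\widehat{\Phi}_s(\sM) = \bigoplus_{\alpha \in \Delta(s)} \sM(\mathrm{supp}(\alpha))_{\alpha}
\]
requires ``the weight-$\alpha$ component of $\sM(S)$ under the natural grading induced from $\bA_s(S)$,'' but no such grading exists for a general $\bA_s$-module $\sM$. The ring $\bA_s(S)$ is $\Delta(s)$-graded, but a module over a graded ring need not be graded, and the definition of an $\bA_s$-module in \S\ref{ss:char0} carries no grading data on the spaces $\sM(S)$. (Contrast this with $\Mod_{\WI(s)}$ and $\Mod^{\Delta(s)}_R$ in \S\ref{s:Phi}, where the grading is part of the objects.) Without the grading, $\widehat{\Phi}_s$ does not exist, so the downstream steps (identifying $F(\bT(S^\lambda))$, building the right adjoint $G$, d\'evissage) have nothing to stand on. The obstruction you note at the end --- that $(S M)^{\fS(S^c)}$ is all of $\bA_s(S)$ when $S M = R/\fh_s$ --- is a real symptom of the same underlying difficulty: there is no direct module-level comparison functor that is easy to control.

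The paper sidesteps this entirely: it never constructs a functor between the module categories. Instead it introduces the $k$-linear category $\cC(\bA_s)$, identifies the injective objects $\bI_{s,n}$ of $\Mod_{\bA_s}^{\rm lf}$ and $\ol{\cP}_{s,n}$ of $\Mod_{R/\fh_s}^{\gen}$ (the latter is Theorem~\ref{thm:P-inj}), and proves by explicit $\Hom$ computations that the full subcategories $\cD$ and $\cE$ spanned by these injectives are each equivalent to $\cC(\bA_s)^{\op}$; for $\cE$ this uses saturation of $\cP_{s,n}$ (Theorem~\ref{thm:vanish2}) and the mapping property of $\cP_{s,n}$. Since both sides are locally finite Grothendieck abelian categories with enough injectives drawn from $\cD$, $\cE$ respectively, an equivalence of the injective subcategories extends to an equivalence of the ambient categories. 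Your intuition that the injective envelopes must match up is exactly right, but in the paper this matching is the entire proof, not a consistency check on a functor constructed by other means. If you want to salvage your approach, you would need to first establish (or assume) a graded refinement of the $\bA_s$-module structure, which amounts to passing through $\Mod_{\WI(s)}$ --- at which point you are essentially re-proving the paper's Lemma via a different route.
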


Fix an $\FI$-algebra $A$. Define a category $\cC(A)$ as follows. The objects are finite sets. We put
\begin{displaymath}
\Hom_{\cC(A)}(S,T) = A(T) \otimes k[\Hom_{\FI}(S,T)].
\end{displaymath}
That is, this $\Hom$ spaces is a free $A(T)$-module with basis indexed by injections $S \to T$. We denote elements of this $\Hom$ space by $a \otimes f$ where $a \in A(T)$ and $f \colon S \to T$. Suppose $U$ is a third finite set. Then composition is defined by
\begin{displaymath}
(b \otimes g)(a \otimes f) = bg_*(a) \otimes gf.
\end{displaymath}
Here $f \colon S \to T$ and $g \colon T \to U$ are injections, $a \in A(T)$, and $b \in A(U)$. One readily verifies that composition is well-defined and $1 \otimes \id_S$ is the identity for $S$. This category is relevant since it allows us to describe the category of $A$-modules. Define a $\cC(A)$-module to be a $k$-linear functor $\cC(A) \to \Vec$. Then:

\begin{lemma}
The category of $A$-modules is equivalent to the category of $\cC(A)$-modules.
\end{lemma}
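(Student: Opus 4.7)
The plan is to construct mutually inverse functors $\Phi \colon \Mod_A \to \Mod_{\cC(A)}$ and $\Psi \colon \Mod_{\cC(A)} \to \Mod_A$, using the key structural observation that every morphism $a \otimes f \colon S \to T$ in $\cC(A)$ factors as $(a \otimes \id_T) \circ (1 \otimes f)$; hence a $\cC(A)$-module is determined by, on the one hand, its restriction along the subcategory of morphisms $1 \otimes f$ (which is equivalent to $\FI$), and on the other hand the endomorphism actions of $a \otimes \id_S$ on each $N(S)$.

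Given an $A$-module $M$, I would define $\Phi(M)(S) = M(S)$ on objects, and define $\Phi(M)(a \otimes f) \colon M(S) \to M(T)$ by $m \mapsto a \cdot f_*(m)$, where $f_*$ denotes the transition map of the underlying $\FI$-module and $a$ acts through the $A(T)$-module structure on $M(T)$. Functoriality amounts to checking that for composable morphisms $a \otimes f \colon S \to T$ and $b \otimes g \colon T \to U$, we have
\begin{displaymath}
b \cdot g_*(a \cdot f_*(m)) = b \cdot g_*(a) \cdot (gf)_*(m),
\end{displaymath}
which uses exactly the hypothesis that $g_* \colon M(T) \to M(U)$ is $A(T)$-linear, where $A(T)$ acts on $M(U)$ through $g_* \colon A(T) \to A(U)$. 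This matches the rule $(b \otimes g)(a \otimes f) = bg_*(a) \otimes gf$ in $\cC(A)$.

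Conversely, given a $\cC(A)$-module $N$, I would define $\Psi(N)$ by setting $\Psi(N)(S) = N(S)$, using the restriction of $N$ along $\FI \hookrightarrow \cC(A)$ to produce the $\FI$-module structure, and using the endomorphisms $a \otimes \id_S$ (which compose as $(b \otimes \id_S)(a \otimes \id_S) = ba \otimes \id_S$) to produce the $A(S)$-module structure on each $N(S)$. The compatibility condition defining an $A$-module reduces to the identity
\begin{displaymath}
(1 \otimes i)(a \otimes \id_S) = i_*(a) \otimes i = (i_*(a) \otimes \id_T)(1 \otimes i)
\end{displaymath}
in $\cC(A)$, which forces $i_* \colon N(S) \to N(T)$ to be $A(S)$-linear with $A(S)$ acting on $N(T)$ through $i_* \colon A(S) \to A(T)$, as required.

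Having defined $\Phi$ and $\Psi$ on objects, extending to morphisms (natural transformations) is immediate: a morphism of $A$-modules automatically commutes with both the $\FI$-action and the $A$-action, hence yields a natural transformation of $\cC(A)$-modules, and vice versa, since it suffices to check naturality on the generating morphisms $1 \otimes f$ and $a \otimes \id_S$. The isomorphisms $\Psi \circ \Phi \cong \id$ and $\Phi \circ \Psi \cong \id$ then follow from the factorization observation above, since each side reconstructs the same data. The main obstacle, such as it is, is purely bookkeeping: one must keep careful track of which copy of $A(S)$ acts on which $N(T)$ through which pushforward, but no conceptual difficulty arises.
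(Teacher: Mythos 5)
Your construction is exactly the one the paper uses: define $\Phi(M)(a \otimes f) = a \cdot f_*(-)$, verify compatibility with composition, and go back via the inclusion $\FI \hookrightarrow \cC(A)$ together with the endomorphisms $a \otimes \id_S$. The paper states the same idea more tersely ("One verifies... One can similarly go back..."), so your write-up just supplies the routine checks the paper leaves to the reader.
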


\begin{proof}
Suppose $M$ is an $A$-module. Then $M(S)$ is an $A(S)$-module for each finite set $S$, and in particular, a $k$-vector space. Suppose that $a \otimes f \colon S \to T$ is a morphism in $\cC(A)$. Then we have a map $M(S) \to M(T)$ by first applying $f_*$, and then multiplying by $a$, using the $A(T)$-module structure on $M(T)$. One verifies that this construction is compatible with composition in $\cC(A)$, and thus defines a $\cC(A)$-module. One can similarly go back from $\cC(A)$-modules to $A$-modules.
\end{proof}

We now make a few simple comments about $\bA_s$-modules. First, if $V$ is an $\fS_n$-representation then we can regard $V$ as an $\bA_s$-module concentrated in degree $n$, where the variables and $\FI$-transition maps act by~0. In this way, the Specht modules $S^{\lambda}$ are exactly the simple $\bA_s$-modules. Define a $\cC(\bA_s)$-module $\bI_{s,n}$ by
\begin{displaymath}
\bI_{s,n}(S) = \Hom_{\cC(\bA_s)}(S, [n])^*.
\end{displaymath}
This module is injective; see \cite[\S 3.4]{brauercat1}. It also has finite length, as it is supported in degrees $\le n$ and finite dimensional in each degree. We have
\begin{displaymath}
\bI_{s,n}([n]) = k[x_1, \ldots, x_n]/(x_i^{s+1}) \otimes k[\fS_n],
\end{displaymath}
and one easily sees that
\begin{displaymath}
(x_1 \cdots x_n)^s \otimes k[\fS_n],
\end{displaymath}
is the socle of $\bI_{s,n}$. In particular, the simple $\bA_s$-module $S^{\lambda}$ injects into $\bI_{s,n}$ where $n=\vert \lambda \vert$, and so we see that the $\bI_{s,n}$'s provide enough injectives for the category $\Mod_{\bA_s}^{\rm lf}$. Let $\cD$ be the full subcategory of $\Mod_{\bA_s}$ spanned by the $\bI_{s,n}$'s. We have a functor
\begin{displaymath}
\cC(\bA_s)^{\op} \to \cD, \qquad [n] \mapsto \bI_{s,n},
\end{displaymath}
which one easily sees is an equivalence.

Let $\cE$ be the full subcategory of $\Mod_{R/\fh_s}^{\gen}$ spanned by the objects $\ol{\cP}_{s,n}$ with $n \ge 0$. The following lemma is the key point in the proof of the theorem.

\begin{lemma}
We have an equivalence $\cC(\bA_s)^{\op} \to \cE$.
\end{lemma}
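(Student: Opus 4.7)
The functor sends $[n] \mapsto \ol{\cP}_{s,n}$, and essential surjectivity holds by the definition of $\cE$. The substance of the claim is the construction of the action on morphisms together with the verification that it is a $k$-linear bijection on $\Hom$-sets preserving composition. A morphism $[n] \to [m]$ in $\cC(\bA_s)^{\op}$ is an element $a \otimes f \in \bA_s([n]) \otimes k[\Hom_{\FI}([m], [n])]$, and I will send it to the unique $R/\fh_s$-linear, $\fS$-equivariant map $\cP_{s,n} \to \cP_{s,m}$ satisfying $e_{1,\ldots,n} \mapsto a \cdot e_{f(1),\ldots,f(m)}$. Here $\bA_s([n]) = k[x_1,\ldots,x_n]/(x_i^{s+1})$ sits naturally as a subring of $R/\fh_s$, so the target lies in $\cP_{s,m}^{\fS(n)}$, and existence and uniqueness of the map follow from the mapping property of $\cP_{s,n} = R/\fh_s \otimes \VV_n$ (Proposition~\ref{prop:Vn}).

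To see that this assignment is bijective on $\Hom$-sets, I would first invoke saturation. By Proposition~\ref{prop:PQfilt}, $\cP_{s,m}$ is $\cQ_s$-filtered, so Theorem~\ref{thm:vanish2} and Proposition~\ref{prop:dersat} together imply that $\cP_{s,m}$ is derived saturated; in particular
\[
\Hom_{R/\fh_s}(\cP_{s,n}, \cP_{s,m}) \xrightarrow{\ \sim\ } \Hom_{\cE}(\ol{\cP}_{s,n}, \ol{\cP}_{s,m}).
\]
The mapping property identifies the left side with $\cP_{s,m}^{\fS(n)}$. Now $\fS(n)$ permutes the natural monomial basis $\{ x^{\alpha} e_{j_1,\ldots,j_m} \}$ of $\cP_{s,m}$ through its action on indices $> n$. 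Any basis vector supported partly outside $[n]$ has an infinite $\fS(n)$-orbit; since $\cP_{s,m}$ is a direct sum (no infinite formal sums allowed), smoothness forces any invariant to be a finite linear combination of basis vectors $x^{\alpha} e_{j_1,\ldots,j_m}$ with $\mathrm{supp}(\alpha) \subset [n]$ and each $j_k \in [n]$. These are in bijection with pairs $(x^{\alpha}, f) \in \bA_s([n]) \times \Hom_{\FI}([m], [n])$, yielding the required bijection.

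Composition compatibility follows from a direct calculation using $\fS$-equivariance. Given $a \otimes f \colon [n] \to [m]$ and $b \otimes g \colon [m] \to [k]$ in $\cC(\bA_s)^{\op}$, with underlying injections $f \colon [m] \to [n]$ and $g \colon [k] \to [m]$, let $\Phi$ and $\Psi$ be the associated maps of $\cP$-modules. Choosing $\sigma \in \fS$ with $\sigma|_{[m]} = f$,
\[
\Psi(\Phi(e_{1,\ldots,n})) = a \cdot \Psi(\sigma \cdot e_{1,\ldots,m}) = a \cdot \sigma(b) \cdot e_{\sigma g(1),\ldots,\sigma g(k)} = a \cdot f_*(b) \cdot e_{fg(1),\ldots,fg(k)},
\]
which is precisely the morphism assigned to $a f_*(b) \otimes fg = (a \otimes f) \circ (b \otimes g) \in \cC(\bA_s)$. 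I expect the main delicate point to be the smoothness/invariance argument in the middle paragraph: ruling out orbit sums supported partly outside $[n]$ relies on the fact that the relevant $\fS(n)$-orbits are infinite while $\cP_{s,m}$ is an honest direct sum, so that invariance of a finite linear combination of distinct basis vectors can only occur when every basis vector appearing is individually $\fS(n)$-fixed.
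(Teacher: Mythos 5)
Your proof is correct and follows essentially the same route as the paper's: identify $\Hom$ in the generic category with $\Hom$ in $\Mod_{R/\fh_s}$ via saturation of $\cP_{s,m}$ (Theorem~\ref{thm:vanish2}), then with $\cP_{s,m}^{\fS(n)}$ via the mapping property, then describe the invariants explicitly. You have simply spelled out two steps the paper leaves to the reader, namely the orbit/smoothness argument pinning down $\cP_{s,m}^{\fS(n)}$ and the explicit verification of compatibility with composition; both of your arguments are sound.
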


\begin{proof}
We define a functor $\cC(\bA_s)^{\op} \to \cE$. On objects, it is defined by $[n] \mapsto \ol{\cP}_{s,n}$. We must now define the functor on morphisms and show that it is fully faithful.

We have
\begin{displaymath}
\Hom_{R/\fh_s}^{\gen}(\ol{\cP}_{s,n}, \ol{\cP}_{s,m})
= \Hom_{R/\fh_s}(\cP_{s,n}, \cP_{s,m})
= \cP_{s,m}^{\fS(n)},
\end{displaymath}
where the first identification follows since $\cP_{s,m}$ is saturated (Theorem~\ref{thm:vanish2}), and the second follows from the mapping property for $\cP_{s,n}$ (see Proposition~\ref{prop:Vn}). Now, we have
\begin{displaymath}
\cP_{s,m}^{\fS(n)} = \sum_{f \colon [m] \to [n]} a(f) e_{f(1), \ldots, f(m)}
\end{displaymath}
where the sum is over injections $f$, and $a(f) \in k[x_1, \ldots, x_n]/(x_i^{s+1})$. We thus have an isomorphism of vector spaces
\begin{displaymath}
\Hom_{\cC(\bA_s)}([m], [n]) \to \cP_{s,m}^{\fS(n)}, \qquad
a \otimes f \mapsto a e_{f(1), \ldots, f(m)}.
\end{displaymath}
We use this isomorphism to define our functor on morphisms. It is straightforward to check compatibility with composition and identity morphisms. The functor is fully faithful by construction. The result thus follows.
\end{proof}

\begin{proof}[Proof of Theorem~\ref{thm:char0}]
$\Mod_{R/\fh_s}^{\gen}$ is a locally finite Grothendieck abelian category and $\cE$ is a class of enough injective objects (by Theorem~\ref{thm:P-inj}). Similarly, $\Mod_{\bA_s}^{\rm lf}$ is a locally finite Grothendieck abelian category and $\cD$ is a class of enough injective objects. It follows that any equivalence $\cD \to \cE$ extends to an equivalence $\Mod_{\bA_s}^{\rm lf} \to \Mod_{R/\fh_s}^{\gen}$. Since we have an equivalence $\cD \cong \cE$, the result follows.
\end{proof}

\section{General modules} \label{s:general}

In this final section, we prove some results on $R/\fh_s$-modules.

\subsection{Saturation and local cohomology} \label{ss:loccoh}

We begin by reviewing a bit more material about Serre quotient categories, following \cite[\S 4]{SSglII}. Let $\cA$ be a Grothendieck abelian category and let $\cB$ be a localizing subcategory. We assume the following condition:
\begin{itemize}
\item[(Inj)] Injective objects of $\cB$ remain injective in $\cA$.
\end{itemize}
As usual, we let $T \colon \cA \to \cA/\cB$ be the quotient functor, and we let $S$ be its right adjoint. We let $\Sigma \colon \cA \to \cA$ be the composition $ST$, which we call the \defn{saturation functor}. Given an object $M$ of $\cA$, there is a maximal subobject $\Gamma(M)$ of $M$ that belongs to $\cB$. This defines a functor $\Gamma \colon \cA \to \cB$ that is right adjoint to the inclusion $\cB \subset \cA$. We call $\rR^i \Gamma(M)$ the $i$th \defn{local cohomology} of $M$. For an object $M$ of $\rD^+(\cA)$, there is a canonical triangle
\begin{displaymath}
\rR \Gamma(M) \to M \to \rR \Sigma(M) \to
\end{displaymath}
by \cite[Proposition~4.6]{SSglII}.

Suppose now that $\cA$ is locally noetherian and that for any object $M$ of $\cA$, each $\rR^i \Sigma(M)$ is finitely generated (i.e., noetherian), and only finitely many are non-zero. We then have a semi-orthogonal decomposition
\begin{displaymath}
\rD^b_{\rm fg}(\cA) = \langle \cD_0, \cD_1 \rangle
\end{displaymath}
where $\cD_0$ is the full subcategory of $\rD^b_{\rm fg}(\cA)$ on objects $M$ with $\rR \Sigma(M)=0$, and $\cD_1$ is the full subcategory on objects $M$ with $\rR \Gamma(M)=0$. Moreover, $\cD_0$ is equivalent to $\rD^b_{\rm fg}(\cB)$ and $\cD_1$ is equivalent to $\rD^b_{\rm fg}(\cA/\cB)$. See \cite[\S 4.3]{SSglII} for these claims, and the definition of semi-orthogonal decomposition.

The above assumptions have one additional notable consequence, namely, the exactness of the sequence
\begin{displaymath}
0 \to \rK(\cB) \to \rK(\cA) \to \rK(\cA/\cB) \to 0.
\end{displaymath}
We always have exactness if we omit the~0 on the left, so the important point is that our hypotheses ensure that $\rK(\cB) \to \rK(\cA)$ is injective. To see this, simply note that $\rR \Gamma$ gives a left inverse. See \cite[\S 4.3]{SSglII} for details.

In the above discussion, we have essentially decomposed $\cA$ into two pieces, namely, $\cB$ and $\cA/\cB$. It is possible to decompose into more pieces if one starts with a chain of localizing subcategories. We will do this below. We refer to \cite[\S 4.2]{SSglII} for a discussion of the general case.

\subsection{Property (Inj)}

We now aim to establish this property in cases of interest. Suppose that $A$ is an $\fS$-algebra and $\fa$ is an $\fS$-ideal of $A$. Then
\begin{displaymath}
\fR_{\fa}(A) = \bigoplus_{n \ge 0} \fa^n t^n
\end{displaymath}
is an $\fS$-algebra, called the \defn{Rees algebra} of $\fa$. Here $t$ is simply a formal symbol. The following is the key result:

\begin{proposition}
The Rees algebra $\fR_{\fh_s}(R)$ is noetherian, for any $s$.
\end{proposition}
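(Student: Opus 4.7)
The plan is to realize $\fR_{\fh_s}(R)$ as an $\fS$-equivariant quotient of a ``two-row'' polynomial ring, and then invoke the equivariant Cohen-type noetherianity of the latter.

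First, I would observe that $\fh_s$ is generated as an $\fS$-ideal by the single element $x_1^{s+1}$, so $\fh_s^n$ is $k$-spanned by products of $n$ elements from the $\fS$-orbit of $x_1^{s+1}$. Consequently $\fR_{\fh_s}(R) \subset R[t]$ is generated as an $\fS$-$R$-algebra by the single element $x_1^{s+1} t$. Equivalently, letting $A = k[x_i, y_i]_{i \ge 1}$ with the diagonal $\fS$-action $\sigma(x_i) = x_{\sigma(i)}$, $\sigma(y_i) = y_{\sigma(i)}$, the $\fS$-equivariant ring map
\[
A \longrightarrow \fR_{\fh_s}(R), \qquad x_i \mapsto x_i,\ y_i \mapsto x_i^{s+1} t
\]
is surjective.

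Since any $\fS$-equivariant quotient of an $\fS$-noetherian algebra is $\fS$-noetherian (chains of $\fS$-ideals in the quotient pull back to chains in the source), it then suffices to prove that $A$ itself is $\fS$-noetherian. But $A$ is the case $c = 2$ of the polynomial ring $k[x_{ij}]_{1 \le i \le c,\, j \ge 1}$ in $c$ rows of variables with $\fS$ permuting the column index, which is known to be $\fS$-noetherian for every fixed $c$ by the equivariant strengthening of Cohen's theorem due to Aschenbrenner--Hillar and Nagel--R\"omer (cited in the introduction).

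The only structural point on which the argument rests is that $\fh_s$ is principally $\fS$-generated, so that a single additional family of variables $y_i$ suffices to present the Rees algebra; this is what makes the multi-row noetherianity theorem applicable. Beyond invoking that theorem as a black box, the proof is formal, and I do not foresee any serious obstacle.
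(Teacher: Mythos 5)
Your proof is correct and takes essentially the same route as the paper: present $\fR_{\fh_s}(R)$ as an $\fS$-equivariant quotient of $k[x_i,y_i]_{i\ge 1}$ via $x_i\mapsto x_i$, $y_i\mapsto x_i^{s+1}t$, then invoke $\fS$-noetherianity of the two-row polynomial ring. In fact you are slightly more careful than the paper, which asserts this map is an isomorphism; it is not (for instance $x_1^{s+1}y_2 - x_2^{s+1}y_1$ lies in the kernel), but your observation that a surjection suffices is all that is needed.
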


\begin{proof}
We have an $\fS$-algebra isomorphism
\begin{displaymath}
k[x_i,y_i]_{i \ge 1} \to \fR_{\fh_s}(R), \qquad
x_i \mapsto x_i, \quad y_i \mapsto x_i^{s+1} t.
\end{displaymath}
The result thus follows from Cohen's theorem that $k[x_i, y_i]$ is $\fS$-noetherian \cite{Cohen2, NagelRomer}.
\end{proof}

\begin{remark}
We do not know if $\fR_\fa(R)$ is noetherian for arbitrary $\fS$-ideals $\fa \subset R$. This is an interesting and important open problem. When $\fa$ is a monomial ideal, an affirmative answer follows from the main result of \cite{DEKL}.
\end{remark}

\begin{corollary}
The Artin--Rees lemma holds for $\fh_s$, in the following sense. Let $M$ be a finitely generated $R$-module and let $N$ be an $R$-submodule of $M$. Then there exists $r$ such that $\fh_s^n M \cap N=\fh_s^{n-r}(\fh_s^r M \cap N)$ for all $n \ge r$.
\end{corollary}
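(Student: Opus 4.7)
The plan is to mimic the classical Artin--Rees argument, using the equivariant noetherianity of the Rees algebra $B=\fR_{\fh_s}(R)$ in place of the ordinary noetherianity. Fix the finitely generated $R$-module $M$ and the submodule $N \subseteq M$. I will consider the graded $B$-module
\begin{displaymath}
\fR(M) = \bigoplus_{n \ge 0} \fh_s^n M \cdot t^n,
\end{displaymath}
on which $\fS$ acts via the given action on $M$, and its graded $\fS$-stable $B$-submodule
\begin{displaymath}
N^{*} = \bigoplus_{n \ge 0} (\fh_s^n M \cap N) \, t^n.
\end{displaymath}
Both are $\fS$-equivariant $B$-modules because $\fh_s$ is an $\fS$-ideal.

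The first step is to check that $\fR(M)$ is a finitely generated $\fS$-equivariant $B$-module. If $m_1, \ldots, m_k$ generate $M$ over $R$ in the equivariant sense, I will verify that the same elements, placed in degree $0$, generate $\fR(M)$ over $B$: any $x \in \fh_s^n M$ can be written as $\sum a_j y_j$ with $a_j \in \fh_s^n$ and $y_j \in M$, and then expanding each $y_j$ as an $R$-combination of $\fS$-translates of the $m_i$ converts $xt^n$ into a $B$-combination of $\fS$-translates of the $m_i$ in degree $0$. Applying the noetherianity of $B$ (the preceding proposition) to $\fR(M)$ then shows $N^{*}$ is finitely generated.

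Next, since $N^{*}$ is graded and $\fS$-equivariant, I can pick finitely many homogeneous $\fS$-orbit generators $z_1, \ldots, z_\ell$ with $z_i \in N^{*}_{r_i}$, and set $r = \max_i r_i$. I claim this $r$ works. The inclusion $\fh_s^{n-r}(\fh_s^r M \cap N) \subseteq \fh_s^n M \cap N$ is immediate. For the reverse, given $x \in \fh_s^n M \cap N$ with $n \ge r$, the element $xt^n \in N^{*}_n$ admits an expression
\begin{displaymath}
xt^n = \sum_{i,\sigma} c_{i,\sigma} \cdot \sigma(z_i), \qquad c_{i,\sigma} \in B,
\end{displaymath}
and projecting to degree $n$ forces $c_{i,\sigma} = a_{i,\sigma} t^{n-r_i}$ with $a_{i,\sigma} \in \fh_s^{n-r_i}$. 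Factoring $a_{i,\sigma} = b_{i,\sigma} c_{i,\sigma}'$ with $b_{i,\sigma} \in \fh_s^{n-r}$ and $c_{i,\sigma}' \in \fh_s^{r-r_i}$, I will observe that
\begin{displaymath}
c_{i,\sigma}' \cdot \sigma(z_i) \in \fh_s^{r-r_i}\bigl(\fh_s^{r_i} M \cap N\bigr) \subseteq \fh_s^r M \cap N,
\end{displaymath}
using that $N$ is an $R$-submodule, and conclude $x \in \fh_s^{n-r}(\fh_s^r M \cap N)$.

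The only genuine subtlety is the first step: verifying that $\fS$-equivariant finite generation of $M$ over $R$ upgrades to $\fS$-equivariant finite generation of $\fR(M)$ over $B$. Once this is in place, everything else is a purely formal transcription of the classical Artin--Rees argument with $R$ replaced by $B$ and ``finitely generated'' interpreted in the $\fS$-equivariant sense.
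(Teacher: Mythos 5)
Your proposal is correct and follows essentially the same route as the paper: form the Rees module $\bigoplus_{n\ge 0} \fh_s^n M\, t^n$ over $\fR_{\fh_s}(R)$, verify it is finitely generated, apply equivariant noetherianity of the Rees algebra to the graded submodule $\bigoplus_{n\ge 0}(\fh_s^n M\cap N)\,t^n$, and read off the conclusion from finite generation in bounded degrees. The paper states this more tersely (``the usual proof applies''), while you spell out the degree-projection step and the finite generation of the Rees module; the content is the same.
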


\begin{proof}
The usual proof of the Artin--Rees lemma applies. In essence, $\bigoplus_{n \ge 0} \fh_s^n M$ is a finitely generated $\fR_{\fh_s}(R)$-module and $\bigoplus_{n \ge 0} (\fh_s^n M \cap N)$ is a submodule. By noetherianity, the latter is finitely generated, and is thus generated in degrees $\le r$ for some $r$. This yields the result.
\end{proof}

\begin{corollary} \label{cor:Inj}
Let $\fa$ be an $\fS$-ideal of $R$ contained in $\fh_s$. Let $\cA=\Mod_{R/\fa}$, and let $\cB$ be the subcategory of modules locally annihilated by a power of $\fh_s$. Then $\cB \subset \cA$ satisfies (Inj).
\end{corollary}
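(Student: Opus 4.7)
The plan is to reduce property (Inj) to an $\fS$-equivariant Baer-style lifting criterion and then apply the Artin--Rees corollary above. First, since $R$ is $\fS$-noetherian by Cohen's theorem and $\fa$ is an $\fS$-ideal, the quotient $R/\fa$ is $\fS$-noetherian, so $\cA$ is locally noetherian. In any locally noetherian Grothendieck abelian category, an object $I$ is injective iff for every noetherian $M$ and subobject $N \subset M$, every map $N \to I$ extends to $M$; this follows from a standard Zorn argument exploiting the fact that noetherian subobjects generate $\cA$. Thus it suffices to show that when $I \in \cB$ is injective in $\cB$, each $f \colon N \to I$ with $N \subset M$ and $M$ noetherian admits an extension to $M$.

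The key step is to transfer the problem into $\cB$ using the Artin--Rees corollary. Since $N$ is noetherian, $f(N)$ is a finitely generated subobject of $I$, and since $I$ lies in $\cB$, there exists $n$ with $\fh_s^n \cdot f(N) = 0$, i.e., $\fh_s^n N \subset \ker(f)$. Artin--Rees then provides $r$ satisfying
\begin{displaymath}
\fh_s^{n+r} M \cap N = \fh_s^n(\fh_s^r M \cap N) \subset \fh_s^n N \subset \ker(f).
\end{displaymath}
Consequently $f$ factors through the natural inclusion
\begin{displaymath}
\iota \colon (N + \fh_s^{n+r}M)/\fh_s^{n+r}M \hookrightarrow M/\fh_s^{n+r}M,
\end{displaymath}
and both the source and target of $\iota$ are annihilated by $\fh_s^{n+r}$, hence belong to $\cB$.

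Finally, injectivity of $I$ in $\cB$ extends the factored map along $\iota$ to a morphism $M/\fh_s^{n+r}M \to I$, and composing with the projection $M \twoheadrightarrow M/\fh_s^{n+r}M$ produces the desired extension $M \to I$. The only piece that requires care is setting up the equivariant Baer criterion for locally noetherian Grothendieck abelian categories (since the usual Baer criterion is phrased in terms of ideals of a ring), but this is a standard generalization and is the only potential obstacle; the rest is essentially mechanical once the Artin--Rees corollary is in hand.
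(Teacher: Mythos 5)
Your argument is correct, and it is genuinely different in presentation from the paper's, which simply cites \cite[Corollary~4.19]{SSglII} without further comment. What you have done is essentially reprove that cited general result in the present setting, and all the steps check out: the $\fS$-noetherianity of $R/\fa$ makes $\cA$ locally noetherian; the Zorn/Baer reduction to extending maps $N \to I$ along monos $N \hookrightarrow M$ with $M$ noetherian is valid in any locally noetherian Grothendieck category (the key point being that any proper subobject $A' \subsetneq B$ admits a noetherian subobject of $B$ not contained in $A'$, and the pushout $A' +_{A' \cap M} M$ lands in $B$); the Artin--Rees corollary applies verbatim to $R/\fa$-modules viewed as $R$-modules; and the factorization of $f$ through $(N+\fh_s^{n+r}M)/\fh_s^{n+r}M \hookrightarrow M/\fh_s^{n+r}M$ together with injectivity of $I$ inside $\cB$ produces the required lift. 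The trade-off is clear: the paper keeps this corollary short by outsourcing the mechanism to the general machinery of \cite{SSglII}, whereas your version is self-contained and makes transparent exactly which piece of structure (the Artin--Rees lemma for $\fh_s$, i.e.\ noetherianity of the Rees algebra) is doing the work. That transparency is the main thing your route buys; the cost is reproducing a general lemma that the authors prefer to cite.
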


\begin{proof}
This follows from \cite[Corollary~4.19]{SSglII}.
\end{proof}

\subsection{Semi-orthogonal decomposition} \label{ss:semi-orth}

Fix $s \ge 1$. We introduce the following notation, where $0 \le r \le s$.
\begin{itemize}
\item $\cC$ is the category $\Mod_{R/\fh_s}$.
\item $\cC_{\le r}$ is the full subcategory of $\cC$ on modules locally annihilated by a power of $\fh_r$.
\item $\cC_{>r}$ is the Serre quotient $\cC/\cC_{\le r}$.
\item $T_{>r} \colon \cC \to \cC_{>r}$ is the quotient functor and $S_{>r} \colon \cC_{>r} \to \cC$ is its right adjoint.
\item $\Sigma_{>r}=S_{>r} \circ T_{>r}$ is the saturation functor with respect to $\cC_{\le r}$.
\item $\Gamma_{\le r} \colon \cC \to \cC_{\le r}$ is the right adjoint to the inclusion $\cC_{\le r} \to \cC$.
\end{itemize}
We write $T_{\ge r}$ for $T_{>r-1}$, and similarly for other functors. This fits into the general set-up of \cite[\S 4.2]{SSglII}, and is analogous to the situation studied in \cite[\S 6]{SSglII}. The inclusion $\cC_{\le r} \subset \cC$ satisfies (Inj) by Corollary~\ref{cor:Inj}. By the general discussion of \S \ref{ss:loccoh}, for any  object $M$ of $\rD^+(\cC)$, we have a canonical exact triangle
\begin{displaymath}
\rR \Gamma_{\le r}(M) \to M \to \rR \Sigma_{>r}(M) \to.
\end{displaymath}
Our first goal is to prove finiteness properties of these functors. The following is the main result in this direction.

\begin{theorem} \label{thm:loccoh}
Let $M \in \rD^b_{\rm fg}(\cC)$. Then $\rR \Sigma_{>r}(M)$ and $\rR \Gamma_{\le r}(M)$ also belong to $\rD^b_{\rm fg}(\cC)$.
\end{theorem}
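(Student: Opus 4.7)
The plan is to use the canonical triangle from \S\ref{ss:loccoh} to reduce to a single functor, then prove the hardest sub-case using the generic category machinery from Section~7, and finally descend to general $r$ by an induction on $s$. The canonical triangle $\rR\Gamma_{\le r}(M) \to M \to \rR\Sigma_{>r}(M) \to$ and the closure of $\rD^b_{\rm fg}(\cC)$ under cones show that it suffices to treat $\rR\Sigma_{>r}$, and standard truncation in $\rD^b(\cC)$ further reduces to the case where $M$ is a single finitely generated $R/\fh_s$-module.

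First I would handle the case $r = s-1$, where $\cC_{>s-1}=\Mod_{R/\fh_s}^{\gen}$. By Theorem~\ref{thm:gen-fin-len}, $T_{>s-1}(M)$ has finite length, and by Theorem~\ref{thm:Q-gen} it is isomorphic in $\rD^b_{\rm fg}(\cC_{>s-1})$ to $T_{>s-1}(Q^\bullet)$ for some bounded complex $Q^\bullet$ of finitely generated $\cQ_s$-filtered modules in $\cC$. By Theorem~\ref{thm:vanish2}, each $Q^i$ is derived saturated with respect to $\cC_{\le s-1}$, so $\rR\Sigma_{>s-1}(Q^\bullet)=Q^\bullet$. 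Derived adjunction (\S\ref{ss:deradj}) then gives
$$\rR\Hom_{\cC}(M, Q^\bullet) = \rR\Hom_{\cC_{>s-1}}(T_{>s-1}(M), T_{>s-1}(Q^\bullet)),$$
so the generic quasi-isomorphism lifts to a morphism $M \to Q^\bullet$ in $\rD^b_{\rm fg}(\cC)$. Applying $\rR\Sigma_{>s-1}$ yields $\rR\Sigma_{>s-1}(M) \simeq Q^\bullet \in \rD^b_{\rm fg}(\cC)$.

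For general $r < s-1$ I would induct on $s$ (with $s=0$ trivial). Applying $\rR\Sigma_{>r}$ to the triangle at level $s-1$, the piece $\rR\Sigma_{>s-1}(M)\in\rD^b_{\rm fg}(\cC)$ is already derived saturated with respect to $\cC_{\le s-1}\supseteq\cC_{\le r}$, so $\rR\Sigma_{>r}$ acts as the identity on it. It remains to show $\rR\Sigma_{>r}(\rR\Gamma_{\le s-1}(M))\in\rD^b_{\rm fg}(\cC)$. The cohomology modules of $\rR\Gamma_{\le s-1}(M)$ are finitely generated and locally annihilated by powers of $\fh_{s-1}$; by the Artin--Rees corollary they are annihilated by a single fixed power, and a dévissage along the $\fh_{s-1}$-power filtration reduces to finitely generated $R/\fh_{s-1}$-modules. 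Proposition~\ref{prop:vanish3}, whose Ext vanishing is formulated over arbitrary $R/\fa$ with $\fa\subseteq\fh_s$, lets one identify $\rR\Sigma_{>r}$ computed inside $\Mod_{R/\fh_s}$ with its analogue inside $\Mod_{R/\fh_{s-1}}$, and the inductive hypothesis then finishes the argument.

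The main obstacle is the lifting in the $r=s-1$ case: one needs to pass from a derived-category statement inside the generic category to a genuine morphism in $\cC$, and this is made possible only by the combination of Theorem~\ref{thm:Q-gen} (generation of the bounded derived category by $\cQ_s$-filtered objects) with the total Ext vanishing of Theorem~\ref{thm:vanish2} (which guarantees that $\cQ_s$-filtered modules are genuine derived saturations, not merely ordinary saturations). A secondary subtlety is the compatibility of saturation functors across $R/\fh_{s-1}\hookrightarrow R/\fh_s$ in Step~3, where the precise strength of Proposition~\ref{prop:vanish3}, rather than Theorem~\ref{thm:vanish2} alone, is exactly what is needed.
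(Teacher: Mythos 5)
Your base case ($r = s-1$) is correct and essentially reproduces the key lemma inside the paper's proof, up to a small citation imprecision: to obtain the bounded complex $Q^\bullet$ of $\cQ_s$-filtered modules you need the explicit resolution constructed in the \emph{proof} of Theorem~\ref{thm:Q-gen} (for objects of $\Mod_{R/\fh_s}^{\gen,\cQ}$, after d\'evissage to simple objects), not merely the generation statement.

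The gap is in your inductive step on $s$. After the d\'evissage you are left with a finitely generated module $N$ killed by $\fh_{s-1}$, and you claim to ``identify'' $\rR\Sigma_{>r}$ computed in $\Mod_{R/\fh_s}$ with its analogue computed in $\Mod_{R/\fh_{s-1}}$ and then invoke the inductive hypothesis for $s-1$. But derived saturation is not preserved by the inclusion $\Mod_{R/\fh_{s-1}}\hookrightarrow\Mod_{R/\fh_s}$: the defining Ext-vanishing in $\Mod_{R/\fh_s}$ is against torsion $R/\fh_s$-modules, which need not be $R/\fh_{s-1}$-modules, and the Ext groups over the two rings differ. Proposition~\ref{prop:vanish3} does give the cross-ring vanishing for $\cQ_{r+1}$-filtered modules (apply it with $r+1$ in the role of $s$ and $\fa = \fh_s$), but your inductive hypothesis is only the black-box statement of the theorem for $s-1$: it yields bounded, finitely generated cohomology of the smaller-ring saturation, not a representing complex of $\cQ_{r+1}$-filtered modules. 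Without the latter you cannot apply Proposition~\ref{prop:vanish3}, and the identification remains unjustified.

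The paper avoids the cross-ring comparison entirely. It proves one lemma, staying in $\Mod_{R/\fh_s}$ throughout: for a finitely generated module $M$ in $\cC_{\le r+1}$, the cohomology $\rR^i\Sigma_{>r}(M)$ is finitely generated and eventually vanishes. This is shown by d\'evissage of $T_{>r}(M)$ to simple objects (which lie in the copy of $\Mod_{R/\fh_{r+1}}^{\gen}$ inside $\cC_{r+1}=\cC_{\le r+1}/\cC_{\le r}$), producing a $\cQ_{r+1}$-filtered resolution by the argument of Theorem~\ref{thm:Q-gen}, each term of which is derived saturated in $\Mod_{R/\fh_s}$ by Proposition~\ref{prop:vanish3}. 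The theorem then follows by the inductive argument of \cite[Thm.~6.10]{SSglII}, which descends on $r$ inside $\Mod_{R/\fh_s}$ using $\rR\Gamma_{\le r+1}\to\id\to\rR\Sigma_{>r+1}$ and feeding in the lemma at each stage. Your induction on $s$ could be rescued by carrying along the existence of $\cQ_{r+1}$-filtered resolving complexes as part of the hypothesis, but at that point you would essentially be re-proving the paper's lemma, and the detour through $\Mod_{R/\fh_{s-1}}$ is a complication with no payoff.
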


\begin{proof}
The inductive argument used in \cite[Theorem~6.10]{SSglII} applies here, if we simply use the following lemma in place of \cite[Corollary~6.9]{SSglII}.
\end{proof}

\begin{lemma}
Suppose $M$ is a finitely generated module in $\cC_{\le r+1}$. Then $\rR^i \Sigma_{>r}(M)$ is finitely generated for all $i$, and vanishes for $i \gg 0$.
\end{lemma}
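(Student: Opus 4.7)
The plan is to reduce to the case where $M$ is a finitely generated $R/\fh_{r+1}$-module by an $\fh_{r+1}$-adic d\'evissage, then use the equivalence $\Psi_{r+1}$ of Theorem~\ref{thm:Psi} together with Proposition~\ref{prop:sym-semi-ind-res} to build a finite coresolution of $M$ (modulo torsion) by finitely generated $\cQ_{r+1}$-filtered $R/\fh_{r+1}$-modules, and finally to invoke Proposition~\ref{prop:vanish3} to conclude that this coresolution computes $\rR \Sigma_{>r}(M)$.

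First, since $M$ is finitely generated and lies in $\cC_{\le r+1}$, some power $\fh_{r+1}^N$ annihilates $M$. The $\fh_{r+1}$-adic filtration $M \supset \fh_{r+1} M \supset \cdots \supset \fh_{r+1}^N M = 0$ has finitely generated graded pieces by noetherianity of the Rees algebra $\fR_{\fh_{r+1}}(R)$, each of which is a finitely generated $R/\fh_{r+1}$-module. The long exact sequence of $\rR^{\bullet} \Sigma_{>r}$ then reduces us to assuming $M$ itself is a finitely generated $R/\fh_{r+1}$-module.

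Next, let $T' \colon \Mod_{R/\fh_{r+1}} \to \Mod_{R/\fh_{r+1}}^{\gen}$ denote the quotient functor and $S'$ its right adjoint. By Theorem~\ref{thm:gen-fin-len} applied with $s=r+1$, the object $T'(M)$ has finite length, and under Theorem~\ref{thm:Psi} it corresponds via $\Psi_{r+1}$ to a finite length representation $V$ of $\fS$. I would apply Proposition~\ref{prop:sym-semi-ind-res} to get a finite coresolution $0 \to V \to I^0 \to \cdots \to I^n \to 0$ by finite length semi-induced representations. Applying $\Psi_{r+1}$ yields a coresolution of $T'(M)$ by finite length $\ol{\cQ}_{r+1}$-filtered generic objects; applying $S'$ then produces finitely generated $\cQ_{r+1}$-filtered $R/\fh_{r+1}$-modules $J^j = S' \Psi_{r+1}(I^j)$, using that $\cQ_{r+1}$-filtered modules are saturated by Theorem~\ref{thm:vanish2}. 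The differentials and the unit $M \to S'T'(M)$ assemble into a complex $0 \to M \to J^0 \to \cdots \to J^n \to 0$ of $R/\fh_{r+1}$-modules whose cohomology is $R/\fh_{r+1}$-torsion, and hence lies in $\cC_{\le r}$ by Proposition~\ref{prop:hs-tors-ann}.

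Finally, Proposition~\ref{prop:vanish3} applied with $s$ replaced by $r+1$ and $\fa = \fh_s$ (valid because $\fh_s \subset \fh_{r+1}$), combined with $\fh_r$-adic d\'evissage on torsion and passage to direct limits, gives $\Ext^i_{R/\fh_s}(T, J^j) = 0$ for all $T \in \cC_{\le r}$ and all $i \ge 0$. Proposition~\ref{prop:dersat} then implies that each $J^j$ is derived saturated in $\Mod_{R/\fh_s}$ with respect to $\cC_{\le r}$, so $\rR \Sigma_{>r}(M)$ is represented by the complex $J^{\bullet}$; therefore $\rR^i \Sigma_{>r}(M) = H^i(J^{\bullet})$ is finitely generated for each $i$ and vanishes for $i > n$. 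The hardest step will be the middle one: verifying that the coresolution constructed in the generic category descends to an honest complex in $\Mod_{R/\fh_{r+1}}$ with torsion cohomology and with each term finitely generated $\cQ_{r+1}$-filtered; this relies critically on the fact that $\cQ_{r+1}$-filtered modules are saturated, so that $S'$ faithfully reconstructs them from their images under $T'$.
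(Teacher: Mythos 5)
Your proof is correct and follows essentially the same route as the paper's. The core ingredients are identical: reduce by d\'evissage to a tractable case, construct a finite coresolution of the saturation by finitely generated $\cQ_{r+1}$-filtered modules via $\Psi_{r+1}$ and Proposition~\ref{prop:sym-semi-ind-res} (this is exactly what the proof of Theorem~\ref{thm:Q-gen} does, which is what the paper cites), then invoke Proposition~\ref{prop:vanish3} to see that each term is derived saturated with respect to $\cC_{\le r}$, so the complex computes $\rR\Sigma_{>r}(M)$.

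The only differences are expository. You make the $\fh_{r+1}$-adic d\'evissage explicit at the outset, whereas the paper compresses the reduction into ``by d\'evissage we assume $T_{>r}(M)$ is simple''; both implicitly need the same filtration to bring Theorem~\ref{thm:gen-fin-len} (stated for $\Mod_{R/\fh_{r+1}}^{\gen}$) to bear, so your phrasing is arguably more careful. You also unpack the construction of the coresolution from $\Psi_{r+1}$ and the section functor $S'$ rather than just pointing to the proof of Theorem~\ref{thm:Q-gen}. Two small remarks: finite generation of the $\fh_{r+1}$-adic graded pieces follows already from noetherianity of $R$, so the Rees algebra is not needed there; and the ``$\fh_r$-adic d\'evissage on torsion'' you mention at the end to apply Proposition~\ref{prop:vanish3} to arbitrary $T \in \cC_{\le r}$ (which need not be $R/\fh_{r+1}$-modules) is a genuine, if standard, reduction that the paper leaves implicit and that you are right to flag.
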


\begin{proof}
Since $M$ belongs to $\cC_{\le r+1}$ it follows from Theorem~\ref{thm:gen-fin-len} that $T_{>r}(M)$ has finite length. By d\'evissage, we assume that $T_{>r}(M)$ is simple. As in the proof of Theorem~\ref{thm:Q-gen}, we have a resolution
\begin{displaymath}
0 \to T_{>r}(M) \to T_{>r}(I^0) \to \cdots \to T_{>r}(I^n) \to 0
\end{displaymath}
where each $I^i$ is a finitely generated $\cQ_{r+1}$-filtered object. By Proposition~\ref{prop:vanish3}, each $I^i$ is derived saturated with respect to $\cC_{\le r}$. We thus see that $\rR \Sigma_{>r}(M)$ is computed by the complex $I^{\bullet}$, which completes the proof.
\end{proof}

As a consequence of the theorem, we obtain a semi-orthogonal decomposition of the derived category. Define $\cD_r$ to be the full subcategory of $\rD^b_{\rm fg}(\cC)$ spanned by objects $M$ for which $\rR\Gamma_{<r}(M)$ and $\rR \Sigma_{>r}(M)$ vanish.

\begin{corollary}
We have a semi-orthogonal deocomposition
\begin{displaymath}
\rD^b_{\rm fg}(\cC) = \langle \cD_0, \ldots, \cD_r \rangle.
\end{displaymath}
\end{corollary}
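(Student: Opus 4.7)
The plan is to iterate the two-term semi-orthogonal decomposition recalled in \S\ref{ss:loccoh} along the chain of localizing subcategories
\[
0 = \cC_{\le -1} \subset \cC_{\le 0} \subset \cC_{\le 1} \subset \cdots \subset \cC_{\le s} = \cC.
\]
The two ingredients needed to apply the formalism at each step are (i) the (Inj) property for $\cC_{\le r} \subset \cC$, which is supplied by Corollary~\ref{cor:Inj}, and (ii) the finiteness of $\rR\Sigma_{>r}$ and $\rR\Gamma_{\le r}$ on $\rD^b_{\rm fg}(\cC)$, which is exactly Theorem~\ref{thm:loccoh}. In other words, everything the general semi-orthogonal machinery of \cite[\S 4.2]{SSglII} requires has already been checked.

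First I would apply the two-term decomposition to the pair $\cC_{\le 0} \subset \cC$, yielding
\[
\rD^b_{\rm fg}(\cC) = \langle \rD^b_{\rm fg}(\cC_{\le 0}),\ \rD^b_{\rm fg}(\cC_{>0}) \rangle,
\]
the two factors being identified with the essential images of $\rR\Gamma_{\le 0}$ and $\rR\Sigma_{>0}$. Inside $\cC_{>0}$, the image of $\cC_{\le 1}$ under $T_{>0}$ plays the role of the torsion subcategory, and the finiteness and (Inj) statements transport through the Serre quotient because the relevant functors factor compatibly. This gives
\[
\rD^b_{\rm fg}(\cC_{>0}) = \langle \rD^b_{\rm fg}(\cC_{\le 1}/\cC_{\le 0}),\ \rD^b_{\rm fg}(\cC_{>1}) \rangle,
\]
and substituting produces a three-piece decomposition. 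Iterating $s$ times yields an $(s+1)$-piece decomposition $\rD^b_{\rm fg}(\cC) = \langle \cD'_0, \ldots, \cD'_s \rangle$, where $\cD'_r$ is equivalent to $\rD^b_{\rm fg}(\cC_{\le r}/\cC_{\le r-1})$ via the functor $T_{\ge r} \circ \rR\Gamma_{\le r}$.

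It remains to match $\cD'_r$ with the subcategory $\cD_r$ as defined in the statement. An object $M$ lies in $\cD'_r$ precisely when it is ``supported on the $r$th slice'': concretely, $\rR\Gamma_{\le r-1}(M)=0$, which is exactly the condition $\rR\Gamma_{<r}(M)=0$, and $\rR\Sigma_{>r}(M)=0$, which is the remaining defining condition of $\cD_r$. (The first vanishing says $M$ has no cohomology living below the $r$th layer after saturation, the second says $M$ is already killed by passing to $\cC_{>r}$.) This gives $\cD'_r = \cD_r$, completing the proof.

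The main potential obstacle is the inductive bookkeeping: one must verify that at each stage the localizing subcategory inside the Serre quotient $\cC_{>r-1}$ still satisfies (Inj) and the finiteness of local cohomology. But this is precisely what the general theory of \cite[\S 4.2]{SSglII} is designed to handle, and since Theorem~\ref{thm:loccoh} establishes the finiteness simultaneously for \emph{all} of the $\rR\Gamma_{\le r}$ and $\rR\Sigma_{>r}$ on $\rD^b_{\rm fg}(\cC)$, no new input is required beyond what has already been proved; the argument reduces to a formal invocation of the iterated semi-orthogonal decomposition.
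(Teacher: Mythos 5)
Your proposal is correct and matches the paper's (very terse) proof, which simply says the result follows by iterating the two-term decomposition of \S\ref{ss:loccoh} along the chain $\cC_{\le 0} \subset \cdots \subset \cC_{\le s} = \cC$, citing \cite[\S 4.3]{SSglII} for the formalism. You have just unwound the iteration explicitly and verified the identification $\cD'_r = \cD_r$, which is the same route.
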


\begin{proof}
This follows from iterating the decomposition discussed in \S \ref{ss:loccoh}. See also \cite[\S 4.3]{SSglII}.
\end{proof}

We can also give generators for the derived category.

\begin{theorem} \label{thm:dergen2}
The triangulated category $\cD_r$ is generated by the objects $T_{\ge r}(\cQ_r(V))$ with $V$ a simple $\fS_n$-representation (with $n$ variable).
\end{theorem}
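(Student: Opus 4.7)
The plan is to identify $\cD_r$ with $\rD^b_{\rm fg}(\Mod^{\gen}_{R/\fh_r})$ via an intermediate subquotient category, and then invoke Theorem~\ref{thm:Q-gen} applied to $R/\fh_r$ in place of $R/\fh_s$.

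First, I would iterate the two-term semi-orthogonal decomposition of \S\ref{ss:loccoh} along the chain
\[
0 = \cC_{\le -1} \subset \cC_{\le 0} \subset \cdots \subset \cC_{\le s} = \cC,
\]
noting that at each step condition (Inj) holds by Corollary~\ref{cor:Inj} and the requisite finiteness of local cohomology holds by Theorem~\ref{thm:loccoh}. The outcome is that the restriction of $T_{\ge r}$ gives an equivalence
\[
T_{\ge r} \colon \cD_r \xrightarrow{\ \sim\ } \rD^b_{\rm fg}\bigl(\cC_{\le r}/\cC_{\le r-1}\bigr),
\]
which is the standard consequence of the iterated semi-orthogonal decomposition from \cite[\S 4.2--4.3]{SSglII}.

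Second, I would produce a natural equivalence of abelian categories $\cC_{\le r}/\cC_{\le r-1} \simeq \Mod^{\gen}_{R/\fh_r}$. Restriction of scalars along $R/\fh_s \twoheadrightarrow R/\fh_r$ embeds $\Mod_{R/\fh_r}$ into $\cC_{\le r}$, and by Proposition~\ref{prop:hs-tors-ann} torsion $R/\fh_r$-modules are sent into $\cC_{\le r-1}$; this yields a canonical functor $\Phi \colon \Mod^{\gen}_{R/\fh_r} \to \cC_{\le r}/\cC_{\le r-1}$. For essential surjectivity I would show that every finitely generated object of $\cC_{\le r}$ is globally killed by some power of $\fh_r$ (a standard argument from finite generation), and that its $\fh_r$-adic filtration has $R/\fh_r$-module subquotients; thus modulo $\cC_{\le r-1}$ every such object is equivalent to an honest $R/\fh_r$-module. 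Full faithfulness then follows by comparing Hom spaces: the two torsion notions --- $R/\fh_r$-torsion on the source and $\cC_{\le r-1}$-content on the target --- agree on $R/\fh_r$-modules, again by Proposition~\ref{prop:hs-tors-ann}.

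Under these two equivalences, $T_{\ge r}(\cQ_r(V))$ corresponds to $\ol{\cQ}_r(V) \in \Mod^{\gen}_{R/\fh_r}$. Applying Theorem~\ref{thm:Q-gen} with $r$ in place of $s$, the objects $\ol{\cQ}_r(V)$ for $V$ a simple $\fS_n$-representation (and $n$ variable) generate $\rD^b_{\rm fg}(\Mod^{\gen}_{R/\fh_r})$ as a triangulated category. Transporting along the two equivalences gives the desired generation of $\cD_r$.

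The main obstacle will be Step 2, establishing the equivalence $\cC_{\le r}/\cC_{\le r-1} \simeq \Mod^{\gen}_{R/\fh_r}$ cleanly. The key technical point is to verify that finitely generated objects of $\cC_{\le r}$ have uniform $\fh_r$-annihilator exponents and that the resulting $\fh_r$-adic d\'evissage is compatible with the Serre quotient by $\cC_{\le r-1}$, so that the two torsion notions line up. Once this identification is in place, the rest of the argument is formal bookkeeping with results already established in the paper.
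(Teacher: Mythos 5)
Your overall strategy matches the paper's: reduce to $\rD^b_{\rm fg}(\cC_{\le r}/\cC_{\le r-1})$ via the iterated semi-orthogonal decomposition, relate this subquotient to $\Mod^{\gen}_{R/\fh_r}$, and invoke Theorem~\ref{thm:Q-gen}. However, there is a real gap in your Step~2, and it is exactly the point you flag as ``the main obstacle.'' You claim an \emph{equivalence} of abelian categories $\cC_{\le r}/\cC_{\le r-1} \simeq \Mod^{\gen}_{R/\fh_r}$, justifying essential surjectivity by saying the $\fh_r$-adic filtration has $R/\fh_r$-module subquotients, ``thus modulo $\cC_{\le r-1}$ every such object is equivalent to an honest $R/\fh_r$-module.'' This inference is invalid: if $M \in \cC_{\le r}$ is killed by $\fh_r^2$ but not $\fh_r$, the extension $0 \to \fh_r M \to M \to M/\fh_r M \to 0$ has both ends killed by $\fh_r$, but the extension class lives in $\Ext^1$ of the subquotient category $\cC_{\le r}/\cC_{\le r-1}$, and there is no reason it should lift along the (in general non-surjective) map from $\Ext^1_{\Mod^{\gen}_{R/\fh_r}}$. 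Having a filtration with graded pieces in the essential image of a fully faithful exact functor does not put the total object in that image. In other words, the essential image is closed under subquotients but not under extensions, so it is a proper full subcategory of $\cC_{\le r}/\cC_{\le r-1}$ in general.

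The good news is that you do not actually need the equivalence, only the weaker facts you already have: (i) there is an exact, fully faithful functor $\Mod^{\gen}_{R/\fh_r} \to \cC_{\le r}/\cC_{\le r-1}$ (your full-faithfulness check via Proposition~\ref{prop:hs-tors-ann} is correct), whose essential image $\cC'_r$ is closed under subquotients; and (ii) every finite length object of $\cC_{\le r}/\cC_{\le r-1}$ has a finite filtration with graded pieces in $\cC'_r$, by $\fh_r$-adic d\'evissage. From (ii), any object of $\cC_{\le r}/\cC_{\le r-1}$ is built by cones from objects of $\cC'_r$. From (i) and Theorem~\ref{thm:Q-gen}, any object of $\cC'_r$ admits a finite resolution by $\ol{\cQ}_r$-filtered objects within $\cC'_r$; since the inclusion $\cC'_r \hookrightarrow \cC_{\le r}/\cC_{\le r-1}$ is exact, this resolution persists, so the $T_{\ge r}(\cQ_r(V))$ generate. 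This is precisely the paper's argument; you should replace the equivalence claim with the filtration statement and the argument goes through.
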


\begin{proof}
The category $\cD_r$ is equivalent to $\rD^b_{\rm fg}(\cC_r)$, where $\cC_r=\cC_{\le r}/\cC_{<r}$ \cite[\S 4.3]{SSglII}. Let $\cC'_r$ be the full subcategory of $\cC_r$ spanned by objects of the form $T_{\ge r}(M)$ where $M$ is killed by $\fh_r$. Then every object of $\cC_r$ has a finite length filtration where the graded pieces belong to $\cC'_r$. Moreover, $\cC'_r$ is equivalent to $\Mod_{R/\fh_r}^{\gen}$. The result thus follows from Theorem~\ref{thm:Q-gen}.
\end{proof}

\begin{corollary}
The triangulated category $\rD^b_{\rm fg}(\cC)$ is generated by the objects $\cQ_r(V)$, where $V$ is an irreducible $\fS_n$-representation and $0 \le r \le s$.
\end{corollary}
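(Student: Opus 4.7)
The plan is to proceed by induction on $s$, showing that every finitely generated object $M$ of $\cC := \Mod_{R/\fh_s}$ lies in the triangulated subcategory $\cK \subseteq \rD^b_{\rm fg}(\cC)$ generated by the stated objects $\cQ_r(V)$; since the finitely generated objects already generate $\rD^b_{\rm fg}(\cC)$, this will suffice. The base case $s=0$ will be immediate from Proposition~\ref{prop:sym-semi-ind-res}: one has $R/\fh_0 = k$, so $\cC = \Rep(\fS)$ and $\cQ_0(V) = \PP(V)$, and that proposition provides a finite resolution of any finite length representation by semi-induced modules, which are finite direct sums of $\PP(V) = \cQ_0(V)$.

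For the inductive step, assuming the result for $s-1$, given a finitely generated $M \in \cC$, I would invoke the canonical exact triangle $\rR\Gamma_{<s}(M) \to M \to \rR\Sigma_{\ge s}(M) \to$ from \S\ref{ss:semi-orth} and show that both outer terms lie in $\cK$; Theorem~\ref{thm:loccoh} already guarantees that both belong to $\rD^b_{\rm fg}(\cC)$.

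For the torsion piece $\rR\Gamma_{<s}(M)$, each cohomology $N$ is a finitely generated object of $\cC_{<s}$, hence annihilated by some power $\fh_{s-1}^n$. Filtering $N$ by the submodules $\fh_{s-1}^i N$ for $0 \le i \le n$ exhibits $N$ as an iterated extension of finitely generated $R/\fh_{s-1}$-modules, each of which lies in the triangulated subcategory of $\rD^b_{\rm fg}(\Mod_{R/\fh_{s-1}})$ generated by $\{\cQ_r(V) : 0 \le r \le s-1\}$ by the inductive hypothesis. Since the forgetful functor $\Mod_{R/\fh_{s-1}} \to \cC$ is exact, this relation is preserved in $\rD^b_{\rm fg}(\cC)$, placing $\rR\Gamma_{<s}(M)$ in $\cK$.

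For the saturated piece $\rR\Sigma_{\ge s}(M)$, which lies in $\cD_s$ by construction, the key observation is that $\cQ_s(V)$ is itself derived-saturated with respect to $\cC_{<s}$: it is trivially $\cQ_s$-filtered, and torsion $R/\fh_s$-modules coincide with $\cC_{<s}$ by Proposition~\ref{prop:hs-tors-ann}, so Theorem~\ref{thm:vanish2} gives $\Ext^i_{R/\fh_s}(T, \cQ_s(V))=0$ for all $i \ge 0$ and $T \in \cC_{<s}$, and then Proposition~\ref{prop:dersat} yields $\cQ_s(V) \in \cD_s$. Under the equivalence $\cD_s \simeq \rD^b_{\rm fg}(\cC_s)$ implemented by $T_{\ge s}$, the object $\cQ_s(V)$ therefore corresponds to $T_{\ge s}(\cQ_s(V))$; by Theorem~\ref{thm:dergen2} these objects generate $\rD^b_{\rm fg}(\cC_s)$, so $\rR\Sigma_{\ge s}(M)$ lies in the triangulated subcategory of $\cD_s$ generated by the $\cQ_s(V)$'s, and hence in $\cK$. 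The main obstacle is precisely this last identification: without recognizing $\cQ_s(V)$ itself as an object of $\cD_s$, one would only obtain that $\rR\Sigma_{\ge s}(M)$ is built from the derived saturations of the $\cQ_s(V)$'s rather than from the $\cQ_s(V)$'s themselves.
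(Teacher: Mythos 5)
Your proof is correct and, despite the surface reformulation as an induction on $s$, it is essentially the paper's intended argument: the corollary follows by combining the semi-orthogonal decomposition $\rD^b_{\rm fg}(\cC) = \langle \cD_0, \ldots, \cD_s \rangle$ with Theorem~\ref{thm:dergen2}, once one observes that $\cQ_r(V)$ lies in $\cD_r$ (so that it corresponds to $T_{\ge r}(\cQ_r(V))$ under the equivalence $\cD_r \cong \rD^b_{\rm fg}(\cC_r)$). You make that last identification explicit via Theorem~\ref{thm:vanish2} and Proposition~\ref{prop:dersat}, and you unroll the decomposition one step at a time through the triangle $\rR\Gamma_{<s}(M) \to M \to \rR\Sigma_{\ge s}(M)$ together with a d\'evissage of $\cC_{<s}$ down to $R/\fh_{s-1}$-modules, which amounts to the same iterative structure the semi-orthogonal decomposition encodes.
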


\subsection{Grothendieck group}

Let $K=\rK(\Mod_{R/\fh_s})$. We now examine the structure of this group. As in \S \ref{ss:gen-K}, $K$ has the structure of a $\Lambda$-module.

\begin{theorem}
We have the following:
\begin{enumerate}
\item The classes $[\cQ_r(V)]$, with $0 \le r \le s$ and $V$ an irreducible $\fS_n$-module, form a $\bZ$-basis of $K$.
\item The classes $[\cP_r(V)]$, with $0 \le r \le s$ and $V$ an irreducible $\fS_n$-module, form a $\bQ$-basis of $\bQ \otimes K$.
\item As a $(\bQ \otimes \Lambda)$-module, $\bQ \otimes K$ is free of rank $s+1$, with basis $[R/\fh_r]$ for $0 \le r \le s$.
\end{enumerate}
\end{theorem}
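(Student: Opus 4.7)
The overall strategy is to combine the semi-orthogonal decomposition $\rD^b_{\rm fg}(\cC) = \langle \cD_0, \ldots, \cD_s \rangle$ from \S\ref{ss:semi-orth} with the structure theorems for the generic Grothendieck groups in \S\ref{ss:gen-K}. Iterating the short exact sequence $0 \to \rK(\cB) \to \rK(\cA) \to \rK(\cA/\cB) \to 0$ of \S\ref{ss:loccoh} along the chain $\cC_{\le 0} \subset \cdots \subset \cC_{\le s} = \cC$ produces a filtration $F_r K = \rK(\cC_{\le r})$ with graded pieces $F_r K / F_{r-1} K = \rK(\cC_r)$; exactness on the left comes from (Inj), which holds by Corollary~\ref{cor:Inj}, together with the finiteness of Theorem~\ref{thm:loccoh}. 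As observed in the proof of Theorem~\ref{thm:dergen2}, each $\rK(\cC_r)$ is canonically identified with $\rK(\Mod_{R/\fh_r}^{\gen})$: a module in $\cC_{\le r}$ reduces after dévissage to one killed by $\fh_r$, and modding out by $\cC_{<r}$ corresponds precisely to killing torsion in $\Mod_{R/\fh_r}$.

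For (a) and (b), the key point is that $\cQ_r(V)$ and $\cP_r(V)$ are annihilated by $\fh_r$ (so lie in $F_r K$) and not by any power of $\fh_{r-1}$; their images in $F_r K / F_{r-1} K$ are the classes of $\ol{\cQ}_r(V)$ and $\ol{\cP}_r(V)$ in $\rK(\Mod_{R/\fh_r}^{\gen})$. Proposition~\ref{prop:Groth-Q} identifies $\{[\ol{\cQ}_r(V)]\}$ as a $\bZ$-basis of each graded piece, and Proposition~\ref{prop:Groth-P} does the same for $\{[\ol{\cP}_r(V)]\}$ rationally. An upper-triangular argument with respect to the filtration then promotes $\{[\cQ_r(V)]\}_{0 \le r \le s}$ (resp.\ $\{[\cP_r(V)]\}_{0 \le r \le s}$) to a $\bZ$-basis of $K$ (resp.\ $\bQ$-basis of $\bQ \otimes K$), giving (a) and (b).

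For (c), the filtration $F_\bullet K$ is $\Lambda$-equivariant since each $\cC_{\le r}$ is stable under tensoring with $\fS$-representations, so the short exact sequences linking the graded pieces are sequences of $\Lambda$-modules. Applying Proposition~\ref{prop:K-gen} to the $r$-th layer identifies $\bQ \otimes \rK(\cC_r)$ as free of rank one over $\bQ \otimes \Lambda$, generated by the image of $[R/\fh_r] = [\cP_{r,0}]$. Generation of $\bQ \otimes K$ by $\{[R/\fh_r]\}_{0 \le r \le s}$ then follows by induction on the filtration. For freeness, a relation $\sum_r f_r [R/\fh_r] = 0$ projects, at its top nonzero index $r_0$, to a relation $f_{r_0} \cdot [\ol{R/\fh_{r_0}}] = 0$ in $\bQ \otimes \rK(\cC_{r_0})$; by Proposition~\ref{prop:K-gen}, this forces $f_{r_0} = 0$, contradicting the choice of $r_0$. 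The main obstacle I anticipate is verifying the $\Lambda$-equivariance of the identification $\rK(\cC_r) \cong \rK(\Mod_{R/\fh_r}^{\gen})$—this is morally obvious since tensoring by an $\fS$-representation commutes with the Serre quotient, but requires tracing through the equivalence $\cC'_r \simeq \Mod_{R/\fh_r}^{\gen}$ carefully. The rest of the argument is essentially upper-triangular bookkeeping.
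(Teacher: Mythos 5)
Your proposal is correct and takes essentially the same approach as the paper: filter $K$ by the subgroups $K_{\le r} = \rK(\cC_{\le r})$, use (Inj) and Theorem~\ref{thm:loccoh} to get injectivity of $K_{\le r} \to K$ with graded pieces $\rK(\cC_r) \cong \rK(\Mod_{R/\fh_r}^{\gen})$, and then invoke Propositions~\ref{prop:Groth-Q}, \ref{prop:Groth-P}, and~\ref{prop:K-gen} for each layer. Your explicit flagging of the $\Lambda$-equivariance of the identification $\rK(\cC_r)\cong\rK(\Mod_{R/\fh_r}^{\gen})$ is a point the paper's proof passes over silently; it holds because tensoring with an $\fS$-representation preserves $\cC_{\le r}$ (it preserves the annihilating ideal) and hence descends to the Serre quotient and commutes with the equivalence $\cC'_r \simeq \Mod_{R/\fh_r}^{\gen}$.
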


\begin{proof}
Use notation as in \S \ref{ss:semi-orth}. For $0 \le r \le s$, put $K_{\le r}=\rK(\cC_{\le r})$ and $K_r=\rK(\cC_r)$ where $\cC_r=\cC_{\le r}/\cC_{\le r-1}$. Note that $K_{\le r}$ is naturally identified with $\rK(\Mod_{R/\fh_r})$, and $K_r$ with $\rK(\Mod_{R/\fh_r}^{\gen})$ (see the proof of Theorem~\ref{thm:dergen2}). By Theorem~\ref{thm:loccoh} and the generalities of \S \ref{ss:loccoh}, the natural map $K_{\le r} \to K$ is injective, and $K_{\le r}/K_{\le r-1} \cong K_r$. The result thus follows from our results on $K_r$ in \S \ref{ss:gen-K}.
\end{proof}

\subsection{Krull--Gabriel dimension}

Let $\cA$ be a locally noetherian Grothendieck abelian category. Inductively define full subcategories $\cA_r$, for $r \ge -1$, of $\cA$ as follows. First, $\cA_{-1}=0$. Having defined $\cA_{r-1}$, we define $\cA_r$ to be the full subcategory spanend by objects that become locally of finite length in $\cA/\cA_{r-1}$. We say that $\cA$ has \defn{Krull--Gabriel dimension} $r$ if $\cA=\cA_r$, with $r$ minimal. We now compute this for the categories studied in this paper.

\begin{theorem}
The Krull--Gabriel dimension of the category $\Mod_{R/\fh_s}$ is $s$.
\end{theorem}

\begin{proof}
Put $\cA=\Mod_{R/\fh_s}$, and let $\cA_r$ be as above. We claim that $\cA_r$ coincides with the category $\cC_{\le r}$ spanned by modules locally annihilated by a power of $\fh_r$. This is clear for $r=0$. Suppose now $r \ge 1$ and we know $\cA_{r-1}=\cC_{\le r-1}$. We show $\cA_r=\cC_{\le r}$. Every object of $\cC_{\le r}$ is locally of finite length in $\cA/\cA_{r-1}$ by Theorem~\ref{thm:gen-fin-len}, and so $\cC_{\le r} \subset \cA_r$. Every object of $\cC$ has a (possibly infinite) filtration where the graded pieces are killed by $\fh_r$. It follows that the same is true for objects of $\cA/\cA_{r-1}$. We thus see that the simple objects of $\cA/\cA_{r-1}$ come from objects of $\cA$ that are killed by $\fh_r$. Hence a finite length object of $\cA/\cA_{r-1}$ comes from an object of $\cA$ that is killed by a power of $\fh_r$. Thus $\cA_r \subset \cC_{\le r}$, as required. From this, we see that $\cA=\cA_s$, and $s$ is minimal, proving the theorem.
\end{proof}

\end{document}